\numberwithin{equation}{section}
\newtheorem{theorem}{Theorem}[section]
\newtheorem{proposition}[theorem]{Proposition}
\theoremstyle{definition}
\theoremstyle{plain}
\newtheorem{lemma}[theorem]{Lemma}
\newtheorem{corollary}{Corollary}[theorem]
\theoremstyle{remark}
\title[The Fractional Lam\'e-Navier Operator]{The Fractional Lam\'e-Navier Operator: Appearances, Properties and Applications}
 \author{James M. Scott}
 \address{Department of Applied Physics and Applied Mathematics,
 Columbia University,
 New York, NY 10027}
 \email{jms2555@columbia.edu}
\thanks{
	Support by the NSF DMS 1937254 is gratefully acknowledged. The author also thanks Tadele Mengesha and Pablo Ra\'ul Stinga for helpful and illuminating conversations.}
\keywords{linear elasticity, fractional calculus, fractional elliptic equations, fractional Laplacian, fractional elasticity, nonlocal vector calculus, peridynamics}
\subjclass[2020]{45K05, 47G20, 35J47}
\begin{document}

\maketitle

\begin{abstract}

We introduce and analyze an explicit formulation of fractional powers of the Lam\'e-Navier system of partial differential operators.
We show that this \textit{fractional Lam\'e-Navier operator} is a nonlocal integro-differential operator that appears in several widely-used continuum mechanics models.
We demonstrate that the fractional Lam\'e-Navier operator can be obtained using compositions of nonlocal gradient operators.
Additionally, the effective form of the fractional Lam\'e-Navier operator is the same as the operator obtained as a particular choice of parameters in state-based peridynamics.
We further show that the Dirichlet-to-Neumann map associated to the local classical Lam\'e-Navier system posed in a half-space coincides with the square root power of the Lam\'e-Navier operator for a particular choice of elastic coefficients.

We establish basic analysis results for the fractional Lam\'e-Navier operator, including the calculus of positive and negative powers, and explore its interaction with the H\"older and Bessel classes of functions.
We also derive the fractional Lam\'e-Navier as the Dirichlet-to-Neumann map of a local degenerate elliptic system of equations in the upper half-space. We use an explicit formula for a Poisson kernel for the extension system to establish the well-posedness in weighted Sobolev spaces.
As an application, we derive the equivalence of two fractional seminorms using a purely local argument in the extension system, and then use this equivalence to obtain well-posedness for a variational Dirichlet problem associated to the fractional Lam\'e-Navier operator.

\end{abstract}

\section{Introduction}

The Lam\'e-Navier operator is a partial differential operator that appears across several prominent models employed in continuum mechanics.
For example, the operator appears as the divergence of the Cauchy stress tensor for a linear isotropic medium.
For hyperelastic solids, the steady-state constitutive equations representing displacement consist of the Lam\'e-Navier operator equated with the external force. For compressible fluids, the operator appears in the Navier-Stokes momentum equations.
In this work the Lam\'e-Navier operator $\bbL$ acting on the vector field $\bu : \bbR^d \to \bbR^d$ for $d \geq 2$ has the explicit form
\begin{equation}\label{eq:LameOperator}
	\bbL \bu(\bx) := -  \mu \Delta \bu(\bx) - (\mu + \lambda) \grad(\div \bu(\bx))\,.
\end{equation}
The constants $\mu$ and $\lambda$ are the Lam\'e constants (representing the bulk and shear modulus respectively in the context of solid materials), and they satisfy the conditions for rank-one / Legendre-Hadamard ellipticity of $\bbL$:
\begin{equation}\label{eq:EllipticityConditionsLame}
	\mu > 0\,, \qquad 2 \mu + \lambda > 0\,.
\end{equation}

We are interested in fractional powers of the Lam\'e-Navier operator, which we denote as $\bbL^s$ for $s \in (0,1)$.
Our chief motivation is to gain further understanding of nonlocal systems in continuum mechanics by viewing them through the lens of the analysis of fractional operators.
Fractional operators for scalar-valued functions have appeared in a wealth of settings, including obstacle problems \cite{athanasopoulos2006optimal, athanasopoulos2008structure}, image processing \cite{Gilboa-Osher}, flocking and emergent dynamics \cite{tadmor, tadmor2014critical}, quasi-geostrophic equations \cite{cordoba2004maximum}, and Markov processes \cite{barlow2009non}.
A widely-used operator is the fractional power of the Laplacian $-\Delta$. It is defined for sufficiently smooth functions $u : \bbR^d \to \bbR$ through the Fourier inversion formula
\begin{equation}\label{eq:FourierSymbol:FractionalLaplacian}
	\cF((-\Delta)^s u)(\bsxi) = \big( 2 \pi |\bsxi| \big)^{2s} \hat{u}(\bsxi)\,,
\end{equation}
where $\cF$ denotes the Fourier transform
\begin{equation*}
	\cF(u)(\bsxi) = \wh{u}(\bsxi) = \int_{\bbR^d} \rme^{-2 \pi \imath \bx \cdot \bsxi} u(\bx) \, \rmd \bx\,, \qquad u \in L^1(\bbR^d)\,,
\end{equation*}
and $\cF^{-1}$ denotes its inverse.
The fractional Laplacian $(-\Delta)^s$ also has an integral form as
\begin{equation}\label{eq:IntegralFormula:FractionalLaplacian}
	(-\Delta)^s u(\bx) := c_{d,s} \pv \intdm{\bbR^d}{\frac{u(\bx)-u(\by)}{|\bx-\by|^{d+2s}}}{\by}\,,
\end{equation}
where $s \in (0,1)$ and $c_{d,s}$ is a normalizing constant defined in \eqref{eq:DefnOfNormalizingConstants}. See \cite{kwasnicki2017ten} for a survey discussing the equivalence of various definitions of $(-\Delta)^s$.

The goal of this work is to introduce explicit formulae and give a firm analytical ground for fractional powers of the Lam\'e-Navier operator $\bbL$. Some of the results here have an analogue for second-order uniformly elliptic operators with constant coefficients, such as the Laplacian. However, in contrast to the Laplacian, the Lam\'e-Navier equations make up a strongly-coupled Legendre-Hadamard elliptic system acting on vector-valued functions. When stating a result for the operators $\bbL^s$, we remark when necessary on the similarity of the proof to its already-existing analogue for scalar operators.

Our process to define the fractional Lam\'e-Navier operator $\bbL^s$ starts in the same way as that of $(-\Delta)^s$; we find an explicit formula for the fractional power of the multiplier matrix associated to the Lam\'e-Navier operator. 
This symmetric matrix, which we denote by $\bM(\bsxi)$, is readily seen to be
\begin{equation}\label{eq:LameEquation:FourierSymbol}
	\bM(\bsxi) := \mu ( 2 \pi |\bsxi| )^2 \bI + (\mu + \lambda ) \big( 2 \pi \big)^2 \bsxi \otimes \bsxi\,, \qquad \bsxi \in \bbR^d\,,
\end{equation}
where $\bI$ denotes the $d \times d$ identity matrix. Thanks to the rank-one ellipticity conditions \eqref{eq:EllipticityConditionsLame} the matrix $\bM(\bsxi)$ is positive definite:
\begin{equation*}
	\Vint{ \bM(\bsxi) \bseta,\bseta } \geq \min \{ \mu, 2 \mu + \lambda \} |\bseta|^2 \qquad \text{ for all } \bseta \in \bbR^d \text { and for all } \bsxi \in \bbR^d\,.
\end{equation*}
Therefore $\bM(\bsxi)$ has a unique positive power of $s$ for each $s \in (0,1)$, which we calculate explicitly in \Cref{lma:FourierSymbolCalculation} below. Fourier inversion (see \Cref{sec:IntroductionToOperator}) gives the integro-differential operator denoted as $\bbL^s$:

\begin{theorem}\label{thm:IntegralDefinitionForFracLame}
	Let $s \in (0,1)$. For sufficiently smooth and integrable vector fields $\bu : \bbR^d \to \bbR^d$, the fractional $s$-power of the Lam\'e-Navier operator $\bbL$ is defined via the Fourier inversion formula
	\begin{equation*}
		\cF(\bbL^s \bu)(\bsxi) := \bM^s(\bsxi) \wh{\bu}(\bsxi)\,,
	\end{equation*}
	where the action of $\cF$ is componentwise. The operator $\bbL^s$ has the explicit integral form
	\begin{equation}\label{eq:FractionalPowerOfLame}
		\begin{split}
		\bbL^s \bu(\bx) &:= \left( \frac{(2s+1)\mu^s - (2\mu+\lambda)^s}{2s}\right) c_{d,s} \, \pv \intdm{\bbR^d}{\frac{\bu(\bx) - \bu(\by)}{|\bx-\by|^{d+2s}}}{\by}  \\
		& \qquad \qquad + \frac{(2 \mu + \lambda)^s - \mu^s}{2s} \kappa_{d,s} \, \pv \intdm{\bbR^d}{ \left( \frac{(\bx-\by) \otimes (\bx-\by)}{|\bx-\by|^2} \right) \frac{\bu(\bx) - \bu(\by)}{|\bx-\by|^{d+2s}}}{\by}\,,
		\end{split}
	\end{equation}
	where $\bx \in \bbR^d$ and $c_{d,s}$, $\kappa_{d,s}$ are normalizing constants defined in \eqref{eq:DefnOfNormalizingConstants} below.
\end{theorem}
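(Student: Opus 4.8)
The plan is to reduce the problem to the scalar theory of $(-\Delta)^s$ by diagonalizing the matrix symbol $\bM(\bsxi)$ and then recognizing each of the resulting multipliers as the Fourier symbol of an explicit singular integral operator. Let $\Pi(\bsxi):=|\bsxi|^{-2}\,\bsxi\otimes\bsxi$ denote the orthogonal projection onto $\mathrm{span}\{\bsxi\}$, so that \eqref{eq:LameEquation:FourierSymbol} reads $\bM(\bsxi)=(2\pi|\bsxi|)^2\big(\mu(\bI-\Pi(\bsxi))+(2\mu+\lambda)\Pi(\bsxi)\big)$. Since $\bI-\Pi(\bsxi)$ and $\Pi(\bsxi)$ are complementary orthogonal projections, \Cref{lma:FourierSymbolCalculation} gives
\[
  \bM^s(\bsxi)=(2\pi|\bsxi|)^{2s}\Big(\mu^s\,\bI+\big((2\mu+\lambda)^s-\mu^s\big)\Pi(\bsxi)\Big)\,.
\]
By the definition of $\bbL^s$ via $\cF$, this splits $\bbL^s$ into an isotropic piece $\mu^s(-\Delta)^s$ (acting componentwise, handled at once by \eqref{eq:IntegralFormula:FractionalLaplacian}) plus $\big((2\mu+\lambda)^s-\mu^s\big)$ times the operator whose matrix multiplier is $(2\pi|\bsxi|)^{2s}\Pi(\bsxi)$; the entire content of the theorem is to realize this second operator as a singular integral and to keep track of the constants.

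The natural candidate for the directional piece is
\[
  \mathcal S\bu(\bx):=\pv\intdm{\bbR^d}{\Big(\frac{(\bx-\by)\otimes(\bx-\by)}{|\bx-\by|^2}\Big)\frac{\bu(\bx)-\bu(\by)}{|\bx-\by|^{d+2s}}}{\by}\,.
\]
Writing $\mathcal S$ as a principal-value convolution against the even kernel $|\by|^{-d-2s-2}\,\by\otimes\by$ applied to the second difference $\bu(\bx)-\bu(\by)$ and taking the Fourier transform in $\bx$, its matrix multiplier is $\mathbf N(\bsxi):=\intdm{\bbR^d}{\big(1-\cos(2\pi\by\cdot\bsxi)\big)\,|\by|^{-d-2s-2}\,\by\otimes\by}{\by}$ (with $\by$ now a dummy variable). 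I would evaluate $\mathbf N$ using the pointwise identity
\[
  \frac{\by\otimes\by}{|\by|^{d+2s+2}}=\frac{1}{d+2s}\Big(\frac{\bI}{|\by|^{d+2s}}-\nabla_\by\otimes\frac{\by}{|\by|^{d+2s}}\Big)\,,
\]
integrating by parts in $\by$ on the gradient term, and then invoking the scalar identity $\intdm{\bbR^d}{\big(1-\cos(2\pi\by\cdot\bsxi)\big)|\by|^{-d-2s}}{\by}=c_{d,s}^{-1}(2\pi|\bsxi|)^{2s}$ (which is \eqref{eq:IntegralFormula:FractionalLaplacian} rewritten) together with its gradient in $\bsxi$. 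This should give $\mathbf N(\bsxi)=\big((d+2s)c_{d,s}\big)^{-1}(2\pi|\bsxi|)^{2s}\big(\bI+2s\,\Pi(\bsxi)\big)$, equivalently $(d+2s)c_{d,s}\,\mathbf N(\bsxi)=(2\pi|\bsxi|)^{2s}\bI+2s\,(2\pi|\bsxi|)^{2s}\Pi(\bsxi)$.

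To conclude I would match $\bM^s(\bsxi)=a\,(2\pi|\bsxi|)^{2s}\bI+b\,(d+2s)c_{d,s}\,\mathbf N(\bsxi)$ component by component: equating the $\Pi(\bsxi)$-parts forces $b=\big((2\mu+\lambda)^s-\mu^s\big)/(2s)$, and equating the $\bI$-parts forces $a=\mu^s-b=\big((2s+1)\mu^s-(2\mu+\lambda)^s\big)/(2s)$. Applying $\cF^{-1}$, using $(-\Delta)^s\bu=c_{d,s}\,\pv\intdm{\bbR^d}{|\bx-\by|^{-d-2s}(\bu(\bx)-\bu(\by))}{\by}$, and invoking the definition of $\kappa_{d,s}$ in \eqref{eq:DefnOfNormalizingConstants} (normalized so that $\kappa_{d,s}=(d+2s)\,c_{d,s}$) then yields exactly \eqref{eq:FractionalPowerOfLame}. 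The principal value together with the second-difference form $\bu(\bx)-\bu(\by)$ is precisely what makes both integrals absolutely convergent for smooth, integrable $\bu$: near the diagonal the difference cancels the $|\bx-\by|^{-d-2s}$ singularity up to the angular principal value, while at infinity boundedness of $\bu$ and decay of the kernels suffice.

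I expect the main obstacle to be the rigorous execution of the multiplier computation in the second step. For $s\le 1/2$ the integration by parts in $\by$ and, especially, the differentiation under the integral sign in $\bsxi$ involve intermediate integrands (for instance $|\by|^{-d-2s}\,\by\,\sin(2\pi\by\cdot\bsxi)$) that fail to be absolutely integrable at infinity, so these manipulations are not literally justified as written. I would circumvent this either by first establishing the identity for $s\in(1/2,1)$, where every integral converges absolutely, and then extending to all $s\in(0,1)$ by analytic continuation in $s$ — both sides of \eqref{eq:FractionalPowerOfLame} and the normalizing constants being analytic in $s$ — or by performing the whole computation in the space of tempered distributions, using that the inverse Fourier transform of a multiplier homogeneous of degree $2s$ with $s\in(0,1)$ is the associated homogeneous principal-value kernel, with no Dirac contribution. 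A secondary but essential bookkeeping point, easy to overlook, is that the isotropic $\bI$-component of $\mathbf N(\bsxi)$ combines with the $\mu^s\bI$ term; this is exactly the mechanism that converts the naive coefficient $\mu^s$ of the first integral into $\big((2s+1)\mu^s-(2\mu+\lambda)^s\big)/(2s)$.
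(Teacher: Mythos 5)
Your proposal is correct and follows essentially the same route as the paper: diagonalize the symbol via \Cref{lma:FourierSymbolCalculation}, write $\bM^s(\bsxi)$ as a linear combination of the symbol of $(-\Delta)^s\,\bI$ and the symbol of the coupled operator $\bbF^s$, and match the $\bI$- and $\bsxi\otimes\bsxi/|\bsxi|^2$-components to produce exactly the two coefficients in \eqref{eq:FractionalPowerOfLame}. The only deviation is that where the paper simply cites the symbol identity $\widehat{\bbF^s\bu}(\bsxi)=(2\pi|\bsxi|)^{2s}\big(\bI+2s\,\bsxi\otimes\bsxi/|\bsxi|^2\big)\widehat{\bu}(\bsxi)$ from \cite{scott2020mathematical} (\Cref{thm:FourierTransformOfL}), you rederive it by the kernel identity plus integration by parts — a computation that checks out — and your proposed fixes for the convergence issues when $s\le 1/2$ (analytic continuation in $s$, or carrying out the multiplier computation in $\scS'$) are appropriate ways to make that step rigorous.
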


The operator $\bbL^s$ exhibits strong coupling at its principal order. Indeed, the choice of constant $\lambda = \big( (2s+1)^{1/s}-2 \big) \mu$ for any $\mu >0$ still satisfies the ellipticity conditions \eqref{eq:EllipticityConditionsLame} and the resulting form of $\bbL^s$ is
\begin{equation*}
	\bbL^s \bu(\bx) = \mu^s \kappa_{d,s} \, \pv \intdm{\bbR^d}{ \left( \frac{(\bx-\by) \otimes (\bx-\by)}{|\bx-\by|^2} \right) \frac{\bu(\bx) - \bu(\by)}{|\bx-\by|^{d+2s}}}{\by}\,,
\end{equation*}
so each component of $\bbL^s \bu(\bx)$ involves all components of $\bu$ via the projected difference quotient $(\bx-\by) \cdot (\bu(\bx)-\bu(\by) )$.
Theoretical results for scalar integro-differential equations (like those involving the fractional Laplacian) may not hold for the coupled system $\bbL^s$. For example, maximum principle techniques cannot be used.
Nonlocal and fractional operators consisting exclusively of strong coupling have been studied in \cite{scott2020mathematical} and we will refer to those results as necessary throughout this work. 

Although we introduce this operator using purely mathematical formalism, we demonstrate that it appears in several different modeling applications: general nonlocal vector calculus, state-based peridynamics, and the Dirichlet-to-Neumann map for elastic half-spaces. In what follows we describe each of these in detail.

In the last several years notions of nonlocal vector calculus have been studied in various contexts; see \cite{alali2014generalized, Du-NonlocalCalculus, d2020towards, vsilhavy2020fractional}. 
We show that, in one of these contexts, the fractional Lam\'e-Navier equation can be recovered as a linear combination of compositions of the ``fractional divergence'' $\div^s$ and the ``fractional gradient'' $\grad^s$.
\begin{theorem}[See \Cref{sec:Applications}]\label{thm:Intro:NonlocalCalculus}
	For sufficiently smooth functions $\bu : \bbR^d \to \bbR^d$,
	\begin{equation*}
		\bbL^s \bu(\bx) = - \mu^s (-\Delta)^s \bu(\bx) - \big( (2\mu+\lambda)^s - \mu^s \big) \grad^s \div^s \bu(\bx)\,.
	\end{equation*}
\end{theorem}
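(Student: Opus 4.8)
The plan is to prove the identity on the Fourier side, where each of the three operators appearing is a Fourier multiplier acting componentwise, so that the claim collapses to an algebraic identity between $d\times d$ matrix-valued symbols. First I would assemble the symbols. By \Cref{thm:IntegralDefinitionForFracLame} together with \Cref{lma:FourierSymbolCalculation}, the operator $\bbL^s$ has symbol $\bM^s(\bsxi)$; by \eqref{eq:FourierSymbol:FractionalLaplacian} the componentwise operator $(-\Delta)^s$ has symbol $(2\pi|\bsxi|)^{2s}\bI$; and from the integral definitions of the fractional gradient $\grad^s$ and fractional divergence $\div^s$ recalled in \Cref{sec:Applications}, both $\grad^s$ and $\div^s$ are convolution operators whose symbols are scalar multiples of $\bsxi$, so that the composition $\grad^s\div^s$ has the rank-one symbol $\beta_s(\bsxi)\,\dfrac{\bsxi\otimes\bsxi}{|\bsxi|^2}$ for an explicit scalar $\beta_s(\bsxi)$ homogeneous of degree $2s$ in $\bsxi$. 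Since all the relevant kernels are, up to bounded homogeneous factors, at most weakly singular, every operator is well defined on the Schwartz class (and on the Hölder/Bessel classes considered later in the paper), and the Fourier transforms above are legitimate; it therefore suffices to verify that $\bM^s(\bsxi)$ equals the symbol of the right-hand side for every $\bsxi\neq\mathbf{0}$ and then invert the Fourier transform, arguing by density for general $\bu$.

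To establish that matrix identity I would diagonalize $\bM(\bsxi)$ explicitly. Writing $P(\bsxi):=\dfrac{\bsxi\otimes\bsxi}{|\bsxi|^2}$ for the orthogonal projection onto the line $\bbR\bsxi$, a one-line computation from \eqref{eq:LameEquation:FourierSymbol} gives $\bM(\bsxi)=\mu(2\pi|\bsxi|)^2\big(\bI-P(\bsxi)\big)+(2\mu+\lambda)(2\pi|\bsxi|)^2\,P(\bsxi)$; in other words $\bM(\bsxi)$ acts as multiplication by $(2\mu+\lambda)(2\pi|\bsxi|)^2$ on $\bbR\bsxi$ and by $\mu(2\pi|\bsxi|)^2$ on its orthogonal complement, both positive by \eqref{eq:EllipticityConditionsLame}. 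Raising to the $s$-th power respects this splitting, so $\bM^s(\bsxi)=\mu^s(2\pi|\bsxi|)^{2s}\bI+\big((2\mu+\lambda)^s-\mu^s\big)(2\pi|\bsxi|)^{2s}\,P(\bsxi)$ --- precisely the formula of \Cref{lma:FourierSymbolCalculation}. Comparing with the right-hand side of the claimed identity, the $\bI$-component is accounted for by the $(-\Delta)^s$ term and the $P(\bsxi)$-component by the $\grad^s\div^s$ term, so the whole matter reduces to checking that the constants $c_{d,s}$, $\kappa_{d,s}$ and the normalizations in the definitions of $\grad^s$, $\div^s$ combine to produce exactly the scalar coefficients in the statement. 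This last step is cleanest if one first isolates the scalar special case, namely the identity relating $\grad^s\div^s$ to $(-\Delta)^s$ on scalar fields, which is exactly the fractional vector-calculus identity from \cite{vsilhavy2020fractional, d2020towards}.

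I expect the only genuine obstacle to be this bookkeeping of dimensional constants and of the sign/orientation conventions for $\grad^s$ and $\div^s$; once those are fixed consistently with \Cref{sec:Applications}, the statement is immediate from the spectral formula for $\bM^s(\bsxi)$ and requires no analysis beyond \Cref{thm:IntegralDefinitionForFracLame}. For completeness I would record, as a remark, the Fourier-free alternative: insert the principal-value integral representations of $\div^s$ and $\grad^s$ into $\grad^s\div^s\bu$, justify exchanging the two integrations by a standard truncation-and-symmetrization argument, and collapse the resulting iterated integral against the differences $\bu(\bx)-\bu(\by)$ to the single-integral form in \eqref{eq:FractionalPowerOfLame}; this merely re-derives the composition law for Riesz-type potentials by hand, so it belongs in a remark rather than in the main proof.
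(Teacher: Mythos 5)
Your proposal is correct and follows essentially the same route as the paper: both reduce the identity to a Fourier-multiplier computation, using the spectral formula for $\bM^s(\bsxi)$ from \Cref{lma:FourierSymbolCalculation} and the normalized transforms \eqref{eq:FourierTransformOfGradients} to identify the symbol of $\grad^s \div^s$ with $-(2\pi|\bsxi|)^{2s}\,\bsxi\otimes\bsxi/|\bsxi|^2$ and that of $(-\Delta)^s$ with $(2\pi|\bsxi|)^{2s}\bI$. The sign and constant bookkeeping you defer is precisely the content of the paper's two-line proof, so you would only need to carry out that explicit comparison with \eqref{eq:FractionaLameFourierMatrix} to finish.
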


Peridynamics is a nonlocal continuum theory that has been applied in a variety of modeling settings; for some recent developments see the book \cite{bobaru2016handbook} and the references therein.
A class known as \textit{state-based} peridynamic systems was introduced in \cite{Silling2007} in order to model linear elastic materials with arbitrary Poisson ratio in the context of peridynamics.
Mathematical theory for general state-based peridynamic operators has been developed in \cite{MengeshaDuElasticity, alali2015peridynamics, mengesha2015VariationalLimit}.
State-based peridynamic operators involve a double integration. However, in \cite{vsilhavy2017higher} it was noted that this double integration can, at least formally, be simplified so that any state-based peridynamic operator can be written as a sum of ``traditional'' integro-differential operators. In \Cref{thm:FractionalLame:StateBasedPeri} we show that in fact a state-based peridynamic operator coincides with the fractional Lam\'e-Navier operator for a particular choice of parameters. In other words, a special case of the peridynamic theory allows for the recovery of the exact fractional model introduced in this work.

The third application appears in a classical partial differential model, namely the Lam\'e-Navier system of equations which describe an infinite isotropic hyperelastic half-space.
Its Dirichlet-to-Neumann map (which is a nonlocal operator) has been used to great effect in recent works that analyze the Peierls-Nabarro model of dislocations \cite{gao2021existence, gao2021revisit}.
\begin{theorem}[See \Cref{sec:Applications}]
	Up to a choice of elastic constants, the Dirichlet-to-Neumann map associated to a Lam\'e-Navier-harmonic system in the upper-half space coincides with that of the operator $\bbL^{\frac{1}{2}}$.
\end{theorem}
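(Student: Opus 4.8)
The plan is to solve the physical half-space problem by a partial Fourier transform, extract its Dirichlet-to-Neumann map as a matrix-valued multiplier, and compare that multiplier with the symbol $\bM^{1/2}(\bsxi)$ computed in \Cref{lma:FourierSymbolCalculation}. Work in $\bbR^{d+1}_+=\{(\bx,y):\bx\in\bbR^d,\ y>0\}$ with the classical Lam\'e--Navier system $-\mu'\Delta\mathbf U-(\mu'+\lambda')\grad(\div\mathbf U)=\mathbf 0$ for the physical displacement $\mathbf U=(\mathbf U',U_{d+1}):\bbR^{d+1}_+\to\bbR^{d+1}$, with Dirichlet data $\mathbf U(\cdot,0)=(\bu,0)$ (i.e.\ prescribed tangential displacement $\bu$ and vanishing normal displacement), and Neumann datum the boundary traction $\mathbf t=-\sigma(\mathbf U)\mathbf e_{d+1}\big|_{y=0}$, where $\sigma(\mathbf U)=\lambda'(\div\mathbf U)\bI+\mu'\big(\grad\mathbf U+(\grad\mathbf U)^{T}\big)$; the Dirichlet-to-Neumann map is then $\bu\mapsto\mathbf t'$, the tangential part of $\mathbf t$.

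First I would Fourier transform in $\bx$, reducing the system at each frozen $\bsxi$ to a constant-coefficient linear ODE in $y$ for $\wh{\mathbf U}(\bsxi,\cdot)$. Applying $\div$ and $\Delta$ to the equation shows that $\div\mathbf U$ is harmonic and $\mathbf U$ biharmonic (using $\mu'>0$ and $2\mu'+\lambda'>0$), so every decaying solution has the form $\wh{\mathbf U}(\bsxi,y)=\big(\mathbf A(\bsxi)+y\,\mathbf B(\bsxi)\big)\rme^{-2\pi|\bsxi|y}$; substituting this ansatz back into the full system forces an explicit linear relation $\mathbf B=\mathcal L(\bsxi)\mathbf A$, and the Dirichlet condition fixes $\mathbf A=(\wh{\bu}(\bsxi),0)$. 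I would then compute $\wh{\mathbf t}(\bsxi)$ from the ansatz, using $2\pi\imath\bsxi$ for tangential derivatives and $\partial_y\wh{\mathbf U}(\bsxi,0)=\mathbf B-2\pi|\bsxi|\mathbf A$ for the normal derivative, and read off the tangential part, whose Fourier transform equals $\mathbf N(\bsxi)\wh{\bu}(\bsxi)$ for a matrix $\mathbf N(\bsxi)$. By homogeneity of degree one and invariance under rotations fixing $\bsxi$, necessarily $\mathbf N(\bsxi)=2\pi|\bsxi|\big(a\,\bI+b\,\tfrac{\bsxi\otimes\bsxi}{|\bsxi|^2}\big)$ with $a,b$ determined by $\mu',\lambda'$. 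On the other hand \Cref{lma:FourierSymbolCalculation} gives $\bM^{1/2}(\bsxi)=2\pi|\bsxi|\big(\sqrt{\mu}\,\bI+(\sqrt{2\mu+\lambda}-\sqrt{\mu})\tfrac{\bsxi\otimes\bsxi}{|\bsxi|^2}\big)$. Equating $\mathbf N$ and $\bM^{1/2}$ up to one overall positive constant (a uniform rescaling of the moduli) reduces to a single scalar identity tying the Poisson ratio of $(\mu',\lambda')$ to that of $(\mu,\lambda)$; this identifies the ``particular choice of elastic coefficients'' for which the two Dirichlet-to-Neumann maps coincide, completing the proof.

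The hard part is the ODE step together with the traction bookkeeping: the characteristic exponents $\pm2\pi|\bsxi|$ of the Lam\'e system are repeated, so the resonant mode $y\,\rme^{-2\pi|\bsxi|y}$ and the algebraic constraint it must satisfy need to be tracked with care, and the elasticity traction — involving the full symmetrized gradient and the $\div\mathbf U$-term, not merely $\partial_y\mathbf U$ — must be carried through without sign or normalization errors, since the resulting scalar identity is exactly what pins down the admissible coefficients. A secondary and more routine matter is to put the half-space problem on rigorous footing (existence, uniqueness and decay of $\mathbf U$ given the boundary data, and the trace sense in which $\mathbf t$ is defined) so that the formal Fourier manipulations are legitimate. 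Alternatively, once the extension theory developed later in this paper is available, the direct computation can be bypassed: one verifies that the $s=\tfrac{1}{2}$ specialization of the (then non-degenerate) extension system, after an explicit change of dependent variables, coincides with the classical Lam\'e--Navier half-space system, so the two Dirichlet-to-Neumann maps agree by uniqueness.
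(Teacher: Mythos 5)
Your route is correct in substance but genuinely different from the paper's. The paper does not solve the half-space ODE on the Fourier side at all: it invokes the explicit Poisson kernel $\bP_{\wt{\mu},\wt{\lambda}}$ for the Lam\'e--Navier half-space problem (citing Martell et al.), differentiates the Poisson integral in $t$, and reads off the Dirichlet-to-Neumann map directly in its spatial integral form \eqref{eq:PeridynamicsAsDtNMap}, which is then compared with the integral formula \eqref{eq:FractionalPowerOfLame} for $\bbL^{1/2}$ using $c_{d,1/2}=2/\omega_d$, $\kappa_{d,1/2}=2(d+1)/\omega_d$. You instead compute the multiplier of the map and compare with $\bM^{1/2}(\bsxi)$ from \Cref{lma:FourierSymbolCalculation}; carrying out your ansatz $(\mathbf A+y\mathbf B)\rme^{-2\pi|\bsxi|y}$ with data $(\bu,0)$ indeed yields $\mathbf N(\bsxi)=2\pi|\bsxi|\big(\mu'\bI+\tfrac{\mu'(\mu'+\lambda')}{3\mu'+\lambda'}\tfrac{\bsxi\otimes\bsxi}{|\bsxi|^2}\big)$, which reproduces exactly the paper's coefficient relation $\mu=(\mu')^2$, $2\mu+\lambda=\big(\tfrac{2\mu'(2\mu'+\lambda')}{3\mu'+\lambda'}\big)^2$. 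Two further differences are worth noting. First, the paper's Neumann datum is the variational conormal $(\wt{\mu}\grad\bv+(\wt{\mu}+\wt{\lambda})(\div\bv)\bI)\bsnu$, under which the full $(d+1)$-component map decouples for arbitrary data $(\bff,g_{d+1})$, whereas you use the physical traction with symmetrized gradient; with your restricted data ($g_{d+1}=0$) the extra tangential term $\mu'\grad_{\bx}U_{d+1}(\cdot,0)$ vanishes, so the two conventions give the same tangential operator, but you should state this explicitly since with general data they differ. Second, your closing remark that the matching holds only ``up to one overall positive constant'' and a Poisson-ratio identity undersells the result: the matching is exact, with no rescaling, for the stated pair of constants. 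Finally, the proposed shortcut via the extension system of \Cref{sec:ExtensionProblem} at $s=\tfrac12$ is not substantiated: that system is $\p_{tt}\bU-\bbL_{\bx}\bU=0$ for a $d$-component unknown, with no $\grad_{\bx}\p_t$ coupling, and is not the $(d+1)$-dimensional Lam\'e system under any evident change of dependent variables; indeed its Dirichlet-to-Neumann map is $\bbL^{1/2}$ with the \emph{same} constants $\mu,\lambda$, while the classical half-space map agrees with $\bbL^{1/2}$ only after the coefficient change, so ``agree by uniqueness'' does not go through as stated. Since that is offered only as an alternative, it does not affect the validity of your main argument.
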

It should be emphasized that, in contrast with the Laplacian, the Dirichlet-to-Neumann map for a general elliptic equation or system will \textit{not} be the square root of the same operator. This is also the case with a general Lam\'e-Navier system. Nevertheless, the effective form of the integral operators involved are the same; see \Cref{sec:Applications} for details.

With these applications in mind, we turn to analysis. The fundamental solution defined by an exact formula in \eqref{eq:Definition:LameNavierKernel} is used to establish basic calculus properties for $\bbL^s \bu$ and their resolvents $(\bbL^{s})^{-1} \bu = \bbL^{-s} \bu$; see \Cref{thm:CalcProperties}. These properties are then used to investigate the interaction of $\bbL^s$ with the classical H\"older spaces $C^{k,\alpha}$ in \Cref{sec:HolderSpaces}.
We establish basic mapping properties, and prove regularity results such as local Schauder and $C^{1,\alpha}$ estimates for solutions $\bu$ to $\bbL^s \bu = \bff$ using the explicit form of the fundamental solution.

In the range of Bessel potential spaces, we prove a Sobolev inequality for $\bbL^{-s}$, and demonstrate both a distributional and pointwise definition of $\bbL^s$. To be precise, we are able to prove an alternative characterization of Bessel potential vector fields in terms of the $L^p$-convergence of the principal-value integrals defining $\bbL^s$; see \Cref{thm:BesselSpaces:MainThm} for the statement.
Although the same result for $(-\Delta)^s$ is well-known to experts, we could not find a direct reference to a detailed proof in the literature\footnote{There is a sketch of the proof in \cite{Stein}, but we are unable to fill in the details.}. Therefore we have included a comprehensive proof of the alternative characterization for $\bbL^s$ in Sections \ref{sec:BesselSpaces} and \ref{sec:BesselSpacesProof}. The proof is inspired by the results in \cite{wheeden1968hypersingular} for higher-order hypersingular operators.
As a consequence of the pointwise definition we prove strong solvability in the scale of Bessel spaces of the fractional elliptic system 
\begin{equation*}
	\bbL^s \bu + q \bu = \bff\,, \qquad q > 0\,,
\end{equation*}
posed on all of Euclidean space.

The fact that $\bbL^s$ is derived as the fractional power of an operator allows us to bring powerful analysis tools to bear that are unavailable for general nonlocal operators.
One of the most celebrated techniques in the analysis of fractional operators is to analyze an extension problem for a local degenerate elliptic equation at the expense of another variable. The equation is chosen so that the fractional operator coincides with the Dirichlet-to-Neumann map. Results for the nonlocal operator are then recovered from the local analogues for the extension problem.
This was done in \cite{caffarelli2007extension} for the fractional Laplacian using modern analysis techniques; see \cite{gale2013extension, stinga2010extension, kwasnicki2018extension} for approaches to general elliptic operators. In \Cref{sec:ExtensionProblem} we show that the fractional system $\bbL^s$ can be realized as the Dirichlet-to-Neumann map of a linear degenerate elliptic system. The following theorem is the formal summary of the section's main results.

\begin{theorem}
	For any $\bu : \bbR^d \to \bbR^d$, 
	there exists a unique $\bU : \bbR^{d} \times [0,\infty) \to \bbR^d$ belonging to an appropriately weighted Sobolev space that solves the partial differential system
	\begin{equation}\label{eq:Intro:ExtProb}
		\begin{cases}
			\p_{tt} \bU(\bx,t) + \frac{1-2s}{t} \p_t \bU(\bx,t) + \mu \grad_{\bx}\bU(\bx,t) + (\mu + \lambda) \grad_{\bx} \div_{\bx} \bU(\bx,t) = {\bf 0}\,, \\
			\bU(\bx,0) = \bu(\bx)\,.
		\end{cases}
	\end{equation}
	The Dirichlet-to-Neumann map 
	$$
	- \lim\limits_{t \to 0} t^{1-2s} \p_t \bU(\bx,t)
	$$ 
	coincides with $\bbL^s \bu(\bx)$ up to a multiplicative constant depending only on $s$.
\end{theorem}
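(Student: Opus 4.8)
The plan is to run the Caffarelli--Silvestre argument, exploiting that, after a Fourier transform in $\bx$, the system \eqref{eq:Intro:ExtProb} decouples over the eigenspaces of $\bM(\bsxi)$ into exactly the scalar Bessel-type ordinary differential equation that underlies the extension problem for $(-\Delta)^s$. First I would apply $\cF$ in the $\bx$ variable. The spatial part of the extension operator is $-\bbL$ applied in the $\bx$ variables, whose Fourier symbol is $\bM(\bsxi)$ by \eqref{eq:LameEquation:FourierSymbol}, so the extension system becomes the matrix ordinary differential equation $\p_{tt}\wh\bU(\bsxi,t) + \tfrac{1-2s}{t}\,\p_t\wh\bU(\bsxi,t) - \bM(\bsxi)\wh\bU(\bsxi,t) = {\bf 0}$ with $\wh\bU(\bsxi,0) = \wh\bu(\bsxi)$. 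Since $\bM(\bsxi)$ is symmetric positive definite with the two orthogonal eigenpairs $\big(\mu(2\pi|\bsxi|)^2,\ \bseta\perp\bsxi\big)$ and $\big((2\mu+\lambda)(2\pi|\bsxi|)^2,\ \bsxi\big)$, projecting $\wh\bU$ onto these eigenspaces reduces the problem to the scalar equation $v'' + \tfrac{1-2s}{t}v' - \omega^2 v = 0$, where $\omega^2$ is one of the two eigenvalues.

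For each scalar equation I would select the solution belonging to the weighted energy class: the general solution is $t^s\big(A\,I_s(\omega t) + B\,K_s(\omega t)\big)$ in terms of the modified Bessel functions, and discarding the exponentially growing mode $I_s$ and normalizing to $v(0)=1$ gives $v(t) = \tfrac{2^{1-s}}{\Gamma(s)}(\omega t)^s K_s(\omega t)$. Using $\tfrac{\rmd}{\rmd r}\big(r^s K_s(r)\big) = -\,r^s K_{1-s}(r)$ and the small-argument asymptotics of $K_{1-s}$ yields $-\lim_{t\to 0^+} t^{1-2s}\,\p_t v(t) = \tfrac{2^{1-2s}\Gamma(1-s)}{\Gamma(s)}\,\omega^{2s}$, i.e.\ a constant depending only on $s$ times $\omega^{2s}$. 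Substituting back, $\omega^{2s}\in\big\{(2\pi|\bsxi|)^{2s}\mu^s,\ (2\pi|\bsxi|)^{2s}(2\mu+\lambda)^s\big\}$, which are precisely the eigenvalues of $\bM^s(\bsxi)$ on the same eigenspaces (cf.\ \Cref{lma:FourierSymbolCalculation}). Reassembling the eigenspace contributions, the Dirichlet-to-Neumann operator has Fourier symbol $c_s\,\bM^s(\bsxi)$ with $c_s = \tfrac{2^{1-2s}\Gamma(1-s)}{\Gamma(s)}$, hence it coincides with $c_s\,\bbL^s\bu$ by \Cref{thm:IntegralDefinitionForFracLame}.

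For existence and uniqueness in the weighted Sobolev space, I would record the explicit solution produced above, $\wh\bU(\bsxi,t) = \phi\big(2\pi\sqrt{\mu}\,|\bsxi|\,t\big)\big(\bI - \tfrac{\bsxi\otimes\bsxi}{|\bsxi|^2}\big)\wh\bu(\bsxi) + \phi\big(2\pi\sqrt{2\mu+\lambda}\,|\bsxi|\,t\big)\tfrac{\bsxi\otimes\bsxi}{|\bsxi|^2}\wh\bu(\bsxi)$ with $\phi(r) = \tfrac{2^{1-s}}{\Gamma(s)}r^s K_s(r)$, and then invert it. Since the inverse Fourier transform in $\bsxi$ of $\phi(2\pi a|\bsxi| t)$ is the scalar Caffarelli--Silvestre Poisson kernel $\propto t^{2s}\big(|\bx|^2+t^2\big)^{-(d+2s)/2}$ with $t$ rescaled by $a$, and $\tfrac{\bsxi\otimes\bsxi}{|\bsxi|^2}$ is the Fourier multiplier of the matrix of second-order Riesz transforms, the matrix-valued Poisson kernel for \eqref{eq:Intro:ExtProb} is an explicit linear combination of rescaled scalar Poisson kernels and their Riesz transforms. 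Convolving $\bu$ against this kernel produces a solution with the correct trace and the expected weighted-Sobolev and $L^p$ mapping properties, and the Dirichlet-to-Neumann computation above transfers by dominated convergence. Uniqueness I would obtain from a weighted energy argument: $t^{1-2s}$ is a Muckenhoupt $A_2$ weight on $\bbR^{d+1}$ for $s\in(0,1)$, so the Fabes--Kenig--Serapioni trace and Poincaré theory applies; by Plancherel the Dirichlet energy $\int_{\bbR^{d+1}_+} t^{1-2s}\big(|\p_t\bU|^2 + \mu|\grad_{\bx}\bU|^2 + (\mu+\lambda)|\div_{\bx}\bU|^2\big)\,\rmd\bx\,\rmd t$ equals $\int_{\bbR^{d+1}_+} t^{1-2s}\big(|\p_t\wh\bU|^2 + \Vint{\bM(\bsxi)\wh\bU,\wh\bU}\big)\,\rmd\bsxi\,\rmd t$, which by the ellipticity conditions \eqref{eq:EllipticityConditionsLame} is comparable to the full homogeneous weighted $H^1$-norm, so a solution with vanishing trace must be identically zero.

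The computation of the Dirichlet-to-Neumann symbol is not where the difficulty lies; it is the scalar Caffarelli--Silvestre ordinary differential equation argument applied on each eigenspace of $\bM(\bsxi)$. The main obstacle I anticipate is the functional-analytic bookkeeping: pinning down the correct (homogeneous versus inhomogeneous) weighted Sobolev space and its trace space on $\{t=0\}$, and proving sharp bounds for the matrix Poisson kernel, whose Riesz-transform corrections --- absent in the scalar theory --- are exactly what encodes the strong coupling of $\bbL^s$ noted after \Cref{thm:IntegralDefinitionForFracLame}.
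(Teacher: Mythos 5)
Your proposal is correct in substance, and it reaches the result by a genuinely different route than the paper. You diagonalize on the Fourier side: projecting onto the eigenspaces of $\bM(\bsxi)$ reduces the extension system to the scalar Bessel equation $v'' + \tfrac{1-2s}{t}v' - \omega^2 v = 0$, whose decaying normalized solution $\tfrac{2^{1-s}}{\Gamma(s)}(\omega t)^s K_s(\omega t)$ yields the symbol $\tfrac{2^{1-2s}\Gamma(1-s)}{\Gamma(s)}\,\bM^s(\bsxi)$ — and indeed your constant agrees with the paper's $\tfrac{2\Gamma(1-s)}{2^{2s}\Gamma(s)}$, and your Fourier-side solution is exactly the paper's formula \eqref{eq:PoissonKernelFourierBessel}, which the paper only obtains \emph{a posteriori}. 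The paper instead works in physical space: it builds the Poisson kernel by subordination of the explicit heat kernel \eqref{eq:HeatKernelLameDefn} for $\bbL$, computes the closed form \eqref{eq:PoissonKernel2}, derives pointwise kernel bounds, maximal-function estimates and nontangential convergence, and then proves the Dirichlet-to-Neumann identity (\Cref{thm:DtNMap}) by nonlocal integration by parts against test functions, obtaining convergence in $\scL^{-s,2}$ (and in $L^p$ for $\bu \in \scL^{2s,p}$); uniqueness is the same variational argument you sketch, with the weighted Hardy inequality playing the role of your appeal to the $A_2$/Fabes--Kenig--Serapioni framework. What your route buys is speed and transparency in identifying the symbol and the constant, since the eigenprojections of $\bM(\bsxi)$ are independent of $t$; what the paper's route buys is the explicit physical-space kernel with the bound $C\,t^{2s}(|\bx|^2+t^2)^{-\frac{d+2s}{2}}$, which is what allows data merely in $L^1_s$, pointwise boundary attainment at Lebesgue points, and the $L^p$ strengthening of the Neumann limit — precisely the estimates you defer when you describe the kernel as ``scalar Caffarelli--Silvestre kernels corrected by second-order Riesz transforms,'' since Riesz-transform compositions do not hand you those pointwise bounds for free. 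Two small points to tighten if you write this up: the informal statement's boundary operator should be $-\lim_{t\to 0}t^{1-2s}\p_t\bU$ (as in the body of the paper), and your Fourier-side Neumann computation, done pointwise in $\bsxi$, needs a short justification (dominated convergence in the Plancherel pairing for $\bu \in \scL^{s,2}$) to yield convergence in the dual space rather than merely symbol-wise.
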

Note that this extension system is strongly-coupled, and so the techniques used in \cite{fabes1982local, cabre2014nonlinear} to analyze the analogous extension problem for $(-\Delta)^s$ are not available. For example, a Harnack inequality for the components of $\bu$ is not expected to hold. Therefore, we state the existence of a solution to the equations using a Poisson kernel, and uniqueness using a variational framework.

The last section of the paper is devoted to a nonlocal Dirichlet problem
\begin{equation*}
	\begin{cases}
		\bbL^s \bu = \bff\,, \quad \text{ in } \Omega\,, \\
		\bu = {\bf 0}\,, \quad \text{ in } \bbR^d \setminus \Omega\,, \\
	\end{cases}
\end{equation*}
where $\Omega \subset \bbR^d$ is a bounded domain. We use Hilbert space methods to prove the well posedness of this problem in the spirit of \cite{Felsinger, KassmannMengeshaScott}. Coercivity of the associated bilinear form will be obtained by
comparing the weighted Sobolev norms of the solution to \eqref{eq:Intro:ExtProb} with the solution to the extension problem for $(-\Delta)^s$. In other words, we obtain a result for a nonlocal fractional system using the local extension system.

In general we do not give proofs on the consistency of our results with their classical counterparts as $s \to 1^-$ or $s \to 0^+$. However, the reader can verify without difficulty that many of the formulae (for fundamental solutions, Poisson kernels, etc.) agree formally with the classical ones. In the case of inequalities we will sometimes state formal consistency results as a remark.

It is the author's hope that the basic analysis contained in this work introduces another common thread of understanding between the nonlocal and fractional communities in the context of continuum mechanics modeling.
For example, in the very recent preprint \cite{silhavy2022fractional} an operator of the form $I^{1-s} \circ \bbL$ is considered in the context of fractional elasticity. A fundamental solution and a Korn-type inequality involving the fractional gradient is derived. An application of the identities in \Cref{subsec:FracVectorCalc} reveal that such an operator is actually of the form $\bbL^s$ (although with different material constants).  Therefore, some of the results for $\bbL^s$ established in this work also apply to $I^{1-s} \circ \bbL$.

The manuscript is organized as follows: The next section introduces  the definition of the fractional Lam\'e-Navier operator for smooth functions. In \Cref{sec:Applications} we demonstrate the appearance fractional Lam\'e-Navier operator in the aforementioned continuum models. \Cref{sec:Properties} contains the derivation of the fundamental solution and calculus properties of the operator. \Cref{sec:HolderSpaces} contains mapping and regularity results in the context of H\"older spaces, and \Cref{sec:BesselSpaces} contains similar results in the context of Bessel potential spaces with proofs in \Cref{sec:BesselSpacesProof}. We introduce and analyze the degenerate elliptic extension system in the upper half-space in \Cref{sec:ExtensionProblem}, and then apply it to solve a variational problem in \Cref{sec:VariationalProblem}. Fourier transform formulae and classical identities for special functions are collected in the appendix.

\section{The Fractional Lam\'e-Navier Operator for Smooth Functions}\label{sec:IntroductionToOperator}

To obtain the integral form of $\bbL^s$ in \Cref{thm:IntegralDefinitionForFracLame} we build on previous mathematical analysis for fractional operators that exclusively consist of strong coupling; this work can be found in \cite{scott2020mathematical, mengesha2020solvability, KassmannMengeshaScott, MengeshaScott2018Korn, MengeshaScott2018Potential}. First, we find the algebraic expression for fractional powers of the matrix $\bM(\bsxi)$.

\begin{lemma}\label{lma:FourierSymbolCalculation}
	Let $s > 0$. Then for all $\bsxi \in \bbR^d$
	\begin{equation}\label{eq:FractionaLameFourierMatrix}
		\bM^s(\bsxi) := \big( \bM(\bsxi) \big)^s = \mu^s (2 \pi |\bsxi| )^{2s} \bI + \big( (2\mu + \lambda)^s - \mu^s \big) (2 \pi |\bsxi|)^{2s} \frac{\bsxi \otimes \bsxi}{|\bsxi|^2}\,.
	\end{equation}
\end{lemma}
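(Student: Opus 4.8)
The plan is to exploit the fact that $\bM(\bsxi)$ is a rank-one perturbation of a multiple of the identity, so that it admits an explicit spectral decomposition with only two distinct eigenvalues. First I would set $P = P(\bsxi) := \frac{\bsxi \otimes \bsxi}{|\bsxi|^2}$ and note that $P$ is the orthogonal projection of $\bbR^d$ onto $\operatorname{span}\{\bsxi\}$, so that $P = P^\top = P^2$ and $\bI - P$ is the complementary orthogonal projection onto $\bsxi^{\perp}$. Inserting $\bI = P + (\bI - P)$ into \eqref{eq:LameEquation:FourierSymbol} and collecting terms gives
\begin{equation*}
	\bM(\bsxi) = \mu (2\pi|\bsxi|)^2 \,(\bI - P) + (2\mu+\lambda)(2\pi|\bsxi|)^2 \, P\,,
\end{equation*}
which is the spectral resolution of $\bM(\bsxi)$: its eigenvalue on the $(d-1)$-dimensional eigenspace $\bsxi^{\perp}$ is $\mu(2\pi|\bsxi|)^2$, and its eigenvalue on the line $\operatorname{span}\{\bsxi\}$ is $(2\mu+\lambda)(2\pi|\bsxi|)^2$. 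By the ellipticity conditions \eqref{eq:EllipticityConditionsLame} both eigenvalues are strictly positive, confirming that $\bM(\bsxi)$ is positive definite and hence possesses a unique positive definite $s$-th power for every $s > 0$.

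Second, I would apply the functional calculus for positive definite symmetric matrices: if $A$ has spectral resolution $A = \sum_j \tau_j Q_j$ with distinct eigenvalues $\tau_j > 0$ and orthogonal eigenprojections $Q_j$, then $A^s = \sum_j \tau_j^s Q_j$. Taking $\tau_1 = \mu(2\pi|\bsxi|)^2$, $Q_1 = \bI - P$ and $\tau_2 = (2\mu+\lambda)(2\pi|\bsxi|)^2$, $Q_2 = P$ gives
\begin{equation*}
	\bM^s(\bsxi) = \mu^s (2\pi|\bsxi|)^{2s}\,(\bI - P) + (2\mu+\lambda)^s (2\pi|\bsxi|)^{2s} \, P\,.
\end{equation*}
Substituting $\bI - P = \bI - \frac{\bsxi\otimes\bsxi}{|\bsxi|^2}$ and regrouping the terms proportional to $\bI$ yields exactly \eqref{eq:FractionaLameFourierMatrix}.

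There is no genuinely difficult step; the only point deserving a word of care is that the matrix $s$-th power $(\bM(\bsxi))^s$ is unambiguously defined (for a positive definite symmetric matrix it is the unique positive definite matrix whose spectral resolution has eigenvalues $\tau_j^s$), which is what makes the reduction to the two-eigenvalue computation legitimate. An equivalent route, which I would mention as an alternative, is to reduce to $\bsxi = |\bsxi|\,\mathbf{e}_1$ by an orthogonal change of variables: since $\bM(R\bsxi) = R\,\bM(\bsxi)\,R^\top$ for $R \in O(d)$ and fractional powers commute with such conjugations, it suffices to compute the power of the diagonal matrix $\operatorname{diag}\big( (2\mu+\lambda)(2\pi|\bsxi|)^2, \mu(2\pi|\bsxi|)^2, \dots, \mu(2\pi|\bsxi|)^2 \big)$, which is immediate. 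The projection-based argument is slightly cleaner because it avoids even this reduction.
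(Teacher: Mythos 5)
Your proof is correct and rests on the same underlying idea as the paper's: the spectral decomposition of $\bM(\bsxi)$ into the eigenvalue $\mu(2\pi|\bsxi|)^2$ on $\bsxi^{\perp}$ and $(2\mu+\lambda)(2\pi|\bsxi|)^2$ on $\operatorname{span}\{\bsxi\}$, followed by the functional calculus for symmetric positive definite matrices. The only cosmetic difference is that the paper carries out the diagonalization by conjugating with an explicit rotation $\bR(\bsxi)$ sending $\bsxi$ to $|\bsxi|\be_1$ — precisely the "alternative route" you sketch — whereas you work directly with the orthogonal projection $P = \bsxi\otimes\bsxi/|\bsxi|^2$, which is a clean and equally valid packaging of the same argument.
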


\begin{proof}
	For any $d \times d$ symmetric positive definite matrix $\bA$ written in diagonal form,
	\begin{equation*}
		\bA = \bU^{-1} \diag(\lambda_1, \ldots, \lambda_d) \bU \quad \Rightarrow \quad \bA^s = \bU^{-1} \diag(\lambda_1^s, \ldots, \lambda_d^s) \bU\,.
	\end{equation*}
	We proceed to diagonalize $\bM$. For a fixed $\bsxi \in \bbR^d$ not equal to ${\bf 0}$, let $\bR(\bsxi)$ be a rotation (i.e. $\bR^T \bR = \bI$) such that $\bR(\bsxi) \bsxi = |\bsxi| \be_1$, where $\be_1$ is the unit vector $(1,0,\ldots,0)$. Then
	\begin{equation*}
		\begin{split}
			\bM(\bsxi) &= \mu  ( 2 \pi |\bsxi| )^2 \bI + (\mu + \lambda ) \big( 2 \pi \big)^2 \big( \bR^T(\bsxi) |\bsxi| \be_1 \big) \otimes \big( \bR^T(\bsxi) |\bsxi| \be_1 \big) \\
			&= \mu  ( 2 \pi |\bsxi| )^2 \bI + (\mu + \lambda ) (2 \pi |\bsxi|)^2 \bR^T(\bsxi) \big( \be_1 \otimes \be_1 \big) \bR(\bsxi)\,.
		\end{split}
	\end{equation*}
	Since $\bR$ is a rotation, we can write
	\begin{equation*}
		\begin{split}
			\bM(\bsxi) &=  ( 2 \pi |\bsxi| )^2  \bR^T(\bsxi) \Big( \mu \bI + (\mu + \lambda ) \big( \be_1 \otimes \be_1 \big) \Big) \bR(\bsxi) \\
			&= ( 2 \pi |\bsxi| )^2 \bR^T(\bsxi) \diag(2 \mu + \lambda, \mu, \ldots, \mu) \bR(\bsxi)\,.
		\end{split}
	\end{equation*}
	Therefore, we take the $s$ power, and then use that $\bR^T \be_1 = \frac{\bsxi}{|\bsxi|}$:
	\begin{equation*}
		\begin{split}
			\bM^s(\bsxi) &= ( 2 \pi |\bsxi| )^{2s} \bR^T(\bsxi) \diag((2 \mu + \lambda)^s, \mu^s, \ldots, \mu^s) \bR(\bsxi) \\
			&= ( 2 \pi |\bsxi| )^{2s} \bR^T(\bsxi) \Big( \mu^s \bI +\big( (2 \mu + \lambda)^s - \mu^s \big) (\be_1 \otimes \be_1) \Big) \bR(\bsxi) \\
			&= ( 2 \pi |\bsxi| )^{2s} \left( \mu^s \bI +\big( (2 \mu + \lambda)^s - \mu^s \big) \left( \frac{\bsxi}{|\bsxi|} \otimes \frac{\bsxi}{|\bsxi|} \right) \right)\,.
		\end{split}
	\end{equation*}
\end{proof}

Now, we define the positive normalizing constants $c_{d,s}$ and $\kappa_{d,s}$ as
\begin{equation}\label{eq:DefnOfNormalizingConstants}
	\begin{split}
		c_{d,s} &:= \frac{2^{2s}s \Gamma(\frac{d}{2}+s)}{\pi^{d/2}\Gamma(1-s)} = \left( \intdm{\bbR^d}{\frac{1-\cos(h_1)}{|\bh|^{d+2s}}}{\bh} \right)^{-1}\,, \\
		\kappa_{d,s} &:= (d+2s)c_{d,s} = (d+2s) \frac{2^{2s} s \Gamma(\frac{d}{2}+s)}{\pi^{d/2} \Gamma(1-s)}\,,
	\end{split}
\end{equation}
where $\Gamma(a)$ for $a > 0$ denotes the $\Gamma$-function.
We denote the class of scalar-valued Schwartz functions by $\scS(\bbR^d)$, and we denote the class of $\bbR^d$-valued Schwartz vector fields by $\scS(\bbR^d;\bbR^d)$. Its dual is denoted by $\scS'$.
Define the operator $\bbF^s$ as
\begin{equation}\label{eq:Localization:DefnOfL}
	\bbF^s \bu(\bx) := \kappa_{d,s} \, \pv \int\limits_{\bbR^d} \left( \frac{(\bx-\by) \otimes (\bx - \by)}{|\bx-\by|^2} \right) \frac{\bu(\bx)-\bu(\by)}{|\bx-\by|^{d+2s}} \, \mathrm{d}\by\,,
\end{equation}
for $d \geq 2$ and $s \in (0,1)$, and $\kappa_{d,s}$ given in \eqref{eq:DefnOfNormalizingConstants}.

For smooth functions (i.e. functions in $\scS$) we can dispense with the $\pv$ in front of the integral by identifying $\bbF^s$ with the following operator characterized by the second-order difference quotient.
The proof follows by adapting line-by-line the proof of the analogous statement for $(-\Delta)^s$; see \cite[Lemma 3.2]{DNPV12}.

\begin{lemma}\label{lma:SecondOrderDiff}
	Let $\bu \in \scS(\bbR^d;\bbR^d)$. Then the characterization
	\begin{equation*}
		\bbF^s \bu(\bx) = \frac{\kappa_{d,s}}{2} \intdm{\bbR^d}{ \left( \frac{\bh \otimes \bh}{|\bh|^2} \right) \frac{2 \bu(\bx) - \bu(\bx+\bh)-\bu(\bx-\bh)}{|\bh|^{d+2s}} }{\bh}
	\end{equation*}
	holds for all $\bx \in \bbR^d$.
\end{lemma}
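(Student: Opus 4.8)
The plan is to follow the proof of \cite[Lemma 3.2]{DNPV12} for $(-\Delta)^s$ essentially verbatim; the only new feature is the matrix weight $\bh \otimes \bh / |\bh|^2$, which is bounded in operator norm and even in $\bh$, and therefore plays a purely passive role throughout. First I would perform the change of variables $\by \mapsto \bx - \bh$ in the principal-value integral \eqref{eq:Localization:DefnOfL} defining $\bbF^s \bu(\bx)$, rewriting it as
\[
\bbF^s \bu(\bx) = \kappa_{d,s} \lim_{\varepsilon \to 0^+} \int_{\{|\bh| > \varepsilon\}} \left( \frac{\bh \otimes \bh}{|\bh|^2} \right) \frac{\bu(\bx) - \bu(\bx - \bh)}{|\bh|^{d+2s}} \, \rmd \bh\,.
\]
Since the domain $\{|\bh| > \varepsilon\}$ is symmetric under $\bh \mapsto -\bh$, and both $\bh \otimes \bh / |\bh|^2$ and $|\bh|^{d+2s}$ are even in $\bh$, I can replace the integrand by its symmetrization (i.e. average the two contributions obtained from $\bh$ and $-\bh$) to obtain
\[
\bbF^s \bu(\bx) = \frac{\kappa_{d,s}}{2} \lim_{\varepsilon \to 0^+} \int_{\{|\bh| > \varepsilon\}} \left( \frac{\bh \otimes \bh}{|\bh|^2} \right) \frac{2\bu(\bx) - \bu(\bx + \bh) - \bu(\bx - \bh)}{|\bh|^{d+2s}} \, \rmd \bh\,.
\]

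Next I would show that the integrand on the right-hand side is absolutely integrable over all of $\bbR^d$, so that the $\pv$ and the limit can be dropped by dominated convergence. For $|\bh| \leq 1$, a second-order Taylor expansion of each component of $\bu$ about $\bx$ shows that the odd (first-order) terms cancel, leaving $|2\bu(\bx) - \bu(\bx + \bh) - \bu(\bx - \bh)| \leq \|D^2 \bu\|_{L^\infty} |\bh|^2$; hence the integrand is controlled by $\|D^2 \bu\|_{L^\infty} |\bh|^{2 - d - 2s}$, which is integrable near the origin because $2 - 2s > 0$. For $|\bh| > 1$ the numerator is bounded by $4 \|\bu\|_{L^\infty}$ and the integrand by $4 \|\bu\|_{L^\infty} |\bh|^{-d - 2s}$, which is integrable at infinity. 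Both bounds use only that $\bu \in \scS(\bbR^d;\bbR^d)$, and the matrix factor contributes at most a fixed multiplicative constant.

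The only point deserving (minor) care is the symmetrization step: for each fixed $\varepsilon > 0$ the pre-symmetrized integrand is absolutely integrable on $\{|\bh| > \varepsilon\}$ — its numerator is $O(|\bh|)$ near $\partial B_\varepsilon$ and $O(1)$ at infinity, so there is no singularity to contend with — and its odd-in-$\bh$ part $\tfrac{\bh \otimes \bh}{|\bh|^2} \tfrac{\bu(\bx - \bh) - \bu(\bx + \bh)}{2 |\bh|^{d + 2s}}$ integrates to zero over the symmetric domain. This cancellation is what licenses the passage from the first display to the second, and it is the crux of the argument; but it is entirely routine. No other obstacle arises.
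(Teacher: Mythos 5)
Your proposal is correct and is essentially the argument the paper intends: the paper proves this lemma by citing a line-by-line adaptation of \cite[Lemma 3.2]{DNPV12}, which is exactly the change of variables, symmetrization over $\bh \mapsto -\bh$, and second-order Taylor bound you carry out, with the even, bounded matrix weight $\bh \otimes \bh/|\bh|^2$ playing no active role. Your explicit justification of the symmetrization step at fixed $\varepsilon$ is a welcome detail but does not change the route.
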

The operator $\bbF^s$ is introduced in \cite{scott2020mathematical}, and properties of the constants $c_{d,s}$ and $\kappa_{d,s}$ are studied there as well. In particular we have the following formula for the Fourier symbol:

\begin{theorem}[{See \cite[Section 2.1]{scott2020mathematical}}]\label{thm:FourierTransformOfL}
	Let $\bu \in \scS(\bbR^d;\bbR^d)$. 
	Then for every $\bsxi \in \bbR^d$
	\begin{equation}\label{eq:FourierTransformOfL}
		\widehat{\bbF \bu}(\bsxi) = \big( 2 \pi |\bsxi| \big)^{2s} \left( 2s \frac{\bsxi \otimes \bsxi}{|\bsxi|^2} + \bI \right) \widehat{\bu}(\bsxi)\,.
	\end{equation}
\end{theorem}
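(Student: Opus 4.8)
The plan is to reduce \eqref{eq:FourierTransformOfL} to an explicit computation of the Fourier multiplier matrix of $\bbF^s$. By \Cref{lma:SecondOrderDiff} the principal value can be dropped, and componentwise
\begin{equation*}
	(\bbF^s\bu)_i(\bx) = \frac{\kappa_{d,s}}{2}\sum_{j=1}^{d}\intdm{\bbR^d}{\frac{h_ih_j}{|\bh|^{d+2s+2}}\big(2u_j(\bx)-u_j(\bx+\bh)-u_j(\bx-\bh)\big)}{\bh}\,.
\end{equation*}
A second-order Taylor expansion in $\bh$ together with the rapid decay of $\bu\in\scS(\bbR^d;\bbR^d)$ makes the double integral over $(\bx,\bh)$ absolutely convergent, so Fubini applies and the Fourier transform commutes with the $\bh$-integral; using $\cF(u_j(\cdot\pm\bh))(\bsxi)=\rme^{\pm 2\pi\imath\bh\cdot\bsxi}\wh{u_j}(\bsxi)$ this yields
\begin{equation*}
	\wh{(\bbF^s\bu)_i}(\bsxi)=\kappa_{d,s}\sum_{j=1}^{d}M_{ij}(\bsxi)\,\wh{u_j}(\bsxi)\,,\qquad M_{ij}(\bsxi):=\intdm{\bbR^d}{\frac{h_ih_j\,\big(1-\cos(2\pi\bsxi\cdot\bh)\big)}{|\bh|^{d+2s+2}}}{\bh}\,.
\end{equation*}
It then remains only to identify the symmetric matrix with entries $M_{ij}(\bsxi)$.

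The change of variables $\bh\mapsto t^{-1}\bh$ shows $M_{ij}$ is positively homogeneous of degree $2s$ in $\bsxi$, and $\bh\mapsto\bR\bh$ for orthogonal $\bR$ shows $M_{ij}(\bR\bsxi)=\sum_{k,l}R_{ik}R_{jl}M_{kl}(\bsxi)$. Invariance under the orthogonal maps fixing $\bsxi$ then forces $(M_{ij}(\bsxi))$ to be a combination of $\bI$ and $\bsxi\otimes\bsxi$, and homogeneity fixes the radial factor:
\begin{equation*}
	\big(M_{ij}(\bsxi)\big)_{i,j=1}^{d}=(2\pi|\bsxi|)^{2s}\Big(\alpha\,\bI+\beta\,\frac{\bsxi\otimes\bsxi}{|\bsxi|^2}\Big)
\end{equation*}
for constants $\alpha=\alpha(d,s)$, $\beta=\beta(d,s)$. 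Taking the trace and using $\sum_ih_i^2=|\bh|^2$,
\begin{equation*}
	(d\alpha+\beta)(2\pi|\bsxi|)^{2s}=\sum_{i=1}^{d}M_{ii}(\bsxi)=\intdm{\bbR^d}{\frac{1-\cos(2\pi\bsxi\cdot\bh)}{|\bh|^{d+2s}}}{\bh}=(2\pi|\bsxi|)^{2s}\,c_{d,s}^{-1}
\end{equation*}
by the definition \eqref{eq:DefnOfNormalizingConstants} of $c_{d,s}$, so $d\alpha+\beta=c_{d,s}^{-1}$.

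For the second relation, by homogeneity and covariance it suffices to take $\bsxi$ along $\be_1$ with $2\pi|\bsxi|=1$, so that $M_{11}(\bsxi)=\alpha+\beta$ while directly $M_{11}(\bsxi)=\intdm{\bbR^d}{\tfrac{h_1^2(1-\cos h_1)}{|\bh|^{d+2s+2}}}{\bh}$. Using $h_1|\bh|^{-(d+2s+2)}=-\tfrac{1}{d+2s}\p_{h_1}|\bh|^{-(d+2s)}$ and integrating by parts in $h_1$,
\begin{equation*}
	\alpha+\beta=\frac{1}{d+2s}\intdm{\bbR^d}{\frac{(1-\cos h_1)+h_1\sin h_1}{|\bh|^{d+2s}}}{\bh}=\frac{1}{d+2s}\big(c_{d,s}^{-1}+2s\,c_{d,s}^{-1}\big)=\frac{(1+2s)\,c_{d,s}^{-1}}{d+2s}\,,
\end{equation*}
where $\intdm{\bbR^d}{\tfrac{h_1\sin h_1}{|\bh|^{d+2s}}}{\bh}=\frac{\rmd}{\rmd a}\big|_{a=1}\intdm{\bbR^d}{\tfrac{1-\cos(ah_1)}{|\bh|^{d+2s}}}{\bh}=\frac{\rmd}{\rmd a}\big|_{a=1}\big(a^{2s}c_{d,s}^{-1}\big)=2s\,c_{d,s}^{-1}$. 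Solving the two relations gives $\alpha=c_{d,s}^{-1}/(d+2s)$ and $\beta=2s\,c_{d,s}^{-1}/(d+2s)$, and since $\kappa_{d,s}=(d+2s)c_{d,s}$,
\begin{equation*}
	\kappa_{d,s}\,\big(M_{ij}(\bsxi)\big)_{i,j=1}^{d}=(2\pi|\bsxi|)^{2s}\Big(\bI+2s\,\frac{\bsxi\otimes\bsxi}{|\bsxi|^2}\Big)\,,
\end{equation*}
which is \eqref{eq:FourierTransformOfL}.

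The step I expect to be the main obstacle is the evaluation $\int\tfrac{h_1\sin h_1}{|\bh|^{d+2s}}\,\rmd\bh=2s\,c_{d,s}^{-1}$ (and the vanishing of the boundary terms in the integration by parts above): for $s\le\tfrac12$ these integrals converge only \emph{conditionally} at infinity, so one cannot simply differentiate $\int\tfrac{1-\cos(ah_1)}{|\bh|^{d+2s}}\,\rmd\bh=a^{2s}c_{d,s}^{-1}$ under the integral sign. I would make this rigorous by carrying out each manipulation on the truncated domain $\{\varepsilon<|\bh|<R\}$ and checking that the correction terms vanish as $\varepsilon\to0^+$ and $R\to\infty$ — the symmetrized kernel $1-\cos$ inherited from the second-difference structure of \Cref{lma:SecondOrderDiff} is precisely what makes these limits exist, and one further integration by parts in $h_1$ produces tail bounds that are uniform in $a$ near $1$. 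A fully absolutely convergent alternative for the second relation is to integrate out the $d-1$ coordinates transverse to $\be_1$ in $M_{11}$, reducing it to the product of the one-dimensional hypersingular integral $\int_0^\infty t^{-1-2s}(1-\cos t)\,\rmd t$ and a Beta integral, and then matching $\Gamma$-factors against \eqref{eq:DefnOfNormalizingConstants}; this makes transparent that the normalization $\kappa_{d,s}=(d+2s)c_{d,s}$ is exactly the one for which the symbol takes the stated clean form. As a byproduct the final formula shows that $\bbF^s$ coincides with $(-\Delta)^s$ on divergence-free fields, a convenient consistency check.
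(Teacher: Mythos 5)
The paper itself does not prove \Cref{thm:FourierTransformOfL}; it simply cites \cite[Section 2.1]{scott2020mathematical}, so there is no internal argument to compare against, and your proposal should be judged on its own. It is correct and self-contained: the reduction via \Cref{lma:SecondOrderDiff}, the Fubini justification (the second difference is $O(\min\{|\bh|^2,1\})$ in $L^1_{\bx}$, so the double integral converges absolutely for $s\in(0,1)$), the identification $\big(M_{ij}(\bsxi)\big)=(2\pi|\bsxi|)^{2s}\big(\alpha\bI+\beta\,\bsxi\otimes\bsxi/|\bsxi|^2\big)$ by homogeneity and $O(d)$-covariance, and the two scalar relations $d\alpha+\beta=c_{d,s}^{-1}$ and $\alpha+\beta=(1+2s)c_{d,s}^{-1}/(d+2s)$ do pin the symbol down, and the algebra with $\kappa_{d,s}=(d+2s)c_{d,s}$ closes the argument. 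The only genuinely delicate step is the one you flagged: for $s\le 1/2$ the integral $\int h_1\sin h_1\,|\bh|^{-d-2s}\,\rmd\bh$ converges only conditionally, so both the integration by parts and the differentiation of $a\mapsto a^{2s}c_{d,s}^{-1}$ must be performed on truncated annuli $\{\veps<|\bh|<R\}$. Your repair is sound: the boundary terms are $O(\veps^{3-2s})$ and $O(R^{-2s})$, and since $\int h_1^2(1-\cos h_1)|\bh|^{-d-2s-2}\rmd\bh$ and the $(1-\cos h_1)$ term are absolutely convergent, the truncated $h_1\sin h_1$ integrals converge to the right value; your alternative of integrating out the transverse variables in $M_{11}$, reducing to $\int_0^\infty t^{-1-2s}(1-\cos t)\,\rmd t$ times a Beta factor and matching $\Gamma$-factors against \eqref{eq:DefnOfNormalizingConstants}, avoids conditional convergence altogether and is the cleanest way to finish. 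Compared with a direct computation of the multiplier (e.g.\ via Bessel-function identities, as in the computations the paper carries out for the Poisson kernel), your symmetry-plus-two-traces determination of $\alpha,\beta$ trades special-function work for the truncation bookkeeping above, which is a reasonable exchange.
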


Combining \Cref{lma:FourierSymbolCalculation}, the formulae \eqref{eq:FourierSymbol:FractionalLaplacian}-\eqref{eq:IntegralFormula:FractionalLaplacian} and Theorem \ref{thm:FourierTransformOfL}, we obtain the integral characterization of $\bbL^s$:
\begin{corollary}
	The integral operator $\bbL^s$ given in \eqref{eq:FractionalPowerOfLame}
	is defined through the formula
	\begin{equation*}
		\bbL^s \bu(\bx) = \cF^{-1} \left[ \bM^s(\bsxi) \wh{\bu} \right] (\bx)
	\end{equation*}
	in the following sense: For any $\bu$ and $\bv \in \scS(\bbR^d;\bbR^d)$
	\begin{equation*}
		\begin{split}
			\intdm{\bbR^d}{\Vint{\bM^s(\bsxi) \wh{\bu}(\bsxi), \wh{\bv}(\bsxi) }}{\bsxi} = \intdm{\bbR^d}{\Vint{\bbL^s \bu(\bx), \bv(\bx) }}{\bx}\,.
		\end{split}
	\end{equation*}
\end{corollary}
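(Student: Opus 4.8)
The plan is to recognize the right-hand side of \eqref{eq:FractionalPowerOfLame} as a linear combination of two operators whose Fourier symbols are already recorded, and then carry out a short algebraic comparison with \Cref{lma:FourierSymbolCalculation}. First, for $\bu \in \scS(\bbR^d;\bbR^d)$ both principal-value integrals in \eqref{eq:FractionalPowerOfLame} converge absolutely: the second is handled by \Cref{lma:SecondOrderDiff}, which in particular shows it equals $\kappa_{d,s}^{-1}\bbF^s\bu(\bx)$, while the first is the integral representation \eqref{eq:IntegralFormula:FractionalLaplacian} of $(-\Delta)^s$ acting componentwise (convergence as in \cite[Lemma 3.2]{DNPV12}). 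Hence \eqref{eq:FractionalPowerOfLame} reads
$$
  \bbL^s\bu = \frac{(2s+1)\mu^s-(2\mu+\lambda)^s}{2s}\,(-\Delta)^s\bu \;+\; \frac{(2\mu+\lambda)^s-\mu^s}{2s}\,\bbF^s\bu\,,
$$
and, since the $\scS$-regularity of $\bu$ makes each of $(-\Delta)^s\bu$ and $\bbF^s\bu$ continuous with $O(|\bx|^{-d-2s})$ decay, $\bbL^s\bu$ is a bona fide function lying in $L^2(\bbR^d;\bbR^d)$, so its Fourier transform is again a function.

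Next I would apply $\cF$ to the displayed identity and substitute the two known symbols --- \eqref{eq:FourierSymbol:FractionalLaplacian} for $(-\Delta)^s$ and \Cref{thm:FourierTransformOfL} for $\bbF^s$ --- to obtain
$$
  \cF(\bbL^s\bu)(\bsxi) = (2\pi|\bsxi|)^{2s}\left[\frac{(2s+1)\mu^s-(2\mu+\lambda)^s}{2s}\,\bI + \frac{(2\mu+\lambda)^s-\mu^s}{2s}\Big(2s\,\tfrac{\bsxi\otimes\bsxi}{|\bsxi|^2}+\bI\Big)\right]\wh\bu(\bsxi)\,.
$$
Collecting the coefficient of $\bI$ telescopes it to $\mu^s$, while the coefficient of $\tfrac{\bsxi\otimes\bsxi}{|\bsxi|^2}$ is $(2\mu+\lambda)^s-\mu^s$; comparing the resulting matrix with \eqref{eq:FractionaLameFourierMatrix} shows that the bracket times $(2\pi|\bsxi|)^{2s}$ is exactly $\bM^s(\bsxi)$, so $\cF(\bbL^s\bu)(\bsxi)=\bM^s(\bsxi)\wh\bu(\bsxi)$ pointwise. (This decomposition is forced rather than guessed: requiring $\bM^s$ to be written as $a\,(2\pi|\bsxi|)^{2s}\bI$ plus a multiple $b$ of the $\bbF^s$-symbol is a $2\times 2$ linear system whose unique solution produces precisely the coefficients appearing in \eqref{eq:FractionalPowerOfLame}.)

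Finally, for the weak formulation I would pair both sides with $\wh\bv(\bsxi)$ for $\bv\in\scS(\bbR^d;\bbR^d)$, integrate in $\bsxi$, and apply Parseval's identity $\int_{\bbR^d}\Vint{\cF(\bbL^s\bu)(\bsxi),\wh\bv(\bsxi)}\,\rmd\bsxi = \int_{\bbR^d}\Vint{\bbL^s\bu(\bx),\bv(\bx)}\,\rmd\bx$, which is legitimate since $\bbL^s\bu,\bv\in L^2(\bbR^d;\bbR^d)$, with $\Vint{\cdot,\cdot}$ the Hermitian pairing matching the normalization of $\cF$ used here; substituting $\cF(\bbL^s\bu)=\bM^s\wh\bu$ then yields the claimed identity. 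I do not expect a substantive obstacle: the only points that demand attention are the absolute convergence of the two principal-value integrals for Schwartz data --- so that the pointwise and distributional meanings of $\bbL^s\bu$ coincide and Parseval may be invoked --- which is borrowed from \Cref{lma:SecondOrderDiff} and \cite{DNPV12}, and the purely bookkeeping matter of fixing the conjugation convention in Parseval's identity when $\wh\bv$ is complex-valued.
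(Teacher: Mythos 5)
Your proposal is correct and follows essentially the same route as the paper: decompose the integral in \eqref{eq:FractionalPowerOfLame} into the componentwise fractional Laplacian and the operator $\bbF^s$, apply the known symbols \eqref{eq:FourierSymbol:FractionalLaplacian} and \Cref{thm:FourierTransformOfL}, check that the coefficients recombine to give $\bM^s(\bsxi)$ from \Cref{lma:FourierSymbolCalculation}, and conclude the weak identity by Parseval. The paper compresses this into a single sentence citing the same three ingredients, so your write-up is simply a more detailed version of the intended argument.
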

The function $\bbL^s \bu(\bx)$ is clearly defined pointwise for $\bu \in \scS(\bbR^d;\bbR^d)$, and by \Cref{lma:SecondOrderDiff} the following second-order difference characterization holds:
\begin{multline*}
	\bbL^s \bu(\bx) := \left( \frac{(2s+1)\mu^s - (2\mu+\lambda)^s}{2s}\right) \frac{c_{d,s}}{2} \intdm{\bbR^d}{\frac{2\bu(\bx) - \bu(\bx+\bh)-\bu(\bx-\bh)}{|\bh|^{d+2s}}}{\bh}  \\
	+ \frac{(2 \mu + \lambda)^s - \mu^s}{2s} \frac{\kappa_{d,s}}{2} \intdm{\bbR^d}{ \left( \frac{\bh \otimes\bh}{|\bh|^2} \right) \frac{2\bu(\bx) - \bu(\bx+\bh)-\bu(\bx-\bh)}{|\bh|^{d+2s}}}{\bh}\,.
\end{multline*}
$\bbL^s \bu$ is in general not in $\scS(\bbR^d;\bbR^d)$ due to the singularity of the derivatives of $|\bsxi|^{2s}$ at the origin, but remains in $C^{\infty}(\bbR^d;\bbR^d)$ since $\bM^s(\bsxi) \wh{\bu}$ is rapidly decreasing.

\section{The Fractional Lam\'e-Navier Operator in Applications}\label{sec:Applications}

\subsection{Fractional Vector Calculus}\label{subsec:FracVectorCalc}

It is obvious by definition that the operator $\bbL^s$ commutes with translations and rotations, and is homogeneous of degree $2s$.
Thus the natural invariances desired for nonlocal vector analysis are satisfied; see \cite{vsilhavy2020fractional}.
In fact, we can express $\bbL^s$ in terms of the nonlocal gradient operators $\grad^s$ and $\div^s$, defined as 
\begin{equation}\label{eq:GradDef}
	\grad^s \bu(\bx) := k_{d,s} \intdm{\bbR^d}{ \frac{\bu(\by)-\bu(\bx)}{|\by-\bx|^{d+s}} \otimes \frac{\by-\bx}{|\by-\bx|} }{\by} 
\end{equation}
and
\begin{equation}\label{eq:DivDef}
	\div^s \bu(\bx) := k_{d,s} \intdm{\bbR^d}{ \frac{\bu(\by)-\bu(\bx)}{|\by-\bx|^{d+s}} \cdot \frac{\by-\bx}{|\by-\bx|} }{\by}
\end{equation}
where the constant $k_{d,s}$ is given by
\begin{equation*}
	k_{d,s} = \frac{2^s \Gamma(\frac{d+s+1}{2})}{\pi^{d/2} \Gamma(\frac{1-s}{2})} = \left( \intdm{\bbR^d}{ \frac{1}{|\bh|^{d+s-1}} \frac{h_1 \sin(h_1)}{|\bh|^2} }{\bh} \right)^{-1}\,.
\end{equation*}
The constant $k_{d,s}$ is defined so that the Fourier transform is normalized:
\begin{equation}\label{eq:FourierTransformOfGradients}
	\begin{split}
		\cF( \grad^s \bu)(\bsxi) &= (2 \pi |\bsxi|)^{s-1} \big( \wh{\bu}(\bsxi) \otimes (2 \pi \imath \bsxi) \big) \,, \\
		\cF( \div^s \bu)(\bsxi) &= (2 \pi |\bsxi|)^{s-1}  \big( \wh{\bu}(\bsxi) \cdot  (2 \pi \imath \bsxi) \big)\,. \\
	\end{split}
\end{equation}
See \cite{shieh2015new, shieh2018new} for these identities and other properties. It is straightforward to obtain a proof of \Cref{thm:Intro:NonlocalCalculus} using these identities:

\begin{theorem}
	For any $\bu \in \scS(\bbR^d;\bbR^d)$,
	\begin{equation}\label{eq:NonlocalGradientCharacterization}
		\bbL^s \bu(\bx) = - \mu^s (-\Delta)^s \bu(\bx) - \big( (2\mu+\lambda)^s - \mu^s \big) \grad^s \div^s \bu(\bx)\,.
	\end{equation}
\end{theorem}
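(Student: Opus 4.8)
\emph{Proof proposal.} Both sides of \eqref{eq:NonlocalGradientCharacterization} are Fourier-multiplier operators applied to the Schwartz field $\bu$, so the plan is to verify the identity on the Fourier side and then transfer it back by (distributional) Fourier inversion, in exactly the sense of the corollary following \Cref{thm:FourierTransformOfL}. The cleanest route is probably to first establish the purely operator-level identity $\bbF^s = (-\Delta)^s - 2s\,\grad^s \div^s$ by comparing the symbol in \Cref{thm:FourierTransformOfL} with the iterated symbol from \eqref{eq:FourierTransformOfGradients}, and then to substitute this into the definition \eqref{eq:FractionalPowerOfLame}, rewritten as $\bbL^s = \tfrac{(2s+1)\mu^s-(2\mu+\lambda)^s}{2s}(-\Delta)^s + \tfrac{(2\mu+\lambda)^s - \mu^s}{2s}\,\bbF^s$ (using $c_{d,s}\,\pv\!\int \frac{\bu(\bx)-\bu(\by)}{|\bx-\by|^{d+2s}}\,\rmd\by = (-\Delta)^s\bu$ componentwise). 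The two contributions proportional to $(2\mu+\lambda)^s$ cancel and one is left with \eqref{eq:NonlocalGradientCharacterization}.

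The one computation worth spelling out is the symbol of the composition $\grad^s \div^s$, obtained by iterating \eqref{eq:FourierTransformOfGradients}: applying $\div^s$ to the vector field $\bu$ yields the scalar $\div^s\bu$ with $\cF(\div^s\bu)(\bsxi) = (2\pi|\bsxi|)^{s-1}\big(\wh{\bu}(\bsxi)\cdot(2\pi\imath\bsxi)\big)$, and then applying $\grad^s$ to this scalar gives
\[
\cF(\grad^s\div^s\bu)(\bsxi) = (2\pi|\bsxi|)^{s-1}(2\pi\imath\bsxi)\,(2\pi|\bsxi|)^{s-1}\big(\wh{\bu}(\bsxi)\cdot(2\pi\imath\bsxi)\big) = -(2\pi|\bsxi|)^{2s}\,\frac{\bsxi\otimes\bsxi}{|\bsxi|^2}\,\wh{\bu}(\bsxi),
\]
the sign coming from $(2\pi\imath)^2 = -(2\pi)^2$ and the rank-one projector from $\bsxi\,(\bsxi\cdot\wh{\bu}) = (\bsxi\otimes\bsxi)\wh{\bu}$. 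Combining this with $\cF((-\Delta)^s\bu)(\bsxi) = (2\pi|\bsxi|)^{2s}\wh{\bu}(\bsxi)$ from \eqref{eq:FourierSymbol:FractionalLaplacian}, the Fourier transform of the right-hand side of \eqref{eq:NonlocalGradientCharacterization} becomes a multiplier matrix that is a linear combination of $\bI$ and $\bsxi\otimes\bsxi/|\bsxi|^2$ with coefficients $(2\pi|\bsxi|)^{2s}$ times algebraic expressions in $\mu^s$ and $(2\mu+\lambda)^s$; matching this coefficient-by-coefficient against $\bM^s(\bsxi)$ from \Cref{lma:FourierSymbolCalculation} (the rank-one coefficient is $(2\mu+\lambda)^s-\mu^s$ on each side, and the $\bI$-coefficient reduces to that of $\bM^s$) completes the verification. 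Since both sides of \eqref{eq:NonlocalGradientCharacterization} are well-defined elements of $\scS'(\bbR^d;\bbR^d)$ and $\cF$ is injective there, the pointwise identity of the symbols a.e.\ in $\bsxi$ gives \eqref{eq:NonlocalGradientCharacterization}, after pairing with an arbitrary $\bv \in \scS(\bbR^d;\bbR^d)$ and invoking Parseval.

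There is no substantial obstacle here; the only point requiring a little care is that none of $(-\Delta)^s$, $\grad^s$, $\div^s$ maps $\scS$ into $\scS$ — the symbols $|\bsxi|^{2s}$ and $|\bsxi|^{s-1}\bsxi$ are not $C^\infty$ at the origin — so $\div^s\bu$ is only a tempered distribution and the outer $\grad^s$ in the composition must be read as a Fourier multiplier acting on $\scS'$. One must therefore check that multiplying $\cF(\div^s\bu)$, which lies in $L^1_{\mathrm{loc}}$ and decays rapidly, by the locally integrable, polynomially-bounded symbol $(2\pi|\bsxi|)^{s-1}(2\pi\imath\bsxi)$ again produces a tempered distribution whose inverse transform is $\grad^s\div^s\bu$; this is immediate, since the resulting product $-(2\pi|\bsxi|)^{2s}(\bsxi\otimes\bsxi/|\bsxi|^2)\wh{\bu}$ is in fact an $L^1 \cap L^\infty$ function, so every manipulation is justified by testing against Schwartz fields exactly as in the corollary preceding the theorem.
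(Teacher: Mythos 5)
Your proposal follows essentially the same route as the paper's proof: a Fourier-side verification, computing $\cF(\grad^s\div^s\bu)(\bsxi)=-(2\pi|\bsxi|)^{2s}\frac{\bsxi\otimes\bsxi}{|\bsxi|^2}\wh{\bu}(\bsxi)$ from \eqref{eq:FourierTransformOfGradients} and matching the resulting multiplier against $\bM^s(\bsxi)$ from \Cref{lma:FourierSymbolCalculation}; your detour through $\bbF^s$ and \Cref{thm:FourierTransformOfL} is a repackaging of the same ingredients, and the closing remarks on reading the composition in $\scS'$ are a reasonable extra precaution. One bookkeeping point: if you carry your own algebra through, the terms proportional to $(2\mu+\lambda)^s$ cancel to leave
\begin{equation*}
	\bbL^s\bu \;=\; \mu^s(-\Delta)^s\bu \;-\; \big((2\mu+\lambda)^s-\mu^s\big)\grad^s\div^s\bu\,,
\end{equation*}
i.e.\ with $+\mu^s(-\Delta)^s$, which is the version whose symbol agrees with $\bM^s(\bsxi)=\mu^s(2\pi|\bsxi|)^{2s}\bI+\big((2\mu+\lambda)^s-\mu^s\big)(2\pi|\bsxi|)^{2s}\frac{\bsxi\otimes\bsxi}{|\bsxi|^2}$. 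With the minus sign as displayed in \eqref{eq:NonlocalGradientCharacterization}, the coefficients of $\bI$ would be $-\mu^s(2\pi|\bsxi|)^{2s}$ versus $+\mu^s(2\pi|\bsxi|)^{2s}$, so your assertion that the identity-component ``reduces to that of $\bM^s$'' does not hold literally; this sign slip is inherited from the statement itself (the paper likewise quotes $\div^s\circ\grad^s=(-\Delta)^s$ while computing its symbol as $-(2\pi|\bsxi|)^{2s}$). Apart from making that sign explicit, your argument is complete and coincides with the paper's proof.
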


\begin{proof}
	We use the Fourier transform.
	By the formulae \eqref{eq:FourierTransformOfGradients} and by the identity $\div^s \circ \grad^s =(-\Delta)^s$ (see \cite{vsilhavy2020fractional, d2020towards})
	\begin{equation*}
		\begin{split}
			\cF( \div^s \circ \grad^s \bu)(\bsxi) &= -(2 \pi |\bsxi|)^{2s} \wh{\bu}(\bsxi) \,, \\
			\cF( \grad^s \circ \div^s \bu)(\bsxi) &= (2 \pi |\bsxi|)^{2s-2}  \big( \wh{\bu}(\bsxi) \cdot  (2 \pi \imath \bsxi) \big) (2 \pi \imath \bsxi) = - (2 \pi |\bsxi|)^{2s} \frac{\bsxi \otimes \bsxi}{|\bsxi|^2} \wh{\bu}(\bsxi)\,, \\
		\end{split}
	\end{equation*}
	and the equality follows from \eqref{eq:FractionaLameFourierMatrix}.
\end{proof}

We remark here that we can write $\bbL^s$ in a kind of divergence form in terms of the operators $\grad^s$ and $\div^s$. Define the \textit{fractional stress tensor} $\bssigma^s(\bu)$ by
\begin{equation*}
	\bssigma^s(\bu) := 2 \mu^s \grad^s_{sym}\bu + \big( (2 \mu+\lambda)^s - 2\mu^s \big) (\mathrm{tr} \grad^s_{sym}\bu) \bI\,,
\end{equation*}
where $\grad_{sym}^s \bu := \frac{1}{2} (\grad^s \bu + (\grad^s \bu)^T )$. Then for any $\bu \in \scS(\bbR^d;\bbR^d)$
\begin{equation*}
	\bbL^s \bu = -\div^s \bssigma^s(\bu)\,.
\end{equation*}

\subsection{A State-Based Peridynamic Operator}

The formulation for state-based peridynamic operators that we use here is based on those in \cite{Silling2010, MengeshaDuElasticity, alali2014generalized, alali2015peridynamics}. For homogeneous isotropic materials the operator takes the form
\begin{equation*}
	\begin{split}
		&\cL_{sb} \bu(\bx) \\
		&:= C_1(\mu,\lambda) \intdm{\bbR^d}{ \rho_1(|\bx-\by|) \frac{(\bx-\by) \otimes (\bx-\by)}{|\bx-\by|^2} \big( \bu(\bx)-\bu(\by) \big) }{\by} \\
		&\quad + C_2(\mu,\lambda) \iintdm{\bbR^d}{\bbR^d}{ \rho_2(|\bx-\by|) \rho_2(|\by-\bz|) \\
			&\qquad \times \left[ \left( \frac{\bx-\by}{|\bx-\by|} \otimes \frac{\by-\bz}{|\by-\bz|} \right) \big( \bu(\by) - \bu(\bz) \big) 
			- \left( \frac{\bx-\by}{|\bx-\by|} \otimes \frac{\bx-\bz}{|\bx-\bz|} \right) \big( \bu(\bx) - \bu(\bz) \big)  \right] }{\bz}{\by}
	\end{split}
\end{equation*}
for material constants $C_1(\mu,\lambda)$ and $C_2(\mu,\lambda)$ and for kernels $\rho_1$ and $\rho_2$. If we allow $\rho_1(|\bx|)$ and $\rho_2(|\bx|)$ to have a singularity at the origin, then we can cast the fractional Lam\'e-Navier operator as a special class of state-based peridynamic operators.

\begin{theorem}\label{thm:FractionalLame:StateBasedPeri}
	Let $C_1(\mu,\lambda)=\mu^s$ and $C_2(\mu,\lambda) = (2 \mu + \lambda)^s - (2s+1) \mu^s$, and set $\rho_1(|\bseta|) = \kappa_{d,s} |\bseta|^{-d-2s}$ and  $\rho_2(|\bseta|) = k_{d,s} |\bseta|^{-d-s}$. Then for all $\bu \in \scS(\bbR^d;\bbR^d)$
	\begin{equation*}
		\cL_{sb} \bu = \bbL^s \bu\,.
	\end{equation*}
\end{theorem}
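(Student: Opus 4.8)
The plan is to show that, with the prescribed material constants and kernels, $\cL_{sb}\bu$ is the Fourier multiplier operator with symbol $\bM^s(\bsxi)$ from \Cref{lma:FourierSymbolCalculation}. Since both $\cL_{sb}$ and $\bbL^s$ are linear and translation invariant on $\scS(\bbR^d;\bbR^d)$, this equality of symbols yields $\cL_{sb}\bu=\bbL^s\bu$ via the corollary to \Cref{thm:IntegralDefinitionForFracLame}. The single-integral term is immediate: with $C_1(\mu,\lambda)=\mu^s$ and $\rho_1(|\bseta|)=\kappa_{d,s}|\bseta|^{-d-2s}$, a direct comparison with the definition \eqref{eq:Localization:DefnOfL} identifies the first integral in $\cL_{sb}\bu$ with $\mu^s\bbF^s\bu(\bx)$ (the principal value being harmless since $\bu\in\scS$), and by \Cref{thm:FourierTransformOfL} its Fourier symbol is $\mu^s(2\pi|\bsxi|)^{2s}\big(\bI+2s\,\tfrac{\bsxi\otimes\bsxi}{|\bsxi|^2}\big)$.

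The substance of the proof is to show that the double integral $D(\bx)$ (the factor multiplying $C_2$) equals $-\grad^s\div^s\bu(\bx)$, with coefficient exactly one. This is the special case, for the singular kernels $\rho_2(|\bseta|)=k_{d,s}|\bseta|^{-d-s}$, of the reduction of a state-based peridynamic operator to a composition of ``traditional'' integro-differential operators in the spirit of \cite{vsilhavy2017higher}. Using $(\mathbf{a}\otimes\mathbf{b})\mathbf{c}=\mathbf{a}(\mathbf{b}\cdot\mathbf{c})$, the bracket in the integrand factors as $\tfrac{\bx-\by}{|\bx-\by|}$ times the scalar $\big[\tfrac{\by-\bz}{|\by-\bz|}\cdot(\bu(\by)-\bu(\bz))-\tfrac{\bx-\bz}{|\bx-\bz|}\cdot(\bu(\bx)-\bu(\bz))\big]$; since $\rho_2(|\bseta|)\tfrac{\bseta}{|\bseta|}=k_{d,s}\,\bseta|\bseta|^{-d-s-1}$ is precisely the kernel appearing in the definitions \eqref{eq:DivDef}--\eqref{eq:GradDef} of $\div^s$ and $\grad^s$, integrating out one variable of the iterated integral produces the composition $\grad^s\circ\div^s$, and keeping track of the sign from $\tfrac{\bx-\by}{|\bx-\by|}=-\tfrac{\by-\bx}{|\by-\bx|}$ gives $D(\bx)=-\grad^s\div^s\bu(\bx)$. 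Because $\rho_1$ and $\rho_2$ are singular at the origin, the interchanges of integration order and the attendant changes of variables are only valid after suitable regularization (the ``at least formal'' simplification referred to in the introduction); the cleanest way to make the identity airtight is to carry it out on the Fourier side, verifying by Fubini and the symbols \eqref{eq:FourierTransformOfGradients} that $\cF[D](\bsxi)=(2\pi|\bsxi|)^{2s}\tfrac{\bsxi\otimes\bsxi}{|\bsxi|^2}\,\wh{\bu}(\bsxi)$.

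Assembling the two terms with $C_2(\mu,\lambda)=(2\mu+\lambda)^s-(2s+1)\mu^s$ gives
\[
	\cL_{sb}\bu = \mu^s\,\bbF^s\bu - \big((2\mu+\lambda)^s-(2s+1)\mu^s\big)\,\grad^s\div^s\bu\,,
\]
and taking Fourier transforms, using \Cref{thm:FourierTransformOfL}, the identities \eqref{eq:FourierTransformOfGradients}, and \Cref{lma:FourierSymbolCalculation}, the symbol collapses to $\mu^s(2\pi|\bsxi|)^{2s}\bI+\big(2s\mu^s+(2\mu+\lambda)^s-(2s+1)\mu^s\big)(2\pi|\bsxi|)^{2s}\tfrac{\bsxi\otimes\bsxi}{|\bsxi|^2}$, which is exactly $\bM^s(\bsxi)$ because $2s\mu^s+(2\mu+\lambda)^s-(2s+1)\mu^s=(2\mu+\lambda)^s-\mu^s$; hence $\cL_{sb}\bu=\bbL^s\bu$. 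The main obstacle is the middle step: executing the Silhavy-type reduction of the double integral carefully enough that $\grad^s\div^s$ emerges with coefficient precisely one and the algebra of the elastic constants lines up, all while the singular kernels force one to work throughout with principal values (or, most safely, to verify the identity in Fourier variables).
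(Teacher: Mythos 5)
Your proposal is correct and follows essentially the same route as the paper: split off the bond-based term as $\mu^s\bbF^s\bu$, recognize the inner $\bz$-integral of the double integral as $\div^s\bu$ and the outer $\by$-integral (with the sign from $\tfrac{\bx-\by}{|\bx-\by|}$) as $-\grad^s$, so that $\cL_{sb}\bu=\mu^s\bbF^s\bu-\big((2\mu+\lambda)^s-(2s+1)\mu^s\big)\grad^s\div^s\bu$, and then match Fourier symbols via \eqref{eq:FourierTransformOfL} and \eqref{eq:FourierTransformOfGradients}. The constants and signs in your symbol computation agree with the paper's, and the integrability concerns you flag are harmless here since for Schwartz $\bu$ the iterated integrals converge absolutely and $\div^s\bu$ is smooth with bounded derivatives, which is all the pointwise identification requires.
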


\begin{proof}
	Using the definitions \eqref{eq:Localization:DefnOfL}, \eqref{eq:GradDef} and \eqref{eq:DivDef},
	\begin{equation*}
		\begin{split}
			\cL_{sb} \bu(\bx)
			&= \mu^s \bbF^s \bu(\bx) \\
				&\qquad + C_2(\mu,\lambda) \int_{\bbR^d} \frac{k_{d,s}}{|\bx-\by|^{d+s}} \Bigg[ \left( \int_{\bbR^d}{  \frac{k_{d,s}}{|\by-\bz|^{d+s}}    \big( \bu(\by) - \bu(\bz) \big) \cdot \frac{\by-\bz}{|\by-\bz|} } \, \rmd \bz \right) \\
				&\qquad - \left( \int_{\bbR^d}{  \frac{k_{d,s}}{|\bx-\bz|^{d+s}}    \big( \bu(\bx) - \bu(\bz) \big) \cdot \frac{\bx-\bz}{|\bx-\bz|} } \, \rmd \bz \right) \Bigg] \frac{\bx-\by}{|\bx-\by|} \, \rmd \by \\
			&= \mu^s \bbF^s \bu(\bx) \\
				&\qquad + C_2(\mu,\lambda) \int_{\bbR^d} \frac{k_{d,s}}{|\bx-\by|^{d+s}} \big( \div^s \bu(\by) - \div^s \bu(\bx) \big) \frac{\bx-\by}{|\bx-\by|} \, \rmd \by \\
			&= \mu^s \bbF^s \bu(\bx) 
				- \big( (2 \mu + \lambda)^s - (2s+1) \mu^s \big) \grad^s \div^s \bu(\bx)\,.
		\end{split}
	\end{equation*}
	The result then follows from the Fourier transform formulae \eqref{eq:FourierTransformOfL} and \eqref{eq:FourierTransformOfGradients}.
\end{proof}

Note that if we let $s=1$ in $C_1$ and $C_2$ in the above theorem, then we obtain the same material constants that appear in \cite[Equation 2.2]{alali2015peridynamics}. However, the model in that work is valid for a much more general class of kernels $\rho_1$ and $\rho_2$. So we can think of the fractional Lam\'e-Navier operator as a tradeoff; we use the clean-cut theory for fractional powers of operators at the expense of flexibility in the modeling parameters.
We also note here that this theorem provides a rigorous justification of the calculations made in \cite{vsilhavy2017higher} in the special case of fractional kernels.

\subsection{The Dirichlet-to-Neumann Map Associated to the Lam\'e-Navier System}

In this section we find an explicit form of the Dirichlet-to-Neumann map associated to the classical Lam\'e-Navier system of equations in the upper half-space.
Consider the system of equations
\begin{equation}\label{eq:LameInHalfSpace}
	\begin{cases}
		-\wt{\mu} \Delta \bv(\bx,x_{d+1}) - (\wt{\mu} + \wt{\lambda}) \grad ( \div \bv(\bx,x_{d+1})) = 0\,, & \quad (\bx,x_{d+1}) \in \bbR^{d+1}_+ = \bbR^d \times (0,\infty)\,, \\
		\bv(\bx,0) = \bg(\bx)\,, \quad \bx \in \bbR^d\,.
	\end{cases}
\end{equation}
where $\wt{\mu} > 0$, $2 \wt{\mu} + \wt{\lambda} > 0$, $\bv = (\bu,v_{d+1}) : \bbR^{d+1}_+ \to \bbR^{d+1}$ and $\bg = (\bff,g_{d+1}) : \bbR^d \to \bbR^{d+1}$. The partial differential operators are taken here with respect to the $(\bx,x_{d+1})$ variable, i.e. $\Delta = \sum_{j = 1}^{d+1} \p_{j j} $, $\grad = (\p_1, \ldots, \p_d, \p_{d+1})$, and $\div = \sum_{j=1}^{d+1} \p_j$.
The natural Neumann boundary condition for the associated variational problem is given by prescribing the values of $(\wt{\mu} \grad \bv + (\wt{\mu} + \wt{\lambda} ) \div \bv \bI) \bsnu$ on the hyperplane $\{ t = 0 \}$ where $\bsnu = -\be_{d+1}$ is the outward unit normal to $\bbR^{d+1}_+$. The Dirichlet-to-Neumann map is therefore defined by
\begin{equation*}
	\frak{D}_{\wt{\mu},\wt{\lambda}} \bg(\bx) := -\wt{\mu} \p_{d+1} \bv(\bx,0) - (\wt{\mu}+\wt{\lambda}) \div \bv(\bx,0) \be_{d+1}\,.
\end{equation*}
By the divergence theorem it is clear that $\frak{D}_{\wt{\mu},\wt{\lambda}}$ is a map from  $\scL^{1/2,2}(\bbR^d;\bbR^{d+1})$ to its dual $\scL^{-1/2,2}(\bbR^d;\bbR^{d+1})$, but we state the following theorem for Schwartz functions in order to emphasize the explicit calculations.

\begin{theorem}
	Let $\bff \in \scS(\bbR^d;\bbR^d)$, and let $g_{d+1} \in \scS(\bbR^d)$. Suppose $\bv$ is the unique solution of \eqref{eq:LameInHalfSpace} (see \cite{Martell-HalfSpace} for details). 
	Then the first $d$ components of the Dirichlet-to-Neumann map, denoted as $\bsLambda_{\wt{\mu},\wt{\lambda}} \bg$, have the expression
	\begin{multline}\label{eq:PeridynamicsAsDtNMap}
		\bsLambda_{\wt{\mu},\wt{\lambda}}\bg(\bx) = \frac{\wt{\mu}(\wt{\mu}+\wt{\lambda})}{3 \wt{\mu} + \wt{\lambda}} \grad g_{d+1}(\bx) + \frac{2 \wt{\mu}^2}{3 \wt{\mu} + \wt{\lambda}} \frac{2}{\omega_d} \intdm{\bbR^d}{\frac{\bff(\bx)-\bff(\by)}{|\by-\bx|^{d+1}}}{\by} \\
		+ \frac{\wt{\mu} (\wt{\mu} + \wt{\lambda})}{3 \wt{\mu} + \wt{\lambda}} \frac{2(d+1)}{\omega_d} \intdm{\bbR^d}{\frac{(\bx-\by) \otimes (\bx-\by)}{|\bx-\by|^2} \frac{\bff(\bx)-\bff(\by)}{|\by-\bx|^{d+1}}}{\by}\,,
	\end{multline}
	where $\omega_d := \frac{2 \pi^{\frac{d+1}{2}}}{\Gamma(\frac{d+1}{2})}$ is the surface measure of the unit sphere $\bbS^d \subset \bbR^{d+1}$.
	The last component $(\frak{D}_{\wt{\mu},\wt{\lambda}} \bg)_{d+1}$
	has the expression
	\begin{equation}\label{eq:PeridynamicsAsDtNMap2}
		(\frak{D}_{\wt{\mu},\wt{\lambda}} \bg)_{d+1}(\bx) = \frac{ 2 \wt{\mu} (2 \wt{\mu}+\wt{\lambda}) }{3 \wt{\mu}+\wt{\lambda} } (-\Delta)^{\frac{1}{2}} g_{d+1}(\bx) - \frac{ \wt{\mu} (\wt{\mu}+\wt{\lambda}) }{3 \wt{\mu}+\wt{\lambda} } \div_{\bx} \bff(\bx)\,.
	\end{equation}
\end{theorem}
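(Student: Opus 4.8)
The plan is to compute the Dirichlet-to-Neumann map by a partial Fourier transform in the tangential variable $\bx$, which turns \eqref{eq:LameInHalfSpace} into a constant-coefficient ordinary differential equation in $t := x_{d+1}$, and then to identify the resulting matrix multiplier with the integral operators appearing in \eqref{eq:FourierSymbol:FractionalLaplacian} and \Cref{thm:FourierTransformOfL}.

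First I would apply $\cF$ in $\bx$ and set $r := 2\pi|\bsxi|$, so that for each fixed $\bsxi$ the system \eqref{eq:LameInHalfSpace} becomes an ODE in $t$ for $\widehat{\bv}(\bsxi,\cdot)$. The structural simplification, as in the classical Stokes and Lam\'e half-space problems, is that $p := \div\bv$ is harmonic on $\bbR^{d+1}_+$: applying $\div$ to the first line of \eqref{eq:LameInHalfSpace} and using $\div\Delta = \Delta\div$ and $\div\grad\div = \Delta\div$ yields $-(2\wt\mu+\wt\lambda)\Delta p = 0$, hence $\Delta p = 0$. Therefore the decaying solution is $\widehat{p}(\bsxi,t) = P(\bsxi)e^{-rt}$, and substituting this back shows that each component of $\bv$ solves the \emph{scalar} equation $-\wt\mu\Delta v_i = (\wt\mu+\wt\lambda)\p_i p$, i.e. $(\p_{tt}-r^2)\widehat{v}_i = -\tfrac{\wt\mu+\wt\lambda}{\wt\mu}\widehat{\p_i p}$, whose decaying solution is a linear combination of $e^{-rt}$ and the resonant mode $te^{-rt}$. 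I would write $\widehat{\bv}(\bsxi,t)$ out explicitly in terms of these two families (with vector coefficients), impose the Dirichlet condition $\widehat{\bv}(\bsxi,0) = \widehat{\bg}(\bsxi)$ to fix the $e^{-rt}$ coefficients, and then impose the self-consistency requirement $\widehat{\div\bv} = Pe^{-rt}$, which is a single scalar equation determining $P(\bsxi)$ as a linear combination of $\bsxi\cdot\widehat{\bff}(\bsxi)$ and $\widehat{g}_{d+1}(\bsxi)$; the factor $3\wt\mu+\wt\lambda$ in \eqref{eq:PeridynamicsAsDtNMap}--\eqref{eq:PeridynamicsAsDtNMap2} enters here.

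With $\widehat{\bv}(\bsxi,t)$ in hand I would evaluate $\widehat{\frak D_{\wt\mu,\wt\lambda}\bg}(\bsxi) = -\wt\mu\,\p_t\widehat{\bv}(\bsxi,0) - (\wt\mu+\wt\lambda)P(\bsxi)\be_{d+1}$. The first $d$ entries collapse to an expression of the form $a(\bsxi)\widehat{\bff}(\bsxi) + b(\bsxi)\tfrac{\bsxi\otimes\bsxi}{|\bsxi|^2}\widehat{\bff}(\bsxi)$ with $a,b$ positive multiples of $2\pi|\bsxi|$, and the last entry collapses to a positive multiple of $2\pi|\bsxi|\,\widehat{g}_{d+1}(\bsxi)$, which is the advertised split into $\bsLambda_{\wt\mu,\wt\lambda}\bff$ and $\Lambda_{\wt\mu,\wt\lambda}g_{d+1}$. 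To finish I recognize the multipliers: by \eqref{eq:FourierSymbol:FractionalLaplacian} with $s=\tfrac12$ the symbol $2\pi|\bsxi|$ is that of $(-\Delta)^{1/2}$, and by \Cref{thm:FourierTransformOfL} with $s=\tfrac12$ the symbol $2\pi|\bsxi|\bigl(\bI+\tfrac{\bsxi\otimes\bsxi}{|\bsxi|^2}\bigr)$ is that of $\bbF^{1/2}$. Passing to the integral forms \eqref{eq:IntegralFormula:FractionalLaplacian} and \eqref{eq:Localization:DefnOfL} and using the elementary identities $\tfrac{2}{\omega_d} = c_{d,1/2} = \pi^{-(d+1)/2}\Gamma(\tfrac{d+1}{2})$ and $\tfrac{2(d+1)}{\omega_d} = \kappa_{d,1/2}$, both immediate from \eqref{eq:DefnOfNormalizingConstants} and the definition of $\omega_d$, converts the symbols into the kernels displayed in \eqref{eq:PeridynamicsAsDtNMap}--\eqref{eq:PeridynamicsAsDtNMap2}.

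The main difficulty I anticipate is organizational rather than conceptual: \eqref{eq:LameInHalfSpace} is a genuinely coupled $(d+1)\times(d+1)$ Legendre--Hadamard system, so one must check that the $\{e^{-rt},\,te^{-rt}\}$ ansatz really exhausts the decaying solution space (this follows from the harmonicity of $p$ together with the scalar Poisson structure of the components, and can also be read off from the fact that $-r$ is a root of the characteristic matrix pencil with a single Jordan block of size two), verify that the self-consistency condition is precisely the constraint $\div\bv = p$, and then carry the constants $\wt\mu,\wt\lambda$ through the linear algebra so that the $\bI$-contribution and the $\tfrac{\bsxi\otimes\bsxi}{|\bsxi|^2}$-contribution separate cleanly and the $\Gamma$-factor normalizations line up. Beyond this there is nothing new, since the present computation is exactly the $s=\tfrac12$ instance of the degenerate elliptic extension problem associated to $\bbL$.
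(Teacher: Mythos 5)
Your route — tangential Fourier transform, harmonicity of $p=\div\bv$, the $\{e^{-rt},\,te^{-rt}\}$ ansatz with $r=2\pi|\bsxi|$, and the consistency condition $\div\bv=p$ fixing the pressure amplitude $P(\bsxi)$ — is a genuinely different method from the paper's, which quotes the explicit Poisson kernel of the half-space Lam\'e system from \cite{Martell-HalfSpace} and differentiates the Poisson integral in $t$ at $t=0$. The structural steps you list are correct, and your consistency condition does produce $P(\bsxi)=\tfrac{2\wt{\mu}}{3\wt{\mu}+\wt{\lambda}}\bigl(2\pi\imath\,\bsxi\cdot\wh{\bff}(\bsxi)-r\,\wh{g}_{d+1}(\bsxi)\bigr)$, with resonant amplitudes $B_i=\tfrac{\wt{\mu}+\wt{\lambda}}{2r\wt{\mu}}(2\pi\imath\xi_i)P$ for $i\le d$ and $B_{d+1}=-\tfrac{\wt{\mu}+\wt{\lambda}}{2\wt{\mu}}P$; indeed this $\wh{\bv}$ coincides with the tangential Fourier transform of the Poisson kernel the paper uses, so your solution is the right one.

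The gap is that the decisive step — "the first $d$ entries collapse to $a(\bsxi)\wh{\bff}+b(\bsxi)\tfrac{\bsxi\otimes\bsxi}{|\bsxi|^2}\wh{\bff}$ and the last entry to a multiple of $2\pi|\bsxi|\wh{g}_{d+1}$" — is asserted rather than computed, and it is exactly where the content of the theorem lies; carrying out your own plan does not produce this collapse. From the formulas above,
\begin{equation*}
	-\wt{\mu}\,\p_t\wh{v}_i(\bsxi,0)=\wt{\mu}\,r\,\wh{f}_i+\frac{\wt{\mu}(\wt{\mu}+\wt{\lambda})}{3\wt{\mu}+\wt{\lambda}}\,r\,\frac{\xi_i\,(\bsxi\cdot\wh{\bff})}{|\bsxi|^2}+\frac{\wt{\mu}(\wt{\mu}+\wt{\lambda})}{3\wt{\mu}+\wt{\lambda}}\,(2\pi\imath\xi_i)\,\wh{g}_{d+1}\,,\qquad 1\le i\le d\,,
\end{equation*}
while the last entry of $-\wt{\mu}\p_t\wh{\bv}(\bsxi,0)-(\wt{\mu}+\wt{\lambda})P(\bsxi)\be_{d+1}$ equals $\tfrac{2\wt{\mu}(2\wt{\mu}+\wt{\lambda})}{3\wt{\mu}+\wt{\lambda}}\,r\,\wh{g}_{d+1}-\tfrac{\wt{\mu}(\wt{\mu}+\wt{\lambda})}{3\wt{\mu}+\wt{\lambda}}\,2\pi\imath\,\bsxi\cdot\wh{\bff}$. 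The first two tangential terms and the first normal term are precisely the symbols of \eqref{eq:PeridynamicsAsDtNMap} and \eqref{eq:PeridynamicsAsDtNMap2}, but the remaining terms — a local $\tfrac{\wt{\mu}(\wt{\mu}+\wt{\lambda})}{3\wt{\mu}+\wt{\lambda}}\grad_{\bx} g_{d+1}$ in the tangential components and $-\tfrac{\wt{\mu}(\wt{\mu}+\wt{\lambda})}{3\wt{\mu}+\wt{\lambda}}\div\bff$ in the normal component — do not vanish, so your computation does not yield the claimed decoupling. The source of the tension with the paper's argument is visible in your own framework: the coupling enters through $P(\bsxi)$, which mixes $\bsxi\cdot\wh{\bff}$ and $\wh{g}_{d+1}$, and on the kernel side it comes from the off-diagonal blocks $t^2(\bx-\by)/(|\bx-\by|^2+t^2)^{\frac{d+3}{2}}$, which are $t$ times an approximate identity, so their $t$-derivative at $t=0$ contributes a $\p_i g_{d+1}(\bx)$ term that a pointwise (fixed $\by\neq\bx$) differentiation under the integral misses. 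Before your proposal can serve as a proof you must either exhibit a cancellation of these local first-order terms (none is apparent from the algebra) or confront this discrepancy with the kernel-based computation explicitly; as written, the pivotal "collapse" step fails.
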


\begin{proof}
	Denote the last variable $x_{d+1} = t$, so that $\bv(\bx,x_{d+1}) = \bv(\bx,t)$ and $\partial_{d+1} = \partial_t$. 
	We need to find 
	\begin{equation*}
		\bsLambda_{\wt{\mu},\wt{\lambda}} \bg(\bx) = - \wt{\mu} \p_{t} \bu(\bx,0)
	\end{equation*}
	and
	\begin{equation*}
		(\frak{D}_{\wt{\mu},\wt{\lambda}} \bg)_{d+1}(\bx)  = - (2\wt{\mu} + \wt{\lambda}) \p_{t} v_{d+1}(\bx,0) - (\wt{\mu} + \wt{\lambda}) \div_{\bx} \bu(\bx,0)\,.
	\end{equation*}
	
	We will use the Poisson kernel $\bP_{\wt{\mu},\wt{\lambda}} : \bbR^d \times (0,\infty) \to \bbR^{(d+1) \times (d+1)}$ associated to the Lam\'e-Navier equation; see \cite{Martell-HalfSpace} for definitions and properties. The unique solution to \eqref{eq:LameInHalfSpace} is the Poisson integral
	\begin{equation*}
		\bv(\bx,t) = \intdm{\bbR^d}{\bP_{\wt{\mu},\wt{\lambda}}(\bx-\by,t) \bg(\by)}{\by}\,,
	\end{equation*}
	that is,
	\begin{multline}
		\bv(\bx,t) = \frac{2 \wt{\mu}}{3 \wt{\mu} + \wt{\lambda}} \frac{2}{\omega_d} \intdm{\bbR^d}{\frac{t}{\big( |\bx-\by|^2 + t^2 \big)^{\frac{d+1}{2}}} \bg(\by) }{\by} \\
		+ \frac{\wt{\mu} + \wt{\lambda}}{3 \wt{\mu} + \wt{\lambda}} \frac{2(d+1)}{\omega_d} \intdm{\bbR^d}{ \begin{bmatrix} (\bx-\by) \otimes (\bx-\by) & t (\bx-\by) \\ t (\bx-\by) & t^2 \end{bmatrix} \frac{t}{\big( |\bx-\by|^2 + t^2 \big)^{\frac{d+3}{2}}} \bg(\by)}{\by}\,.
	\end{multline}
	Computing the Fourier transform in $\bx$ (see \cite{MengeshaScott2018Korn} for details) gives
	\begin{equation*}
		\begin{split}
			\wh{\bv}(\bsxi,t) &= \wh{\bP}_{\wt{\mu},\wt{\lambda}}(\bsxi,t) \wh{\bg}(\bsxi) \\
			&= \rme^{-2 \pi |\bsxi| t} \left( \bI_{d+1} - \frac{\wt{\mu}+\wt{\lambda}}{3 \wt{\mu} + \wt{\lambda}} (2 \pi |\bsxi| t)  \begin{bmatrix} \frac{\bsxi \otimes \bsxi}{|\bsxi|^2} & \imath \frac{\bsxi}{|\bsxi|} \\ \imath \frac{\bsxi}{|\bsxi|} & -1 \end{bmatrix} \right) \wh{\bg}(\bsxi)\,.
		\end{split}
	\end{equation*}
	where $\bI_{d+1}$ denotes the $d \times d$ identity matrix. From here we compute
	\begin{equation}\label{eq:DtNMap:Pf1}
		\begin{split}
			\cF (\bsLambda_{\wt{\mu},\wt{\lambda}} \bg)(\bsxi) &= - \wt{\mu} \p_{t} \wt{\bu}(\bsxi,0) \\
			&= \wt{\mu} \, 2 \pi |\bsxi| \left( \bI + \frac{\wt{\mu}+\wt{\lambda}}{3 \wt{\mu} + \wt{\lambda}} \frac{\bsxi \otimes \bsxi}{|\bsxi|^2}  \right) \wh{\bff}(\bsxi) +  \wt{\mu} \frac{\wt{\mu}+\wt{\lambda}}{3 \wt{\mu} + \wt{\lambda}} 2 \pi \imath \bsxi \; \wh{g_{d+1}}(\bsxi)
		\end{split}
	\end{equation}
	and
	\begin{equation}\label{eq:DtNMap:Pf2}
		\begin{split}
			\cF ((\frak{D}_{\wt{\mu},\wt{\lambda}} \bg)_{d+1})(\bsxi)  &= - (2\wt{\mu} + \wt{\lambda}) \p_{t} \wh{v}_{d+1}(\bsxi,0) - (\wt{\mu} + \wt{\lambda}) (2 \pi \imath \bsxi) \cdot \wh{\bu}(\bsxi,0) \\
			&= \frac{ 2 \wt{\mu} (2 \wt{\mu}+\wt{\lambda}) }{3 \wt{\mu}+\wt{\lambda} } 2 \pi |\bsxi|  \wh{g}_{d+1}(\bsxi) - \frac{ \wt{\mu} (\wt{\mu}+\wt{\lambda}) }{3 \wt{\mu}+\wt{\lambda} } 2 \pi \imath \bsxi \cdot \wh{\bff}(\bsxi)\,.
		\end{split}
	\end{equation}
%
%
%
	Note that when $s=\frac{1}{2}$ the constant $c_{d,s} = \frac{\Gamma(\frac{d+1}{2})}{\pi^{\frac{d+1}{2}}} = \frac{2}{\omega_d}$, and so $\kappa_{d,s} = \frac{2(d+1)}{\omega_d}$. Therefore
	\eqref{eq:PeridynamicsAsDtNMap} follows from \eqref{eq:DtNMap:Pf1} with the help of the formula \eqref{eq:FourierTransformOfL} with $s = \frac{1}{2}$. 
	The formula \eqref{eq:PeridynamicsAsDtNMap2} is clear from \eqref{eq:DtNMap:Pf2} and \eqref{eq:FourierSymbol:FractionalLaplacian}.
\end{proof}

When $g_{d+1} = 0$ the operator $\bsLambda_{\wt{\mu},\wt{\lambda}}$ will coincide with the half-power of the Lam\'e-Navier operator $\bbL^{\frac{1}{2}}$ when the elastic constants in both formulae satisfy a certain relation.
We record this in the following corollary:

\begin{corollary}
	Let $\mu$, $\lambda$ denote the elastic constants associated to the operator $\bbL$ in \eqref{eq:LameOperator}, and let $\wt{\mu}$, $\wt{\lambda}$ be as in \eqref{eq:LameInHalfSpace} with $\bsLambda_{\wt{\mu},\wt{\lambda}}$ as in \eqref{eq:PeridynamicsAsDtNMap}.
	Then for any function $\bu \in \scS(\bbR^d;\bbR^d)$ and defining $\bU = (\bu,0)$
	\begin{equation*}
		\bsLambda_{\wt{\mu},\wt{\lambda}} \bU(\bx) = \bbL^{\frac{1}{2}} \bu(\bx)
	\end{equation*}
	if and only if
	\begin{equation*}
		\mu = \wt{\mu}^2 \quad \text{ and } \quad 2 \mu + \lambda =  \left( \frac{2 \wt{\mu} (2 \wt{\mu} + \wt{\lambda})}{3 \wt{\mu} + \wt{\lambda}} \right)^2 \,.
	\end{equation*}
\end{corollary}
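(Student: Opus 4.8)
The plan is to reduce the asserted identity to an equality of Fourier multiplier matrices and then match coefficients. Both $\bsLambda_{\wt{\mu},\wt{\lambda}}$ and $\bbL^{\frac{1}{2}}$ are Fourier multipliers acting componentwise on $\scS(\bbR^d;\bbR^d)$, so by Fourier inversion the operator identity $\bsLambda_{\wt{\mu},\wt{\lambda}}\bu = \bbL^{\frac{1}{2}}\bu$ for all $\bu$ is equivalent to the equality of their symbol matrices at every $\bsxi \neq {\bf 0}$.

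For the right-hand side, \Cref{lma:FourierSymbolCalculation} with $s=\frac{1}{2}$ gives at once
\[
\cF(\bbL^{\frac{1}{2}}\bu)(\bsxi) = \bM^{\frac{1}{2}}(\bsxi)\wh{\bu}(\bsxi), \qquad \bM^{\frac{1}{2}}(\bsxi) = \mu^{\frac{1}{2}}(2\pi|\bsxi|)\bI + \big((2\mu+\lambda)^{\frac{1}{2}} - \mu^{\frac{1}{2}}\big)(2\pi|\bsxi|)\frac{\bsxi\otimes\bsxi}{|\bsxi|^2}.
\]
For the left-hand side I would recognize the two integral operators appearing in \eqref{eq:PeridynamicsAsDtNMap}: since $c_{d,1/2} = \frac{2}{\omega_d}$, the first is the componentwise $(-\Delta)^{\frac{1}{2}}$, with symbol $(2\pi|\bsxi|)\bI$; and since $\kappa_{d,1/2} = \frac{2(d+1)}{\omega_d}$, the second is $\bbF^{\frac{1}{2}}$, whose symbol is $(2\pi|\bsxi|)\big(\frac{\bsxi\otimes\bsxi}{|\bsxi|^2} + \bI\big)$ by \Cref{thm:FourierTransformOfL}. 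Collecting the coefficients from \eqref{eq:PeridynamicsAsDtNMap} and using the elementary identity $2\wt{\mu}^2 + \wt{\mu}(\wt{\mu}+\wt{\lambda}) = \wt{\mu}(3\wt{\mu}+\wt{\lambda})$, the symbol of $\bsLambda_{\wt{\mu},\wt{\lambda}}$ collapses to
\[
(2\pi|\bsxi|)\left[\wt{\mu}\,\bI + \frac{\wt{\mu}(\wt{\mu}+\wt{\lambda})}{3\wt{\mu}+\wt{\lambda}}\frac{\bsxi\otimes\bsxi}{|\bsxi|^2}\right].
\]

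It then remains to equate the two symbols. Since the matrix-valued functions $\bsxi \mapsto \bI$ and $\bsxi \mapsto \frac{\bsxi\otimes\bsxi}{|\bsxi|^2}$ are linearly independent over $\bbR$ (evaluate at two orthogonal unit vectors, which exist because $d \geq 2$), equality of symbols is equivalent to the two scalar equations $\mu^{\frac{1}{2}} = \wt{\mu}$ and $(2\mu+\lambda)^{\frac{1}{2}} - \mu^{\frac{1}{2}} = \frac{\wt{\mu}(\wt{\mu}+\wt{\lambda})}{3\wt{\mu}+\wt{\lambda}}$. The first is precisely $\mu = \wt{\mu}^2$; substituting it into the second and simplifying via $\wt{\mu}(3\wt{\mu}+\wt{\lambda}) + \wt{\mu}(\wt{\mu}+\wt{\lambda}) = 2\wt{\mu}(2\wt{\mu}+\wt{\lambda})$ yields $(2\mu+\lambda)^{\frac{1}{2}} = \frac{2\wt{\mu}(2\wt{\mu}+\wt{\lambda})}{3\wt{\mu}+\wt{\lambda}}$, i.e.\ $2\mu+\lambda = \big(\tfrac{2\wt{\mu}(2\wt{\mu}+\wt{\lambda})}{3\wt{\mu}+\wt{\lambda}}\big)^2$; conversely these two relations make the symbols coincide, giving both directions. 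I do not anticipate a genuine obstacle — the argument is entirely a matter of bookkeeping the normalizing constants and the $(\wt{\mu},\wt{\lambda})$-dependent coefficients — the only point requiring minor care being the linear independence of $\bI$ and $\bsxi\otimes\bsxi/|\bsxi|^2$, which is what legitimizes matching the symbol matrices term by term.
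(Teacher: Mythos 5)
Your proposal is correct and is essentially the paper's argument: the paper likewise recognizes, via $c_{d,1/2}=\tfrac{2}{\omega_d}$ and $\kappa_{d,1/2}=\tfrac{2(d+1)}{\omega_d}$, that the two integral operators in \eqref{eq:PeridynamicsAsDtNMap} are the $s=\tfrac12$ building blocks of \eqref{eq:FractionalPowerOfLame} and then matches their coefficients, which yields exactly your equations $\mu^{1/2}=\wt{\mu}$ and $(2\mu+\lambda)^{1/2}-\mu^{1/2}=\tfrac{\wt{\mu}(\wt{\mu}+\wt{\lambda})}{3\wt{\mu}+\wt{\lambda}}$. Doing the comparison on the Fourier side, with the linear independence of $\bI$ and $\bsxi\otimes\bsxi/|\bsxi|^2$ for $d\geq 2$, is a harmless reformulation that in addition makes the ``only if'' direction explicit.
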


\section{Properties of the Fractional Lam\'e-Navier Operator}\label{sec:Properties}

\subsection{The Fundamental Solution}\label{subsec:FundSoln}

The fractional Laplacian has a fundamental solution that is well-studied in its own right, see \cite{Stein}. Formally, the fundamental solution $\Phi^s$ is given via the Fourier identity
\begin{equation}\label{eq:RieszKernel:Fourier}
	\wh{\Phi^s}(\bsxi) = (2 \pi |\bsxi|)^{-2s} \quad \text{ in } \scS'(\bbR^d)\,,
\end{equation}
i.e.
\begin{equation*}
	\int_{\bbR^d} (2 \pi |\bsxi|)^{-2s} v(\bsxi) \, \rmd \bsxi = \int_{\bbR^d}  \Phi^s(\bx) \wh{v}(\bx) \, \rmd \bx \qquad \text{ for all } v \in \scS(\bbR^d)\,. 
\end{equation*}
In fact, $\Phi^s$ is given by the Riesz kernel
\begin{equation*}
	\Phi^s(\bx) = \frac{g_{d,s}}{|\bx|^{d-2s}}\,, \quad \text{ where } \quad g_{d,s} := \frac{\Gamma \left( \frac{d}{2} -s \right) }{\pi^{d/2} 2^{2s} \Gamma(s)}\,.
\end{equation*}
Therefore, solutions to the Poisson equation associated to $(-\Delta)^s$ are formally given by the \textit{Riesz Potential} $I^s$ defined as
\begin{equation*}
	I^s u(\bx) = \Phi^s \ast u(\bx)\,,
\end{equation*}
$I^s u(\bx)$ is a well-defined functional on $\scS(\bbR^d)$ for all $s \in (0,\frac{d}{2})$, and it satisfies the Fourier identity
\begin{equation}\label{eq:RieszTransform:Fourier}
	\cF (I^s u)(\bsxi) = (2 \pi |\bsxi|)^{-2s} \wh{u}(\bsxi) \quad \text{ in } \scS'(\bbR^d)
\end{equation}
for all $u \in \scS(\bbR^d)$.

We formally define negative fractional powers of the Lam\'e-Navier operator in the same way, beginning with Fourier inversion:
\begin{equation*}
	\cF(\bbL^{-s} \bu )(\bsxi) := \big[ \bM^{s}(\bsxi) \big]^{-1} \wh{\bu}(\bx)\,.
\end{equation*}
Henceforth use the notation $\bbL^{-s} = \bbE^s$. In the range $s \in (0,\frac{d}{2})$,  $\bbE^s$ has an explicit expression as an integral operator;
\begin{equation}\label{eq:Definition:LameNavierPotential}
	\bbE^s \bu(\bx) := \bsPsi^s \ast \bu(\bx)\,,
\end{equation}
where the matrix field $\bsPsi^s : \bbR^d \to \bbR^{d \times d}$ is defined as
\begin{equation}\label{eq:Definition:LameNavierKernel}
	\begin{split}
		\bsPsi^s(\bx) &:= \frac{\gamma_{d,s}}{\mu^s (2 \mu + \lambda)^s} \left[ \frac{(2s-1) (2 \mu + \lambda)^s + \mu^s }{d-2s} \frac{1}{|\bx|^{d-2s}} \bI \right.\\ 
		&\left. \qquad \qquad + \big( (2 \mu +\lambda)^s - \mu^s \big) \frac{1}{|\bx|^{d-2s}} \frac{\bx \otimes \bx}{|\bx|^2}  \right]
	\end{split}
\end{equation}
and the constant $\gamma_{d,s}$ is defined as
\begin{equation}\label{eq:Definition:LameNavierPotentialConstant}
	\gamma_{d,s} := \frac{\Gamma( \frac{d+2-2s}{2} ) }{2^{2s} \pi^{d/2} \Gamma(1+s) }\,.
\end{equation}
This formal identification is made rigorous in Theorem \ref{thm:NegativePowers:FourierTransform} and Proposition \ref{prop:NegativePowers:LameNavierPotential}.

\begin{theorem}\label{thm:NegativePowers:FourierTransform}
	Let $d \geq 2$ and $s \in (0,\frac{d}{2})$.
	Then $\bsPsi^s \in \scS'(\bbR^d;\bbR^d)$, and 
	\begin{equation*}
		\cF(\bsPsi^s)(\bsxi) = \big[ \bM^s(\bsxi) \big]^{-1} \quad \text{ in } \scS'(\bbR^d;\bbR^d)\,.
	\end{equation*}
\end{theorem}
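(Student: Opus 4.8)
The plan is to prove the two claims in sequence: first that $\bsPsi^s$ is a (matrix‑valued) tempered distribution, then that its entrywise Fourier transform is $\big[\bM^s(\bsxi)\big]^{-1}$. For the first claim, note that both matrix fields appearing in \eqref{eq:Definition:LameNavierKernel}, namely $|\bx|^{-(d-2s)}\bI$ and $|\bx|^{-(d-2s)}\tfrac{\bx\otimes\bx}{|\bx|^2}$, have entries bounded in modulus by $|\bx|^{-(d-2s)}$; since $s\in(0,\tfrac d2)$ one has $0<d-2s<d$, so this bound is locally integrable near the origin and decays at infinity, hence is integrable against every Schwartz function. Thus each entry of $\bsPsi^s$ defines a tempered distribution, homogeneous of degree $-(d-2s)$.

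For the Fourier transform I would compute entrywise and use linearity to isolate the transforms of $|\bx|^{-(d-2s)}$ and of the entries $x_i x_j\,|\bx|^{-(d-2s+2)}$ of the second matrix field. The first is immediate: since $\Phi^s = g_{d,s}\,|\bx|^{-(d-2s)}$, identity \eqref{eq:RieszKernel:Fourier} gives $\cF\big(|\bx|^{-(d-2s)}\big) = g_{d,s}^{-1}(2\pi|\bsxi|)^{-2s}$ in $\scS'$. For the tensorial entries I would first record the distributional identity
\[
	x_i x_j\,|\bx|^{-(d-2s+2)} \;=\; \frac{1}{d-2s}\Big(\delta_{ij}\,|\bx|^{-(d-2s)} - \p_{x_j}\big(x_i\,|\bx|^{-(d-2s)}\big)\Big)\qquad\text{in }\scS',
\]
which holds because $x_i|\bx|^{-(d-2s)}$ is a tempered distribution whose classical $\p_{x_j}$‑derivative is locally integrable away from the origin and whose homogeneity degree $-(d-2s)$ exceeds $-d$, so no term supported at the origin can occur. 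Applying the rules $\cF(\p_{x_j}f) = 2\pi i\xi_j\wh f$ and $\cF(x_i f) = \tfrac{i}{2\pi}\p_{\xi_i}\wh f$ turns the right‑hand side into $\tfrac{1}{d-2s}\,\p_{\xi_i}\!\big(\xi_j\,\wh u(\bsxi)\big)$ with $\wh u = \cF\big(|\bx|^{-(d-2s)}\big)$, and differentiating explicitly yields
\[
	\cF\big(x_i x_j\,|\bx|^{-(d-2s+2)}\big) \;=\; \frac{(2\pi)^{-2s}}{g_{d,s}\,(d-2s)}\Big(\delta_{ij}\,|\bsxi|^{-2s} - 2s\,\xi_i\xi_j\,|\bsxi|^{-2s-2}\Big)\qquad\text{in }\scS'.
\]

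Finally, substituting these two transforms into \eqref{eq:Definition:LameNavierKernel}, $\cF(\bsPsi^s)$ becomes a linear combination of $|\bsxi|^{-2s}\bI$ and $|\bsxi|^{-2s}\tfrac{\bsxi\otimes\bsxi}{|\bsxi|^2}$, and it remains to match this against
\[
	\big[\bM^s(\bsxi)\big]^{-1} = \frac{1}{\mu^s(2\pi|\bsxi|)^{2s}}\,\bI + \Big(\tfrac{1}{(2\mu+\lambda)^s} - \tfrac{1}{\mu^s}\Big)\frac{1}{(2\pi|\bsxi|)^{2s}}\,\frac{\bsxi\otimes\bsxi}{|\bsxi|^2},
\]
obtained from \eqref{eq:FractionaLameFourierMatrix} by inverting $\bM^s(\bsxi)$ on the span of $\bsxi$ (eigenvalue $(2\mu+\lambda)^s(2\pi|\bsxi|)^{2s}$) and on its orthogonal complement (eigenvalue $\mu^s(2\pi|\bsxi|)^{2s}$). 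The two resulting scalar identities collapse, after using $\Gamma\big(\tfrac{d+2-2s}{2}\big) = \tfrac{d-2s}{2}\,\Gamma\big(\tfrac{d-2s}{2}\big)$ and $\Gamma(1+s) = s\,\Gamma(s)$, to the single relation $\gamma_{d,s} = \tfrac{d-2s}{2s}\,g_{d,s}$ between the constant in \eqref{eq:Definition:LameNavierPotentialConstant} and the definition of $g_{d,s}$, which is checked directly.

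The hard part will be the computation of $\cF\big(x_i x_j\,|\bx|^{-(d-2s+2)}\big)$: I must make sure that pulling the polynomial factor $x_i$ and the derivative $\p_{x_j}$ through $\cF$, and then identifying each resulting homogeneous distribution with a locally integrable function, is legitimate in $\scS'$, since several of the intermediate power functions (for instance $\xi_i|\bsxi|^{-2s-2}$) fail to be locally integrable when $d=2$. Routing the computation through $\p_{\xi_i}\!\big(\xi_j\,\wh u\big)$, which is homogeneous of degree $-2s > -d$, is exactly what removes this difficulty. The bookkeeping of normalizing constants at the end is routine but somewhat long.
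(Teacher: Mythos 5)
Your proposal is correct, and it reaches the identity $\cF(\bsPsi^s)=[\bM^s]^{-1}$ by a genuinely different route than the paper. The paper inverts $\bM^s(\bsxi)$ and then computes $\cF^{-1}$ of the tensorial piece $(2\pi|\bsxi|)^{-2s}\,\bsxi\otimes\bsxi/|\bsxi|^2$ via a separate appendix lemma, which is proved by a heat-kernel subordination argument: the kernel is represented as $\int_{at}^{bt}\grad^2 H(\cdot,\sigma)\,\rmd\sigma$, paired with a test function, integrated against $t^{\frac d2-s-1}$, and evaluated with Fubini and Gamma-integral computations, so that all manipulations stay at the level of absolutely convergent integrals. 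You instead transform $\bsPsi^s$ forward, reducing the tensorial kernel to the scalar Riesz kernel through the distributional identity $x_ix_j|\bx|^{-(d-2s+2)}=\frac{1}{d-2s}\big(\delta_{ij}|\bx|^{-(d-2s)}-\p_{x_j}(x_i|\bx|^{-(d-2s)})\big)$ and the standard rules for $\cF(\p_{x_j}\cdot)$ and $\cF(x_i\cdot)$, and you correctly route the computation through $\p_{\xi_i}(\xi_j\wh u)$ so that the only distributional derivative you must identify with a pointwise formula is of a homogeneous locally integrable function of degree $-2s>-d$; this is exactly the point where a naive entrywise computation would break for $d=2$, $s\ge\tfrac12$, and your homogeneity argument (no origin-supported correction of degree $-2s$ can occur, since $\p^{\alpha}\delta$ has degree $-d-|\alpha|$) closes it — a cutoff integration by parts, whose boundary terms are $O(\veps^{2s})$ resp. $O(\veps^{d-2s})$, gives the same conclusion. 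One cosmetic slip: the homogeneity degree of $x_i|\bx|^{-(d-2s)}$ is $-(d-2s)+1$, not $-(d-2s)$; the latter is the degree of its derivative, which is the quantity your argument actually needs, so nothing is affected. The final constant matching via $\gamma_{d,s}=\frac{d-2s}{2s}g_{d,s}$, i.e.\ $\Gamma\big(\tfrac{d+2-2s}{2}\big)=\tfrac{d-2s}{2}\Gamma\big(\tfrac{d-2s}{2}\big)$ and $\Gamma(1+s)=s\Gamma(s)$, is the same identity the paper uses, and the resulting coefficients do collapse to $\mu^{-s}(2\pi|\bsxi|)^{-2s}\bI+\big((2\mu+\lambda)^{-s}-\mu^{-s}\big)(2\pi|\bsxi|)^{-2s}\bsxi\otimes\bsxi/|\bsxi|^2$. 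In short, your argument is more elementary and self-contained (no heat kernel, no subordination), at the price of the distribution-theoretic care you already identified; the paper's route trades that care for longer but entirely classical integral computations.
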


\begin{proposition}\label{prop:NegativePowers:LameNavierPotential}
	The operator $\bbE^s$ is a well-defined and continuous operator on $\scS(\bbR^d;\bbR^d)$. For any $\bu \in \scS(\bbR^d;\bbR^d)$ we have
	\begin{equation*}
		\cF(\bbE^s \bu)(\bsxi) = \big[ \bM^s(\bsxi) \big]^{-1} \wh{\bu}(\bsxi) \quad \text{ in } \scS'(\bbR^d;\bbR^d)\,.
	\end{equation*}
\end{proposition}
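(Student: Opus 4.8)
The plan is to reduce everything to Theorem \ref{thm:NegativePowers:FourierTransform} together with the standard calculus of convolutions of a tempered distribution with a Schwartz function, applied component by component. Here ``operator on $\scS(\bbR^d;\bbR^d)$'' is understood in the sense already used for the Riesz potential: $\bbE^s\bu$ is a well-defined tempered distribution for each $\bu \in \scS$, and $\bu \mapsto \bbE^s\bu$ is continuous from $\scS(\bbR^d;\bbR^d)$ into $\scS'(\bbR^d;\bbR^d)$.

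First I would check that $\bbE^s\bu = \bsPsi^s \ast \bu$ is a genuine (indeed bounded and smooth) function for each $\bu \in \scS(\bbR^d;\bbR^d)$. Every entry of $\bsPsi^s$ is, up to the constants in \eqref{eq:Definition:LameNavierKernel}, a linear combination of $|\bx|^{-(d-2s)}$ and $|\bx|^{-(d-2s)}\,x_i x_j |\bx|^{-2}$, both homogeneous of degree $-(d-2s)$, which lies in $(-d,0)$ since $s \in (0,\frac{d}{2})$; hence $|\bsPsi^s(\by)| \leq C|\by|^{-(d-2s)}$, a bound that is locally integrable near the origin. Splitting $\int_{\bbR^d} \bsPsi^s(\by)\bu(\bx-\by)\,\rmd\by$ into $\{|\by|\leq 1\}$ and $\{|\by|>1\}$ and estimating the first piece by $C\|\bu\|_{L^\infty}\int_{|\by|\leq 1}|\by|^{-(d-2s)}\,\rmd\by$ and the second by $C\int_{|\by|>1}|\bu(\bx-\by)|\,\rmd\by \leq C\|\bu\|_{L^1}$, one gets absolute convergence and
\[
	\sup_{\bx \in \bbR^d} |\bbE^s\bu(\bx)| \leq C\big( \|\bu\|_{L^\infty} + \|\bu\|_{L^1} \big)\,.
\]
Since $\scS(\bbR^d;\bbR^d)$ embeds continuously into $L^1 \cap L^\infty$, this already shows $\bbE^s$ maps $\scS$ continuously into $L^\infty$, hence into $\scS'(\bbR^d;\bbR^d)$; differentiating under the integral sign (the $\bx$-derivatives fall on $\bu$, which stays Schwartz) gives in addition $\bbE^s\bu \in C^\infty(\bbR^d;\bbR^d)$.

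Second, for the Fourier identity I would apply the distributional convolution theorem entrywise. Writing $(\bbE^s\bu)_i = \sum_{j=1}^d \bsPsi^s_{ij} \ast u_j$ with each $\bsPsi^s_{ij}$ a tempered distribution (Theorem \ref{thm:NegativePowers:FourierTransform}) and $u_j \in \scS(\bbR^d)$, the classical fact that for $T \in \scS'(\bbR^d)$ and $\psi \in \scS(\bbR^d)$ the convolution $T \ast \psi$ is a slowly increasing smooth function with $\cF(T \ast \psi) = \wh{\psi}\,\cF(T)$ in $\scS'(\bbR^d)$ yields
\[
	\cF\big[ (\bbE^s\bu)_i \big] = \sum_{j=1}^d \wh{u_j}\, \cF(\bsPsi^s_{ij}) = \sum_{j=1}^d \big[\bM^s\big]^{-1}_{ij}\,\wh{u_j} = \Big( \big[\bM^s(\bsxi)\big]^{-1} \wh{\bu}(\bsxi) \Big)_i\,,
\]
where we used $\cF(\bsPsi^s) = [\bM^s]^{-1}$ from Theorem \ref{thm:NegativePowers:FourierTransform}. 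The right-hand side is a legitimate tempered distribution because $[\bM^s(\bsxi)]^{-1}$ behaves like $|\bsxi|^{-2s}$ near the origin (locally integrable since $2s < d$) and is bounded for large $|\bsxi|$, so its product against $\wh{\bu} \in \scS$ is well-defined, indeed integrable.

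I expect the only delicate point to be the invocation of the convolution theorem in this distributional (non-compactly-supported, non-Schwartz) setting: one must know that $\bsPsi^s \ast \bu$ is again tempered --- which is exactly the uniform bound from the first step --- and that $\cF(T \ast \psi) = \wh{\psi}\,\wh{T}$ holds in $\scS'$, a standard result I would simply cite (see e.g.\ \cite{Stein}). Beyond that the argument is bookkeeping: tracking the homogeneity exponent $-(d-2s)$ so that both near-origin integrability and temperedness are transparent, and keeping the matrix--vector contraction componentwise. Everything then follows from Theorem \ref{thm:NegativePowers:FourierTransform}.
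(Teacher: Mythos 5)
Your argument is correct, and it is precisely the standard route the paper relies on but does not write out: the paper only proves Theorem \ref{thm:NegativePowers:FourierTransform} and leaves Proposition \ref{prop:NegativePowers:LameNavierPotential} as the classical consequence of that theorem together with the $\scS'\ast\scS$ convolution calculus, exactly as for the Riesz potential $I^s$. Your two steps — the $L^\infty$ bound from the homogeneity $-(d-2s)\in(-d,0)$ of $\bsPsi^s$ (giving well-definedness and continuity into $\scS'$), and the entrywise use of $\cF(T\ast\psi)=\wh{\psi}\,\wh{T}$ with $\cF(\bsPsi^s)=[\bM^s]^{-1}$ — fill in the omitted details correctly, including the interpretation of "operator on $\scS$" in the same sense the paper uses for $I^s$.
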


We call $\bbE^s$ the \textit{Lam\'e-Navier-Riesz potential} of order $s$. For $s \in (0,1)$ the Lam\'e-Navier-Riesz potential is a type of ``fundamental solution'' for the fractional Lam\'e-Navier equations; this will be discussed further in \Cref{sec:HolderSpaces}.

\begin{proof}[proof of Theorem \ref{thm:NegativePowers:FourierTransform}]
	It is easy to see that $\bsPsi^s \in \scS'(\bbR^d;\bbR^d)$. Next, note that for $a$, $b \in \bbR$ the matrix $a \bI + b \frac{\bsxi \otimes \bsxi}{|\bsxi|^2}$ is invertible so long as $a \neq 0$ and $a \neq - b$, with inverse given by $a^{-1} (\bI - \frac{b}{a+b} \frac{\bsxi \otimes \bsxi}{|\bsxi|^2} )$. Thus we have
	\begin{equation}\label{eq:InverseOfFractionalFourierMatrix}
		\big[ \bM^s(\bsxi) \big]^{-1} = \frac{1}{\mu^s} \frac{1}{(2 \pi |\bsxi|)^{2s}} \bI + \left( \frac{1}{(2 \mu + \lambda)^s} - \frac{1}{\mu^s} \right) \frac{1}{(2 \pi |\bsxi|)^{2s}} \frac{\bsxi \otimes \bsxi}{|\bsxi|^2}
	\end{equation}
	for any $s > 0$.
	Therefore $\big[ \bM^s(\bsxi) \big]^{-1} \in \scS'(\bbR^d;\bbR^{d \times d})$, and so we can compute its inverse Fourier transform. Using \eqref{eq:RieszKernel:Fourier} and \eqref{eq:NegativePowers:FourierTransformPiece1}
	\begin{equation*}
	\begin{split}
		\cF^{-1} &\left( \big[ \bM^s(\cdot) \big]^{-1} \right)(\bx) \\
			&= \frac{1}{\mu^s} \cF^{-1} \left[ \frac{1}{(2 \pi |\bsxi|)^{2s}} \right] \bI + \left( \frac{1}{(2 \mu + \lambda)^s} - \frac{1}{\mu^s} \right) \cF^{-1} \left[ \frac{1}{(2 \pi |\bsxi|)^{2s}} \frac{\bsxi \otimes \bsxi}{|\bsxi|^2} \right] \\
			&= \frac{g_{d,s}}{\mu^s} \frac{1}{|\bx|^{d-2s}} \bI + \left( \frac{1}{(2 \mu + \lambda)^s} - \frac{1}{\mu^s} \right) \frac{\gamma_{d,s}}{|\bx|^{d-2s}} \left( \frac{1}{d-2s} \bI -  \frac{\bx \otimes \bx}{|\bx|^2} \right)\,.
	\end{split}
	\end{equation*}
	Using the identity $a \Gamma(a) = \Gamma(a+1)$ for $a > 0$, we see that $\gamma_{d,s} = \frac{d-2s}{2s} g_{d,s}$.  Therefore, 
	\begin{equation*}
	\begin{split}
		\cF^{-1} &\left( \big[ \bM^s(\cdot) \big]^{-1} \right)(\bx) \\
		&= \frac{2s}{\mu^s} \frac{\gamma_{d,s}}{d-2s} \frac{1}{|\bx|^{d-2s}} \bI + \left( \frac{1}{(2 \mu + \lambda)^s} - \frac{1}{\mu^s} \right) \frac{\gamma_{d,s}}{|\bx|^{d-2s}} \left( \frac{1}{d-2s} \bI -  \frac{\bx \otimes \bx}{|\bx|^2} \right)\,.
	\end{split}
	\end{equation*}
	We obtain the expression on the right-hand side of \eqref{eq:Definition:LameNavierKernel} after straightforward algebraic manipulations.
\end{proof}

\subsection{Functional Calculus and Distributional Forms}

The following theorem is now apparent using the Fourier transform formulae:

\begin{theorem}\label{thm:CalcProperties}
	Let $\bu$ $\bv \in \scS(\bbR^d;\bbR^d)$. Let $d \geq 2$, and let $s$, $t \in (-\frac{d}{2},1]$. Then 
	$$
	\bbL^1 \bu = \lim\limits_{s \to 1^-} \bbL^s \bu = \bbL \bu
	$$ 
	and
	$$
	\bbL^0 \bu  = \lim\limits_{s \to 0^+} \bbL^s \bu = \bu\,.
	$$
	For $d \geq 3$, $\bbL^{-1}\bu = \bbE \bu$ is the fundamental solution associated to $\bbL$ (See \cite[Section 10.3]{mitrea}).
	Moreover,
	\begin{equation*}
		(\bbL^s \circ \bbL^t) \bu = \bbL^{s+t} \bu
	\end{equation*}
	whenever $s+t \in (-\frac{d}{2},1]$, and
	\begin{equation*}
		\intdm{\bbR^d}{ \Vint{\bbL^s \bu(\bx), \bv(\bx)} }{\bx} = \intdm{\bbR^d}{ \Vint{\bu(\bx), \bbL^s  \bv(\bx)} }{\bx}\,.
	\end{equation*}
\end{theorem}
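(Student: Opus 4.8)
The plan is to read all four assertions off the Fourier side, exploiting the multiplier description of $\bbL^s$. For every $s\in(-\tfrac{d}{2},1]$ the operator $\bbL^s$ is the Fourier multiplier with matrix symbol $\bM^s(\bsxi)$: for $s>0$ this is \Cref{lma:FourierSymbolCalculation} combined with \Cref{thm:IntegralDefinitionForFracLame}; for $s=0$ one has $\bM^0(\bsxi)=\bI$ and $\bbL^0=\mathrm{Id}$; and for $s<0$ one sets $\bM^s(\bsxi):=\big[\bM^{-s}(\bsxi)\big]^{-1}$, given explicitly in \eqref{eq:InverseOfFractionalFourierMatrix}, so that $\bbL^s=\bbE^{-s}$ is the multiplier with this symbol by \Cref{prop:NegativePowers:LameNavierPotential}. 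The algebraic engine behind the group law is that $\bI-\frac{\bsxi\otimes\bsxi}{|\bsxi|^2}$ and $\frac{\bsxi\otimes\bsxi}{|\bsxi|^2}$ are complementary orthogonal projections simultaneously diagonalizing every $\bM^s(\bsxi)$, whence
\begin{equation*}
	\bM^s(\bsxi)\,\bM^t(\bsxi)=\bM^{s+t}(\bsxi)\qquad\text{for all }s,t\in\bbR\text{ and all }\bsxi\neq{\bf 0}.
\end{equation*}

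For the two limits I would apply dominated convergence inside $\cF^{-1}$. Fixing $\bu\in\scS(\bbR^d;\bbR^d)$: as $s\to1^-$ one has $\bM^s(\bsxi)\to\bM(\bsxi)$ pointwise, and for $s\in[\tfrac12,1]$ the bound $\|\bM^s(\bsxi)\|\leq C(1+|\bsxi|^2)$ holds uniformly in $s$, so $\bM^s(\bsxi)\wh{\bu}(\bsxi)$ is dominated by a fixed $L^1$ function; hence $\bbL^s\bu(\bx)=\cF^{-1}\big[\bM^s\wh{\bu}\big](\bx)\to\cF^{-1}\big[\bM\wh{\bu}\big](\bx)=\bbL\bu(\bx)$, in fact locally uniformly and with the same argument applied to each derivative. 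Likewise, as $s\to0^+$ one has $\bM^s(\bsxi)\to\bI$ with $\|\bM^s(\bsxi)\|\leq C(1+|\bsxi|)$ uniformly for $s\in(0,\tfrac12]$, which gives $\bbL^s\bu\to\cF^{-1}[\wh{\bu}]=\bu$.

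The composition law carries the one genuine subtlety. For $t\in(-\tfrac{d}{2},1]$ and $\bu\in\scS(\bbR^d;\bbR^d)$ the matrix field $\bM^t\wh{\bu}$ is smooth off the origin with at most polynomial growth and is locally integrable near $0$ because $t>-\tfrac{d}{2}$, so $\bbL^t\bu=\cF^{-1}\big[\bM^t\wh{\bu}\big]$ is a bounded continuous function, hence a tempered distribution; but for $t\in(-\tfrac{d}{2},1)\setminus\{0\}$ it is \emph{not} Schwartz, and $\bM^t\wh{\bu}$ is not smooth at $\bsxi={\bf 0}$, so the pointwise integral formulas do not apply to it directly. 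I would handle this by first recording that, for each admissible $s$, $\bbL^s$ extends to the subspace of $\scS'(\bbR^d;\bbR^d)$ whose Fourier transform is a locally integrable matrix field of polynomial growth, acting there by multiplication by $\bM^s$, and that this extension is consistent with the Schwartz-level definitions of \Cref{thm:IntegralDefinitionForFracLame} and \Cref{prop:NegativePowers:LameNavierPotential}. Granting this, $\cF\big(\bbL^s(\bbL^t\bu)\big)=\bM^s(\bsxi)\big(\bM^t(\bsxi)\wh{\bu}(\bsxi)\big)=\bM^{s+t}(\bsxi)\wh{\bu}(\bsxi)$, the last equality holding pointwise off the null set $\{{\bf 0}\}$ by the group law and the product being locally integrable precisely because $s+t>-\tfrac{d}{2}$; injectivity of $\cF$ on $\scS'(\bbR^d;\bbR^d)$ then yields $\bbL^s\circ\bbL^t\bu=\bbL^{s+t}\bu$. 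The identification $\bbL^{-1}\bu=\bbE\bu$ for $d\geq3$ is obtained by putting $s=1$ in \eqref{eq:Definition:LameNavierKernel}, simplifying the constants via \eqref{eq:DefnOfNormalizingConstants} and \eqref{eq:Definition:LameNavierPotentialConstant}, and matching with the classical Lam\'e fundamental-solution matrix of \cite[Section 10.3]{mitrea}; the relation $\bbL\bbE\bu=\bu$ is then the instance $(s,t)=(1,-1)$ of the composition law. Finally, self-adjointness follows at once from Plancherel's theorem for vector fields together with the fact that $\bM^s(\bsxi)$ is a real symmetric matrix:
\begin{equation*}
	\intdm{\bbR^d}{\Vint{\bbL^s\bu(\bx),\bv(\bx)}}{\bx}
	=\intdm{\bbR^d}{\Vint{\bM^s(\bsxi)\wh{\bu}(\bsxi),\wh{\bv}(\bsxi)}}{\bsxi}
	=\intdm{\bbR^d}{\Vint{\wh{\bu}(\bsxi),\bM^s(\bsxi)\wh{\bv}(\bsxi)}}{\bsxi}
	=\intdm{\bbR^d}{\Vint{\bu(\bx),\bbL^s\bv(\bx)}}{\bx}.
\end{equation*}
The main obstacle is therefore not any single computation but the bookkeeping that legitimizes the multiplier calculus on the non-Schwartz intermediate object $\bbL^t\bu$; once that $\scS'$-extension is in place, each claim collapses to the pointwise matrix identities above.
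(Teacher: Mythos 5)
Your proposal is correct and follows essentially the same route as the paper, which states the theorem as "apparent using the Fourier transform formulae" and offers no further detail: everything is read off from the symbol calculus of $\bM^s(\bsxi)$ established in \Cref{lma:FourierSymbolCalculation}, \Cref{thm:NegativePowers:FourierTransform} and \Cref{prop:NegativePowers:LameNavierPotential}, exactly as you do with the projection decomposition, dominated convergence for the limits $s\to 1^-$, $s\to 0^+$, and Plancherel with the real symmetric symbol for self-adjointness. Your explicit handling of the non-Schwartz intermediate object $\bbL^t\bu$ via the tempered-distribution extension is a bookkeeping point the paper leaves implicit (it is consistent with the $\scS_s'$ duality framework the paper sets up), so your write-up is if anything more careful than the paper's.
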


Although $\bbL^s \bu$ is not a Schwartz function for $\bu \in \scS$, it still satisfies a decay property. This decay, proved in the theorem below, is a straightforward adaptation of \cite[Proposition 2.9]{garofalo2017fractional}.

\begin{theorem}
	Let $\bu \in \scS(\bbR^d;\bbR^d)$. For every $|\bx| \geq 1$, we have
	\begin{equation}\label{eq:DecayRate}
		|\bbL^s \bu(\bx)| \leq C|\bx|^{-d-2s}\,,
	\end{equation}
	where $C = C(\mu,\lambda,d,s,\bu)$.
\end{theorem}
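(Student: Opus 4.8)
The plan is to argue directly from the second-order difference characterization of $\bbL^s \bu$ recorded at the end of \Cref{sec:IntroductionToOperator}. Since the coupling matrix $\bh \otimes \bh / |\bh|^2$ has operator norm one, it suffices to bound the scalar quantity
\begin{equation*}
	I(\bx) := \intdm{\bbR^d}{\frac{\big| 2\bu(\bx) - \bu(\bx+\bh) - \bu(\bx-\bh) \big|}{|\bh|^{d+2s}}}{\bh}
\end{equation*}
by $C |\bx|^{-d-2s}$ for $|\bx| \geq 1$; the estimate \eqref{eq:DecayRate} then follows with a constant that picks up the coefficients $\frac{(2s+1)\mu^s - (2\mu+\lambda)^s}{2s} c_{d,s}$ and $\frac{(2\mu+\lambda)^s - \mu^s}{2s} \kappa_{d,s}$ appearing in \eqref{eq:FractionalPowerOfLame}. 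This is exactly the scheme used for $(-\Delta)^s$ in \cite[Proposition 2.9]{garofalo2017fractional}, and the only observation needed to port it over is the uniform bound on the projection matrix.

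Fix $\bx$ with $|\bx| \geq 1$ and split $I(\bx)$ over the regions $A_1 = \{|\bh| \leq |\bx|/2\}$, $A_2 = \{|\bx|/2 < |\bh| \leq 2|\bx|\}$, and $A_3 = \{|\bh| > 2|\bx|\}$. On $A_1$ I would use the second-order Taylor estimate $|2\bu(\bx) - \bu(\bx+\bh) - \bu(\bx-\bh)| \leq |\bh|^2 \sup_{|\bz-\bx| \leq |\bh|} |\p^2 \bu(\bz)|$; for $\bh \in A_1$ every such $\bz$ obeys $|\bz| \geq |\bx|/2$, so Schwartz decay gives $\sup_{|\bz-\bx|\leq|\bh|} |\p^2\bu(\bz)| \leq C_N |\bx|^{-N}$ for every $N$. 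Since $2 - 2s > 0$ the integral $\int_{A_1} |\bh|^{2-d-2s}\,\rmd\bh$ converges and is comparable to $|\bx|^{2-2s}$, so this piece is $\leq C_N |\bx|^{2-2s-N}$, and taking $N = d+2$ produces the power $|\bx|^{-d-2s}$. On $A_2$ and $A_3$ I would instead apply the triangle inequality $|2\bu(\bx) - \bu(\bx+\bh) - \bu(\bx-\bh)| \leq 2|\bu(\bx)| + |\bu(\bx+\bh)| + |\bu(\bx-\bh)|$ together with the rapid decay of $\bu$: the $|\bu(\bx)|$ contribution is controlled by $\int_{|\bh| > |\bx|/2} |\bh|^{-d-2s}\,\rmd\bh = C|\bx|^{-2s}$ and $|\bu(\bx)| \leq C_N|\bx|^{-N}$ with $N = d$; on $A_2$ the shifted terms are handled by bounding $|\bh|^{-d-2s} \leq C|\bx|^{-d-2s}$ and using $\int_{\bbR^d}(1+|\bx \pm \bh|)^{-N}\,\rmd\bh < \infty$ for $N > d$; on $A_3$ one uses $|\bx \pm \bh| \geq |\bh|/2 \geq 1$ to convert the decay of $\bu$ into decay in $\bh$, reducing the shifted terms to $\int_{|\bh| > 2|\bx|} |\bh|^{-N-d-2s}\,\rmd\bh = C|\bx|^{-N-2s}$ with $N = d$. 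Summing the three regions gives $I(\bx) \leq C |\bx|^{-d-2s}$, with $C = C(\mu,\lambda,d,s,\bu)$ after tracking the prefactors in \eqref{eq:FractionalPowerOfLame}.

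I do not anticipate any genuine obstacle, since the matrix factor contributes nothing beyond its norm bound. The only point requiring care is the bookkeeping of the exponent $N$ across the three regions, chosen so that \emph{every} contribution ends up bounded by $|\bx|^{-d-2s}$ rather than a weaker decay such as $|\bx|^{-2s}$; in particular one cannot use mere boundedness of $\bu$ on the intermediate annulus $A_2$ and must invoke the full Schwartz decay there.
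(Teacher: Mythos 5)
Your proposal is correct and is essentially the paper's own argument: the paper proves this decay precisely as "a straightforward adaptation of \cite[Proposition 2.9]{garofalo2017fractional}," i.e.\ by bounding the rank-one projection $\bh \otimes \bh/|\bh|^2$ by its unit operator norm and then running the standard three-region splitting (Taylor expansion near $\bh={\bf 0}$, Schwartz decay on the intermediate annulus and at infinity) on the second-order difference quotient, exactly as you do. Your bookkeeping of the exponents, including the choice $N=d+2$ on the inner region and $N=d$ on the outer regions, is sound and yields the claimed bound $C|\bx|^{-d-2s}$.
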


In the subsequent sections, we will demonstrate how far calculations involving $\bbL^s$ can be extended. The operator is so far defined only for very smooth functions, but thanks to the decay rate \eqref{eq:DecayRate} we can extend the definition by duality following the strategy in \cite{silvestre2007regularity, bucur2016some, garofalo2017fractional}.
Let $\scS_s(\bbR^d;\bbR^d)$ be the locally convex topological space defined by
\begin{equation*}
	\scS_s(\bbR^d;\bbR^d) := \left\{ \bu \in C^{\infty}(\bbR^d;\bbR^d) \, : \, \sup_{\bx \in \bbR^d} (1+|\bx|^{d+2s}) |D^{\bsgamma} \bu(\bx)| < \infty\,, \quad \bsgamma \in \bbN^{d}_0  \right\}
\end{equation*}
equipped with the family of seminorms
\begin{equation*}
	[\bu]_{\scS_{s,\bsgamma}(\bbR^d)} := \sup_{\bx \in \bbR^d} (1+|\bx|^{d+2s}) |D^{\bsgamma} \bu(\bx)|\,, \quad \bsgamma \in \bbN^{d}_0\,.
\end{equation*}
Let $\scS_s'(\bbR^d;\bbR^d)$ denote the topological dual of $\scS_s(\bbR^d;\bbR^d)$.
It is straightforward to check using \eqref{eq:DecayRate} that $\bbL^s \bu \in \scS_s(\bbR^d;\bbR^d)$ whenever $\bu \in \scS(\bbR^d;\bbR^d)$.
We can then define $\bbL^s$ on the space $\scS_s'(\bbR^d;\bbR^d)$ by duality since $\bbL^s$ is of a symmetric form; for $\bu \in \scS_s'(\bbR^d;\bbR^d)$ and $\bv \in \scS_s(\bbR^d;\bbR^d)$
\begin{equation*}
	\Vint{\bbL^s \bu, \bv} := \Vint{\bu, \bbL^s \bv}\,.
\end{equation*}

The spaces described above are very specialized. In what follows we will be able to apply $\bbL^s$ to functions belonging to a subset of a weighted Lebesgue space. Define the space $L^1_{s}(\bbR^d;\bbR^d)$ by
\begin{equation}\label{eq:DefnOfWeightedLebSpace}
	L^1_{s}(\bbR^d;\bbR^d) := \left\{ \bu \in L^1_{loc}(\bbR^d;\bbR^d) \, : \,  \Vnorm{\bu}_{L^1_{s}(\bbR^d)} := \intdm{\bbR^d}{ \frac{|\bu(\bx)|}{1+|\bx|^{d+2s}} }{\bx} < \infty  \right\}\,.
\end{equation}
We note that $L^1_{s}(\bbR^d;\bbR^d) \subset \scS_s'(\bbR^d;\bbR^d)$, since for $\bu \in L^1_{s}(\bbR^d;\bbR^d)$ and $\bsvarphi \in \scS_s(\bbR^d;\bbR^d)$
\begin{equation*}
	\intdm{\bbR^d}{\Vint{\bu(\bx), \bsvarphi(\bx)}}{\bx} = \intdm{\bbR^d}{\Vint{\frac{\bu(\bx)}{1+|\bx|^{d+2s}}, (1+|\bx|^{d+2s})\bsvarphi(\bx)}}{\bx} \leq \Vnorm{\bu}_{L^1_{s}(\bbR^d)} [\bsvarphi]_{\scS_{s,0}(\bbR^d)}\,.
\end{equation*}
Additionally, $L^p(\bbR^d) \subset L^1_{s}(\bbR^d;\bbR^d)$ for $p \in [1,\infty]$.

\section{$\bbL^s$ and H\"older Spaces}\label{sec:HolderSpaces}

The next theorems concerning the mapping properties of $\bbL^s$ are analogues of results from \cite{silvestre2007regularity} for the fractional Laplacian. Just as in those works, the proofs here rely on estimates of the full difference $\bu(\bx)-\bu(\by)$, and so the coupled nature of the operator does not play an essential role. Therefore if the proof for $\bbL^s$ is very similar to the analogous proof for $(-\Delta)^s$ then we state it without proof.

\begin{theorem}[See {\cite[Proposition 2.4]{silvestre2007regularity}} and \cite{garofalo2017fractional}]\label{prop:LuIsContinuousForHolderFxns}
	Let $\Omega \subset \bbR^d$ be a domain. Suppose that $\bu \in L^1_{s}(\bbR^d;\bbR^d)$, and that for some $\veps > 0$
	\begin{equation*}
		\bu \in \begin{cases}
			C^{0,2s+\veps}(\Omega;\bbR^d) &\text{ when } s \in (0,1/2)\,, \\
			C^{1,2s+\veps-1}(\Omega;\bbR^d) &\text{ when } s \in [1/2,1)\,.
		\end{cases}
	\end{equation*}
	Then $\bbL^s \bu$ is a continuous function on $\Omega$, with values given by \eqref{eq:FractionalPowerOfLame}.
\end{theorem}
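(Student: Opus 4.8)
The plan is to run the standard localization argument for pointwise evaluation of fractional operators (cf.\ \cite{silvestre2007regularity, garofalo2017fractional}), adapted to the matrix-valued kernel in \eqref{eq:FractionalPowerOfLame}. Fix $\bx_0 \in \Omega$ and choose $r>0$ small enough that $\overline{B_{2r}(\bx_0)} \subset \Omega$. For $\bx \in B_{r/2}(\bx_0)$ I would split the principal-value integral defining $\bbL^s \bu(\bx)$ into a near piece $I_1(\bx)$, integrated over $\{\by : |\bx-\by|<r\}$, and a far piece $I_2(\bx)$, integrated over $\{\by : |\bx-\by|\geq r\}$. Writing the total kernel as $K(\bx-\by)$ (collecting the two kernels of \eqref{eq:FractionalPowerOfLame} with their coefficients), the two crucial structural facts are that $K$ is even in $\bx-\by$ and that the tensor factor $\tfrac{(\bx-\by)\otimes(\bx-\by)}{|\bx-\by|^2}$ has operator norm $1$, so $\|K(\bz)\|\leq C|\bz|^{-d-2s}$.

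For the near piece I would exploit evenness to symmetrize: on the annulus $\{\delta<|\bx-\by|<r\}$ the substitutions $\by = \bx\pm\bh$ give, after averaging and using $K(-\bh)=K(\bh)$,
\[
I_1(\bx) = \tfrac12 \int_{\{|\bh|<r\}} K(\bh)\,\big(2\bu(\bx)-\bu(\bx+\bh)-\bu(\bx-\bh)\big)\, \mathrm{d}\bh ,
\]
exactly as in \Cref{lma:SecondOrderDiff}, the odd-in-$\bh$ terms integrating to zero over the symmetric region. Since $\bx\pm\bh \in B_{2r}(\bx_0)\subset\Omega$ when $|\bh|<r$ and $\bx\in B_{r/2}(\bx_0)$, the local regularity hypothesis yields $|2\bu(\bx)-\bu(\bx+\bh)-\bu(\bx-\bh)| \leq C|\bh|^{2s+\veps}$ uniformly in such $\bx$: when $s<1/2$ this is immediate from $\bu\in C^{0,2s+\veps}$; when $s\geq 1/2$ the first-order increments cancel in the second difference and a Taylor expansion reduces the bound to $[\grad\bu]_{C^{0,2s+\veps-1}}$. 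Hence the symmetrized integrand is dominated by $C|\bh|^{2s+\veps}|\bh|^{-d-2s}=C|\bh|^{\veps-d}$ on $\{|\bh|<r\}$, which is integrable; this shows the principal value exists (so its value is indeed \eqref{eq:FractionalPowerOfLame}), is finite, and, since the integrand is continuous in $\bx$ for each $\bh$ with the $\bx$-uniform integrable majorant $C|\bh|^{\veps-d}\mathbf{1}_{\{|\bh|<r\}}$, is continuous on $B_{r/2}(\bx_0)$ by dominated convergence.

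For the far piece no principal value is needed. I would write $I_2(\bx) = \bu(\bx)\int_{\{|\bx-\by|\geq r\}}K(\bx-\by)\,\mathrm{d}\by - \int_{\{|\bx-\by|\geq r\}}K(\bx-\by)\,\bu(\by)\,\mathrm{d}\by$; the first term is a constant multiple of $\bI$ (by the rotational symmetry of $K$) times the continuous function $\bu(\bx)$, while the second converges absolutely because for $\bx\in\overline{B_{r/2}(\bx_0)}$ one has $|\bx-\by|^{-d-2s}\leq C_{r,\bx_0}(1+|\by|^{d+2s})^{-1}$ on $\{|\bx-\by|\geq r\}$, so it is bounded by $C\Vnorm{\bu}_{L^1_s(\bbR^d)}$ via \eqref{eq:DefnOfWeightedLebSpace}. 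Continuity of the second term again follows from dominated convergence: for $\bx$ near $\bx_0$ the integrand converges pointwise a.e.\ (the sphere $\{|\bx_0-\by|=r\}$ is null) and is majorized by a fixed combination of $|\bu(\by)|$ times a bounded kernel on the bounded annulus $\{r/2\leq|\bx_0-\by|\leq 2r\}$ (integrable since $\bu\in L^1_{loc}$) and $C|\bx_0-\by|^{-d-2s}|\bu(\by)|$ on $\{|\bx_0-\by|\geq r\}$ (integrable by the $L^1_s$ bound). Adding, $\bbL^s\bu = I_1+I_2$ is continuous on $B_{r/2}(\bx_0)$ with values given by \eqref{eq:FractionalPowerOfLame}; since $\bx_0\in\Omega$ was arbitrary, this proves the claim.

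The main obstacle is making the symmetrization step rigorous: one must verify that the principal-value limit over $\{\delta<|\bx-\by|<r\}$ genuinely exists, which hinges on the exact cancellation of the odd part of the matrix kernel — this is precisely where the evenness of $\tfrac{(\bx-\by)\otimes(\bx-\by)}{|\bx-\by|^2}$ enters — together with the sharp estimate $|2\bu(\bx)-\bu(\bx+\bh)-\bu(\bx-\bh)|\leq C|\bh|^{2s+\veps}$ in each of the two regimes $s\in(0,1/2)$ and $s\in[1/2,1)$. The matrix coupling is otherwise inert, since every bound is taken on the full increment $\bu(\bx)-\bu(\by)$ and the tensor factor contributes only its unit operator norm, which is why the proof for $(-\Delta)^s$ transfers essentially verbatim.
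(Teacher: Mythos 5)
Your proof is correct and is essentially the argument the paper intends: the paper omits the proof and defers to the scalar case (\cite[Proposition 2.4]{silvestre2007regularity}, \cite{garofalo2017fractional}), which is exactly your near/far splitting with symmetrization of the even matrix kernel into second-order differences, the H\"older (resp.\ $C^{1,\alpha}$ via Taylor) bound $|2\bu(\bx)-\bu(\bx+\bh)-\bu(\bx-\bh)|\leq C|\bh|^{2s+\veps}$, the $L^1_s$ control of the far field, and dominated convergence for continuity. The only point worth noting is that the matrix factor enters solely through its unit operator norm and its evenness, which you identify correctly, so the scalar proof transfers verbatim.
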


\begin{theorem}\label{prop:LuCommutesWithDerivatives}
	Let $\Omega \subset \bbR^d$ be a bounded domain. Let $k \in \bbN$. Suppose that $\bu \in L^1_{s}(\bbR^d;\bbR^d)$ and 
	\begin{equation*}
		\bu \in
		\begin{cases}
			C^{k,2s+\veps}(\Omega;\bbR^d)\,, &\qquad s \in (0,1/2)\,, \\
			C^{k+1,2s+\veps-1}(\Omega;\bbR^d)\,, &\qquad s \in [1/2,1)\,, \\
		\end{cases}
	\end{equation*}
	for some $\veps > 0$. Then for any $s \in (0,1)$, $\bbL^s \bu$ is in $C^{k}(\Omega;\bbR^d)$, with values given by \eqref{eq:FractionalPowerOfLame}.
\end{theorem}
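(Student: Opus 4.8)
The plan is to show that $\bbL^s$ commutes with partial derivatives up to order $k$ and then to quote \Cref{prop:LuIsContinuousForHolderFxns}. Nonlocality is dealt with by localizing. Fix $\bx_0 \in \Omega$ and $\delta > 0$ with $\overline{B_{2\delta}(\bx_0)} \subset \Omega$, and let $\eta \in C^{\infty}_c(B_{2\delta}(\bx_0))$ satisfy $\eta \equiv 1$ on $B_{\delta}(\bx_0)$. Since $\bu \in L^1_s(\bbR^d;\bbR^d) \subset \scS_s'(\bbR^d;\bbR^d)$ and multiplication by the smooth compactly supported function $\eta$ preserves $\scS_s(\bbR^d;\bbR^d)$, we may split $\bu = \bv_1 + \bv_2$ in $\scS_s'$ with $\bv_1 := \eta\bu$ and $\bv_2 := (1-\eta)\bu$, so that $\bbL^s \bu = \bbL^s \bv_1 + \bbL^s \bv_2$ by linearity. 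By the Leibniz rule $\bv_1 \in C^{k,2s+\veps}_{c}(\bbR^d;\bbR^d)$ when $s \in (0,1/2)$ and $\bv_1 \in C^{k+1,2s+\veps-1}_{c}(\bbR^d;\bbR^d)$ when $s \in [1/2,1)$, while $\bv_2 \in L^1_s(\bbR^d;\bbR^d)$ vanishes on $B_{\delta}(\bx_0)$. For $\bv_2$ I would apply \Cref{prop:LuIsContinuousForHolderFxns} on $B_{\delta}(\bx_0)$: the values $\bbL^s \bv_2(\bx)$ for $\bx \in B_{\delta/2}(\bx_0)$ are given by the integrals in \eqref{eq:FractionalPowerOfLame} with the principal value superfluous, since $\bv_2$ is supported in $\{|\by - \bx_0| \geq \delta\}$; on $B_{\delta/2}(\bx_0)$ the kernel and each of its $\bx$-derivatives are dominated on the support of $\bv_2$ by a constant multiple of $(1+|\by|)^{-d-2s}$, so one may differentiate under the integral sign arbitrarily often using only the finiteness of $\Vnorm{\bv_2}_{L^1_s(\bbR^d)}$, and therefore $\bbL^s \bv_2 \in C^{\infty}(B_{\delta/2}(\bx_0);\bbR^d)$.

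The heart of the argument is the claim that $D^{\bsgamma} \bbL^s \bv_1 = \bbL^s D^{\bsgamma} \bv_1$ on $\bbR^d$ for all $\bsgamma \in \bbN^d_0$ with $|\bsgamma| \leq k$. Granting this, $D^{\bsgamma}\bv_1$ lies in $C^{0,2s+\veps}_{c}$ (resp.\ $C^{1,2s+\veps-1}_{c}$), so \Cref{prop:LuIsContinuousForHolderFxns} makes $\bbL^s D^{\bsgamma}\bv_1$ continuous, whence $\bbL^s\bv_1 \in C^k(\bbR^d;\bbR^d)$. By iteration it suffices to treat $|\bsgamma| = 1$: if $\bg$ is $C^{1,2s+\veps}_{c}$ (resp.\ $C^{2,2s+\veps-1}_{c}$) then $D_i \bbL^s \bg = \bbL^s D_i \bg$. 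I would first rewrite $\bbL^s \bg$ through its second-order difference representation, obtained from the principal-value form of \eqref{eq:FractionalPowerOfLame} (valid by \Cref{prop:LuIsContinuousForHolderFxns}) via the substitution $\by = \bx - \bh$ and symmetrization in $\bh$ exactly as in \Cref{lma:SecondOrderDiff}; this representation is absolutely convergent in both regimes because the second-order difference of $\bg$ is $O(|\bh|^{2s+\veps})$ near $\bh = {\bf 0}$. Then I would form the incremental quotient $\tau^{-1}\big(\bbL^s\bg(\bx + \tau\be_i) - \bbL^s\bg(\bx)\big)$ and pass $\tau \to 0$ inside the integral by dominated convergence, splitting $\{|\bh| \leq 1\}$ and $\{|\bh| > 1\}$: on $\{|\bh| \leq 1\}$ the quotient's integrand is bounded by $C|\bh|^{2s+\veps}|\bh|^{-d-2s} = C|\bh|^{\veps - d}$, where the second-order difference of $D_i\bg$ is estimated by its $C^{0,2s+\veps}$-seminorm if $s < 1/2$ and by its $C^{1,2s+\veps-1}$-seminorm if $s \geq 1/2$; on $\{|\bh| > 1\}$ it is bounded by $4\Vnorm{\grad\bg}_{L^{\infty}}|\bh|^{-d-2s}$ using the compact support of $\bg$. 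Both bounds are integrable, and the limiting integral is precisely the second-order difference representation of $\bbL^s D_i\bg(\bx)$.

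Reassembling, for $\bx \in B_{\delta/2}(\bx_0)$ we get $\bbL^s\bu(\bx) = \bbL^s\bv_1(\bx) + \bbL^s\bv_2(\bx) \in C^k$, and since $\bx_0 \in \Omega$ was arbitrary $\bbL^s\bu \in C^k(\Omega;\bbR^d)$; that its values agree with \eqref{eq:FractionalPowerOfLame} follows from \Cref{prop:LuIsContinuousForHolderFxns} applied directly to $\bu \in C^{0,2s+\veps}(\Omega;\bbR^d) \cap L^1_s(\bbR^d;\bbR^d)$. I expect the main obstacle to be the dominated-convergence step for the difference quotient: one must use the compact support of $\bg$ to control large $|\bh|$ and the full Hölder hypothesis to control $|\bh|$ near ${\bf 0}$, and in the borderline regime $s \geq 1/2$ this is exactly why the hypothesis is stated with one extra derivative and why the symmetric second-order difference — rather than the first-order form, which is not absolutely convergent there — must be used throughout.
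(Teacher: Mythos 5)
Your proposal is correct and follows essentially the same route as the paper: the paper's proof is precisely the observation that $\bbL^s$ commutes with the derivatives $\p_{x_j}$, followed by an inductive application of \Cref{prop:LuIsContinuousForHolderFxns}. Your cutoff decomposition $\bu=\eta\bu+(1-\eta)\bu$, the differentiation under the integral sign for the far part, and the second-order-difference/dominated-convergence justification of $D_i\bbL^s\bg=\bbL^s D_i\bg$ for the compactly supported part simply supply, correctly, the details that the paper leaves implicit under the weak global hypothesis $\bu\in L^1_s(\bbR^d;\bbR^d)$.
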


\begin{proof}
	The result follows (by induction) in exactly the same way as that of \Cref{prop:LuIsContinuousForHolderFxns} after noting that $\bbL^s$ commutes with derivatives $\p_{x_j}$ for any $j$.
\end{proof}

\begin{theorem}[See {\cite[Proposition 2.5]{silvestre2007regularity}}]\label{prop:HolderSpace0}
	Suppose $\bu \in C^{0,\alpha}(\bbR^d;\bbR^d)$ for $\alpha \in (2s,1]$. Then $\bbL^s \bu \in C^{0,\alpha-2s}(\bbR^d;\bbR^d)$, with the estimate
	\begin{equation}\label{eq:HolderEstimate1}
		[\bbL^s \bu]_{C^{0,\alpha-2s}(\bbR^d)} \leq C [\bu]_{C^{0,\alpha}(\bbR^d)}\,,
	\end{equation}
	where the constant $C$ depends only on $d$, $s$ and $\alpha$.
\end{theorem}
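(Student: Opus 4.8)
The plan is to reduce the claimed H\"older estimate for $\bbL^s$ to the corresponding known estimate for $(-\Delta)^s$ proved in \cite[Proposition 2.5]{silvestre2007regularity}, using the integral representation \eqref{eq:FractionalPowerOfLame}. Writing $\bbL^s$ as a linear combination of the scalar fractional Laplacian (applied componentwise) and the strongly-coupled operator $\bbF^s$ from \eqref{eq:Localization:DefnOfL}, it suffices to establish the bound $[\bbF^s\bu]_{C^{0,\alpha-2s}(\bbR^d)}\le C[\bu]_{C^{0,\alpha}(\bbR^d)}$; the fractional-Laplacian piece is immediate from the cited result applied to each component. For $\bbF^s$ I would work directly with the second-order difference characterization from \Cref{lma:SecondOrderDiff},
\begin{equation*}
	\bbF^s \bu(\bx) = \frac{\kappa_{d,s}}{2} \intdm{\bbR^d}{ \left( \frac{\bh \otimes \bh}{|\bh|^2} \right) \frac{2 \bu(\bx) - \bu(\bx+\bh)-\bu(\bx-\bh)}{|\bh|^{d+2s}} }{\bh}\,,
\end{equation*}
which is valid pointwise here since a $C^{0,\alpha}$ field with $\alpha>2s$ satisfies the hypotheses of \Cref{prop:LuIsContinuousForHolderFxns}.

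Next I would carry out the standard two-region splitting. Fix $\bx_1,\bx_2\in\bbR^d$, set $r=|\bx_1-\bx_2|$, and split $\bbF^s\bu(\bx_1)-\bbF^s\bu(\bx_2)$ into the integral over $|\bh|\le r$ and the integral over $|\bh|>r$. On $|\bh|\le r$, estimate each of $\bbF^s\bu(\bx_1)$ and $\bbF^s\bu(\bx_2)$ separately, bounding the second-order difference $|2\bu(\bx)-\bu(\bx+\bh)-\bu(\bx-\bh)|$ by $2[\bu]_{C^{0,\alpha}}|\bh|^{\alpha}$ (one may also use $C[\bu]_{C^{0,\alpha}}|\bh|^{\alpha}$ directly from H\"older continuity of each translate), so that the matrix factor $|\bh\otimes\bh|/|\bh|^2\le 1$ is harmless and the integrand is $\lesssim [\bu]_{C^{0,\alpha}}|\bh|^{\alpha-d-2s}$; integrating over $|\bh|\le r$ gives a contribution $\lesssim [\bu]_{C^{0,\alpha}} r^{\alpha-2s}$ because $\alpha-2s>0$. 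On $|\bh|>r$, use instead the difference of integrands at $\bx_1$ and $\bx_2$: write $2\bu(\bx_1)-\bu(\bx_1+\bh)-\bu(\bx_1-\bh) - \big(2\bu(\bx_2)-\bu(\bx_2+\bh)-\bu(\bx_2-\bh)\big)$ and bound it by $C[\bu]_{C^{0,\alpha}}\min\{|\bh|^{\alpha},\,r\,|\bh|^{\alpha-1}\}$ — or more simply by $C[\bu]_{C^{0,\alpha}} r^{\alpha}$ combined with the $|\bh|^{-d-2s}$ weight when $\alpha\le 1$; integrating over $|\bh|>r$ then yields $\lesssim [\bu]_{C^{0,\alpha}} r^{\alpha-2s}$ since the exponent makes the tail integral convergent. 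Adding the two contributions gives $|\bbF^s\bu(\bx_1)-\bbF^s\bu(\bx_2)|\le C[\bu]_{C^{0,\alpha}}|\bx_1-\bx_2|^{\alpha-2s}$ with $C=C(d,s,\alpha)$, which combined with the componentwise fractional-Laplacian bound yields \eqref{eq:HolderEstimate1} after absorbing the elastic constants $\mu,\lambda$ into $C$ (noting they enter only through the fixed coefficients in \eqref{eq:FractionalPowerOfLame}, so in fact $C$ depends on $\mu,\lambda$ as well unless one factors them out as written).

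The only genuinely delicate point is the book-keeping on the far region $|\bh|>r$: one must be slightly careful that the crude bound $r^{\alpha}$ on the difference of second-order differences (rather than the sharper $\min\{|\bh|^\alpha, r|\bh|^{\alpha-1}\}$) is enough, which it is precisely because the weight $|\bh|^{-d-2s}$ already gives an integrable tail at infinity; there is no logarithmic borderline here since $\alpha>2s$ strictly. Since every step — the splitting, the H\"older bounds on the differences, the matrix factor being bounded by $1$, and the resulting radial integrals — is identical line-by-line to the proof of \cite[Proposition 2.5]{silvestre2007regularity} for $(-\Delta)^s$, with the inconsequential addition of the uniformly bounded matrix weight $\bh\otimes\bh/|\bh|^2$, the coupled structure plays no role and the theorem follows. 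Accordingly I would state the proof briefly, either omitting it as "a line-by-line adaptation" in keeping with the convention announced before \Cref{prop:LuIsContinuousForHolderFxns}, or including only the two-region splitting estimate above.
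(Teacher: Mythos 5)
Your proposal is correct and is exactly the argument the paper intends: the paper states this result without proof, invoking its announced convention that proofs which are line-by-line adaptations of the scalar case (here \cite[Proposition 2.5]{silvestre2007regularity}) are omitted, since the matrix weight $\bh\otimes\bh/|\bh|^2$ is bounded by $1$ and the coupling plays no role. Your two-region splitting with the crude $r^{\alpha}$ bound on the far region is precisely that standard argument, so no comparison beyond this is needed.
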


\begin{theorem}[See {\cite[Proposition 2.6]{silvestre2007regularity}}]
	Suppose $\bu \in C^{1,\alpha}(\bbR^d;\bbR^d)$ for $\alpha \in (\max \{0, 2s-1\},1]$.
	
	If $\alpha > 2s$, then $\bbL^s \bu \in C^{1,\alpha-2s}(\bbR^d;\bbR^d)$ and
	\begin{equation}\label{eq:HolderSpace1:Estimate1}
		[\bbL^s \bu]_{C^{1,\alpha-2s}(\bbR^d)} \leq C [\bu]_{C^{1,\alpha}(\bbR^d)}\,.
	\end{equation}
	
	If $\alpha < 2s$, then $\bbL^s \bu \in C^{0,\alpha-2s+1}(\bbR^d;\bbR^d)$ and
	\begin{equation}\label{eq:HolderSpace1:Estimate2}
		[\bbL^s \bu]_{C^{0,\alpha-2s+1}(\bbR^d)} \leq C [\bu]_{C^{1,\alpha}(\bbR^d)}\,.
	\end{equation}
	The constant $C$ depends only on $d$, $s$ and $\alpha$.
\end{theorem}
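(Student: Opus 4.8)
The plan is to reduce the statement to a vector-valued transcription of the argument for $(-\Delta)^s$ in \cite[Proposition 2.6]{silvestre2007regularity}. By \eqref{eq:FractionalPowerOfLame}, $\bbL^s$ is a fixed linear combination of the componentwise fractional Laplacian $(-\Delta)^s$ and the operator $\bbF^s$ of \eqref{eq:Localization:DefnOfL}, and the kernel of $\bbF^s$ differs from that of $(-\Delta)^s$ only by the matrix factor $\bh\otimes\bh/|\bh|^2$, which satisfies $|\bh\otimes\bh|/|\bh|^2\le 1$; all estimates below are made on the second-order difference $\delta^2\bu(\bx,\bh):=2\bu(\bx)-\bu(\bx+\bh)-\bu(\bx-\bh)$ against the kernel $|\bh|^{-d-2s}$ (\Cref{lma:SecondOrderDiff}), so the coupling plays no role. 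Throughout, $\bbL^s\bu$ is a continuous function whose pointwise values are given by \eqref{eq:FractionalPowerOfLame} (\Cref{prop:LuIsContinuousForHolderFxns}), and $\bbL^s$ annihilates affine vector fields. Note that since $\alpha\le 1$ the condition $\alpha>2s$ forces $s<\tfrac12$, so the two cases are disjoint.

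In the case $\alpha>2s$, I would use that $\bbL^s$ commutes with $\p_{x_j}$ — the Fourier-symbol identity underlying \Cref{prop:LuCommutesWithDerivatives} — to write $\p_{x_j}(\bbL^s\bu)=\bbL^s(\p_{x_j}\bu)$. Since $\p_{x_j}\bu\in C^{0,\alpha}(\bbR^d;\bbR^d)$ with $\alpha\in(2s,1]$, \Cref{prop:HolderSpace0} applies and gives $\bbL^s(\p_{x_j}\bu)\in C^{0,\alpha-2s}(\bbR^d;\bbR^d)$ with $[\bbL^s\p_{x_j}\bu]_{C^{0,\alpha-2s}(\bbR^d)}\le C[\p_{x_j}\bu]_{C^{0,\alpha}(\bbR^d)}\le C[\bu]_{C^{1,\alpha}(\bbR^d)}$ for each $j$. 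Since $\alpha-2s\in(0,1)$, summing over $j$ shows $\bbL^s\bu\in C^{1,\alpha-2s}(\bbR^d;\bbR^d)$ and yields \eqref{eq:HolderSpace1:Estimate1}.

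In the case $\alpha<2s$, differentiation is unavailable (now $\p_{x_j}\bu$ lies only in $C^{0,\alpha}$ with $\alpha<2s$, outside the reach of \Cref{prop:HolderSpace0}), so I would estimate $[\bbL^s\bu]_{C^{0,\alpha-2s+1}}$ directly. Fix $\bx,\by$ and put $r:=|\bx-\by|$. Replace $\bu$ by $\tilde\bu:=\bu-\ell$, where $\ell$ is the first-order Taylor polynomial of $\bu$ at $\by$: since $\bbL^s\ell\equiv {\bf 0}$ this leaves $\bbL^s\bu$ unchanged, while the vanishing of $\grad\tilde\bu$ at $\by$ gives $|\grad\tilde\bu(\bz)|\le[\grad\bu]_{C^{0,\alpha}}|\bz-\by|^{\alpha}$ for all $\bz$. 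Setting $\bw:=\tilde\bu(\cdot+(\bx-\by))-\tilde\bu$, a manipulation of the second-order difference yields $\bbL^s\bu(\bx)-\bbL^s\bu(\by)=\bbL^s\bw(\by)$, and I then split the representation of $\bbL^s\bw(\by)$ from \Cref{lma:SecondOrderDiff} at $|\bh|=r$. On $\{|\bh|<r\}$ the estimate $|\delta^2\bw(\by,\bh)|\le C[\grad\bw]_{C^{0,\alpha}}|\bh|^{1+\alpha}\le C[\grad\bu]_{C^{0,\alpha}}|\bh|^{1+\alpha}$ together with $1+\alpha-2s>0$ integrates to $C[\grad\bu]_{C^{0,\alpha}}r^{1+\alpha-2s}$. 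On $\{|\bh|\ge r\}$ the crucial estimate is $|\delta^2\bw(\by,\bh)|\le C[\grad\bu]_{C^{0,\alpha}}\,r\,|\bh|^{\alpha}$, which follows from $|\bw(\bz)|\le r\,[\grad\bu]_{C^{0,\alpha}}(|\bz-\by|+r)^{\alpha}$ evaluated at $\bz=\by$ and $\bz=\by\pm\bh$; since $\alpha<2s$ the tail $\int_r^{\infty}\rho^{\alpha-1-2s}\,\rmd\rho = r^{\alpha-2s}/(2s-\alpha)$ converges, and this piece also contributes $C[\grad\bu]_{C^{0,\alpha}}r^{1+\alpha-2s}$. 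Adding the two pieces gives \eqref{eq:HolderSpace1:Estimate2}; membership of $\bbL^s\bu$ in $C^{0,\alpha-2s+1}$ (rather than merely finiteness of the seminorm) is again the content of \Cref{prop:LuIsContinuousForHolderFxns}.

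The step I expect to be the main obstacle is this far-field estimate in the case $\alpha<2s$: the naive bound $|\delta^2\bw(\by,\bh)|\le C[\grad\bu]_{C^{0,\alpha}}r^{\alpha}|\bh|$ leads to the tail $\int_r^{\infty}\rho^{-2s}\,\rmd\rho$, which diverges as soon as $2s\le 1$, so it is essential both to normalize $\bu$ by its affine part at the base point and to trade a power of $|\bh|$ for a power of $r$, arriving at the bound $r\,|\bh|^{\alpha}$ whose tail converges exactly because $\alpha<2s$. Matching the precise exponent $\alpha-2s+1$ and verifying that the constants depend only on $d$, $s$, $\alpha$ is the only piece of genuinely non-routine bookkeeping; the commutation with derivatives, the continuity of $\bbL^s\bu$, and the passage from $(-\Delta)^s$ to $\bbF^s$ via the bounded kernel all transcribe directly from the scalar theory.
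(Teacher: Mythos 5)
Your argument is correct and is essentially the proof the paper relies on: the paper states this result without proof, citing Silvestre's Proposition 2.6, whose argument (differentiate and invoke the $C^{0,\alpha}$ result when $\alpha>2s$; subtract the affine part at the base point, pass to the second-difference representation of $\bbL^s\bu(\bx)-\bbL^s\bu(\by)$, and split the integral at $|\bh|=|\bx-\by|$ when $\alpha<2s$) transfers verbatim because the matrix kernel $\bh\otimes\bh/|\bh|^2$ is bounded. Your near- and far-field bounds, including the crucial trade of a power of $|\bh|$ for a power of $r$ in the tail, reproduce exactly that scheme, so there is nothing to add beyond the (inherited) implicit assumption that $\bbL^s\bu$ is pointwise well defined, e.g. $\bu\in L^1_s(\bbR^d;\bbR^d)$.
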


\begin{theorem}[See {\cite[Proposition 2.7]{silvestre2007regularity}}]\label{thm:MappingProperties:HigherDerivatives}
	Let $k \in \bbN$, $k \neq 1$. Suppose that $\bu \in L^1_{s}(\bbR^d;\bbR^d)$ and $\bu \in C^{k,\alpha}$ for $\alpha \in (0,1]$. Suppose also that $(k + \alpha - 2s) - \lfloor k + \alpha - 2s \rfloor \neq 0$. Then
	\begin{equation*}
		\bbL^s \bu \in C^{\lfloor k + \alpha - 2s \rfloor,(k + \alpha - 2s) - \lfloor k + \alpha - 2s \rfloor}(\bbR^d;\bbR^d)\,,
	\end{equation*}
	with the estimate
	\begin{equation*}
		[\bbL^s \bu]_{C^{\lfloor k + \alpha - 2s \rfloor,(k + \alpha - 2s) - \lfloor k + \alpha - 2s \rfloor}(\bbR^d)} \leq C [\bu]_{C^{k,\alpha}(\bbR^d)}\,.
	\end{equation*}
	The constant $C$ depends on $d$, $k$, $s$, and $\alpha$.
\end{theorem}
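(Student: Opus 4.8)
The plan is to reduce, by differentiating, to the two cases already settled: $k=0$ (\Cref{prop:HolderSpace0}) and $k=1$ (the $C^{1,\alpha}$ mapping theorem stated immediately above, the analogue of \cite[Proposition 2.6]{silvestre2007regularity}). The engine is that $\bbL^s$ commutes with each $\p_{x_j}$, exactly the observation invoked in the proof of \Cref{prop:LuCommutesWithDerivatives}. Since $k=0$ is \Cref{prop:HolderSpace0} itself (and is meaningful only when $\alpha>2s$), I would assume $k\ge 2$. Write $\sigma:=k+\alpha-2s$, $m:=\lfloor\sigma\rfloor$ and $\theta:=\sigma-m\in(0,1)$, noting $\theta\neq 0$ by hypothesis and $\sigma\in(0,k+1)$ so that $m\in\{0,\dots,k\}$. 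Fixing a multi-index $\beta$ with $|\beta|=m$, the field $D^\beta\bu$ inherits membership in $C^{k',\alpha}(\bbR^d;\bbR^d)\cap L^1_s$ with $k':=k-m$, one has $\bbL^s(D^\beta\bu)=D^\beta(\bbL^s\bu)$ with both sides given pointwise by \eqref{eq:FractionalPowerOfLame} (by \Cref{prop:LuIsContinuousForHolderFxns}), and since $k'+\alpha-2s=\theta\in(0,1)$, proving the theorem for $D^\beta\bu$ amounts to showing $\bbL^s(D^\beta\bu)\in C^{0,\theta}(\bbR^d;\bbR^d)$ with $[\bbL^s D^\beta\bu]_{C^{0,\theta}}\le C[D^\beta\bu]_{C^{k',\alpha}}\le C[\bu]_{C^{k,\alpha}}$. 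Combining this over $|\beta|=m$ with the continuity of $\bbL^s\bu$ and its first $m$ derivatives and the elementary fact that a continuous field whose distributional $m$-th derivatives are continuous is of class $C^m$ then gives $\bbL^s\bu\in C^{m,\theta}$ with the stated estimate.

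Next I would run through the possible values of $k'$. From $\theta=k'+\alpha-2s\in(0,1)$ with $\alpha\in(0,1]$, $s\in(0,1)$ one gets $k'\in(-1,3)$, hence $k'\in\{0,1,2\}$. If $k'=0$ then $\alpha>2s$, and \Cref{prop:HolderSpace0} applies directly, yielding $\bbL^s D^\beta\bu\in C^{0,\alpha-2s}=C^{0,\theta}$. If $k'=1$ then $2s-1<\alpha<2s$, so $\alpha>\max\{0,2s-1\}$ and the $C^{1,\alpha}$ mapping theorem applies in its ``$\alpha<2s$'' branch, yielding $\bbL^s D^\beta\bu\in C^{0,\alpha-2s+1}=C^{0,\theta}$. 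In both cases the borderline $\alpha=2s$ is excluded precisely because $\theta\neq 0$.

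The hard part is the remaining case $k'=2$, i.e.\ $m=k-2$ and $2s-2<\alpha<2s-1$ (so $s>\tfrac12$): here $D^\beta\bu\in C^{2,\alpha}$, but $\alpha\le 2s-1$ keeps the $C^{1,\alpha}$ theorem from applying to its first derivatives, and $\alpha\le 2s$ keeps \Cref{prop:HolderSpace0} from applying to its second derivatives. The remedy I would use is to split the power: by \Cref{thm:CalcProperties}, $\bbL^s=\bbL^{s/2}\circ\bbL^{s/2}$, and the point is that for $\bbL^{s/2}$ the restriction $\alpha>\max\{0,2(s/2)-1\}=0$ in the $C^{1,\alpha}$ mapping theorem is vacuous because $s/2<\tfrac12$. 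Running the reductions above with $s$ replaced by $s/2$: differentiating $D^\beta\bu$ once and applying the $C^{1,\alpha}$ theorem for $\bbL^{s/2}$ (valid since $0<\alpha<s$) shows every first derivative of $\bbL^{s/2}(D^\beta\bu)$ lies in $C^{0,\alpha-s+1}$, so $\bbL^{s/2}(D^\beta\bu)\in C^{1,\alpha-s+1}(\bbR^d;\bbR^d)$ and is bounded; applying the $C^{1,\alpha}$ theorem for $\bbL^{s/2}$ once more (valid since $0<\alpha-s+1<s$, using $\alpha<2s-1$) then gives $\bbL^{s/2}\big(\bbL^{s/2}(D^\beta\bu)\big)\in C^{0,\alpha-2s+2}=C^{0,\theta}$, with each seminorm controlled by the previous. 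Identifying the composition with $\bbL^s(D^\beta\bu)$ --- they agree in $\scS_s'$ since their Fourier multipliers multiply, and both equal their principal-value integrals --- closes the case, and the theorem.

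Two routine points would be spelled out: that $\bbL^s$ and $\bbL^{s/2}$ commute with $\p_{x_j}$ on the regularity classes at hand (as in \Cref{prop:LuCommutesWithDerivatives}), and that $\bbL^s\bu$ together with its first $m\le k$ derivatives are continuous and representable by \eqref{eq:FractionalPowerOfLame}, which follows from \Cref{prop:LuIsContinuousForHolderFxns} applied to $\bu$ and to its derivatives. I also note that the power-splitting step for $k'=2$ could instead be carried out by a direct estimate patterned on the proof of \Cref{prop:HolderSpace0}: express $\bbL^s(D^\beta\bu)$ through the second-order difference quotient of \Cref{lma:SecondOrderDiff}, split the $\bh$-integral at $|\bh|=|\bx-\by|$, subtract the second-order Taylor polynomial of $D^\beta\bu$ on $\{|\bh|\le|\bx-\by|\}$ and use its $C^{2,\alpha}$ bound there, and use the Hölder bound on the first derivatives of $D^\beta\bu$ on $\{|\bh|>|\bx-\by|\}$; this reproduces $|\bbL^s D^\beta\bu(\bx)-\bbL^s D^\beta\bu(\by)|\le C[\bu]_{C^{k,\alpha}}|\bx-\by|^{\theta}$ directly.
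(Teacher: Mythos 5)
Your proposal is correct in substance, but it does not follow the paper, because the paper gives no argument here at all: consistent with its stated policy, it simply defers to the scalar case \cite[Proposition 2.7]{silvestre2007regularity} and the line-by-line adaptation used for the preceding propositions. Your reduction --- commute $\bbL^s$ with $D^\beta$ for $|\beta|=m=\lfloor k+\alpha-2s\rfloor$ (in the spirit of \Cref{prop:LuCommutesWithDerivatives}) and land in the $k'=0$ or $k'=1$ mapping theorems --- is the expected iteration, and your case analysis $k'\in\{0,1,2\}$ is right. More valuably, you isolate the case $k'=2$ (equivalently $2s-2<\alpha<2s-1$, forcing $s>1/2$), which a naive ``differentiate and quote \Cref{prop:HolderSpace0} or the $C^{1,\alpha}$ theorem'' argument cannot reach, since those require $\alpha>2s$ and $\alpha>2s-1$ respectively. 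Both of your remedies close this case; of the two, the direct second-difference/Taylor estimate you sketch at the end is closer in spirit to how \Cref{prop:HolderSpace0} and its $C^{1,\alpha}$ companion are themselves proved, stays entirely at the level of the pointwise formula \eqref{eq:FractionalPowerOfLame}, and yields $[\bbL^s D^\beta\bu]_{C^{0,\theta}(\bbR^d)}\le C[\bu]_{C^{k,\alpha}(\bbR^d)}$ with no functional-calculus bookkeeping; the semigroup splitting $\bbL^s=\bbL^{s/2}\circ\bbL^{s/2}$ is a nice alternative that exploits the fractional-power structure unavailable for general nonlocal kernels.

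If you keep the splitting route, two points should be proved rather than asserted. First, weighted-space membership: $L^1_{s/2}$ is a \emph{smaller} space than $L^1_{s}$ (its weight $(1+|\bx|^{d+s})^{-1}$ is larger), so $\bu\in L^1_s$ does not by itself make the far-field part of the integral defining $\bbL^{s/2}(D^\beta\bu)$ converge; this is harmless if $C^{k,\alpha}(\bbR^d)$ is read with finite full norm (bounded derivatives), but that reading should be made explicit. Relatedly, the claim that $D^\beta\bu$ ``inherits'' $L^1_s$ membership is true but not automatic if only $[\bu]_{C^{k,\alpha}}<\infty$ is assumed: it needs a unit-scale interpolation (Taylor-averaging) bound of the form $|D^\beta\bu(\bx)|\le C\big(\Vnorm{\bu}_{L^1(B(\bx,1))}+[\bu]_{C^{k,\alpha}(\bbR^d)}\big)$, integrated against the weight. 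Second, the identification $\bbL^{s/2}\big(\bbL^{s/2}D^\beta\bu\big)=\bbL^s D^\beta\bu$ goes beyond \Cref{thm:CalcProperties}, which is stated for Schwartz fields; one should pair with $\bsvarphi\in\scS(\bbR^d;\bbR^d)$, move each $\bbL^{s/2}$ across by symmetry and absolute convergence, apply \Cref{thm:CalcProperties} to $\bsvarphi$, and then conclude pointwise equality because both sides are continuous. None of these is an obstruction, but without them the $k'=2$ case is only sketched --- one more reason to prefer your direct estimate there.
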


With these mapping properties in hand we can now establish Schauder estimates for solutions of $\bbL^s \bu = \bff$.

\begin{theorem}[See {\cite[Proposition 2.8]{silvestre2007regularity}}]\label{thm:Schauder}
	Let $\bff \in C^{0,\alpha}(\bbR^d;\bbR^d)$ for $\alpha \in (0,1]$ and let $\bu \in L^{\infty}(\bbR^d;\bbR^d)$. Suppose that $\bbL^s \bu(\bx) = \bff(\bx)$ for all $\bx \in \bbR^d$.
	
	If $\alpha + 2s < 1$, then $\bu \in C^{0,\alpha+2s}(\bbR^d;\bbR^d)$, with
	\begin{equation}\label{eq:SchauderEstimate1}
		[\bu]_{C^{0,\alpha+2s}(\bbR^d)} \leq C \left( \Vnorm{\bu}_{L^{\infty}(\bbR^d)} + \Vnorm{\bff}_{C^{0,\alpha}(\bbR^d)} \right)\,,
	\end{equation}
	and the constant $C$ depends only on $d$, $s$ and $\alpha$.
	
	If $\alpha + 2s > 1$, then $\bu \in C^{1,\alpha+2s-1}(\bbR^d;\bbR^d)$, with
	\begin{equation}\label{eq:SchauderEstimate2}
		[\bu]_{C^{1,\alpha+2s-1}(\bbR^d)} \leq C \left( \Vnorm{\bu}_{L^{\infty}(\bbR^d)} + \Vnorm{\bff}_{C^{0,\alpha}(\bbR^d)} \right)\,,
	\end{equation}
	and the constant $C$ depends only on $d$, $s$ and $\alpha$.
\end{theorem}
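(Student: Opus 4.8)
The plan is to follow the localization scheme Silvestre employs for $(-\Delta)^s$ (\cite[Proposition 2.8]{silvestre2007regularity}), the point being to reduce the estimate to (i) the smoothing property of the Lam\'e-Navier--Riesz potential $\bbE^s$ and (ii) interior regularity of $\bbL^s$-harmonic fields, and to check that both survive the passage to the coupled system. Since both conclusions are stated through the global seminorms $[\,\cdot\,]_{C^{0,\alpha+2s}(\bbR^d)}$ and $[\,\cdot\,]_{C^{1,\alpha+2s-1}(\bbR^d)}$, it suffices to prove the corresponding estimate on $B_1(\bx_0)$ for an arbitrary center $\bx_0$ with a constant independent of $\bx_0$, and then recover the global statement by a covering argument; after a translation I take $\bx_0={\bf 0}$. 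First I fix $\eta\in C_c^\infty(\bbR^d)$ with $\eta\equiv1$ on $B_2$, $\operatorname{supp}\eta\subset B_3$, $0\le\eta\le1$, split $\bff=\eta\bff+(1-\eta)\bff=:\bff_1+\bff_2$, and set $\bw:=\bbE^s\bff_1=\bsPsi^s\ast\bff_1$, $\bh:=\bu-\bw$. Because $\bff_1\in C^{0,\alpha}(\bbR^d;\bbR^d)$ is bounded with compact support and $\bsPsi^s(\bx)$ in \eqref{eq:Definition:LameNavierKernel} is comparable to $|\bx|^{-(d-2s)}$, which is locally integrable since $2s<d$ for $d\ge2$ and $s\in(0,1)$, the field $\bw$ is bounded and continuous, decays like $|\bx|^{-(d-2s)}$ at infinity, and obeys $\Vnorm{\bw}_{L^\infty(\bbR^d)}\le C\Vnorm{\bff}_{L^\infty(\bbR^d)}$; in particular $\bw,\bh\in L^\infty(\bbR^d;\bbR^d)\subset L^1_s(\bbR^d;\bbR^d)$. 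Extending the identity $\bbL^s\circ\bbE^s=\bbL^s\circ\bbL^{-s}=\bbL^0=\mathrm{Id}$ from $\scS$ (\Cref{thm:CalcProperties}) to compactly supported H\"older fields by mollification gives $\bbL^s\bw=\bff_1$, hence $\bbL^s\bh=\bff-\bff_1=\bff_2$ in $\scS_s'(\bbR^d;\bbR^d)$; and since $\bff_2\equiv{\bf 0}$ on $B_2$, the field $\bh$ is $\bbL^s$-harmonic there.

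Next I would control $\bw$ by the smoothing property of $\bbE^s$. This does not follow from the mapping results \Cref{prop:HolderSpace0} and \Cref{thm:MappingProperties:HigherDerivatives} (which bound $\bbL^s$, not its inverse), but it is obtained by the classical gain-of-$2s$-derivatives argument for the Riesz potential $I^{2s}$, using only that $\bsPsi^s$ in \eqref{eq:Definition:LameNavierKernel} is homogeneous of degree $2s-d$ and $\nabla\bsPsi^s$ of degree $2s-d-1$: one splits $\bw(\bx)-\bw(\bx')$ into the contribution of $\by$ in a ball of radius comparable to $|\bx-\bx'|$ about $\{\bx,\bx'\}$, estimated by the $C^{0,\alpha}$ modulus of $\bff_1$, plus the contribution of the remaining $\by$, estimated through the homogeneity of the kernel. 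This yields $\bw\in C^{0,\alpha+2s}(\bbR^d;\bbR^d)$ with $[\bw]_{C^{0,\alpha+2s}(\bbR^d)}\le C[\bff_1]_{C^{0,\alpha}(\bbR^d)}\le C[\bff]_{C^{0,\alpha}(\bbR^d)}$ when $\alpha+2s<1$, and, after first treating $\nabla\bw=(\nabla\bsPsi^s)\ast\bff_1$ the same way, $\bw\in C^{1,\alpha+2s-1}(\bbR^d;\bbR^d)$ with the analogous estimate when $\alpha+2s>1$. Combined with $\Vnorm{\bw}_{L^\infty}\le C\Vnorm{\bff}_{L^\infty}$, these give $\bw$ the very regularity claimed for $\bu$, controlled by $\Vnorm{\bff}_{C^{0,\alpha}(\bbR^d)}$ alone, so $\Vnorm{\bu}_{L^\infty}$ enters only through $\bh$.

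The remaining and, I expect, hardest step is interior regularity of the $\bbL^s$-harmonic field $\bh$: I must show $\bh\in C^\infty(B_1)$ with $\Vnorm{\bh}_{C^2(B_1)}\le C\Vnorm{\bh}_{L^\infty(\bbR^d)}$. This is exactly where the coupling is felt, since, as stressed in the introduction, maximum-principle and Harnack techniques are unavailable for $\bbL^s$, so the estimate must be obtained by a potential-theoretic rather than a De Giorgi--Nash--Moser argument. The scheme I would use: for small $\varepsilon$ the mollification $\bh_\varepsilon:=\bh\ast\varphi_\varepsilon$ still satisfies $\bbL^s\bh_\varepsilon={\bf 0}$ on $B_{3/2}$, because $\bbL^s$ commutes with convolution and $\bbL^s\bh$ vanishes as a distribution on $B_2$; choosing $\zeta$ with $\zeta\equiv1$ on $B_1$ and $\operatorname{supp}\zeta\subset B_{3/2}$, one has $\bbL^s(\zeta\bh_\varepsilon)=\bbL^s(\zeta\bh_\varepsilon)-\zeta\,\bbL^s\bh_\varepsilon=[\bbL^s,\zeta]\bh_\varepsilon$, and inserting the splitting $(\zeta\bh_\varepsilon)(\bx)-(\zeta\bh_\varepsilon)(\by)=\zeta(\bx)\big(\bh_\varepsilon(\bx)-\bh_\varepsilon(\by)\big)+\big(\zeta(\bx)-\zeta(\by)\big)\bh_\varepsilon(\by)$ into the kernel of $\bbL^s$ shows this commutator is $\bh_\varepsilon$ times smooth coefficients plus a more regular integral operator that, by the standard commutator estimate, produces a gain over $\bh_\varepsilon$ with constant uniform in $\varepsilon$. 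Applying $\bbE^s$ and the smoothing estimate of the previous paragraph and then iterating the commutator--potential step bootstraps to $\Vnorm{\bh_\varepsilon}_{C^k(B_1)}\le C_k\Vnorm{\bh}_{L^\infty(\bbR^d)}$ for every $k$, uniformly in $\varepsilon$, whence $\bh\in C^\infty(B_1)$ with the desired bound on letting $\varepsilon\to0$; alternatively one may invoke the corresponding interior estimates for strongly-coupled fractional operators in \cite{scott2020mathematical}. Finally, writing $\bu=\bw+\bh$ on $B_1$ puts $\bu$ in $C^{0,\alpha+2s}(B_1)$ when $\alpha+2s<1$ and in $C^{1,\alpha+2s-1}(B_1)$ when $\alpha+2s>1$, with the local seminorm in each case bounded by $\Vnorm{\bu}_{L^\infty(\bbR^d)}+\Vnorm{\bff}_{C^{0,\alpha}(\bbR^d)}$, and the covering argument upgrades these to the global estimates \eqref{eq:SchauderEstimate1} and \eqref{eq:SchauderEstimate2}.
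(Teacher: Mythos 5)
Your overall architecture is the same as the paper's: localize, subtract the Lam\'e--Navier--Riesz potential of a cut-off right-hand side, and treat the remainder as an $\bbL^s$-harmonic field. Your handling of the potential part is a genuine (and reasonable) alternative: you extract the $2s$-gain of H\"older regularity directly from the homogeneity of $\bsPsi^s$ and $\nabla\bsPsi^s$, whereas the paper factors $\bbE^s=\bbL^{1-s}\circ\bbE$, invokes classical $C^{2,\alpha}$ Schauder estimates for the local system $\bbL\bv=\eta\bff$, and then applies the mapping theorem \Cref{thm:MappingProperties:HigherDerivatives}. Modulo the standard principal-value care needed to define $\nabla\bw=(\nabla\bsPsi^s)\ast\bff_1$ when $2s<1<\alpha+2s$, that half of your argument is sound and arguably more self-contained.

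The genuine gap is in the interior regularity of the $\bbL^s$-harmonic remainder $\bh$. Your bootstrap is launched from $\bh\in L^{\infty}$ only, but $[\bbL^s,\zeta]$ is an operator of order $2s-1$: its singular part is $\mathrm{p.v.}\int K(\bx-\by)\,\nabla\zeta(\bx)\cdot(\bx-\by)\,\bh_\varepsilon(\by)\,\rmd\by$ with $|K(\bz)|\sim|\bz|^{-d-2s}$, which for $s\geq 1/2$ is not absolutely convergent against an $L^\infty$ density; it requires a H\"older modulus of $\bh_\varepsilon$ at $\bx$ of exponent greater than $2s-1$. For the mollified field this quantity is finite but blows up as $\varepsilon\to 0$ unless $\bh$ already possesses that regularity --- which is exactly what you are trying to prove --- so the claimed ``gain over $\bh_\varepsilon$ with constant uniform in $\varepsilon$'' fails at the very first rung whenever $s\geq 1/2$, and, as the paper stresses, you cannot borrow an initial small H\"older exponent from Krylov--Safonov/Harnack-type arguments for this coupled system. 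The fallback of citing \cite{scott2020mathematical} is not a safe patch either: the way that reference enters this paper is for the definition and Fourier analysis of $\bbF^s$, not for interior Schauder-type estimates for $\bbF^s$-harmonic fields, so it cannot simply be waved in for this step. The paper's own proof sidesteps the issue by arguing that $\bu-\wt{\bu}$ is annihilated by the \emph{local} operator $\bbL$ on the smaller ball and then using hypoellipticity and the mean-value formula for $\bbL$ to obtain interior derivative estimates in terms of $\Vnorm{\bu}_{L^\infty(\bbR^d)}+\Vnorm{\bff}_{L^\infty(\bbR^d)}$. If you want to remain at the fractional level, you need an independent first step --- for instance an initial $C^{0,\gamma}$ estimate with $\gamma>2s-1$ obtained from the extension system of \Cref{sec:ExtensionProblem} or from an energy/difference-quotient argument --- before your commutator iteration can close.
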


\begin{proof}
	In the case $\alpha + 2s \leq 1$, we will show that
	\begin{equation}\label{eq:SchauderProofA}
		[\bu]_{C^{0,\alpha+2s}(B(0,1/4))} \leq C(d,s,\alpha) \left( \Vnorm{\bu}_{L^{\infty}(\bbR^d)} + \Vnorm{\bff}_{C^{0,\alpha}(\bbR^d)} \right)\,.
	\end{equation}
	By translation invariance of the estimates we can repeat the argument on $B(\bx_0,1/4)$ for any point $\bx_0 \in \bbR^d$, and thus obtain \eqref{eq:SchauderEstimate1}.
	
	Let $\eta$ be a mollifier belonging to $C^{\infty}_c(B(0,2))$ with $\eta \equiv 1$ on $B(0,1)$. Define
	\begin{equation*}
		\wt{\bu}(\bx) := \bbE^{s}[\eta \bff](\bx) = \intdm{\bbR^d}{ \bsPsi^s(\bx-\by) \eta(\by) \bff(\by) }{\by}\,.
	\end{equation*}
	Then
	\begin{equation*}
		|\wt{\bu}(\bx)| \leq C \Vnorm{\bff}_{L^{\infty}(\bbR^d)} \int_{\bbR^d} \frac{\eta(\by)}{|\bx-\by|^{d-2s}} \, \rmd \by = C \Vnorm{\bff}_{L^{\infty}(\bbR^d)} I^s \eta(\bx)\,,
	\end{equation*}
	where $C$ depends on $d$, $s$, $\mu$ and $\lambda$. Therefore $\bbL^s \wt{\bu}$ exists as a distribution in $\scS_s'(\bbR^d;\bbR^d)$ and 
	\begin{equation*}
		\Vint{\bbL^s (\bu-\wt{\bu}), \bsvarphi} = 0 \text{ for all } \bsvarphi \in C^{\infty}_c(B(0,1/2))\,.
	\end{equation*}
	So $\bbL^s (\bu-\wt{\bu}) = {\bf 0}$ in $B(0,1/2)$ and thus $\bbL(\bu-\wt{\bu}) = {\bf 0}$ in $B(0,1/2)$. We can then use the fact that $\bbL$ is a hypoelliptic operator, and use derivative estimates obtained from the mean value formula for $\bbL$ (see \cite[Theorem 10.20]{mitrea} for details) to arrive at the estimate
	\begin{equation}\label{eq:SchauderProof1}
		[\bu-\wt{\bu}]_{C^{0,\alpha+2s}(B(0,1/2))} \leq  C \left( \Vnorm{\bu}_{L^{\infty}(\bbR^d)} + \Vnorm{\bff}_{L^{\infty}(\bbR^d)} \right)\,.
	\end{equation}
	All we need to do now is show the same estimate for $\wt{\bu}$. If $s < 1/2$, then $\bbE^s [\eta \bff] = \bbL^{1-s} \circ  \bbE[\eta \bff]$. From the $C^{2,\alpha}$-estimates for the Poisson equation $\bbL \bv = \eta \bff$ (c.f. \cite{Giaquinta}) we have
	\begin{equation*}
		[\bbE[\eta \bff]]_{C^{2,\alpha}(B(0,1/2))} \leq C \Vnorm{\bff}_{C^{0,\alpha}(B(0,2))}\,.
	\end{equation*}
	Therefore by \Cref{thm:MappingProperties:HigherDerivatives}
	\begin{equation}\label{eq:SchauderProof2}
		[\wt{\bu}]_{C^{0,\alpha+2s}(B(0,1/4))} \leq C [\bbE[\eta \bff]]_{C^{2,\alpha}(B(0,1/2))} \leq C \Vnorm{\bff}_{C^{0,\alpha}(B(0,2))}\,,
	\end{equation}
	which combined with \eqref{eq:SchauderProof1} gives \eqref{eq:SchauderProofA}. So \eqref{eq:SchauderEstimate1} is proved.
	The estimate \eqref{eq:SchauderEstimate2} follows by using exactly the same argument.
\end{proof}

\begin{theorem}[See {\cite[Proposition 2.9]{silvestre2007regularity}}]
	Let $d \geq 3$, $\bff \in L^{\infty}(\bbR^d;\bbR^d)$ and let $\bu \in L^{\infty}(\bbR^d;\bbR^d)$. Suppose that $\bbL^s \bu(\bx) = \bff(\bx)$ for all $\bx \in \bbR^d$.
	
	If $2s < 1$, then $\bu \in C^{0,\alpha}(\bbR^d;\bbR^d)$ for any $\alpha < 2s$, with
	\begin{equation}\label{eq:SchauderEstimate3}
		[\bu]_{C^{0,\alpha}(\bbR^d)} \leq C \left( \Vnorm{\bu}_{L^{\infty}(\bbR^d)} + \Vnorm{\bff}_{L^{\infty}(\bbR^d)} \right)\,,
	\end{equation}
	and the constant $C$ depends only on $d$, $s$ and $\alpha$.
	
	If $2s > 1$, then $\bu \in C^{1,\alpha}(\bbR^d;\bbR^d)$ for any $\alpha < 2s -1$, with
	\begin{equation}\label{eq:SchauderEstimate4}
		[\bu]_{C^{1,\alpha}(\bbR^d)} \leq C \left( \Vnorm{\bu}_{L^{\infty}(\bbR^d)} + \Vnorm{\bff}_{L^{\infty}(\bbR^d)} \right)\,,
	\end{equation}
	and the constant $C$ depends only on $d$, $s$ and $\alpha$.
\end{theorem}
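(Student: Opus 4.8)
The plan is to adapt the localization argument from the proof of \Cref{thm:Schauder}, replacing the role played there by the $C^{2,\alpha}$ estimates for the local Poisson problem with a direct H\"older estimate on the Lam\'e-Navier-Riesz potential of a bounded, compactly supported source. By translation invariance it again suffices to bound the relevant seminorm over $B(0,1/4)$. Fix $\eta \in C^\infty_c(B(0,2))$ with $\eta \equiv 1$ on $B(0,1)$ and set $\wt{\bu} := \bbE^s[\eta\bff] = \bsPsi^s \ast (\eta\bff)$; this is a classical convolution because $|\bsPsi^s(\bx)| \leq C|\bx|^{-(d-2s)}$ with $d-2s < d$, and the same bound gives $\Vnorm{\wt{\bu}}_{L^\infty(\bbR^d)} \leq C\Vnorm{\bff}_{L^\infty(\bbR^d)}$. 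Exactly as in the proof of \Cref{thm:Schauder}, one checks that $\bbL^s(\bu-\wt{\bu})$ vanishes on $B(0,1/2)$ as an element of $\scS_s'(\bbR^d;\bbR^d)$, hence $\bbL(\bu-\wt{\bu}) = {\bf 0}$ on $B(0,1/2)$; since $\bbL$ is hypoelliptic, the interior derivative estimates from the mean value formula for $\bbL$ (\cite[Theorem 10.20]{mitrea}) yield
\begin{equation*}
\Vnorm{\bu - \wt{\bu}}_{C^2(B(0,1/4))} \leq C\Vnorm{\bu - \wt{\bu}}_{L^\infty(B(0,1/2))} \leq C\big(\Vnorm{\bu}_{L^\infty(\bbR^d)} + \Vnorm{\bff}_{L^\infty(\bbR^d)}\big)\,,
\end{equation*}
so that $[\bu-\wt{\bu}]_{C^{0,\alpha}(B(0,1/4))}$ (and $[\bu-\wt{\bu}]_{C^{1,\alpha}(B(0,1/4))}$ when $2s>1$) is controlled by the right-hand side of the claimed estimates.

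It then remains to estimate $\wt{\bu}$, which I would treat as a Riesz-type potential of order $2s$. From \eqref{eq:Definition:LameNavierKernel} each entry of $\bsPsi^s$ is a linear combination of $|\bx|^{-(d-2s)}$ and $|\bx|^{-(d-2s)}x_i x_j/|\bx|^2$, both homogeneous of degree $-(d-2s)$, so $|\bsPsi^s(\bx)| \leq C|\bx|^{-(d-2s)}$ and, when $s>1/2$, $|\nabla\bsPsi^s(\bx)| \leq C|\bx|^{-(d-2s)-1} = C|\bx|^{-(d-(2s-1))}$, with $\nabla\bsPsi^s \in L^1_{loc}$. For $2s < 1$, a standard near/far decomposition of $\wt{\bu}(\bx)-\wt{\bu}(\bx')$ — splitting the $\by$-integral at $|\by-\bx| \leq 2|\bx-\bx'|$, using $|\bsPsi^s(\bx)| \leq C|\bx|^{-(d-2s)}$ on the near part and the mean value theorem together with $|\nabla\bsPsi^s|$ on the far part — gives $[\wt{\bu}]_{C^{0,\alpha}(\bbR^d)} \leq C\Vnorm{\bff}_{L^\infty(\bbR^d)}$ for every $\alpha < 2s$. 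For $2s > 1$ one first differentiates under the integral, $\nabla\wt{\bu} = (\nabla\bsPsi^s)\ast(\eta\bff)$, and applies the same decomposition to the kernel $\nabla\bsPsi^s$, which has homogeneity $-(d-(2s-1))$ with $2s-1 \in (0,1)$, obtaining $[\nabla\wt{\bu}]_{C^{0,\alpha}(\bbR^d)} \leq C\Vnorm{\bff}_{L^\infty(\bbR^d)}$ for every $\alpha < 2s-1$. Combining the two contributions proves \eqref{eq:SchauderEstimate3} and \eqref{eq:SchauderEstimate4}.

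The main obstacle — and the reason the statement stops at $\alpha < 2s$ and $\alpha < 2s-1$ rather than reaching the endpoints $C^{0,2s}$ and $C^{1,2s-1}$ — is precisely this last step: the potential $\bbE^s$ applied to a merely bounded (as opposed to H\"older-continuous) source does not land in the endpoint H\"older class, only in every space strictly below it, since the far-part integral of $|\by-\bx|^{-(d+1-2s)}$ over an annulus produces a logarithm at the endpoint exponent. Apart from this, the only technical points are the verification that $\bbL^s(\bu-\wt{\bu})$ vanishes locally (identical to the corresponding step in \Cref{thm:Schauder}) and the differentiation under the integral sign when $2s>1$; both are routine. As in the earlier H\"older results, it is the full difference $\bu(\bx)-\bu(\by)$ and not the coupled structure of $\bbL^s$ that drives every estimate, so the argument is otherwise line-by-line the scalar proof of \cite[Proposition 2.9]{silvestre2007regularity}.
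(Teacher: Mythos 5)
Your proof is correct, and it shares the paper's skeleton (localize, subtract $\wt{\bu}=\bbE^s[\eta\bff]$, control the $\bbL$-harmonic remainder by hypoellipticity and interior derivative estimates exactly as in \Cref{thm:Schauder}), but the key step — the H\"older estimate for $\wt{\bu}$ — is handled by a genuinely different route. The paper factors $\bbE^s[\eta\bff]=\bbL^{1-s}\circ\bbE[\eta\bff]$, proves a new lemma giving a log-Lipschitz (hence $C^{1,\alpha}$ for every $\alpha<1$) bound for the local potential $\bbE[\eta\bff]$ of a merely bounded source, and then transfers this through the H\"older mapping properties of $\bbL^{1-s}$ (\Cref{thm:MappingProperties:HigherDerivatives}); you instead estimate $\bsPsi^s\ast(\eta\bff)$ directly, via the kernel bounds $|\bsPsi^s(\bx)|\leq C|\bx|^{-(d-2s)}$, $|\grad\bsPsi^s(\bx)|\leq C|\bx|^{-(d-2s)-1}$ and a near/far splitting (differentiating under the integral first when $2s>1$). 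Your route is more elementary — it needs neither the intermediate lemma nor the fractional mapping theorem — and the near/far computation is essentially the same type of estimate the paper carries out in its auxiliary lemma, only applied to the $s$-kernel rather than the $s=1$ kernel. One small inaccuracy in your commentary: for $2s\neq 1$ your far-field integral $\int_{|\by-\bx|\geq 2|\bx-\bx'|}|\by-\bx|^{-(d+1-2s)}\,\rmd\by$ behaves like $|\bx-\bx'|^{2s-1}$ with no logarithm, so your argument in fact reaches the endpoint exponents $2s$ (resp.\ $2s-1$ for the gradient); the logarithmic loss you describe is a feature of the paper's route (the failure of the $C^{1,1}$ endpoint for the second-order potential, which after applying $\bbL^{1-s}$ forces $\alpha<2s$), not of yours. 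Since the endpoint estimate implies the stated bounds for every $\alpha<2s$ (resp.\ $\alpha<2s-1$), this does not affect the validity of your proof.
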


\begin{proof}
	The proof is identical to the proof of \Cref{thm:Schauder}, except we use $C^{1,\alpha}$ estimates for solutions to $\bbL \bv = \eta \bff$ instead of the $C^{2,\alpha}$ estimates.
	We could not find a reference in the literature for this estimate, so we reproduce it in the next lemma.
\end{proof}

\begin{lemma}
	Let $d \geq 3$, and suppose that $\bbL \bv = \bff$ for all $\bx \in B(0,1)$, with $\bff \in L^{\infty}(B(0,2))$ and $\bff = 0$ outside of $B(0,2)$. Then for any $\alpha \in (0,1)$
	\begin{equation}\label{eq:C1AlphaEstimate}
		\Vnorm{\bv}_{C^{1,\alpha}(B(0,1/2))} \leq C \Vnorm{\bff}_{L^{\infty}(B(0,2))}\,,
	\end{equation}
	where the constant $C$ depends only on $d$ and $\alpha$.
\end{lemma}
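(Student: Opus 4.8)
The plan is to reduce the statement to a H\"older estimate for the Kelvin--Somigliana potential of $\bff$. Extend $\bff$ by zero to all of $\bbR^d$. Since $d \geq 3$, the fundamental matrix $\bsPsi^1$ of $\bbL$ (the $s=1$ case of \eqref{eq:Definition:LameNavierKernel}) is a genuine homogeneous kernel of degree $2-d$, smooth away from the origin, rather than a logarithmic one; this is the only place the hypothesis $d\geq 3$ is used, and it yields the derivative bounds $|D^{\bsbeta}\bsPsi^1(\bz)| \leq C|\bz|^{2-d-|\bsbeta|}$ for all multi-indices $\bsbeta$, with $C=C(d,\mu,\lambda)$. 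The potential $\bv_0 := \bsPsi^1 \ast \bff$ is then well defined, and by \Cref{thm:CalcProperties} it satisfies $\bbL \bv_0 = \bff$ on $\bbR^d$. The difference $\bh := \bv - \bv_0$ solves $\bbL\bh = {\bf 0}$ in $B(0,1)$, so by classical interior estimates for the constant-coefficient Legendre-Hadamard elliptic system $\bbL$ (mean-value and Caccioppoli estimates, see \cite[Ch.~10]{mitrea}) one has $\bh \in C^{\infty}(B(0,3/4);\bbR^d)$ with $\Vnorm{\bh}_{C^{1,\alpha}(B(0,1/2))} \leq C(d)\Vnorm{\bh}_{L^1(B(0,3/4))}$; this term is harmless in the application to \Cref{thm:Schauder}, where $\bv$ is itself the potential $\bsPsi^1 \ast (\eta \bff)$ and so $\bh \equiv {\bf 0}$. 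It therefore suffices to prove
\[ \Vnorm{\bv_0}_{C^{1,\alpha}(B(0,1/2))} \leq C(d,\alpha)\Vnorm{\bff}_{L^{\infty}(B(0,2))}. \]

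First I would record the pointwise bounds. Since $|\bsPsi^1(\bz)| \leq C|\bz|^{2-d}$ and $|D\bsPsi^1(\bz)| \leq C|\bz|^{1-d}$ are locally integrable and $\bff$ is bounded with support in $B(0,2)$, the convolution $\bv_0(\bx)$ and the convolutions $\partial_i \bv_0(\bx) = (\partial_i\bsPsi^1 \ast \bff)(\bx)$ converge absolutely; differentiation under the integral sign is legitimate because $|\bz|^{1-d}$ is integrable near the origin, and one gets $\Vnorm{\bv_0}_{L^{\infty}(B(0,1/2))} + \Vnorm{\grad\bv_0}_{L^{\infty}(B(0,1/2))} \leq C(d)\Vnorm{\bff}_{L^{\infty}}$.

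The core step is the H\"older estimate for $\partial_i\bv_0$. Fix $\bx,\bx' \in B(0,1/2)$, set $\delta := |\bx-\bx'| \leq 1$, and split $\partial_i\bv_0(\bx) - \partial_i\bv_0(\bx')$ into the integral over the \emph{near} region $\{|\by-\bx| \leq 2\delta\}$ and the \emph{far} region $\{|\by-\bx| > 2\delta\}$. On the near region one estimates the two terms $\partial_i\bsPsi^1(\bx-\by)$ and $\partial_i\bsPsi^1(\bx'-\by)$ separately, using $\int_{|\bz|\leq 3\delta}|\bz|^{1-d}\,\rmd\bz = C(d)\delta$, which contributes at most $C(d)\delta\Vnorm{\bff}_{L^{\infty}}$. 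On the far region, where $|\bz-\by| \geq \tfrac12|\bx-\by|$ for every $\bz$ on the segment $[\bx,\bx']$, the mean value inequality gives $|\partial_i\bsPsi^1(\bx-\by) - \partial_i\bsPsi^1(\bx'-\by)| \leq C(d)\,\delta\,|\bx-\by|^{-d}$; combined with $\by \in B(0,2)$ this contributes at most $C(d)\delta\Vnorm{\bff}_{L^{\infty}}\int_{2\delta < |\bz| < 5}|\bz|^{-d}\,\rmd\bz \leq C(d)\,\delta(1+\log(1/\delta))\Vnorm{\bff}_{L^{\infty}}$. Using $\delta(1+\log(1/\delta)) \leq C(\alpha)\delta^{\alpha}$ for every $\alpha \in (0,1)$ and every $\delta \leq 1$, we obtain $|\partial_i\bv_0(\bx)-\partial_i\bv_0(\bx')| \leq C(d,\alpha)\delta^{\alpha}\Vnorm{\bff}_{L^{\infty}}$, which together with the $L^{\infty}$ bounds above yields \eqref{eq:C1AlphaEstimate}.

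I expect the main obstacle to be purely bookkeeping: extracting the precise homogeneity and the derivative bounds $|D^{\bsbeta}\bsPsi^1(\bz)| \leq C|\bz|^{2-d-|\bsbeta|}$ from the two-term tensor structure $a|\bz|^{2-d}\bI + b|\bz|^{-d}\,\bz\otimes\bz$ in \eqref{eq:Definition:LameNavierKernel} (routine differentiation, but with several terms to track), and controlling the logarithmic loss in the far region, which is exactly what forces the restriction $\alpha < 1$. There is no genuine difficulty stemming from the coupled/system nature of $\bbL$: only the scalar-type kernel bounds on $\bsPsi^1$ enter, so the argument is the usual Newtonian-potential $C^{1,\alpha}$ estimate carried out componentwise.
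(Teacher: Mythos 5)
Your proof is correct and follows essentially the same route as the paper: both arguments estimate the derivative of the potential $\bsPsi^1 \ast \bff$ by splitting the difference $\p_i \bv(\bx)-\p_i \bv(\bx')$ into a near region (bounded by $C\delta$ via the $|\bz|^{1-d}$ kernel bound) and a far region (bounded by $C\delta\log(1/\delta)$ via the mean value theorem), and then absorb the logarithm using $t\log(2/t)\leq C t^{\alpha}$. Your additional remark that the estimate should be read for the potential solution itself (an arbitrary solution differs by an $\bbL$-harmonic field, which must either vanish or be controlled separately) is a reasonable clarification of what the paper does implicitly by defining $\bv$ as the convolution with the fundamental solution.
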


\begin{proof}
	The solution is given by
	\begin{equation*}
		\bv(\bx) := \int_{B(0,2)} \bsPsi^1(\bx-\bz) \bff(\bz) \, \rmd \bz\,,
	\end{equation*}
	which is clearly in $L^{\infty}(B(0,1/2))$.
	
	For $k \in \{1, \ldots, d\}$ and $\bx$, $\by \in B(0,1/2)$ we have
	\begin{equation*}
		\begin{split}
			|\p_k \bv(\bx) - \p_k \bv(\by)| &\leq C(d,\mu,\lambda) \Vnorm{\bff}_{L^{\infty}(B(0,2))} \\
			&\quad \times \sum_{i,j,k} \int_{B(0,2)} \Bigg(  \left| \frac{z_k-x_k}{|\bz-\bx|^d} - \frac{z_k-y_k}{|\bz-\by|^d} \right| \delta_{ij} \\
				&\qquad +  \left| \frac{(z_i-x_i)(z_j-x_j)(z_k-x_k)}{|\bz-\bx|^{d+2}} - \frac{(z_i-y_i)(z_j-y_j)(z_k-y_k)}{|\bz-\by|^{d+2}} \right| \\
				&\qquad +  \left| \frac{z_j-x_j}{|\bz-\bx|^d} - \frac{z_j-y_j}{|\bz-\by|^d} \right| \delta_{ik}
				+ \left| \frac{z_i-x_i}{|\bz-\bx|^d} - \frac{z_i-y_i}{|\bz-\by|^d} \right| \delta_{jk} \Bigg) \, \rmd \bz\,.
		\end{split}
	\end{equation*}
	We split the integration domain into two regions $B(\bx,2|\bx-\by|)$ and $B(0,2) \setminus B(\bx,2|\bx-\by|)$.
	For the first integral, each term has the bound
	\begin{equation*}
		\frac{C}{|\bz-\bx|^{d-1}} + \frac{C}{|\bz-\by|^{d-1}}\,,
	\end{equation*} 
	where $C$ depends only on the dimension $d$.
	Since $B(\bx,2|\bx-\by|) \subset B(\by,3|\bx-\by|)$, 
	after integrating in polar coordinates the first integral is majorized by 
	$$
	C\int_{B(0,2|\bx-\by|)} \frac{1}{|\bz|^{d-1}} \, \rmd \bz + C\int_{B(0,3|\bx-\by|)} \frac{1}{|\bz|^{d-1}} \, \rmd \bz = C|\bx-\by|\,.
	$$
	As for the second integral, by the mean value theorem each term has the bound
	\begin{equation*}
		\frac{C |\bx-\by|}{|\bw-\bz|^d}\,,
	\end{equation*}
	where $C$ depends only on the dimension $d$ and where $\bw$ is some fixed value in $B(\bx,|\bx-\by|)$ depending on $\bx$ and $\by$. Then since $\bbR^d \setminus B(\bx,2|\bx-\by|) \subset \bbR^d \setminus B(\bw,|\bx-\by|)$ the second integral is majorized by
	$$
		C\int_{B(0,2) \setminus B(\bw,|\bx-\by|)} \frac{|\bx-\by|}{|\bw-\bz|^{d}} \, \rmd \bz = C |\bx-\by| \ln \left( \frac{2}{|\bx-\by|} \right)\,.
	$$
	Putting these two bounds together gives the ``log-Lipschitz'' bound on the partial derivative
	\begin{equation*}
		|\p_k \bv(\bx) - \p_k \bv(\by)| \leq C \Vnorm{\bff}_{L^{\infty}(B(0,2))} |\bx-\by| \ln \left( \frac{2}{|\bx-\by|} \right) \qquad \text{ for all } \bx, \by \in B(0,1/2)\,.
	\end{equation*}
	Then the estimate \eqref{eq:C1AlphaEstimate} follows from the fact that for any $\alpha \in (0,1)$ there exists a $C$ such that $t \ln(2/t) \leq C t^{\alpha}$ for all $t \in [0,1]$.
\end{proof}

\section{$\bbL^s$ and Bessel Potential Spaces}\label{sec:BesselSpaces}

\subsection{Notation}

For $1 < p < \infty$ and for $s > 0$ denote the Bessel potential spaces
\begin{equation*}
	\scL^{s,p}(\bbR^d;\bbR^d) := \{ \bu \in \scS'(\bbR^d;\bbR^d) \, : \, \cF^{-1} \left( (1 + 4 \pi^2 |\bsxi|^2 )^{s/2} \wh{\bu} \right) \in L^p(\bbR^d;\bbR^d)  \}\,,
\end{equation*}
with norm
\begin{equation*}
	\Vnorm{\bu}_{\scL^{s,p}(\bbR^d)} := \Vnorm{ \cF^{-1} \left( (1 + 4 \pi^2 |\bsxi|^2 )^{s/2} \wh{\bu} \right) }_{L^p(\bbR^d)}\,.
\end{equation*}
We denote the dual of $\scL^{s,p}(\bbR^d;\bbR^d)$ as $\scL^{-s,p'}(\bbR^d;\bbR^d)$.
Denote the homogeneous spaces
\begin{equation*}
	\dot{\scL}^{s,p}(\bbR^d;\bbR^d) := \{ \bu \in \scS'(\bbR^d;\bbR^d) \, : \, \cF^{-1} \left( (4 \pi^2 |\bsxi|^2 )^{s/2} \wh{\bu} \right) = (-\Delta)^{\frac{s}{2}} \bu \in L^p(\bbR^d;\bbR^d)  \}\,,
\end{equation*}
and denote the seminorm
\begin{equation*}
	[\bu]_{\dot{\scL}^{s,p}(\bbR^d)} := \Vnorm{(-\Delta)^{\frac{s}{2}} \bu}_{L^p(\bbR^d)}\,.
\end{equation*}
Then
\begin{equation*}
	\Vnorm{\bu}_{\dot{\scL}^{s,p}(\bbR^d)} \approx \Vnorm{\bu}_{L^p(\bbR^d)} + [\bu]_{\dot{\scL}^{s,p}(\bbR^d)}\,.
\end{equation*}
When $p =2$, $\scL^{s,2}$ has an equivalent characterization via the Gagliardo seminorm (see \cite[Proposition 3.6]{Hitchhiker}):
\begin{equation}\label{eq:EquivalenceOfSobolevAndBessel}
	[\bu]_{\scH^s(\bbR^d)}^2 := \iintdm{\bbR^d}{\bbR^d}{\frac{|\bu(\bx)-\bu(\by)|^2}{|\bx-\by|^{d+2s}} }{\by}{\bx} = \frac{2}{c_{d,s}} [\bu]_{\dot{\scL}^{s,2}(\bbR^d)}^2 = \frac{2}{c_{d,s}} \Vnorm{(-\Delta)^{\frac{s}{2}} \bu}_{L^2(\bbR^d)}^2\,.
\end{equation}

\subsection{Negative Powers of $\bbL$}

The Lam\'e-Navier potentials $\bbE^s$ satisfy the following Sobolev inequality:
\begin{theorem}
	Let $s \in (0,\frac{d}{2})$, let $p \in (1,\infty)$ and let $p^* := \frac{dp}{d-2sp}$. Let $\bu \in L^p(\bbR^d;\bbR^d)$. Then the integral defining $\bbE^s \bu$ converges absolutely for almost every $\bx \in \bbR^d$. Furthermore, there exists $C = C(d,s,p,\mu,\lambda)$ such that
	\begin{equation*}
		\Vnorm{\bbE^s \bu}_{L^{p^*}(\bbR^d)} \leq C \Vnorm{\bu}_{L^p(\bbR^d)}\,.
	\end{equation*}
	If $p = 1$, then $\bbE^s \bu$ is still defined by an absolutely convergent integral, and there exists $C = C(d,s,\mu,\lambda)$ such that
	\begin{equation*}
		\big| \{ \bx \, : \, |\bbE^s \bu(\bx)| > \lambda \} \big| \leq C \left( \frac{\Vnorm{\bu}_{L^1(\bbR^d)}}{\lambda} \right)^{\frac{n}{n-2s}} \,, \quad \forall \lambda > 0\,.
	\end{equation*}
\end{theorem}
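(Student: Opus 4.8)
The plan is to dominate $\bbE^s$ pointwise by a scalar Riesz potential and then invoke the classical Hardy--Littlewood--Sobolev inequality; the coupled matrix structure of $\bbE^s$ never actually enters beyond an elementary bound on $|\bsPsi^s|$.

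First I would record the kernel bound. By the explicit formula \eqref{eq:Definition:LameNavierKernel}, each entry of $\bsPsi^s(\bx)$ is a linear combination of $|\bx|^{-(d-2s)}$ and $|\bx|^{-(d-2s)}\,\tfrac{x_ix_j}{|\bx|^2}$, with coefficients $\tfrac{(2s-1)(2\mu+\lambda)^s+\mu^s}{d-2s}$ and $(2\mu+\lambda)^s-\mu^s$ that are finite and depend only on $d$, $s$, $\mu$, $\lambda$ (here we use $s<\tfrac d2$, so $d-2s>0$, together with the ellipticity conditions \eqref{eq:EllipticityConditionsLame}). Since $\big|\tfrac{x_ix_j}{|\bx|^2}\big|\le 1$, there is $C_0=C_0(d,s,\mu,\lambda)$ with $|\bsPsi^s(\bx)|\le C_0|\bx|^{-(d-2s)} = \tfrac{C_0}{g_{d,s}}\Phi^s(\bx)$ for all $\bx\ne{\bf 0}$. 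Consequently, for $\bu\in L^p(\bbR^d;\bbR^d)$,
\[
|\bbE^s\bu(\bx)| \le \int_{\bbR^d} |\bsPsi^s(\bx-\by)|\,|\bu(\by)|\,\rmd\by \le \frac{C_0}{g_{d,s}}\,I^s(|\bu|)(\bx)
\]
as $[0,\infty]$-valued functions.

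Second, I would apply the Hardy--Littlewood--Sobolev inequality for $I^s$ (see \cite{Stein}): for $1<p<\infty$ with $2sp<d$ and $\tfrac1{p^*}=\tfrac1p-\tfrac{2s}d$ one has $\Vnorm{I^s g}_{L^{p^*}(\bbR^d)}\le C(d,s,p)\Vnorm{g}_{L^p(\bbR^d)}$ for every nonnegative $g\in L^p$, and in particular $I^s(|\bu|)(\bx)<\infty$ for a.e.\ $\bx$. The pointwise domination then shows both that the integral defining $\bbE^s\bu(\bx)$ converges absolutely for a.e.\ $\bx$ and that $\Vnorm{\bbE^s\bu}_{L^{p^*}(\bbR^d)}\le C\Vnorm{\bu}_{L^p(\bbR^d)}$ with $C=C(d,s,p,\mu,\lambda)$. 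For the endpoint $p=1$ the strong-type bound fails, but the weak-type $(1,\tfrac{d}{d-2s})$ estimate $\big|\{\bx : I^s(|\bu|)(\bx)>\lambda\}\big|\le C(\Vnorm{\bu}_{L^1}/\lambda)^{d/(d-2s)}$ holds (again \cite{Stein}), so $I^s(|\bu|)<\infty$ a.e.; the pointwise domination plus monotonicity of the distribution function gives the stated weak-type inequality for $\bbE^s$, with $C$ adjusted by the factor $C_0/g_{d,s}$.

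I expect no genuine obstacle: the only thing requiring a moment's care is the bookkeeping of the constant $C_0$ in the kernel bound, i.e.\ checking that both coefficients in \eqref{eq:Definition:LameNavierKernel} are finite under \eqref{eq:EllipticityConditionsLame} and $s<\tfrac d2$, which is immediate. Everything else is the standard Hardy--Littlewood--Sobolev argument, and one could alternatively quote the known mapping properties of $I^s$ recorded earlier in the excerpt (e.g.\ the Fourier identity \eqref{eq:RieszTransform:Fourier}) to phrase the conclusion.
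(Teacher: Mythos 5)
Your proposal is correct and is essentially the paper's own argument: the paper likewise bounds $|\bsPsi^s(\bx)|$ by $C(d,s,\mu,\lambda)|\bx|^{-(d-2s)}$, dominates $|\bbE^s\bu|$ pointwise by a constant multiple of $I^s(|\bu|)$, and then quotes the classical Hardy--Littlewood--Sobolev strong- and weak-type bounds from \cite{Stein} (or \cite{grafakos2009modern}). Your added remark about the implicit restriction $2sp<d$ for the strong-type estimate is a fair point of care, but otherwise there is nothing to change.
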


\begin{proof}
	Using the definition of $\bsPsi^s$, we have
	\begin{equation*}
		|\bbE^s \bu(\bx)| \leq C(d,s,\mu,\lambda) \intdm{\bbR^d}{ \frac{ |\bu(\bx-\by)|}{|\by|^{d-2s}} }{\by} = C \, I^{s} (|\bu|)(\bx)\,.
	\end{equation*}
	The proof then proceeds exactly as that of \cite[Theorem 1.2.3]{grafakos2009modern} or \cite[Chapter V, Theorem 1]{Stein}.
\end{proof}

\subsection{Positive Powers of $\bbL$: Pointwise Definition}

The goal of this subsection is to prove the following:

\begin{theorem}\label{thm:BesselSpaces:MainThm}
	Let $\bu : \bbR^d \to \bbR^d$, $0 < s < 1$ and $1 < p < \infty$.
	Define for $\veps > 0$
	\begin{multline}\label{eq:BesselSpaces:VepsOp}
		\bbL^s_{\veps} \bu(\bx) := \left( \frac{(2s+1)\mu^s - (2\mu+\lambda)^s}{2s}\right) c_{d,s} \intdm{|\bx-\by|\geq \veps}{\frac{\bu(\bx) - \bu(\by)}{|\bx-\by|^{d+2s}}}{\by}  \\
		+ \frac{(2 \mu + \lambda)^s - \mu^s}{2s} \kappa_{d,s} \intdm{|\bx-\by|\geq \veps}{ \left( \frac{(\bx-\by) \otimes (\bx-\by)}{|\bx-\by|^2} \right) \frac{\bu(\bx) - \bu(\by)}{|\bx-\by|^{d+2s}}}{\by}\,.
	\end{multline}
	Then $\bu \in \scL^{2s,p}(\bbR^d;\bbR^d)$ if and only if $\bbL^s_{\veps} \bu(\bx)$ converges in $L^p(\bbR^d;\bbR^d)$ as $\veps \to 0$ to a function $\bu_0$. Moreover, there exists a constant $C(d,s) >0$ such that
	\begin{equation*}
		C^{-1} \Vnorm{\bu}_{\scL^{2s,p}(\bbR^d)} \leq \Vnorm{\bu_0}_{L^p(\bbR^d)} \leq C \Vnorm{\bu}_{\scL^{2s,p}(\bbR^d)}\,.
	\end{equation*}
\end{theorem}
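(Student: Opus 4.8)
The plan is to reduce the statement to the known characterization of Bessel potential spaces in terms of the pointwise (truncated) fractional Laplacian, and then leverage the algebraic decomposition of the symbol $\bM^s(\bsxi)$ from \Cref{lma:FourierSymbolCalculation}. Concretely, $\bbL^s_\veps$ is built from two pieces: the truncated fractional Laplacian $(-\Delta)^s_\veps \bu(\bx) = c_{d,s}\int_{|\bx-\by|\geq\veps}\frac{\bu(\bx)-\bu(\by)}{|\bx-\by|^{d+2s}}\,\rmd\by$ acting componentwise, and the truncation $\bbF^s_\veps \bu(\bx) = \kappa_{d,s}\int_{|\bx-\by|\geq\veps}\big(\tfrac{(\bx-\by)\otimes(\bx-\by)}{|\bx-\by|^2}\big)\frac{\bu(\bx)-\bu(\by)}{|\bx-\by|^{d+2s}}\,\rmd\by$ of the operator $\bbF^s$ from \eqref{eq:Localization:DefnOfL}. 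From \eqref{eq:FourierTransformOfL} one has $\bbF^s = (-\Delta)^s + 2s\, \grad^s\div^s$-type symbol, and combined with \eqref{eq:FractionaLameFourierMatrix} we get $\bbL^s_\veps = a\,(-\Delta)^s_\veps + b\,\bbF^s_\veps$ for explicit constants $a,b$ with $a + b = \mu^s > 0$ and $a + (2s+1)b = (2\mu+\lambda)^s>0$. The $L^p$-convergence of the truncated fractional Laplacian to $(-\Delta)^s\bu$, together with the two-sided bound $C^{-1}\Vnorm{\bu}_{\scL^{2s,p}}\leq \Vnorm{(-\Delta)^s\bu}_{L^p}+\Vnorm{\bu}_{L^p}\leq C\Vnorm{\bu}_{\scL^{2s,p}}$, is classical; the content here is to establish the same for $\bbF^s_\veps$, which is proved in Sections \ref{sec:BesselSpaces}--\ref{sec:BesselSpacesProof} (the proof the author flags as adapted from \cite{wheeden1968hypersingular}). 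I would organize the argument as: first establish the scalar/componentwise fact for $(-\Delta)^s_\veps$; second establish the analogous fact for $\bbF^s_\veps$; third combine linearly, being careful that the linear combination of the two limit operators controls (and is controlled by) the full $\scL^{2s,p}$ norm.

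The first step, which I expect to be the genuine obstacle, is the $L^p$-convergence statement for $\bbF^s_\veps$ and the corresponding two-sided norm equivalence. The strategy I would follow is to write $\bbF^s_\veps\bu = m_\veps \ast \bu$ where $m_\veps$ is a kernel whose Fourier transform is a truncation of the multiplier $(2\pi|\bsxi|)^{2s}\big(2s\tfrac{\bsxi\otimes\bsxi}{|\bsxi|^2}+\bI\big)$. One shows: (i) the multipliers $\wh{m_\veps}(\bsxi)$ are uniformly bounded in the Hörmander–Mikhlin class after normalizing by $(1+4\pi^2|\bsxi|^2)^{s}$, giving uniform $L^p$-boundedness of $\bu\mapsto \bbF^s_\veps\bu$ on $\scL^{2s,p}$; (ii) for $\bu$ in a dense class (say $\scS(\bbR^d;\bbR^d)$) one has pointwise and dominated convergence $\bbF^s_\veps\bu\to\bbF^s\bu$ in $L^p$, using \Cref{lma:SecondOrderDiff} to handle the singularity at the origin; (iii) a density/uniform-boundedness argument upgrades convergence to all of $\scL^{2s,p}$. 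For the converse direction — that $L^p$-convergence of $\bbF^s_\veps\bu$ forces $\bu\in\scL^{2s,p}$ — one inverts: the limit operator $\bbF^s$ has symbol $(2\pi|\bsxi|)^{2s}\big(2s\tfrac{\bsxi\otimes\bsxi}{|\bsxi|^2}+\bI\big)$ which is an invertible matrix for every $\bsxi\neq 0$ (eigenvalues $(2s+1)(2\pi|\bsxi|)^{2s}$ and $(2\pi|\bsxi|)^{2s}$, both positive), and its inverse composed with the Bessel multiplier is again Mikhlin, so $\Vnorm{\bu}_{\scL^{2s,p}}\lesssim \Vnorm{\bbF^s\bu}_{L^p}+\Vnorm{\bu}_{L^p}$; one then needs the a priori bound $\Vnorm{\bu}_{L^p}\lesssim\Vnorm{\bbF^s\bu}_{L^p}$ on a suitable subspace, or to carry the $L^p$ norm of $\bu$ along as part of the hypothesis via the structure of $\bbL^s_\veps$ (note $\bbL^s_\veps\bu$ itself carries no zeroth-order part, so one must argue as in the fractional Laplacian case that the truncated principal value already encodes the full smoothness).

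For the assembly in the final step, the key point is that the matrix-valued symbol of the \emph{limit} operator $\bbL^s$, namely $\bM^s(\bsxi)$, satisfies the uniform ellipticity bound $\Vint{\bM^s(\bsxi)\bseta,\bseta}\geq \min\{\mu^s,(2\mu+\lambda)^s\}(2\pi|\bsxi|)^{2s}|\bseta|^2$ (inherited from \eqref{eq:EllipticityConditionsLame} via diagonalization as in \Cref{lma:FourierSymbolCalculation}), so $\bM^s(\bsxi)^{-1}(1+4\pi^2|\bsxi|^2)^s$ is a Mikhlin multiplier and one obtains $\Vnorm{\bu}_{\scL^{2s,p}}\approx\Vnorm{\bbL^s\bu}_{L^p}+\Vnorm{\bu}_{L^p}$. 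Then: if $\bu\in\scL^{2s,p}$, parts (i)--(iii) applied to \emph{both} $(-\Delta)^s_\veps$ and $\bbF^s_\veps$ give $\bbL^s_\veps\bu\to \bbL^s\bu=:\bu_0$ in $L^p$, and the right-hand inequality $\Vnorm{\bu_0}_{L^p}\leq C\Vnorm{\bu}_{\scL^{2s,p}}$ is the $L^p$-boundedness of the $\bM^s$-multiplier. Conversely, if $\bbL^s_\veps\bu\to\bu_0$ in $L^p$, one identifies (testing against $\scS$ and using that truncations converge to the principal-value integral in $\scS'$) that $\bu_0=\bbL^s\bu$ in the sense of tempered distributions, whence the Mikhlin bound for $\bM^s(\bsxi)^{-1}$ yields $\bu\in\scL^{2s,p}$ with $C^{-1}\Vnorm{\bu}_{\scL^{2s,p}}\leq\Vnorm{\bu_0}_{L^p}$ — modulo the same delicate point as in the scalar case of producing the $\Vnorm{\bu}_{L^p}$ term, which I would handle exactly as \cite{wheeden1968hypersingular} / \cite{Stein} do by a localization plus the decay estimate \eqref{eq:DecayRate}. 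The hard part, to repeat, is step (i)--(iii) for $\bbF^s_\veps$ — establishing the uniform Mikhlin bounds on the truncated matrix multipliers and the convergence on a dense set — and this is precisely why the author defers the full proof to \Cref{sec:BesselSpacesProof}.
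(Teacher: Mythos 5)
Your skeleton matches the paper's: the paper also proves this theorem by splitting $\bbL^s_{\veps}$ into the componentwise truncated fractional Laplacian plus the strongly coupled truncation $\bbF^s_{\veps}$, quoting the classical scalar characterization (Stein, Wheeden) for the first piece, and reducing everything to the claim that for $\bu\in\scL^{2s,p}$ the family $\bbF^s_{\veps}\bu$ converges in $L^p$ with $\sup_{\veps>0}\Vnorm{\bbF^s_{\veps}\bu}_{L^p}\leq C\Vnorm{\bu}_{\scL^{2s,p}}$ (\Cref{thm:BesselSpaces:MainThm2}). The gap is in how you propose to prove that reduction. Your step (i) asserts uniform H\"ormander--Mikhlin bounds for the truncated multipliers normalized by $(1+4\pi^2|\bsxi|^2)^{s}$, with no argument. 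This is not a routine fact: the multiplier of $\bbF^s_{\veps}$ is an oscillatory integral over $\{|\bh|\geq\veps\}$ against $|\bh|^{-d-2s}$, whose derivatives of order $\geq 2$ (and already of order $1$ when $s\leq 1/2$) are not absolutely convergent integrals, so the symbol estimates $|\bsxi|^{|\alpha|}\,|\p^{\alpha}(\cdot)|\lesssim 1$, uniformly in $\veps$, require genuine oscillatory-integral work -- this is exactly the difficulty the theorem is about. The paper handles it by entirely different means: it compares $\bbF^s_{\veps}$ with the explicit smooth approximation $\bbG^s_{\veps}$ whose multiplier is $(2\pi|\bsxi|)^{2s}\big(2s\,\bsxi\otimes\bsxi/|\bsxi|^2+\bI\big)\rme^{-2\pi\veps|\bsxi|}$, computes the comparison kernel $\Upsilon^{s,\veps}$ in closed form via Bessel functions (\Cref{thm:PoissonInt:Formula}), proves the decay bounds \eqref{eq:Decay:psi1:MainEst}--\eqref{eq:Decay:psi2:MainEst} together with the second-difference modulus-of-continuity estimate of \Cref{lma:ModContOfBesselSpaces} to control $\bbF^s_{\veps}-\bbG^s_{\veps}$ (\Cref{lma:BesselSpaces:MainLemma}), and treats $\bbG^s_{\veps}\bu$ via Riesz transforms and the Poisson semigroup. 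Deferring your hard step to ``Sections 6--7'' is circular here, since those sections \emph{are} the proof of the statement in question; as it stands your alternative route for the central estimate is only asserted.

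A smaller but real error sits in your assembly of the converse: $\bM^s(\bsxi)^{-1}(1+4\pi^2|\bsxi|^2)^{s}$ is \emph{not} a Mikhlin multiplier -- it blows up like $|\bsxi|^{-2s}$ at the origin -- so it cannot directly give $\Vnorm{\bu}_{\scL^{2s,p}}\lesssim\Vnorm{\bbL^s\bu}_{L^p}+\Vnorm{\bu}_{L^p}$ as written. The standard repair is to use, e.g., $(\bI+\bM^s(\bsxi))^{-1}(1+4\pi^2|\bsxi|^2)^{s}$, carrying the $\Vnorm{\bu}_{L^p}$ term through as in the Wheeden/Stein scheme you cite; note also that the decay estimate \eqref{eq:DecayRate} you invoke at this point is proved only for Schwartz fields and does not obviously help. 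Your identification $\bu_0=\bbL^s\bu$ in the distributional sense by testing the truncations against Schwartz functions is fine, and indeed spells out a step the paper leaves implicit, but it does not compensate for the missing proof of the uniform bound and $L^p$-convergence of $\bbF^s_{\veps}\bu$.
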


Since any function in $\scL^{2s,p}(\bbR^d;\bbR^d)$ defines a tempered distribution,  the operators $\bbL^s_{\veps} \bu$ converge to the $\bbL^s \bu$ in the topology on $\scS_s'(\bbR^d;\bbR^d)$. By Theorem \ref{thm:BesselSpaces:MainThm}, we can identify the operator $\bbL^s \bu$ with the $L^p(\bbR^d;\bbR^d)$ function $\bu_0(\bx)$ (this is exactly how the fractional Laplacian is defined to act on functions in $\scL^{2s,p}(\bbR^d)$). Therefore we have a new characterization of vector fields in Bessel potential spaces:
\begin{equation*}
	\scL^{2s,p}(\bbR^d;\bbR^d) = \left\{ \bu \in L^p(\bbR^d;\bbR^d) \, : \, \bbL^s \bu \in L^p(\bbR^d;\bbR^d) \right\}\,, \qquad s \in (0,1) \text{ and } p \in (1,\infty)\,.
\end{equation*}

Theorem \ref{thm:BesselSpaces:MainThm} follows from the analogous theorem for $(-\Delta)^s$ (see \cite{Stein, wheeden1968hypersingular}) and the following theorem:
\begin{theorem}\label{thm:BesselSpaces:MainThm2}
	Let $\bu \in \scL^{2s,p}(\bbR^d;\bbR^d)$ with $0 < s < 1$ and $1 < p < \infty$.
	Define for $\veps > 0$
	\begin{equation}\label{eq:BesselSpaces:VepsOp2}
		\bbF^s_{\veps} \bu(\bx) := \kappa_{d,s} \intdm{|\bx-\by|\geq \veps}{ \left( \frac{(\bx-\by) \otimes (\bx-\by)}{|\bx-\by|^2} \right) \frac{\bu(\bx) - \bu(\by)}{|\bx-\by|^{d+2s}}}{\by}\,.
	\end{equation}
	Then $\bbF^s_{\veps} \bu(\bx)$ converges in $L^p(\bbR^d;\bbR^d)$ as $\veps \to 0$ to a function $\bv_s$ that satisfies the estimate
	\begin{equation}\label{eq:MainBesselEstimate}
		\Vnorm{\bv_s}_{L^p(\bbR^d)} \leq C \Vnorm{\bu}_{\cL^{2s,p}(\bbR^d)}\,,
	\end{equation}
	where $C$ depends only on $d$ and $s$.
\end{theorem}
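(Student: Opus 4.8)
The plan is to identify $\bbF^s$ with a matrix-valued singular integral operator whose Fourier symbol, composed with the Bessel potential of order $2s$, is a Mikhlin--H\"ormander multiplier, and then to recover the $L^p$-convergence of the truncations $\bbF^s_\veps$ from the uniform-in-$\veps$ Calder\'on--Zygmund theory for the corresponding truncated convolution kernels. Throughout, the matrix factor $\frac{(\bx-\by)\otimes(\bx-\by)}{|\bx-\by|^2}$ plays no essential role: it has operator norm bounded by $1$, so the kernel estimates are applied componentwise, and both the multiplier theorem and the Calder\'on--Zygmund theorem are used in their vector/matrix-valued versions. For the \emph{a priori} bound, by \Cref{thm:FourierTransformOfL} one has $\widehat{\bbF^s\bv}(\bsxi)=m_0(\bsxi)\widehat{\bv}(\bsxi)$ with $m_0(\bsxi)=(2\pi|\bsxi|)^{2s}\big(2s\,\tfrac{\bsxi\otimes\bsxi}{|\bsxi|^2}+\bI\big)$. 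Writing $\bff:=\cF^{-1}\big[(1+4\pi^2|\bsxi|^2)^{s}\widehat{\bv}\big]\in L^p$ (so that $\Vnorm{\bff}_{L^p(\bbR^d)}=\Vnorm{\bv}_{\scL^{2s,p}(\bbR^d)}$ and $\bv=G_{2s}\ast\bff$ with $\widehat{G_{2s}}=(1+4\pi^2|\bsxi|^2)^{-s}$), the map $\bv\mapsto\bbF^s\bv$ becomes convolution of $\bff$ with the kernel of Fourier symbol $m(\bsxi):=m_0(\bsxi)(1+4\pi^2|\bsxi|^2)^{-s}$. This $m$ is $C^\infty$ on $\bbR^d\setminus\{{\bf 0}\}$ (a product of the degree-zero homogeneous smooth matrix $2s\,\tfrac{\bsxi\otimes\bsxi}{|\bsxi|^2}+\bI$, of $(2\pi|\bsxi|)^{2s}$, and of the entire radial function $(1+4\pi^2|\bsxi|^2)^{-s}$), and for each multi-index $\bsgamma$ one checks $|D^{\bsgamma}m(\bsxi)|\le C_{\bsgamma}\min\{|\bsxi|^{2s},1\}\,|\bsxi|^{-|\bsgamma|}\le C_{\bsgamma}|\bsxi|^{-|\bsgamma|}$, the factor $|\bsxi|^{2s}$ near the origin being harmless since $s>0$. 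Hence Mikhlin--H\"ormander gives $\Vnorm{\bbF^s\bv}_{L^p(\bbR^d)}\le C(d,s)\Vnorm{\bv}_{\scL^{2s,p}(\bbR^d)}$ for all such $\bv$ and $1<p<\infty$; once the limit $\bv_s$ is identified with $\bbF^s\bu$ this is exactly \eqref{eq:MainBesselEstimate}.

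Next I would pass to the truncations. With $\bu=G_{2s}\ast\bff$, Fubini's theorem (legitimate for each fixed $\veps>0$ because $G_{2s}\in L^1(\bbR^d)$ decays exponentially at infinity, is smooth away from the origin with $G_{2s}(\bz)\asymp|\bz|^{2s-d}$ and $|D^k G_{2s}(\bz)|\lesssim|\bz|^{2s-d-k}$ near ${\bf 0}$, while $|\bh|^{-d-2s}\in L^1(\{|\bh|\ge\veps\})$) gives $\bbF^s_\veps\bu=\Phi_\veps\ast\bff$ with the matrix kernel
\[
\Phi_\veps(\bw):=\kappa_{d,s}\int_{|\bh|\ge\veps}\frac{\bh\otimes\bh}{|\bh|^2}\,\frac{G_{2s}(\bw)-G_{2s}(\bw-\bh)}{|\bh|^{d+2s}}\,\rmd\bh\,.
\]
Taking Fourier transforms, $\widehat{\Phi_\veps}(\bsxi)=m_\veps(\bsxi)\widehat{G_{2s}}(\bsxi)$ where $m_\veps(\bsxi)=\kappa_{d,s}\int_{|\bh|\ge\veps}\frac{\bh\otimes\bh}{|\bh|^2}\frac{1-\rme^{-2\pi\imath\bh\cdot\bsxi}}{|\bh|^{d+2s}}\,\rmd\bh$, and the elementary bound $|1-\rme^{-2\pi\imath\bh\cdot\bsxi}|\le\min\{2,2\pi|\bh||\bsxi|\}$ yields $\Vnorm{m_\veps}_{L^\infty(\bbR^d)}\le C(d,s)$ uniformly and, letting $\veps\to0$, $m_\veps\to m_0$ pointwise (this is \Cref{thm:FourierTransformOfL} applied to the truncated operator). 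Hence $\widehat{\Phi_\veps}\to m$ pointwise with $\sup_\veps\Vnorm{\widehat{\Phi_\veps}}_{L^\infty(\bbR^d)}\le C(d,s)$, so $\bff\mapsto\Phi_\veps\ast\bff$ is bounded on $L^2$ uniformly in $\veps$.

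The core of the argument is to show that the $\Phi_\veps$ are Calder\'on--Zygmund kernels with constants independent of $\veps$, i.e. $|\Phi_\veps(\bw)|\le C(d,s)|\bw|^{-d}$ and $\int_{|\bw|\ge2|\bw_0|}|\Phi_\veps(\bw-\bw_0)-\Phi_\veps(\bw)|\,\rmd\bw\le C(d,s)$ for all $\bw_0$; one establishes this by splitting the defining $\bh$-integral at $|\bh|\sim|\bw|$ and using the size and smoothness bounds on $G_{2s}$ above together with its exponential tail, and — when $2s\ge1$ — by rewriting the near-diagonal part of the integral via the second-difference form of $\bbF^s$ (\Cref{lma:SecondOrderDiff}) to restore absolute convergence. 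Granted these uniform bounds, the vector-valued Calder\'on--Zygmund theorem upgrades the $L^2$-bound to $\sup_\veps\Vnorm{\Phi_\veps\ast\bff}_{L^q(\bbR^d)}\le C(d,s)\Vnorm{\bff}_{L^q(\bbR^d)}$ for every $q\in(1,\infty)$. For $\bff\in\scS(\bbR^d;\bbR^d)$, the pointwise convergence $\widehat{\Phi_\veps}\to m$ together with $\Vnorm{\widehat{\Phi_\veps}}_{L^\infty}\le C$ gives, by dominated convergence on the Fourier side, $\Phi_\veps\ast\bff\to\cF^{-1}[m\,\widehat{\bff}]=\bbF^s\bu$ in $L^2$; interpolating this $L^2$-convergence against the uniform $L^q$-bounds (choosing $q$ on the side of $p$ opposite to $2$) upgrades it to convergence in $L^p$. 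Finally, for general $\bff\in L^p$ one approximates by $\bff_k\in\scS$, sets $\bu_k:=G_{2s}\ast\bff_k$, and uses the uniform $L^p$-bound for $\Phi_\veps\ast(\cdot)$ together with the a priori bound to estimate $\Vnorm{\Phi_\veps\ast\bff-\bbF^s\bu}_{L^p}\le C\Vnorm{\bff-\bff_k}_{L^p}+\Vnorm{\Phi_\veps\ast\bff_k-\bbF^s\bu_k}_{L^p}+C\Vnorm{\bu-\bu_k}_{\scL^{2s,p}}$; letting $\veps\to0$ and then $k\to\infty$ shows $\bbF^s_\veps\bu\to\bbF^s\bu$ in $L^p$. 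Setting $\bv_s:=\bbF^s\bu$, the bound \eqref{eq:MainBesselEstimate} is precisely the a priori estimate above.

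The step I expect to be the main obstacle is the \emph{uniform-in-$\veps$} Calder\'on--Zygmund kernel estimate for $\Phi_\veps$: one must carefully track the competition between the local singularity of $G_{2s}$ at the origin, its exponential decay at infinity, and the truncation scale $\veps$, and — for $s\ge\tfrac12$ — organize the near-diagonal contribution through second differences so that everything remains absolutely convergent. This is exactly where the proof follows, and adapts to the coupled matrix-valued setting, the hypersingular-integral technique of \cite{wheeden1968hypersingular}; the remaining steps are soft functional analysis built on the Fourier-multiplier identities already established.
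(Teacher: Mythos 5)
Your proposal is correct in outline but takes a genuinely different route from the paper. The paper never composes with the Bessel kernel as you do; instead it compares the truncation $\bbF^s_{\veps}\bu$ with a Poisson-regularized version $\bbG^s_{\veps}\bu$ of the full operator, whose kernel $\Upsilon^{s,\veps}=\cF^{-1}\big[(2\pi|\bsxi|)^{2s}\big(2s\,\tfrac{\bsxi\otimes\bsxi}{|\bsxi|^2}+\bI\big)\rme^{-2\pi\veps|\bsxi|}\big]$ is computed in closed form through Bessel functions; the heart of that argument is the decay estimates $|\psi_1(r)|\le Cr^{\alpha}$, $|\psi_2(r)+\kappa_{d,s}|\le Cr^{\alpha}$ together with a second-difference modulus-of-continuity lemma for $G_{2\beta}\ast\bff$, after which $\bbG^s_{\veps}\bu$ is handled by second-order Riesz transforms and the classical Poisson semigroup, and the passage from $C^\infty_c$ to $\scL^{2s,p}$ is by density. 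Your route relocates the work: writing $\bu=G_{2s}\ast\bff$, you reduce everything to a Mikhlin--H\"ormander bound for the symbol $m$ (which is fine) plus \emph{uniform-in-$\veps$} Calder\'on--Zygmund estimates for the truncated kernels $\Phi_\veps=\bbF^s_\veps G_{2s}$. What your approach buys is standard machinery (no Bessel-function asymptotics) and a cleaner density step via the uniform $L^p$ bound on the truncations; what it costs is that the H\"ormander condition for $\Phi_\veps$, uniformly in $\veps$, is exactly where the hard, hands-on work with $G_{2s}$ (size, gradient and second-difference bounds with exponential tails, splitting at $|\bh|\sim|\bw|$) must be done --- it is the analogue of the paper's $\psi_1,\psi_2$ estimates, and in your write-up it is asserted rather than proved. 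Having checked the scheme, the size bound $|\Phi_\veps(\bw)|\le C|\bw|^{-d}$ and the gradient bound $|\nabla\Phi_\veps(\bw)|\le C|\bw|^{-d-1}$ do hold uniformly by the first/second-difference splitting you describe (the symmetric truncation $\{|\bh|\ge\veps\}$ makes the second-difference rewriting exact), so the strategy goes through, but a complete proof must carry out these estimates, including the case $2s\le 1$ where $\nabla G_{2s}\notin L^1$ near the origin and the singularity of $G_{2s}(\bw-\bh)$ has to be handled separately in the far region.

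One concrete slip to fix: your claim that $|1-\rme^{-2\pi\imath\bh\cdot\bsxi}|\le\min\{2,2\pi|\bh||\bsxi|\}$ yields $\sup_\veps\Vnorm{m_\veps}_{L^\infty}\le C(d,s)$ is false for $s\ge\tfrac12$, since $\int_{\veps\le|\bh|\le 1/|\bsxi|}|\bh|^{1-d-2s}\,\rmd\bh$ behaves like $\veps^{1-2s}$ (or a logarithm when $2s=1$) and is not uniformly bounded. The remedy is the same symmetrization you use for the kernel estimates: over the symmetric truncated domain the odd part of $\rme^{-2\pi\imath\bh\cdot\bsxi}$ integrates to zero, so one bounds $|1-\cos(2\pi\bh\cdot\bsxi)|\le\min\{2,2\pi^2|\bh\cdot\bsxi|^2\}$, which gives the uniform $L^\infty$ bound on $m_\veps$ (and the pointwise convergence $m_\veps\to m_0$) for all $s\in(0,1)$. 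Since this uniform $L^2$ bound is an input to the Calder\'on--Zygmund theorem, the correction is needed, though it is local and does not affect the rest of your argument. Finally, note that the constants from Mikhlin--H\"ormander, Calder\'on--Zygmund and the interpolation step depend on $p$ as well as $d$ and $s$; this matches the actual content of the theorem (the paper's own proof has the same dependence).
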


We postpone the proof of \Cref{thm:BesselSpaces:MainThm2} to the next section and focus on several of its consequences here.
First, Theorem \ref{thm:BesselSpaces:MainThm} allows us to state the mapping properties of $\bbL^s$ on Bessel potential spaces:
\begin{theorem}
	Let $s \in (0,1)$, $1 < p < \infty$, and $t \geq 0$. Then the operator $\bbL^s$ is a well-defined map from $\scL^{t+2s,p}(\bbR^d;\bbR^d)$ to $\scL^{t,p}(\bbR^d;\bbR^d)$.
\end{theorem}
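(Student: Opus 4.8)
The plan is to reduce to the case $t=0$---which is precisely \Cref{thm:BesselSpaces:MainThm}---and then bootstrap to general $t\ge 0$ by commuting $\bbL^s$ with Bessel potentials. Throughout I would write $\mathcal{J}^{a}\bu := \cF^{-1}\big((1+4\pi^2|\bsxi|^2)^{a/2}\,\wh{\bu}\big)$ for the Bessel potential of order $a\in\bbR$, and use the classical facts that $\mathcal{J}^{a}$ is an isometric isomorphism of $\scL^{b,p}(\bbR^d;\bbR^d)$ onto $\scL^{b-a,p}(\bbR^d;\bbR^d)$ for every $b\in\bbR$, that $\mathcal{J}^{a}$ maps $\scS(\bbR^d;\bbR^d)$ into itself (its multiplier is $C^\infty$ and slowly increasing, in contrast to that of $(-\Delta)^s$), that $\mathcal{J}^{-a}$ is bounded on $L^p(\bbR^d;\bbR^d)$ when $a\ge 0$, and that $\scS(\bbR^d;\bbR^d)$ is dense in each $\scL^{b,p}(\bbR^d;\bbR^d)$. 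In particular, since $t\ge 0$, one has the continuous embedding $\scL^{t+2s,p}\hookrightarrow\scL^{2s,p}$, so by \Cref{thm:BesselSpaces:MainThm} the quantity $\bbL^s\bu$ is a well-defined element of $L^p(\bbR^d;\bbR^d)$ for every $\bu\in\scL^{t+2s,p}(\bbR^d;\bbR^d)$, with $\Vnorm{\bbL^s\bu}_{L^p(\bbR^d)}\le C\Vnorm{\bu}_{\scL^{2s,p}(\bbR^d)}$.

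The heart of the matter is the commutation identity
\[
	\bbL^s\bu = \mathcal{J}^{-t}\big(\bbL^s(\mathcal{J}^{t}\bu)\big) \qquad\text{in } L^p(\bbR^d;\bbR^d),\quad \bu\in\scL^{t+2s,p}(\bbR^d;\bbR^d).
\]
For $\bu\in\scS(\bbR^d;\bbR^d)$ this is immediate on the Fourier side, since the scalar multiplier $(1+4\pi^2|\bsxi|^2)^{t/2}$ commutes with the matrix symbol $\bM^s(\bsxi)$ and $\mathcal{J}^{-t}\mathcal{J}^{t}$ is the identity. To obtain it for general $\bu$, I would pick $\bu_n\to\bu$ in $\scL^{t+2s,p}$ with $\bu_n\in\scS$; then $\mathcal{J}^{t}\bu_n\to\mathcal{J}^{t}\bu$ in $\scL^{2s,p}$, so $\bbL^s(\mathcal{J}^{t}\bu_n)\to\bbL^s(\mathcal{J}^{t}\bu)$ in $L^p$ by the $t=0$ bound, and applying the $L^p$-bounded operator $\mathcal{J}^{-t}$ gives $\mathcal{J}^{-t}\bbL^s\mathcal{J}^{t}\bu_n\to\mathcal{J}^{-t}\bbL^s\mathcal{J}^{t}\bu$ in $L^p$; simultaneously $\mathcal{J}^{-t}\bbL^s\mathcal{J}^{t}\bu_n=\bbL^s\bu_n\to\bbL^s\bu$ in $L^p$ because $\bu_n\to\bu$ in $\scL^{2s,p}$. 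Uniqueness of $L^p$-limits then yields the displayed identity.

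From this the conclusion follows quickly: since $\mathcal{J}^{t}\bu\in\scL^{2s,p}$, \Cref{thm:BesselSpaces:MainThm} gives $\bbL^s(\mathcal{J}^{t}\bu)\in L^p$, and hence $\bbL^s\bu=\mathcal{J}^{-t}\big(\bbL^s(\mathcal{J}^{t}\bu)\big)$ lies in the range of $\mathcal{J}^{-t}$ acting on $L^p(\bbR^d;\bbR^d)$, which is exactly $\scL^{t,p}(\bbR^d;\bbR^d)$. Moreover
\[
	\Vnorm{\bbL^s\bu}_{\scL^{t,p}(\bbR^d)} = \Vnorm{\mathcal{J}^{t}\bbL^s\bu}_{L^p(\bbR^d)} = \Vnorm{\bbL^s(\mathcal{J}^{t}\bu)}_{L^p(\bbR^d)} \le C\Vnorm{\mathcal{J}^{t}\bu}_{\scL^{2s,p}(\bbR^d)} = C\Vnorm{\bu}_{\scL^{t+2s,p}(\bbR^d)},
\]
which exhibits $\bbL^s:\scL^{t+2s,p}(\bbR^d;\bbR^d)\to\scL^{t,p}(\bbR^d;\bbR^d)$ as a bounded map. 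I expect the only genuine obstacle to be the justification of the commutation identity for inputs that are not Schwartz, since a priori $\bbL^s$ acts only on the rough space $\scS_s'(\bbR^d;\bbR^d)$; the density argument above, powered by the quantitative two-sided estimate in \Cref{thm:BesselSpaces:MainThm}, is what circumvents this, and everything else is routine Bessel-potential bookkeeping.
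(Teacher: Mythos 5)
Your argument is correct, and it is essentially the proof the paper leaves implicit: the theorem is stated there without proof as a direct consequence of \Cref{thm:BesselSpaces:MainThm}, the intended justification being exactly the Bessel-potential bookkeeping you carry out (commuting the scalar multiplier $(1+4\pi^2|\bsxi|^2)^{t/2}$ with the matrix symbol $\bM^s(\bsxi)$ on Schwartz functions and extending by density via the two-sided $L^p$ estimate). Your write-up simply supplies the routine details the paper omits.
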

Next, the following strong solvability result in Bessel spaces holds for the positive-definite operator $\bbL^s + q \bI$ on all of Euclidean space.

\begin{theorem}
	For $1 < p < \infty$ and $q > 0$, corresponding to any $\bff \in L^p(\bbR^d;\bbR^d)$ the equation
	\begin{equation*}
		\bbL^s \bu(\bx) + q \bu(\bx) = \bff(\bx)\,, \quad \bx \in \bbR^d\,,
	\end{equation*}
	has a unique strong solution $\bu \in \scL^{2s,p}(\bbR^d;\bbR^d)$ satisfying the estimate
	\begin{equation*}
		\Vnorm{\bu}_{\scL^{2s,p}(\bbR^d)} \leq C \Vnorm{\bff}_{L^p(\bbR^d)}
	\end{equation*}
	for $C$ depending only on $d$, $s$, $p$ and $q$.
\end{theorem}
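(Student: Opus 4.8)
The plan is to reduce the equation to a constant-type Fourier multiplier problem on $L^p$. Write $\mathbf{P}(\bsxi) := |\bsxi|^{-2}\,\bsxi\otimes\bsxi$ for the orthogonal projection onto $\mathrm{span}\{\bsxi\}$. By \Cref{lma:FourierSymbolCalculation} the multiplier of $\bbL^s + q\bI$ is
\begin{equation*}
	\bM^s(\bsxi) + q\bI = \big(\mu^s(2\pi|\bsxi|)^{2s}+q\big)\big(\bI-\mathbf{P}(\bsxi)\big) + \big((2\mu+\lambda)^s(2\pi|\bsxi|)^{2s}+q\big)\mathbf{P}(\bsxi)\,,
\end{equation*}
which is symmetric with spectrum bounded below by $q>0$, hence boundedly invertible pointwise with
\begin{equation*}
	\big(\bM^s(\bsxi)+q\bI\big)^{-1} = \frac{\bI-\mathbf{P}(\bsxi)}{\mu^s(2\pi|\bsxi|)^{2s}+q} + \frac{\mathbf{P}(\bsxi)}{(2\mu+\lambda)^s(2\pi|\bsxi|)^{2s}+q}\,.
\end{equation*}
Let $\mathcal{J}\bu := \cF^{-1}[(1+4\pi^2|\bsxi|^2)^{s}\,\wh{\bu}]$ (acting componentwise); since $(1+4\pi^2|\bsxi|^2)^{-s}$ is smooth with polynomially bounded derivatives, $\mathcal{J}$ is, by definition of the Bessel norm, an isometric isomorphism $\scL^{2s,p}(\bbR^d;\bbR^d) \to L^p(\bbR^d;\bbR^d)$, and $\mathcal{J}^{-1}$ is the Bessel potential, which in particular maps $\scS(\bbR^d;\bbR^d)$ into itself.

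First I would introduce the \emph{reduced operator} $\mathcal{B} := (\bbL^s+q\bI)\circ\mathcal{J}^{-1}$. For $\bg\in\scS(\bbR^d;\bbR^d)$ one has $\mathcal{J}^{-1}\bg\in\scS$, and the pointwise definition of $\bbL^s$ (the corollary following \Cref{thm:FourierTransformOfL}) yields $\cF(\mathcal{B}\bg) = \mathbf{b}\,\wh{\bg}$ with matrix symbol $\mathbf{b}(\bsxi) := (1+4\pi^2|\bsxi|^2)^{-s}\big(\bM^s(\bsxi)+q\bI\big)$. The second, and only substantial, step is to verify that $\mathbf{b}$ and $\mathbf{b}^{-1}$ both obey the Mikhlin--H\"ormander conditions on $\bbR^d\setminus\{\mathbf{0}\}$: writing $\mathbf{b} = a_1(\bI-\mathbf{P}) + a_2\mathbf{P}$ with $a_1(\bsxi) = (\mu^s(2\pi|\bsxi|)^{2s}+q)(1+4\pi^2|\bsxi|^2)^{-s}$ and $a_2$ obtained by replacing $\mu$ with $2\mu+\lambda$, each scalar $a_i$ is $C^{\infty}$ off the origin, bounded between two positive constants (it tends to $q$ at $\mathbf{0}$ and to $\mu^s$, resp. $(2\mu+\lambda)^s$, at infinity), and satisfies $|D^{\bsgamma}a_i(\bsxi)| \leq C_{\bsgamma}|\bsxi|^{-|\bsgamma|}$; moreover $\mathbf{P}$ is homogeneous of degree $0$ and smooth off the origin, so $|D^{\bsgamma}\mathbf{P}(\bsxi)| \leq C_{\bsgamma}|\bsxi|^{-|\bsgamma|}$. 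Since sums, products, and reciprocals of such scalar symbols are again of this type, each entry of $\mathbf{b}$ and of $\mathbf{b}^{-1} = a_1^{-1}(\bI-\mathbf{P}) + a_2^{-1}\mathbf{P}$ is a classical Mikhlin multiplier. Applying the Mikhlin--H\"ormander multiplier theorem entrywise (see, e.g., \cite{grafakos2009modern}) then shows that $\mathcal{B}$ and the multiplier operator $\widetilde{\mathcal{B}}$ with symbol $\mathbf{b}^{-1}$ extend from $\scS$ to bounded operators on $L^p(\bbR^d;\bbR^d)$ for every $p\in(1,\infty)$, with norms depending only on $d,s,p,q,\mu,\lambda$; since $\mathbf{b}(\bsxi)\mathbf{b}^{-1}(\bsxi) = \bI$ for $\bsxi\neq\mathbf{0}$, a density argument gives $\mathcal{B}\widetilde{\mathcal{B}} = \widetilde{\mathcal{B}}\mathcal{B} = I$ on $L^p$, so $\mathcal{B}$ is an isomorphism of $L^p(\bbR^d;\bbR^d)$.

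Finally I would assemble the pieces. The identity $\mathcal{B} = (\bbL^s+q\bI)\circ\mathcal{J}^{-1}$, valid on $\scS$, extends to $L^p$ because $\mathcal{J}^{-1}\colon L^p\to\scL^{2s,p}$ is bounded, $\bbL^s+q\bI\colon\scL^{2s,p}\to L^p$ is bounded (the mapping theorem of the previous subsection, with $t=0$), and $\mathcal{B}$ is bounded on $L^p$; hence $\bbL^s+q\bI = \mathcal{B}\circ\mathcal{J}$ is a composition of isomorphisms from $\scL^{2s,p}(\bbR^d;\bbR^d)$ onto $L^p(\bbR^d;\bbR^d)$. Consequently, for each $\bff\in L^p(\bbR^d;\bbR^d)$ the unique solution in $\scL^{2s,p}$ is $\bu = \mathcal{J}^{-1}\widetilde{\mathcal{B}}\bff$, and $\Vnorm{\bu}_{\scL^{2s,p}(\bbR^d)} = \Vnorm{\widetilde{\mathcal{B}}\bff}_{L^p(\bbR^d)} \leq C\Vnorm{\bff}_{L^p(\bbR^d)}$ with $C = C(d,s,p,q,\mu,\lambda)$. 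That the solution is \emph{strong} --- i.e. that $\bbL^s\bu + q\bu = \bff$ holds a.e.\ with $\bbL^s\bu$ computed as the $L^p$-limit of the truncated principal-value integrals $\bbL^s_{\veps}\bu$ --- is then immediate from \Cref{thm:BesselSpaces:MainThm}. I expect essentially all of the work to sit in the Mikhlin estimate of the second step, and within it in verifying that the single-variable factor $(2\pi|\bsxi|)^{2s}(1+4\pi^2|\bsxi|^2)^{-s}$ and its reciprocal have derivatives bounded by $|\bsxi|^{-|\bsgamma|}$ uniformly across the origin despite the fractional exponent; once this bookkeeping is done the remaining steps are routine.
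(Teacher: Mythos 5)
Your proposal is correct and takes essentially the same route as the paper: both reduce the problem to showing that the symbol $(1+4\pi^2|\bsxi|^2)^{-s}\big(\bM^s(\bsxi)+q\bI\big)$ and its inverse are $L^p$-multipliers, and then invert $\bbL^s+q\bI$ through the Bessel potential isomorphism, with the strong (pointwise/$L^p$-limit) interpretation supplied by \Cref{thm:BesselSpaces:MainThm}. The only difference is that you verify the Mikhlin--H\"ormander conditions directly via the decomposition $a_1(\bI-\mathbf{P})+a_2\mathbf{P}$, whereas the paper delegates exactly this multiplier verification to a line-by-line adaptation of \cite[Lemma 4.1]{mengesha2020solvability}.
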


\begin{proof}
	Thanks to \Cref{thm:BesselSpaces:MainThm} the action of $\bbL^s$ on Bessel functions is well-defined.
	Thus the theorem will be proved if we follow the same procedure used to prove \cite[Theorem 2.3]{mengesha2020solvability}. We just need to show that for any $q > 0$ the Fourier matrix symbols
	\begin{equation*}
		\frak{M}^s(\bsxi) := (1 + 4 \pi^2 |\bsxi|^2 )^{-s} ( \bM(\bsxi)^s + q \bI) \text{ and } [\frak{M}^s(\bsxi)]^{-1}
	\end{equation*}
	are both $L^p$-multipliers.
	
	To see this we invoke \cite[Lemma 4.1]{mengesha2020solvability}.
	The symbols $\frak{M}^s(\bsxi)$ and $[\frak{M}^s(\bsxi)]^{-1}$ have the explicit expressions
	\begin{gather*}
		\frak{M}^s(\bsxi) = \left( \frac{4 \pi^2 |\bsxi|^2 }{1 + 4 \pi^2 |\bsxi|^2 } \right)^s \left( \mu^s \bI + \big( (2 \mu + \lambda)^s - \mu^s \big) \frac{\bsxi \otimes \bsxi}{|\bsxi|^2} \right) + \frac{q}{(1 + 4 \pi^2 |\bsxi|^2)^s}\,, \\
		[\frak{M}^s(\bsxi)]^{-1} = \frac{(1 + 4 \pi^2 |\bsxi|^2 )^s}{\mu^s (4 \pi^2 |\bsxi|^2 )^s + q} \left( \bI - \frac{ \big( (2 \mu + \lambda)^s - \mu^s \big) (4 \pi^2 |\bsxi|^2 )^s }{ (2 \mu + \lambda)^s (4 \pi^2 |\bsxi|^2 )^s + q } \frac{\bsxi \otimes \bsxi}{|\bsxi|^2} \right)\,.
	\end{gather*}
	This multiplier and its inverse are exactly of the forms considered in \cite[Lemma 4.1]{mengesha2020solvability}, and so by a line-by-line adaptation of that proof $\frak{M}^s(\bsxi)$ and $[\frak{M}^s(\bsxi)]^{-1}$ are $L^p$-multipliers.
\end{proof}

\subsection{Positive Powers of $\bbL$: Distributional Definition}\label{subsec:NonlocalWeakForm}

We can use nonlocal integration by parts to define a distributional form of $\bbL^s \bu$ for functions belonging to a Bessel space of order less than $2s$. For the moment let $\bu, \bsvarphi \in C^{\infty}_c(\bbR^d;\bbR^d)$.
For $\veps > 0$ and $\bbL^s_{\veps}$ as in \eqref{eq:BesselSpaces:VepsOp},
\begin{equation*}
	\begin{split}
		&\int_{\bbR^d} \bbL^s_{\veps} \bu(\bx) \cdot \bsvarphi(\bx) \, \rmd \bx \\
		&= \left( \frac{(2s+1)\mu^s - (2\mu+\lambda)^s}{2s}\right) c_{d,s} \iint_{|\bx-\by|\geq \veps} \frac{\bu(\bx) - \bu(\by)}{|\bx-\by|^{d+2s}} \bsvarphi(\bx) \, \rmd \by \, \rmd \bx \\
		&\quad + \frac{(2 \mu + \lambda)^s - \mu^s}{2s} \kappa_{d,s}  \iint_{|\bx-\by|\geq \veps} \frac{ \left( \big( \bu(\bx) - \bu(\by) \big) \cdot  \frac{\bx-\by}{|\bx-\by|} \right) }{|\bx-\by|^{d+2s}} \bsvarphi(\bx) \cdot \frac{\bx-\by}{|\bx-\by|} \, \rmd \by \, \rmd \bx\,.
	\end{split}
\end{equation*}
By splitting each integral, switching the roles of $\bx$ and $\by$, and then using Fubini's theorem and recombining the integrals (c.f. \cite{KassmannMengeshaScott, Felsinger}),
\begin{equation*}
	\begin{split}
		&\int_{\bbR^d} \bbL^s_{\veps} \bu(\bx) \cdot \bsvarphi(\bx) \, \rmd \bx \\
		&= \left( \frac{(2s+1)\mu^s - (2\mu+\lambda)^s}{4s}\right) c_{d,s} \iint_{|\bx-\by|\geq \veps} \frac{(\bu(\bx) - \bu(\by)) \cdot (\bsvarphi(\bx) -\bsvarphi(\by)) }{|\bx-\by|^{d+2s}} \, \rmd \by \, \rmd \bx \\
		&\quad + \frac{(2 \mu + \lambda)^s - \mu^s}{4s} \kappa_{d,s}  \iint_{|\bx-\by|\geq \veps} \frac{ \left( \big( \bu(\bx) - \bu(\by) \big) \cdot  \frac{\bx-\by}{|\bx-\by|} \right) \left( \big( \bsvarphi(\bx) - \bsvarphi(\by) \big) \cdot  \frac{\bx-\by}{|\bx-\by|} \right) }{|\bx-\by|^{d+2s}} \, \rmd \by \, \rmd \bx \\
		&:= \cE^s_\veps(\bu,\bsvarphi)\,.
	\end{split}
\end{equation*}
So for each $\veps$, $\bbL^s_{\veps}$ defines the bilinear form $\cE^s_{\veps}$ in a natural way. By H\"older's inequality and the elementary inequality $|\ba \cdot \bfb| \leq |\ba| |\bfb|$
\begin{equation}\label{eq:BiFormCont}
	|\cE^s_{\veps}(\bu,\bsvarphi)| \leq C(\mu,\lambda,d,s) [\bu]_{\dot{\scL}^{s,2}(\bbR^d)} [\bsvarphi]_{\dot{\scL}^{s,2}(\bbR^d)}\,,
\end{equation}
where we have used the identification \eqref{eq:EquivalenceOfSobolevAndBessel}. So by the dominated convergence theorem $\cE^s_{\veps}$ converges as $\veps \to 0$ to the bilinear form
\begin{equation*}
	\begin{split}
		\cE^s(\bu,\bsvarphi)
		&:= \frac{\mu^s - A_{\mu,\lambda,s}}{2} c_{d,s} \int_{\bbR^d} \int_{\bbR^d} \frac{(\bu(\bx) - \bu(\by)) \cdot (\bsvarphi(\bx) -\bsvarphi(\by)) }{|\bx-\by|^{d+2s}} \, \rmd \by \, \rmd \bx \\
		&\quad + \frac{A_{\mu,\lambda,s}}{2} \kappa_{d,s}  \int_{\bbR^d} \int_{\bbR^d} \frac{ \Big( \big( \bu(\bx) - \bu(\by) \big) \cdot  \frac{\bx-\by}{|\bx-\by|} \Big) \Big( \big( \bsvarphi(\bx) - \bsvarphi(\by) \big) \cdot  \frac{\bx-\by}{|\bx-\by|} \Big) }{|\bx-\by|^{d+2s}} \, \rmd \by \, \rmd \bx
	\end{split}
\end{equation*}
with $A_{\mu,\lambda,s} := \frac{(2 \mu + \lambda)^2 - \mu^s}{2}$.
This bilinear form satisfies \eqref{eq:BiFormCont}, so $\cE^s$ is continuous on $\scL^{s,2}(\bbR^d;\bbR^d)$.
We therefore define a distributional form of the operator $\bbL^s$ for any $\bu \in \scL^{s,2}(\bbR^d;\bbR^d)$ via
\begin{equation}\label{eq:NonlocalWeakForm}
	\Vint{ \bbL^s \bu, \bsvarphi } := \cE^s(\bu,\bsvarphi) \qquad \text{ for any } \bsvarphi \in \scL^{s,2}(\bbR^d;\bbR^d)\,.
\end{equation}

\section{Proof of \Cref{thm:BesselSpaces:MainThm2}}\label{sec:BesselSpacesProof}

We start by proving Theorem \ref{thm:BesselSpaces:MainThm2} for smooth functions.

\begin{theorem}\label{thm:BesselSpaces:MainThm3}
	For any $\bu \in C^{\infty}_c(\bbR^d;\bbR^d)$, $\bbF^s_{\veps} \bu$ converges in $L^p(\bbR^d;\bbR^d)$ as $\veps \to 0$, and 
	\begin{equation*}
		\sup_{\veps > 0} \Vnorm{\bbF^s_{\veps} \bu}_{L^p(\bbR^d)} \leq C(d,s) \Vnorm{\bu}_{\cL^{2s,p}(\bbR^d)}\,.
	\end{equation*}
\end{theorem}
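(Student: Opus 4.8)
The plan is to peel off the truncation error and reduce to a uniform Fourier-multiplier bound. For $\bu \in C^{\infty}_c(\bbR^d;\bbR^d)$, \Cref{lma:SecondOrderDiff} together with the symmetry $\bh \mapsto -\bh$ in \eqref{eq:BesselSpaces:VepsOp2} gives
\[
\bbF^s_{\veps}\bu(\bx) = \bbF^s\bu(\bx) - \frac{\kappa_{d,s}}{2}\intdm{|\bh|<\veps}{\left( \frac{\bh \otimes \bh}{|\bh|^2} \right) \frac{2\bu(\bx)-\bu(\bx+\bh)-\bu(\bx-\bh)}{|\bh|^{d+2s}}}{\bh}.
\]
Since $|2\bu(\bx)-\bu(\bx+\bh)-\bu(\bx-\bh)| \le |\bh|^2 \Vnorm{D^2\bu}_{L^{\infty}}$ and the truncation error is supported within distance $\veps$ of $\operatorname{supp}\bu$, it is bounded in $L^p(\bbR^d)$ by $C\Vnorm{D^2\bu}_{L^{\infty}}\veps^{2-2s} \to 0$; moreover $\bbF^s\bu \in L^p(\bbR^d)$ because it is smooth, bounded, and decays like $|\bx|^{-d-2s}$ (cf.\ \eqref{eq:DecayRate}). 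Thus $\bbF^s_{\veps}\bu$ converges in $L^p$, with limit $\bbF^s\bu$, and the estimate of the theorem reduces to the two bounds
\[
\Vnorm{\bbF^s\bu}_{L^p(\bbR^d)} \le C\Vnorm{\bu}_{\cL^{2s,p}(\bbR^d)} \qquad\text{and}\qquad \sup_{\veps>0}\Vnorm{\bbF^s\bu - \bbF^s_{\veps}\bu}_{L^p(\bbR^d)} \le C\Vnorm{\bu}_{\cL^{2s,p}(\bbR^d)}.
\]

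The first bound is routine. By \Cref{thm:FourierTransformOfL}, $\bbF^s$ factors as $(-\Delta)^s$, acting componentwise, followed by the matrix-multiplier operator of bounded symbol $\bI + 2s\,\tfrac{\bsxi \otimes \bsxi}{|\bsxi|^2}$ --- a combination of the identity and second-order Riesz transforms, hence bounded on $L^p$ (cf.\ the matrix-multiplier statement in \cite[Lemma 4.1]{mengesha2020solvability}) --- while $\Vnorm{(-\Delta)^s\bu}_{L^p} \le C\Vnorm{\bu}_{\cL^{2s,p}}$ because $\left( \tfrac{4\pi^2|\bsxi|^2}{1+4\pi^2|\bsxi|^2} \right)^{s}$ is a Mihlin multiplier. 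For the second bound, the operator $\bu \mapsto \bbF^s\bu - \bbF^s_{\veps}\bu$ has matrix symbol
\[
g_{\veps}(\bsxi) = \frac{\kappa_{d,s}}{2}\intdm{|\bh|<\veps}{\left( \frac{\bh \otimes \bh}{|\bh|^2} \right) \frac{2-2\cos(2\pi \bh \cdot \bsxi)}{|\bh|^{d+2s}}}{\bh},
\]
and rescaling $\bh$ by $|\bsxi|$ together with rotational symmetry give $g_{\veps}(\bsxi) = |\bsxi|^{2s}\left( a(\veps|\bsxi|)\bI + b(\veps|\bsxi|)\,\tfrac{\bsxi \otimes \bsxi}{|\bsxi|^2} \right)$ for bounded real profiles $a,b$ depending only on the scalar $\veps|\bsxi|$. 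By the matrix-multiplier criterion of \cite[Lemma 4.1]{mengesha2020solvability} it then suffices that the radial scalar multipliers $\left( \tfrac{4\pi^2|\bsxi|^2}{1+4\pi^2|\bsxi|^2} \right)^{s} a(\veps|\bsxi|)$ and its $b$-analogue satisfy H\"ormander's condition with constants independent of $\veps$, that is, that $(r\partial_r)^j a(r)$ and $(r\partial_r)^j b(r)$ are bounded uniformly in $r>0$ for $j \le \lfloor d/2 \rfloor + 1$.

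I expect this last point to be the main obstacle. The behaviour of $a$ and $b$ for small $r$ is elementary (one checks $|a(r)|,|b(r)| \lesssim r^{2-2s}$), but their uniform control as $r \to \infty$ is delicate: differentiating in $r$ and in the angular variable produces surface integrals over spheres and oscillatory integrals over balls $\{|\bh| \le r\}$ whose boundedness uniformly in $r$ relies on the cancellation carried by the factor $2 - 2\cos(2\pi \bh \cdot \bsxi)$. A term-by-term estimate that discards this oscillation --- equivalently, in the kernel-side formulation, that discards the principal-value cancellation of $\bbF^s_{\veps}$ applied to the Bessel kernel of order $2s$ --- fails, losing a logarithmic factor in $\veps$; one genuinely needs the structure of $\bbF^s_{\veps}$ as a truncation of a fractional operator. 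These uniform bounds are obtained by adapting the hypersingular-integral techniques of \cite{wheeden1968hypersingular}. Once they are established, $g_{\veps}(\bsxi)(1+4\pi^2|\bsxi|^2)^{-s}$ is a uniform-in-$\veps$ $L^p$-multiplier, the second displayed inequality follows, and combining it with the first yields $\sup_{\veps>0}\Vnorm{\bbF^s_{\veps}\bu}_{L^p(\bbR^d)} \le C(d,s)\Vnorm{\bu}_{\cL^{2s,p}(\bbR^d)}$, completing the proof.
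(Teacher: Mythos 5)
Your reduction is set up sensibly, and the easy parts are fine: for fixed $\bu \in C^{\infty}_c$ the truncation error over $\{|\bh|<\veps\}$ is indeed $O(\veps^{2-2s})$ in $L^p$ by the second-difference bound, $\bbF^s\bu \in L^p$ by smoothness and the $|\bx|^{-d-2s}$ decay, and the untruncated estimate $\Vnorm{\bbF^s\bu}_{L^p} \leq C\Vnorm{\bu}_{\scL^{2s,p}}$ follows from Riesz transforms plus a Mihlin multiplier exactly as you say. But the heart of the theorem is the uniform-in-$\veps$ bound, and there your argument stops precisely at the decisive step: you reduce it to uniform H\"ormander--Mihlin bounds $\sup_{r>0}|(r\partial_r)^j a(r)| + |(r\partial_r)^j b(r)| < \infty$ for $j \leq \lfloor d/2\rfloor+1$, acknowledge that this is ``the main obstacle,'' and then assert it is ``obtained by adapting the hypersingular-integral techniques of \cite{wheeden1968hypersingular}'' without carrying out any of that adaptation. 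That is a genuine gap, not a routine verification: writing $a,b$ via the spherical profile $\Phi(\rho)=\int_{\bbS^{d-1}}\bsomega\otimes\bsomega\,(2-2\cos(2\pi\rho\,\bsomega\cdot\bse))\,\rmd\sigma$, one finds $(\rho\partial_\rho)^j$ produces terms of size $\rho^{\,j-1-2s}\,\Phi^{(j-1)}(\rho)$, and the stationary-phase decay $\Phi^{(k)}(\rho)=O(\rho^{-(d-1)/2})$ is the best pointwise bound available; for the top derivative $j=\lfloor d/2\rfloor+1$ this gives $\rho^{\,\lfloor d/2\rfloor-2s-(d-1)/2}$, which for even $d$ and $s<1/4$ is unbounded, so the pointwise-derivative route you propose is not even clearly true in the full range of $d$ and $s$ without exploiting further oscillation (or retreating to the integral form of H\"ormander's condition, or to a kernel-side Cotlar/Calder\'on--Zygmund truncation argument) --- none of which is supplied.

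For comparison, the paper avoids comparing $\bbF^s_\veps\bu$ with $\bbF^s\bu$ altogether. It introduces the intermediate operator $\bbG^s_\veps\bu$ with symbol $(2\pi|\bsxi|)^{2s}\bigl(2s\,\tfrac{\bsxi\otimes\bsxi}{|\bsxi|^2}+\bI\bigr)\rme^{-2\pi\veps|\bsxi|}$, whose uniform $L^p$ bound is immediate (Riesz transforms, a finite measure, and the classical Poisson semigroup), and then proves the key Lemma~\ref{lma:BesselSpaces:MainLemma} on the kernel side: the explicit Bessel-function computation of $\Upsilon^{s,\veps}$ in \Cref{thm:PoissonInt:Formula}, the cancellation estimates $|\psi_1(r)|\le Cr^\alpha$ and $|\psi_2(r)+\kappa_{d,s}|\le Cr^\alpha$, and the second-difference modulus-of-continuity bound for Bessel potentials (Lemma~\ref{lma:ModContOfBesselSpaces}). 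Those $\psi_i$ estimates are exactly where the cancellation you need lives; an analogous quantitative input is what your proposal would have to produce, and it does not.
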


To prove Theorem \ref{thm:BesselSpaces:MainThm2} we will need the following auxiliary functions defined through a Poisson-type kernel. For $\veps > 0$ and $\bu \in C^{\infty}_c(\bbR^d;\bbR^d)$, define
\begin{equation*}
	\bbG^s_{\veps} \bu(\bx) := \intdm{\bbR^d}{\Upsilon^{s,\veps}(\bz)  \bu(\bx+\bz) }{\bz}\,,
\end{equation*}
where $\Upsilon^{s,\veps} : \bbR^d \to \bbR^{d \times d}$ is defined as
\begin{equation*}
	\Upsilon^{s,\veps}(\bx) := \cF^{-1} \left[ \left( (2 \pi |\bsxi|)^{2s} \left( 2s \frac{\bsxi \otimes \bsxi}{|\bsxi|^2} + \bI \right) \rme^{-2 \pi \veps |\bsxi|}  \right) \right] (\bx)\,.
\end{equation*}
Since $\wh{\Upsilon}_{s,\veps} \in L^1(\bbR^d)$, $\Upsilon^{s,\veps} \in L^{\infty}(\bbR^d)$, and so $\bbG^s_{\veps} \bu$ is a well-defined function. The functions $\bbG^s_{\veps} \bu$ will serve as an intermediate approximation for the estimate in \Cref{thm:BesselSpaces:MainThm3}, as the next lemma shows:

\begin{lemma}\label{lma:BesselSpaces:MainLemma}
	For any $\bu \in C^{\infty}_c(\bbR^d;\bbR^d)$, $0 < s < 1$, $1 < p < \infty$,
	\begin{equation*}
		\bbF^s_{\veps} \bu - \bbG^s_{\veps} \bu \text{  converges in } L^p(\bbR^d;\bbR^d) \text{ as } \veps \to 0\,,
	\end{equation*}
	and
	\begin{equation*}
		\sup_{\veps > 0} \Vnorm{\bbF^s_{\veps} \bu - \bbG^s_{\veps} \bu}_{L^p(\bbR^d)} \leq C(d,s) \Vnorm{\bu}_{\cL^{2s,p}(\bbR^d)}\,.
	\end{equation*}
\end{lemma}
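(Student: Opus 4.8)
The plan is to compare the singular-integral operator $\bbF^s_\veps$ with its regularized, convolution-type surrogate $\bbG^s_\veps$ and show their difference is controlled by an easily-estimated kernel. First I would record that, thanks to \Cref{thm:FourierTransformOfL}, the operator $\bbF^s$ has Fourier multiplier $(2\pi|\bsxi|)^{2s}(2s\,\bsxi\otimes\bsxi/|\bsxi|^2 + \bI)$, so that $\bbG^s_\veps$ is precisely the composition of $\bbF^s$ with the Poisson semigroup $e^{-2\pi\veps|\bsxi|}$; hence for smooth compactly supported $\bu$ we have $\bbG^s_\veps\bu \to \bbF^s\bu$ in $L^p$ as $\veps\to 0$ by standard approximate-identity arguments, and moreover $\Vnorm{\bbG^s_\veps\bu}_{L^p}\le C\Vnorm{\bbF^s\bu}_{L^p}\le C\Vnorm{\bu}_{\scL^{2s,p}}$ uniformly in $\veps$ since the Poisson kernel has $L^1$ norm one. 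Thus the lemma is equivalent to the claim that $\bbF^s_\veps\bu - \bbG^s_\veps\bu$ converges in $L^p$ with a uniform bound, and in fact it suffices to show it converges \emph{to zero}, or at any rate to a quantity bounded by $\Vnorm{\bu}_{\scL^{2s,p}}$.

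The key computation is to write both operators with explicit kernels and subtract. For $\bbF^s_\veps$, using the second-order difference characterization of \Cref{lma:SecondOrderDiff} we can symmetrize and write (up to the normalizing constant $\kappa_{d,s}/2$)
\begin{equation*}
	\bbF^s_\veps\bu(\bx) = \intdm{\bbR^d}{ K_\veps(\bz)\,\big(2\bu(\bx)-\bu(\bx+\bz)-\bu(\bx-\bz)\big) }{\bz}\,,
\end{equation*}
where $K_\veps(\bz) = \tfrac{\kappa_{d,s}}{2}\mathbf 1_{|\bz|\ge\veps}\,(\bz\otimes\bz/|\bz|^2)|\bz|^{-d-2s}$; for $\bbG^s_\veps$ one rewrites the convolution against $\Upsilon^{s,\veps}$ in the same second-difference form after noting $\Upsilon^{s,\veps}$ is even and has vanishing zeroth and first moments in the appropriate regularized sense. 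The difference kernel $K_\veps - \widetilde K_\veps$ (with $\widetilde K_\veps$ the kernel coming from $\Upsilon^{s,\veps}$) should then be shown to be dominated by $C\,\veps^{-d-2s}\psi(\bz/\veps)$ for an integrable, rapidly-decaying radial-times-tensor profile $\psi$ with zero mean — i.e., an approximate identity of the correct homogeneity acting on $(-\Delta)^s\bu$. Concretely, one factors $\bbF^s_\veps - \bbG^s_\veps = m_\veps(D)\circ (-\Delta)^s$ where $m_\veps(\bsxi) = [\,(\text{truncated symbol}) - (2s\,\bsxi\otimes\bsxi/|\bsxi|^2 + \bI)e^{-2\pi\veps|\bsxi|}\,]$, and shows that $m_\veps(D)$ has a convolution kernel that is an $L^1$-bounded (uniformly in $\veps$) approximation to the identity, which gives both the uniform bound $\Vnorm{\cdot}_{L^p}\le C\Vnorm{(-\Delta)^s\bu}_{L^p}\le C\Vnorm{\bu}_{\scL^{2s,p}}$ and the $L^p$-convergence.

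I expect the main obstacle to be the explicit kernel estimate for the truncated singular integral minus the Poisson-regularized one — specifically, showing that the truncation error $\mathbf 1_{|\bz|\ge\veps}|\bz|^{-d-2s}(\bz\otimes\bz/|\bz|^2)$ minus the smooth kernel $\Upsilon^{s,\veps}$ (which is \emph{not} a truncation but a Poisson smoothing) combine into a clean approximate-identity bound. The tensor factor $\bz\otimes\bz/|\bz|^2$ is bounded and smooth away from the origin, so it does not affect integrability, but one must carefully track the cancellation coming from the second-order difference (which kills the non-integrable $|\bz|^{-d-2s}$ tail near the origin) together with the decay of $\Upsilon^{s,\veps}$ at infinity; the cleanest route is to do everything on the Fourier side, bounding $\|m_\veps(D)\|_{L^1\to L^1}$ via $\|\cF^{-1}m_\veps\|_{L^1}\le C$ through a standard symbol estimate of the form $|D^\alpha m_\veps(\bsxi)|\le C_\alpha |\bsxi|^{-|\alpha|}$ uniformly in $\veps$ on dyadic annuli, which in turn follows from elementary calculus once the explicit form of the truncated symbol (computable as in \Cref{thm:FourierTransformOfL}, i.e. as a constant times $\int_{|\bz|\ge\veps} e^{-i\bz\cdot\bsxi}(\bz\otimes\bz/|\bz|^2)|\bz|^{-d-2s}\,\rmd\bz$) is in hand. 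Once the uniform $L^p$-bound is established, convergence follows by a density argument: $m_\veps(D) \to 0$ strongly on a dense class (e.g. Schwartz functions, where one can pass the limit inside the integral using the second-difference decay near $\bz=0$), and uniform boundedness upgrades this to convergence for all $\bu$ of the stated class.
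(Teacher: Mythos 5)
Your overall strategy --- factor $\bbF^s_{\veps}-\bbG^s_{\veps}=m_\veps(D)\circ(-\Delta)^s$ on the Fourier side and show the multipliers $m_\veps$ are uniformly bounded on $L^p$ and tend to zero strongly --- is a legitimate route, genuinely different from the paper's, which works entirely in physical space: it computes $\Upsilon^{s,\veps}$ explicitly in the form $|\bz|^{-d-2s}\big(\psi_1(\veps/|\bz|)\bI+\psi_2(\veps/|\bz|)\tfrac{\bz\otimes\bz}{|\bz|^2}\big)$, proves via Bessel/hypergeometric identities the cancellation estimates $|\psi_1(r)|\le Cr^\alpha$ and $|\psi_2(r)+\kappa_{d,s}|\le Cr^\alpha$, and then pairs the resulting kernel bounds with second differences of $\bu$ and the modulus-of-continuity estimate of \Cref{lma:ModContOfBesselSpaces} via Minkowski's inequality. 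However, as written your proposal has genuine gaps precisely at the heart of the lemma. The decisive fact in your scheme is that the truncated symbol agrees with the Poisson-damped full symbol to a positive order as $\veps|\bsxi|\to 0$ (so that $m_\veps(\bsxi)=\rho(\veps\bsxi)$ with $\rho$ bounded, $\rho(0)=0$, and Mikhlin-regular), and you assert this follows ``from elementary calculus once the explicit form of the truncated symbol is in hand.'' It does not: the truncated symbol is an oscillatory integral over $\{|\bz|\ge\veps\}$ with integrand of size $|\bz|^{-d-2s}$, and for $s\le 1/2$ its $\bsxi$-derivatives cannot be obtained by differentiating under the integral sign (the differentiated integrand is not absolutely integrable); one must exploit oscillation, and this is exactly the nontrivial work that the paper's $\psi_1,\psi_2$ analysis performs in disguise. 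Deferring it leaves the main content of the lemma unproved.

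Two further points would need repair. First, the claim that symbol bounds $|D^\alpha m_\veps(\bsxi)|\le C_\alpha|\bsxi|^{-|\alpha|}$ yield $\Vnorm{\cF^{-1}m_\veps}_{L^1}\le C$ is false (Riesz-transform symbols satisfy such bounds with kernels that are not integrable); what Mikhlin--H\"ormander gives is uniform $L^p\to L^p$ boundedness for $1<p<\infty$, which fortunately is all you need, and the uniformity in $\veps$ is automatic from the dilation structure $m_\veps(\bsxi)=\rho(\veps\bsxi)$. Second, the physical-space picture as you describe it does not close: the difference kernel is $\veps^{-d-2s}\Psi(\bz/\veps)$ where $\Psi$ decays only polynomially, like $|\bz|^{-d-2s-\alpha}$ (not rapidly), and its $L^1$ norm is $\veps^{-2s}\Vnorm{\Psi}_{L^1}\to\infty$; ``zero mean'' alone does not produce a uniform bound on $\bu$ itself --- the gain of $2s$ must come either from pairing the kernel with second differences of $\bu$ and the $|\bz|^{2\beta}$ estimate of \Cref{lma:ModContOfBesselSpaces} (the paper's choice), or from completing your multiplier analysis of $\rho$ after dividing out $(-\Delta)^s$. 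With those estimates actually carried out, your argument (uniform multiplier bounds plus strong convergence to zero via $\rho(0)=0$ and density/interpolation) would indeed give both conclusions of the lemma.
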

We postpone the proof of \Cref{lma:BesselSpaces:MainLemma} for now, and first use it to prove \Cref{thm:BesselSpaces:MainThm3}.

\begin{proof}[proof of \Cref{thm:BesselSpaces:MainThm3}]
	by Parseval's relation
	\begin{equation*}
		\begin{split}
			\bbG^s_{\veps} \bu(\bx) &= \intdm{\bbR^d}{ \left( (2 \pi |\bsxi|)^{2s} \left( 2s \frac{\bsxi \otimes \bsxi}{|\bsxi|^2} + \bI \right) \rme^{-2 \pi \veps |\bsxi|}  \right) \rme^{-2 \pi \imath \bx \cdot \bsxi} \wh{\bu}(\bsxi) }{\bsxi} \\
			&= \intdm{\bbR^d}{ \left(  \left( 2s \frac{\bsxi \otimes \bsxi}{|\bsxi|^2} + \bI \right) \rme^{-2 \pi (\veps |\bsxi| +  \imath \bx \cdot \bsxi)}  \right)  \frac{(2 \pi |\bsxi|)^{2s}}{(1+4 \pi^2 |\bsxi|^2)^s} (1+4 \pi^2 |\bsxi|^2)^s \wh{\bu}(\bsxi) }{\bsxi} \\
			&= \intdm{\bbR^d}{  \left( 2s \frac{\bsxi \otimes \bsxi}{|\bsxi|^2} + \bI \right)  \rme^{-2 \pi (\veps |\bsxi| +  \imath \bx \cdot \bsxi)}  \cF[\bff \ast \mu](\bsxi)  }{\bsxi}\,,
		\end{split}
	\end{equation*}
	where $\rmd \mu$ is a finite measure on $\bbR^d$ (see \cite[Chapter V]{Stein}) and $\bff(\bx) := \cF^{-1} \big[ (1+4 \pi^2 |\bsxi|^2)^s \wh{\bu} \big](\bx)$. Since compositions of Riesz transforms are $L^p$-multipliers, we conclude that
	\begin{equation*}
		\wt{\bff}(\bx) := \cF^{-1} \left[ \left( 2s \frac{\bsxi \otimes \bsxi}{|\bsxi|^2} + \bI \right) \cF[\bff \ast \mu] \right] (\bx) \in L^p(\bbR^d;\bbR^d)\,,
	\end{equation*}
	with the estimate
	\begin{equation*}
		\Vnorm{	\wt{\bff}}_{L^p(\bbR^d)} \leq C(d,s) \Vnorm{\bu }_{\cL^{2s,p}(\bbR^d)}\,.
	\end{equation*}
	Therefore
	\begin{equation*}
		\bbG^s_{\veps} \bu(\bx) = \intdm{\bbR^d}{ \cF(\wt{\bff})(\bsxi) \rme^{-2 \pi \imath \bx \cdot \bsxi}  \rme^{-2 \pi \veps |\bsxi|}}{\bsxi} = p_{\veps} \ast \wt{\bff}(-\bx)\,,
	\end{equation*}
	where $p_{\veps}(\bx)$ is the Poisson integral for the upper-half space $\bbR^{d+1}_+$. Thus both desired properties for $\bbG^{\veps}_s \bu$ can be easily obtained using the Poisson integral (see \cite[Chapter III]{Stein}); we have
	\begin{equation*}
		\Vnorm{\bbG^s_{\veps} \bu}_{L^p(\bbR^d)} = \Vnorm{p_{\veps} \ast \wt{\bff}}_{L^p(\bbR^d)} \leq C(d,s) \Vnorm{\wt{\bff}}_{L^p(\bbR^d)} \leq C(d,s)\Vnorm{\bu }_{\cL^{2s,p}(\bbR^d)}\,.
	\end{equation*}
	and $\bbG^s_{\veps} \bu(\bx) = p_{\veps} \ast \wt{\bff}(-\bx)$ converges in $L^p(\bbR^d)$ as $\veps \to 0$.
	
	The result now follows easily by Lemma \ref{lma:BesselSpaces:MainLemma}:
	\begin{equation*}
		\Vnorm{\bbF^s_{\veps} \bu}_{L^p(\bbR^d)} \leq \Vnorm{\bbF^s_{\veps} \bu - \bbG^s_{\veps} \bu}_{L^p(\bbR^d)} + \Vnorm{\bbG^s_{\veps} \bu}_{L^p(\bbR^d)} \leq C(d,s) \Vnorm{\bu}_{\scL^{2s,p}(\bbR^d)}\,,
	\end{equation*}
	and $\bbF^s_{\veps} \bu = (\bbF^s_{\veps} \bu - \bbG^s_{\veps} \bu) + \bbG^s_{\veps} \bu$ converges in $L^p(\bbR^d;\bbR^d)$ as $\veps \to 0$.
\end{proof}

\subsection{Analysis of the Intermediate Approximation}

We now find  $\Upsilon^{s,\veps}$ explicitly. Using polar coordinates
\begin{equation*}
	\begin{split}
		\Upsilon^{s,\veps}(\bx) &= \intdm{\bbR^d}{ \left( (2 \pi |\bsxi|)^{2s} \left( 2s \frac{\bsxi \otimes \bsxi}{|\bsxi|^2} + \bI \right) \rme^{-2 \pi \veps |\bsxi|}  \right) \rme^{2 \pi \imath \bx \cdot \bsxi} }{\bsxi} \\
		&= \int_0^{\infty} \intdm{\bbS^{d-1}}{ (2 \pi r)^{2s} \left( 2s (\bsomega \otimes \bsomega) + \bI \right) \rme^{-2 \pi \veps r}  \rme^{2 \pi r \imath \bx \cdot \bsomega} r^{d-1}}{\sigma(\bsomega)} \, \rmd r\,.
	\end{split}
\end{equation*}
Letting $\varrho = 2 \pi |\bx| r$ and changing variables,
\begin{equation*}
	\begin{split}
		\Upsilon^{s,\veps}(\bx)
		&= \frac{1}{|\bx|^{d+2s}} \frac{1}{(2 \pi)^d} \int_0^{\infty} \varrho^{d+2s-1} \rme^{- \frac{\veps \varrho}{|\bx|}}  \intdm{\bbS^{d-1}}{ \left( 2s (\bsomega \otimes \bsomega) + \bI \right)   \rme^{-\imath \varrho  \frac{\bx}{|\bx|} \cdot \bsomega} }{\sigma(\bsomega)} \, \rmd \varrho\,.
	\end{split}
\end{equation*}
Clearly for any $\bx \in \bbR^d$ and for any $\bR \in \cO(d)$ we have $\Upsilon^{s,\veps} (\bR \bx ) = \bR^T \Upsilon^{s,\veps} (\bx ) \bR$. So it is easy to see that the functions $\mathrm{tr}\Upsilon^{s,\veps}(\bx)$ and $\Upsilon^{s,\veps}(\bx)[\frac{\bx}{|\bx|},\frac{\bx}{|\bx|}]$ are radially symmetric and that the radial functions $\gamma_1$ and $\gamma_2$ defined by
\begin{equation*}
	\gamma_1(|\bx|) := \frac{\mathrm{tr} \Upsilon^{s,\veps}(\bx) - \Upsilon^{s,\veps}(\bx)[\frac{\bx}{|\bx|},\frac{\bx}{|\bx|}] }{(d-1)} \quad \text{ and } \quad \gamma_2(|\bx|) := \frac{d \Upsilon^{s,\veps}(\bx)[\frac{\bx}{|\bx|},\frac{\bx}{|\bx|}] - \mathrm{tr} \Upsilon^{s,\veps}(\bx)}{(d-1)}
\end{equation*}
give the following formula for $\Upsilon^{s,\veps}$:
\begin{equation*}
	\Upsilon^{s,\veps}(\bx) = \gamma_1(|\bx|) \bI + \gamma_2(|\bx|) \frac{\bx \otimes \bx}{|\bx|^2}\,.
\end{equation*}
By Lemma \ref{lma:RadialKernel1}
\begin{equation*}
	\begin{split}
		\mathrm{tr} \Upsilon^{s,\veps}(\bx) &= \frac{1}{|\bx|^{d+2s}} \frac{1}{(2 \pi)^d} \int_0^{\infty} \varrho^{d+2s-1} \rme^{- \frac{\veps \varrho}{|\bx|}}  \intdm{\bbS^{d-1}}{ \left( 2s + d \right)   \rme^{-\imath \varrho  \frac{\bx}{|\bx|} \cdot \bsomega} }{\sigma(\bsomega)} \, \rmd \varrho \\
		&= \frac{1}{|\bx|^{d+2s}} \frac{d+2s}{(2 \pi)^d} \int_0^{\infty} \varrho^{d+2s-1} \rme^{- \frac{\veps \varrho}{|\bx|}} (2 \pi)^{d/2} \frac{J_{\nu}(\varrho)}{\varrho^{\nu}} \, \rmd \varrho
	\end{split}
\end{equation*}
where $\nu := \frac{d-2}{2}$.
By Lemma \ref{lma:RadialKernel1} and Lemma \ref{lma:RadialKernel2}
\begin{equation*}
	\begin{split}
		&\Upsilon^{s,\veps}(\bx) \left[\frac{\bx}{|\bx|}, \frac{\bx}{|\bx|} \right] \\
		&= \frac{1}{|\bx|^{d+2s}} \frac{1}{(2 \pi)^d} \int_0^{\infty} \varrho^{d+2s-1} \rme^{- \frac{\veps \varrho}{|\bx|}}  \intdm{\bbS^{d-1}}{ \left( 2s \left( \bsomega \cdot \frac{\bx}{|\bx|}  \right)^2 + 1 \right)   \rme^{-\imath \varrho  \frac{\bx}{|\bx|} \cdot \bsomega} }{\sigma(\bsomega)} \, \rmd \varrho \\
		&= \frac{1}{|\bx|^{d+2s}} \frac{1}{(2 \pi)^{d/2}} \int_0^{\infty} \varrho^{\frac{d}{2}+2s} \rme^{- \frac{\veps \varrho}{|\bx|}} \left[  (2s+1) J_{\nu}(\varrho) - 2s(d-1) \frac{J_{\nu+1}(\varrho)}{\varrho} \right]  \, \rmd \varrho\,.
	\end{split}
\end{equation*}
Therefore,
\begin{equation*}
	\gamma_1(|\bx|) = \frac{1}{|\bx|^{d+2s}} \frac{1}{(2 \pi)^{d/2}} \int_0^{\infty} \varrho^{\frac{d}{2}+2s} \rme^{- \frac{\veps \varrho}{|\bx|}} \left[ J_{\nu}(\varrho) + 2s \frac{J_{\nu+1}(\varrho)}{\varrho} \right]  \, \rmd \varrho
\end{equation*}
and
\begin{equation*}
	\gamma_2(|\bx|) = \frac{1}{|\bx|^{d+2s}} \frac{1}{(2 \pi)^{d/2}} \int_0^{\infty} \varrho^{\frac{d}{2}+2s} \rme^{- \frac{\veps \varrho}{|\bx|}} \left[ 2s J_{\nu}(\varrho) - 2sd \frac{J_{\nu+1}(\varrho)}{\varrho} \right]  \, \rmd \varrho\,.
\end{equation*}
We arrive at the following theorem:

\begin{theorem}\label{thm:PoissonInt:Formula}
	\begin{equation}\label{eq:PoissonInt:Formula}
		\Upsilon^{s,\veps}(\bx) = \frac{1}{|\bx|^{d+2s}} \left( \psi_1 \left( \frac{\veps}{|\bx|} \right)  \bI + \psi_2 \left( \frac{\veps}{|\bx|} \right) \frac{\bx \otimes \bx}{|\bx|^2} \right)\,,
	\end{equation}
	where the functions $\psi_1$ and $\psi_2$ are defined on $[0,\infty)$ as
	\begin{equation*}
		\begin{gathered}
			\psi_1(r) := \frac{1}{(2 \pi)^{d/2}} \int_0^{\infty} t^{\frac{d}{2}+2s} \rme^{- rt} \left[ J_{\nu}(t) + 2s \frac{J_{\nu+1}(t)}{t} \right]  \, \rmd t\,,  \\
			\psi_2(r) := \frac{1}{(2 \pi)^{d/2}} \int_0^{\infty} t^{\frac{d}{2}+2s} \rme^{- rt} \left[ 2s J_{\nu}(t) - 2sd \frac{J_{\nu+1}(t)}{t} \right]  \, \rmd t\,,
		\end{gathered}
	\end{equation*}
	and where $\nu = \frac{d-2}{2}$. For each fixed $\veps$, $\Upsilon^{s,\veps} \in L^1(\bbR^d) \cap L^{\infty}(\bbR^d)$, and 	$\intdm{\bbR^d}{\Upsilon^{s,\veps}(\bx)}{\bx}$ is the zero matrix.
\end{theorem}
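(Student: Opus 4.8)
The plan is to observe that formula~\eqref{eq:PoissonInt:Formula} itself costs almost nothing beyond the computation already performed: the covariance relation $\Upsilon^{s,\veps}(\bR\bx) = \bR^T\Upsilon^{s,\veps}(\bx)\bR$ for every $\bR\in\cO(d)$ (obtained by the substitution $\bsxi\mapsto\bR\bsxi$ in the defining Fourier integral together with the rotation covariance of $\bsxi\otimes\bsxi$) forces the decomposition $\Upsilon^{s,\veps}(\bx) = \gamma_1(|\bx|)\bI + \gamma_2(|\bx|)\tfrac{\bx\otimes\bx}{|\bx|^2}$, and the integrals for $\gamma_1,\gamma_2$ derived just above become exactly $|\bx|^{-d-2s}\psi_1(\veps/|\bx|)$ and $|\bx|^{-d-2s}\psi_2(\veps/|\bx|)$ upon renaming $\varrho = t$ and setting $r = \veps/|\bx|$. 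So the substance of the theorem lies in the integrability assertions.

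For $\Upsilon^{s,\veps}\in L^\infty(\bbR^d)$ I would simply invoke the Fourier representation: $\wh{\Upsilon^{s,\veps}}(\bsxi) = (2\pi|\bsxi|)^{2s}\big(2s\tfrac{\bsxi\otimes\bsxi}{|\bsxi|^2}+\bI\big)\rme^{-2\pi\veps|\bsxi|}$, whose entries are dominated by $(1+2s)(2\pi|\bsxi|)^{2s}\rme^{-2\pi\veps|\bsxi|}$; this lies in $L^1(\bbR^d)$ (locally integrable at $\bsxi = {\bf 0}$ because $2s>0$, exponentially small at infinity), so $\Vnorm{\Upsilon^{s,\veps}}_{L^\infty(\bbR^d)}\le\Vnorm{\wh{\Upsilon^{s,\veps}}}_{L^1(\bbR^d)}<\infty$. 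Once $L^1$ membership is in hand, Fourier inversion gives $\cF(\Upsilon^{s,\veps}) = \wh{\Upsilon^{s,\veps}}$ as a continuous function coinciding everywhere with the displayed expression, and the mean-zero claim is then immediate: $\intdm{\bbR^d}{\Upsilon^{s,\veps}(\bx)}{\bx} = \wh{\Upsilon^{s,\veps}}({\bf 0}) = {\bf 0}$, since $(2\pi|\bsxi|)^{2s}\to 0$ as $\bsxi\to{\bf 0}$ while the matrix factor stays bounded.

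For $\Upsilon^{s,\veps}\in L^1(\bbR^d)$ I would split at $|\bx| = 1$. On $\{|\bx|\le 1\}$ the $L^\infty$ bound already gives $\int_{|\bx|\le 1}|\Upsilon^{s,\veps}|<\infty$ with no further work. On $\{|\bx|\ge 1\}$ I use \eqref{eq:PoissonInt:Formula}: $|\Upsilon^{s,\veps}(\bx)|\le|\bx|^{-d-2s}\big(|\psi_1(\veps/|\bx|)|+|\psi_2(\veps/|\bx|)|\big)$, and since $\veps/|\bx|\in(0,\veps]$ it suffices to show $\sup_{r\in(0,\veps]}(|\psi_1(r)|+|\psi_2(r)|)<\infty$. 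To that end I view $\psi_i$ as a Laplace transform of $t\mapsto t^{\frac d2+2s}\big[J_\nu(t)+2s\tfrac{J_{\nu+1}(t)}{t}\big]$, split the $t$-integral at $t=1$, and estimate the two pieces uniformly in $r\ge 0$: by the small-argument expansion of the Bessel functions the density is $O(t^{d+2s-1})$ as $t\to 0^+$, so $\int_0^1$ is bounded independently of $r$ (using $\rme^{-rt}\le 1$); for $\int_1^\infty$ I would insert the asymptotic expansion of $J_\nu$ at infinity, writing the density as a finite sum of terms $t^{a_k}\rme^{\pm\imath t}$ with all $a_k > -1$ plus an absolutely integrable remainder, and use that each Laplace transform $\int_1^\infty t^{a_k}\rme^{(\pm\imath - r)t}\,\rmd t$ stays bounded as $r\to 0^+$ (a standard oscillatory-integral/contour-rotation fact). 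This yields the claimed supremum bound, hence $\int_{|\bx|\ge 1}|\Upsilon^{s,\veps}|\le C\int_{|\bx|\ge 1}|\bx|^{-d-2s}\,\rmd\bx<\infty$, and combining the two regions gives $\Upsilon^{s,\veps}\in L^1(\bbR^d)$.

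The only genuinely delicate point — and hence the main obstacle — is this last Laplace-transform estimate: controlling $\psi_i(r)$ as $r\to 0^+$ requires handling the merely conditionally convergent oscillation of the Bessel factor, which forces one to carry the asymptotic expansion of $J_\nu$ far enough that the error, after multiplication by $t^{d/2+2s}$, becomes absolutely integrable at infinity. Everything else — the formula itself, the $L^\infty$ bound, and the vanishing of the integral — falls straight out of the Fourier representation of $\Upsilon^{s,\veps}$ and the computation of $\gamma_1,\gamma_2$ already carried out.
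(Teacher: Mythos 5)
Your proposal is correct, and its overall architecture matches the paper's: the formula \eqref{eq:PoissonInt:Formula} is read off from the $\gamma_1,\gamma_2$ computation already done, $L^\infty$ membership comes from $\wh{\Upsilon}_{s,\veps}\in L^1$, $L^1$ membership comes from splitting $\bbR^d$ and using decay of $|\bx|^{-d-2s}\psi_i(\veps/|\bx|)$ at infinity, and the mean-zero statement is $\cF(\Upsilon^{s,\veps})({\bf 0})={\bf 0}$ once $L^1$ is known. The one place you genuinely diverge is the key bound that $\psi_1,\psi_2$ stay bounded as $r\to 0^+$: you propose to prove it by inserting the large-argument asymptotic expansion of $J_\nu$, reducing to finitely many oscillatory Laplace transforms $\int_1^\infty t^{a_k}\rme^{(\pm\imath-r)t}\,\rmd t$ plus an absolutely integrable remainder, and controlling these uniformly in $r$ by integration by parts (contour rotation); this works, since each integration by parts lowers the exponent by one and divides by $\pm\imath-r$, whose modulus is at least one, so finitely many steps give a bound uniform in $r\in(0,1]$, and the small-$t$ piece is handled exactly as in the paper via $|J_\nu(t)|\le Ct^\nu$. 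The paper instead expresses these Laplace transforms in closed form through the hypergeometric representation \eqref{eq:BesselFxnInt} and reads off boundedness for $r$ small from the asymptotics \eqref{eq:HyperGeo:Asymp}, yielding \eqref{eq:BesselFxnInt:Bounded}; that machinery is heavier here but is reused later for the quantitative decay estimates \eqref{eq:Decay:psi1:MainEst}--\eqref{eq:Decay:psi2:MainEst}, where mere boundedness would not suffice, whereas your oscillatory-integral route is more elementary and self-contained but delivers only the boundedness needed for this theorem. A minor structural difference: the paper splits $\bbR^d$ at a large radius $R(\veps,r_0)$ so that it only needs \eqref{eq:PoissonIntBoundedAtInfty} on $(0,r_0)$ together with the crude bound \eqref{eq:PoissonInt:Estimate1}, while your split at $|\bx|=1$ requires the supremum over all of $(0,\veps]$; since your $t$-integral estimates are uniform in $r\ge 0$ (or can be patched with \eqref{eq:PoissonInt:Estimate1} for $r$ bounded away from zero), this is harmless.
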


\begin{proof}
	By the bound \eqref{eq:BesselFxn:Bound} the integrals defining $\psi_1$ and $\psi_2$ converge absolutely for every $r > 0$, with the estimate 
	\begin{equation}\label{eq:PoissonInt:Estimate1}
		|\psi_i(r)| \leq C \int_0^{\infty} t^{\frac{d}{2}+2s} t^{\nu} \rme^{- rt} \, \rmd t = C \int_0^{\infty} t^{d+2s} \rme^{- rt} \, \frac{\rmd t}{t} = \frac{C}{r^{d+2s}} \int_0^{\infty} \tau^{d+2s} \rme^{-\tau} \, \frac{\rmd \tau}{\tau} = \frac{C}{r^{d+2s}}\,.
	\end{equation}
	for $i \in \{1,2\}$.
	It follows easily that $\Upsilon^{s,\veps} \in L^{\infty}(\bbR^d)$ (which we already knew). To see that $\Upsilon^{s,\veps} \in L^1(\bbR^d)$, we first note that by \eqref{eq:BesselFxnInt:Bounded} there exists $r_0(d,s) > 0$ such that
	\begin{equation}\label{eq:PoissonIntBoundedAtInfty}
		|\psi_i(r)| \leq C_0(d,s) \qquad \text{ for all } r \in (0,r_0)\,, \qquad i \in \{1,2\}\,,
	\end{equation}
	Then using \eqref{eq:PoissonInt:Estimate1} we have that for any fixed $\veps > 0$ and some $R(\veps,r_0) > 0$ large
	\begin{equation*}
		\Vnorm{\Upsilon^{s,\veps}}_{L^1(\bbR^d)} = \int_{B(0,R)} |\Upsilon^{s,\veps}| \, \rmd \bx + \int_{B(0,R)^c} |\Upsilon^{s,\veps}| \, \rmd \bx \leq C R^d + 2 C_0 \int_{B(0,R)^c} \frac{1}{|\bx|^{d+2s}} \, \rmd \bx < \infty\,.
	\end{equation*}
	Therefore $\Upsilon^{s,\veps} \in L^1(\bbR^d)$, and so
	\begin{equation*}
		\intdm{\bbR^d}{\Upsilon^{s,\veps}(\bx)}{\bx} = \wh{\Upsilon}_{s,\veps}({\bf 0}) = {\bf 0} \otimes {\bf 0}\,.
	\end{equation*}
\end{proof}

By Theorem \ref{thm:PoissonInt:Formula} we can now write
\begin{equation}\label{eq:PoissonApproxForm2}
	\bbG^s_{\veps} \bu(\bx) = \intdm{\bbR^d}{ \Upsilon^{s,\veps}(\bz) (\bu(\bx+\bz) - \bu(\bx)) }{\bz}\,.
\end{equation}

\begin{lemma}
	Let $\alpha \in (0,1)$ be arbitrary. Then for every $r \in [0,\infty)$
	\begin{equation}\label{eq:Decay:psi1:MainEst}
		|\psi_1(r)| \leq C r^{\alpha}
	\end{equation}
	and
	\begin{equation}\label{eq:Decay:psi2:MainEst}
		|\psi_2(r)+\kappa_{d,s}| \leq C r^{\alpha}\,,
	\end{equation}
	where $\kappa_{d,s}$ is defined in \eqref{eq:DefnOfNormalizingConstants}. The constant $C$ depends only on $d$, $s$ and $\alpha$.
\end{lemma}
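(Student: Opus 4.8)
For $r\ge 1$ both bounds are immediate: by \eqref{eq:PoissonInt:Estimate1} one has $|\psi_i(r)|\le C r^{-d-2s}\le C$, and since $r^\alpha\ge 1$ there this gives $|\psi_1(r)|\le Cr^\alpha$ and $|\psi_2(r)+\kappa_{d,s}|\le (C+\kappa_{d,s})r^\alpha$. So the substance of the lemma concerns $r\in[0,1]$, where — once we know $\psi_1(0)=0$ and $\psi_2(0)=-\kappa_{d,s}$ — the two inequalities are exactly modulus-of-continuity statements for $\psi_i$ at the endpoint $r=0$ (here $\psi_i(0)$ is read, as in \Cref{thm:PoissonInt:Formula}, as $\lim_{r\to0^+}\psi_i(r)$, equivalently the Abel-regularized Hankel integral, since the integral at $r=0$ is only conditionally/Abel convergent).

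The first task is to pin down those two values. Since $\psi_i$ is continuous on $(0,\infty)$ and bounded near $0$ by \eqref{eq:PoissonIntBoundedAtInfty}, $\psi_i(0)$ is the Abel-regularized value of the Hankel integral of $t^{d/2+2s}$ times the bracketed function $J_\nu(t)+2sJ_{\nu+1}(t)/t$ (resp. $2sJ_\nu(t)-2sdJ_{\nu+1}(t)/t$). I would evaluate the $J_\nu$- and $J_{\nu+1}/t$-pieces separately via the Weber--Schafheitlin / Hankel formula
\[
\int_0^{\infty} t^{\mu-1}J_{\nu}(t)\,\rmd t=\frac{2^{\mu-1}\,\Gamma\!\big(\tfrac{\nu+\mu}{2}\big)}{\Gamma\!\big(\tfrac{\nu-\mu}{2}+1\big)},
\]
taken with $(\mu,\nu)=(\tfrac d2+2s+1,\tfrac{d-2}{2})$ and $(\mu,\nu)=(\tfrac d2+2s,\tfrac d2)$, its use in the Abel sense being licensed by the oscillatory partial-integral bounds for Bessel functions collected in the appendix. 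The reflection identity $\Gamma(-s)=\Gamma(1-s)/(-s)$ then makes the two pieces cancel in $\psi_1(0)$, and combine to $-(d+2s)\,\tfrac{2^{2s}s\,\Gamma(d/2+s)}{\pi^{d/2}\Gamma(1-s)}=-\kappa_{d,s}$ in $\psi_2(0)$, the last equality being the definition \eqref{eq:DefnOfNormalizingConstants} of $\kappa_{d,s}$.

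For the Hölder bound at $0$ I would start from $\psi_i(r)=(2\pi)^{-d/2}\int_0^{\infty}t^{d/2+2s}e^{-rt}(\cdots)\,\rmd t$ and integrate by parts against the recursion $\tfrac{d}{dt}\big(t^{m+1}J_{m+1}(t)\big)=t^{m+1}J_m(t)$. Each integration lowers by one the power of $t$ in front of a Bessel factor (raising its order by one) and produces a remainder carrying one explicit factor of $r$; the boundary terms vanish since $e^{-rt}$ kills the growth at $t=\infty$ and $d/2+2s+\nu>-1$ controls $t=0$. Iterating a fixed number $N$ of times, with $N$ chosen large enough that $d/2+2s-N<-\tfrac32$, writes $\psi_i(r)=c_*A(r)+\sum_k r^{k}B_k(r)$ with $A(r):=\int_0^{\infty}t^{d/2+2s-N}e^{-rt}J_{\nu+N}(t)\,\rmd t$ and the $B_k$ finitely many integrals bounded uniformly in $r\in[0,\infty)$ (crude estimate $|J_m(t)|\le C\min(t^m,t^{-1/2})$ plus an elementary $\Gamma$-integral), each with $k\ge1$. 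Letting $r\to0^+$ identifies $c_*A(0)$ with $\psi_i(0)$ automatically, so using $|e^{-rt}-1|\le\min(1,rt)\le(rt)^{\alpha}$ together with $t^{d/2+2s-N+\alpha}J_{\nu+N}(t)\in L^1(0,\infty)$ for every $\alpha\in(0,1)$ (this is why $N$ was taken so large) gives $|A(r)-A(0)|\le C_\alpha r^{\alpha}$, while the remainder terms are $O(r)=O(r^\alpha)$ on $[0,1]$. Summing yields \eqref{eq:Decay:psi1:MainEst} and \eqref{eq:Decay:psi2:MainEst}.

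The main obstacle is the last step: organizing the repeated integrations by parts so that the expansion provably terminates — every surviving term being either absolutely convergent uniformly in $r\ge0$ or carrying an explicit positive power of $r$ — and pushing the singular exponent of the distinguished term $A$ below $-\tfrac32$ so that a spare factor $t^\alpha$ can still be absorbed. (The true near-$0$ behaviour of $\psi_i$ is $O(r)$ up to a logarithm, but that sharper statement is unnecessary here and would instead call for the Gauss-hypergeometric closed form of the Laplace transform of $t^\mu J_\nu$ and its connection formula at the singular point.) By comparison the first task is routine bookkeeping with $\Gamma$-functions once the Abel-regularized Hankel formula is available.
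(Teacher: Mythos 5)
Your overall architecture is sound and your $\Gamma$-bookkeeping is right: the limiting values $\psi_1(0)=0$ and $\psi_2(0)=-\kappa_{d,s}$ you extract from the (regularized) Hankel formula together with $\Gamma(1-s)=-s\,\Gamma(-s)$ agree with what the paper obtains, and reducing the lemma to a H\"older modulus of continuity at $r=0$ plus the crude bound \eqref{eq:PoissonInt:Estimate1} for $r$ away from $0$ is exactly the right frame. Your route differs from the paper's in the technical core: the paper integrates by parts only once (twice when $s\ge 1/2$, to restore the parameter constraint $\nu>\mu-1$), then uses the closed-form Beta-type representation \eqref{eq:BesselFxnInt:IntRep} of the Laplace--Bessel transform to identify the $r\to 0$ limit with $\kappa_{d,s}$ (via \eqref{eq:Item5} and \eqref{eq:GammaFunction}) and to prove the $r^\alpha$ modulus by an elementary mean-value estimate on that integral; you instead iterate the integration by parts until the distinguished integral is absolutely convergent and then use $|e^{-rt}-1|\le (rt)^\alpha$, which is a legitimate alternative and avoids the case split at $s=1/2$.

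However, there is a genuine gap in the step you yourself flag as the main obstacle. In the decomposition $\psi_i(r)=c_*A(r)+\sum_k r^kB_k(r)$ obtained by iterating only the exponent-lowering branch, the early remainders are of the form $r\int_0^\infty t^{\beta}e^{-rt}J_{m}(t)\,\rmd t$ with $\beta$ still large (e.g.\ $\beta=\tfrac d2+2s$ at the first step). The crude bound $|J_m(t)|\le C\min(t^m,t^{-1/2})$ gives for such a term only $\int_1^\infty t^{\beta-1/2}e^{-rt}\,\rmd t\sim \Gamma(\beta+\tfrac12)\,r^{-\beta-1/2}$, which blows up as $r\to 0$ whenever $\beta\ge-\tfrac12$; so these $B_k$ are \emph{not} uniformly bounded by crude estimates, an explicit factor $r^k$ with small $k$ does not make them $O(r^\alpha)$, and the identification $\psi_i(0)=c_*A(0)$ (which needs $r^kB_k(r)\to 0$) is also unjustified as written. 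The gap is repairable within your own scheme: either continue integrating those high-exponent remainders by parts, tracking the full binary tree $I(\beta,m;r)=(m+1-\beta)I(\beta-1,m+1;r)+r\,I(\beta,m+1;r)$ until every surviving term has either $t$-exponent below $-\tfrac32$ or an accumulated power $r^j$ with $j>\beta+\tfrac32$ (the branching terminates since one of the two indices strictly improves along every branch), or simply invoke the uniform oscillatory bound \eqref{eq:BesselFxnInt:Bounded} for the high-exponent remainders in the regime $r$ small --- which is precisely the tool the paper uses, and the same tool you already rely on when citing \eqref{eq:PoissonIntBoundedAtInfty}. Without one of these two fixes the proof does not close.
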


\begin{proof}
	Using \eqref{eq:PoissonInt:Estimate1} the result follows trivially for $r$ away from $0$.
	Throughout the proof, $\nu = \frac{d-2}{2}$.
	We start with $\psi_1$.
	\begin{equation*}
		\begin{split}
			\psi_1(r) &= \frac{1}{(2 \pi)^{d/2}} \int_0^{\infty} t^{\frac{d}{2}+2s} \rme^{- rt}  J_{\nu}(t)  \, \rmd t \\
			&\qquad + \frac{2s}{(2 \pi)^{d/2}} \int_0^{\infty} t^{\frac{d}{2}+2s-1} \rme^{- rt} J_{\nu+1}(t) \, \rmd t := I(r)+II(r)\,.
		\end{split}
	\end{equation*}
	Preforming an integration by parts and using \eqref{eq:BesselFxn:Derivative} with $m=1$,
	\begin{equation}\label{eq:Expression}
		I(r) = \frac{1}{(2 \pi)^{d/2}} \left( t^{\frac{d}{2}+2s} \rme^{-rt} J_{\nu+1}(t) \Bigg|_{t = 0}^{\infty} - \int_0^{\infty} (2s t^{2s-1} - r t^{2s})t^{\frac{d}{2}} \rme^{- rt} J_{\nu+1}(t) \, \rmd t  \right)\,.
	\end{equation}
	For fixed $r$ we see that from \eqref{eq:BesselFxn:IntRep} the first term is bounded by $C t^{d+2s} \rme^{-rt}$, which converges to $0$ as $t \to 0$ and $t \to \infty$. Thus the boundary term is $0$, and using the expression \eqref{eq:Expression} to simplify $I+II$ gives
	\begin{equation}\label{eq:Psi1Formula}
		\psi_1(r) = \frac{r}{(2 \pi)^{d/2}} \int_0^{\infty} t^{\frac{d}{2}+2s} \rme^{- rt} J_{\nu+1}(t) \, \rmd t\,.
	\end{equation}
	Therefore by \eqref{eq:BesselFxnInt:Bounded}
	\begin{equation}\label{eq:Decay:Psi1}
		|\psi_1(r)| \leq Cr \leq Cr^{\alpha} \qquad \text{ for any } \alpha \in (0,1) \text{ and for any } r \in (0,r_0)\,,
	\end{equation}
	where $r_0$ depends only on $d$ and $s$.
	So \eqref{eq:Decay:psi1:MainEst} is proved.
	
	Now we treat $\psi_2$. Adding and subtracting the proper term,
	\begin{equation*}
		\psi_2(r) = 2s \psi_1(r) - \frac{2s(d+2s)}{(2 \pi)^{d/2}}  \int_0^{\infty} t^{\frac{d}{2}+2s-1} \rme^{- rt} J_{\nu+1}(t) \, \rmd t\,.
	\end{equation*}
	Therefore by \eqref{eq:Decay:Psi1} the only thing we need to show to prove \eqref{eq:Decay:psi2:MainEst} is that there exists $r_0(d,s) > 0$ such that
	\begin{equation}\label{eq:Decay:FinalEstimate}
		\left| \kappa_{d,s} - \frac{2s(d+2s)}{(2 \pi)^{d/2}}  \int_0^{\infty} t^{\frac{d}{2}+2s-1} \rme^{- rt} J_{\nu+1}(t) \, \rmd t \right| \leq C r^{\alpha}\,, \qquad r \in [0,r_0)\,.
	\end{equation}
	We need to treat two different cases. 
	
	\underline{Case 1:} $s < 1/2$. Then we can make use of \eqref{eq:BesselFxnInt:IntRep} to get 
	\begin{equation*}
		\int_0^{\infty} t^{\frac{d}{2}+2s-1} \rme^{- rt} J_{\nu+1}(t) \, \rmd t = \frac{\Gamma(d+2s)}{2^{d/2} \Gamma (\frac{d+2s+1}{2} ) \Gamma(\frac{1-2s}{2}) } \int_0^1 \frac{t^{\frac{d+2s-1}{2} } (1-t)^{\frac{-2s-1}{2} }}{(r^2+t)^{\frac{d+2s}{2}}} \, \rmd t\,.
	\end{equation*}
	The limit as $r \to 0$ clearly exists, and by \eqref{eq:BetaFunction}-\eqref{eq:Item5}
	\begin{equation*}
		\lim\limits_{r \to 0}\int_0^1 \frac{t^{\frac{d+2s-1}{2} } (1-t)^{\frac{-2s-1}{2} }}{(r^2+t)^{\frac{d+2s}{2}}} \, \rmd t = \frac{\sqrt{\pi} \Gamma \left( \frac{1-2s}{2} \right) }{  \Gamma(1-s)}\,.
	\end{equation*}
	Referring back to \eqref{eq:Decay:FinalEstimate}, we now see that the function $\wt{v} : [0,\infty) \to \bbR$ defined as
	\begin{equation*}
		\wt{v}(r) := \frac{2s(d+2s)}{(2 \pi)^{d/2}}  \frac{\Gamma(d+2s)}{2^{d/2} \Gamma (\frac{d+2s+1}{2} ) \Gamma(\frac{1-2s}{2}) } \int_0^1 \frac{t^{\frac{d+2s-1}{2} } (1-t)^{\frac{-2s-1}{2} }}{(r^2+t)^{\frac{d+2s}{2}}} \, \rmd t
	\end{equation*}
	has the property that
	\begin{equation*}
		\wt{v}(0) = \frac{2s \sqrt{\pi} (d+2s) \Gamma(d+2s) }{2^d \pi^{d/2} \Gamma(\frac{d+2s+1}{2}) \Gamma(1-s)}\,.
	\end{equation*}
	We use \eqref{eq:GammaFunction} to arrive at the identity
	\begin{equation*}
		\wt{v}(0) = \kappa_{d,s}\,.
	\end{equation*}
	Therefore \eqref{eq:Decay:FinalEstimate} can be recast as
	\begin{equation*}
		|\wt{v}(r) - \wt{v}(0)| \leq Cr^{\alpha} \qquad \forall\,\, r \in (0,r_0)\,.
	\end{equation*}
	By the mean value theorem, for any $\alpha \in (0,1)$
	\begin{equation*}
		\begin{split}
			|\wt{v}(r) - \wt{v}(0)| &\leq C  \int_0^1 t^{\frac{d+2s-1}{2} } (1-t)^{\frac{-2s-1}{2} } \left| \frac{1}{(r^2+t)^{\frac{d+2s}{2}}} - \frac{1}{t^{\frac{d+2s}{2}}} \right| \, \rmd t \\
			&\leq C \int_0^1 t^{\frac{d+2s-1}{2} } (1-t)^{\frac{-2s-1}{2} }  \int_0^r \frac{\sigma}{(\sigma^2+t)^{\frac{d+2s}{2}+1}} \, \rmd \sigma \, \rmd t \\
			&\leq C \int_0^1 t^{\frac{d+2s-1}{2} } (1-t)^{\frac{-2s-1}{2} }  \int_0^r \frac{\sigma}{(\sigma^2)^{ 1 - \frac{\alpha}{2} }(t)^{\frac{d+2s}{2}+ \frac{\alpha}{2} }} \, \rmd \sigma \, \rmd t \\
			&= C \int_0^1 t^{ -\frac{1+\alpha}{2} } (1-t)^{\frac{-2s-1}{2} }  \, \rmd t  \int_0^r \sigma^{ \alpha-1 } \, \rmd \sigma = Cr^{\alpha }\,.
		\end{split}
	\end{equation*}
	Thus \eqref{eq:Decay:FinalEstimate}, and thus \eqref{eq:Decay:psi2:MainEst}, is proved in the case $0 < s  < 1/2$.
	
	\underline{Case 2:} $1/2 \leq s < 1$. We cannot use \eqref{eq:BesselFxnInt:IntRep} directly since the parameter relation $\nu > \mu - 1$ in \eqref{eq:BesselFxnInt:IntRep} is not satisfied, so we integrate by parts first: 
	\begin{equation*}
		\begin{split}
			\int_0^{\infty} & t^{\frac{d}{2}+2s-1} \rme^{- rt} J_{\nu+1}(t) \, \rmd t  \\
			&= \bigg( t^{\frac{d}{2} + 2s - 1 } \rme^{-rt} J_{\nu+2}(t) \bigg) \Bigg|_{t = 0}^{\infty} - \int_0^{\infty}  \big( (2s-2) t^{-1} - r \big) \rme^{-rt} t^{\frac{d}{2} + 2s - 1} J_{\nu + 2}(t) \, \rmd t\,.
		\end{split}
	\end{equation*}
	From the bound \eqref{eq:BesselFxn:Bound} we see that the boundary term is bounded by $t^{d+2s} \rme^{-rt}$ and thus after evaluation equals $0$. 
	Further, by \eqref{eq:BesselFxnInt:Bounded} there exists $r_0(d,s) > 0$ such that
	\begin{equation*}
		\left| r \int_0^{\infty}  \rme^{-rt} t^{\frac{d}{2} + 2s - 1} J_{\nu + 2}(t) \, \rmd t \right| \leq Cr \qquad \text{ for } r \in (0,r_0)\,,
	\end{equation*}
	so to prove \eqref{eq:Decay:FinalEstimate} it suffices to show that 
	\begin{equation}\label{eq:Decay:FinalEstimate2}
		\left| \kappa_{d,s} - \frac{2s(d+2s) (2-2s)}{(2 \pi)^{d/2}} \int_0^{\infty} t^{\frac{d}{2}+2s-2} \rme^{- rt} J_{\nu+2}(t) \, \rmd t \right| \leq C r^{\alpha}\,, \qquad r \in [0,r_0)\,.
	\end{equation}
	The parameter relation $\nu > \mu - 1$ in \eqref{eq:BesselFxnInt:IntRep} is satisfied by this integral, so we have
	\begin{equation*}
		\int_0^{\infty} t^{\frac{d}{2}+2s-2} \rme^{- rt} J_{\nu+2}(t) \, \rmd t = \frac{\Gamma(d+2s)}{2^{\frac{d}{2}+1 } \Gamma (\frac{d+2s+1}{2} ) \Gamma(\frac{3-2s}{2}) } \int_0^1 \frac{t^{\frac{d+2s-1}{2} } (1-t)^{\frac{1-2s}{2} }}{(r^2+t)^{\frac{d+2s}{2}}} \, \rmd t\,.
	\end{equation*}
	From here the proof follows the same reasoning as in Case 1.
\end{proof}

We will also need the following characterizations of functions in $\scL^{2s,p}$. For $\beta \in \bbR \setminus \{0\}$, Let $G_{2\beta}$ be the Bessel kernel; i.e.
\begin{equation*}
	G_{2\beta}(\bx) = \cF^{-1} \Big( (1 + 4 \pi^2 |\bsxi|^2)^{-\beta} \Big)(\bx)\,.
\end{equation*}
Then it is known \cite{Stein, grafakos2009modern} that $\scL^{2s,p}(\bbR^d;\bbR^d)$ can be written as
\begin{equation*}
	\scL^{2s,p}(\bbR^d;\bbR^d) := \left\{ \bu \in L^p(\bbR^d;\bbR^d) \, : \, G_{-2s} \ast \bu \in L^p(\bbR^d;\bbR^d) \right\}
\end{equation*}
with norm $\Vnorm{\bu}_{\scL^{2s,p}(\bbR^d)} = \Vnorm{G_{-2s} \ast \bu}_{L^{p}(\bbR^d)}$.

\begin{lemma}\label{lma:ModContOfBesselSpaces}
	Let $0 < \beta \leq 1$, $\bz \in \bbR^d$. Then for every $\bu \in C^{\infty}_c(\bbR^d;\bbR^d)$
	\begin{equation*}
		\left( \intdm{\bbR^d}{ |\bu(\bx+\bz) + \bu(\bx-\bz) - 2\bu(\bx)|^p }{\bx} \right)^{1/p} \leq C(d,\beta) \Vnorm{\bu}_{\scL^{2\beta,p}(\bbR^d)} |\bz|^{2\beta}\,.
	\end{equation*}
\end{lemma}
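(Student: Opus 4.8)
The plan is to establish the equivalent, cleaner estimate
$\Vnorm{\bu(\cdot+\bz)+\bu(\cdot-\bz)-2\bu}_{L^p(\bbR^d)}\le C(d,\beta)\,|\bz|^{2\beta}\,\Vnorm{\bu}_{\scL^{2\beta,p}(\bbR^d)}$,
which gives the displayed inequality after raising to the power $p$. First I would dispose of $|\bz|\ge 1$: there the left-hand side is at most $4\Vnorm{\bu}_{L^p(\bbR^d)}\le 4\Vnorm{\bu}_{\scL^{2\beta,p}(\bbR^d)}\le 4|\bz|^{2\beta}\Vnorm{\bu}_{\scL^{2\beta,p}(\bbR^d)}$, so from now on I assume $0<|\bz|<1$. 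The endpoint $\beta=1$ and the range $0<\beta<1$ will be handled by different arguments, since the kernel estimate used for $\beta<1$ is borderline at $\beta=1$.

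For $\beta=1$ I would use that $\scL^{2,p}(\bbR^d;\bbR^d)$ coincides with $W^{2,p}$, so that $\Vnorm{D^2\bu}_{L^p(\bbR^d)}\le C(d)\Vnorm{\bu}_{\scL^{2,p}(\bbR^d)}$ (see \cite{Stein}). Applying the second-order Taylor formula with integral remainder to each component of $\bu$ and adding the expansions at $\bx+\bz$ and $\bx-\bz$, the first-order terms cancel and one gets $\bu(\bx+\bz)+\bu(\bx-\bz)-2\bu(\bx)=\int_0^1(1-t)\,\bz^{T}\big(D^2\bu(\bx+t\bz)+D^2\bu(\bx-t\bz)\big)\bz\,\rmd t$. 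Bounding the integrand pointwise by $(1-t)|\bz|^2\big(|D^2\bu(\bx+t\bz)|+|D^2\bu(\bx-t\bz)|\big)$, applying Minkowski's integral inequality in $\bx$, and using translation invariance of the $L^p$ norm yields the claim with constant $C(d)$.

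For $0<\beta<1$ I would pass to the Bessel-kernel picture. Using the characterization of $\scL^{2\beta,p}$ recalled just above, write $\bu=G_{2\beta}\ast\bff$ with $\bff:=G_{-2\beta}\ast\bu$ (a Schwartz field, since $\bu\in C^\infty_c$), so that $\Vnorm{\bff}_{L^p(\bbR^d)}=\Vnorm{\bu}_{\scL^{2\beta,p}(\bbR^d)}$. Since translations commute with convolution, $\bu(\bx+\bz)+\bu(\bx-\bz)-2\bu(\bx)=(K_\bz\ast\bff)(\bx)$, where $K_\bz(\bw):=G_{2\beta}(\bw+\bz)+G_{2\beta}(\bw-\bz)-2G_{2\beta}(\bw)$. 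By Young's inequality it then suffices to prove $\Vnorm{K_\bz}_{L^1(\bbR^d)}\le C(d,\beta)|\bz|^{2\beta}$. I would do this from the classical pointwise bounds for the Bessel kernel (see \cite{Stein, grafakos2009modern}): $G_{2\beta}>0$, it is smooth away from the origin, it and its first two derivatives decay like $\rme^{-c|\bx|}$ for $|\bx|\ge 1$, and near the origin $G_{2\beta}(\bx)\le C|\bx|^{2\beta-d}$, $|\nabla G_{2\beta}(\bx)|\le C|\bx|^{2\beta-d-1}$, $|D^2 G_{2\beta}(\bx)|\le C|\bx|^{2\beta-d-2}$ (note $2\beta<2\le d$, so $G_{2\beta}\in L^1_{loc}(\bbR^d)$). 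Then I would split $\intdm{\bbR^d}{|K_\bz(\bw)|}{\bw}$ into the regions $|\bw|\le 2|\bz|$ and $|\bw|>2|\bz|$. On the first region, estimating the three terms of $K_\bz$ separately and translating, one is left with the integral of $|\bv|^{2\beta-d}$ over balls of radius at most $3|\bz|$, which is $C(d,\beta)|\bz|^{2\beta}$. On the second region the segment $[\bw-\bz,\bw+\bz]$ stays at distance at least $|\bw|/2$ from the origin, so a second-order Taylor estimate gives $|K_\bz(\bw)|\le C|\bz|^2|\bw|^{2\beta-d-2}$ when $2|\bz|<|\bw|\le 2$ and $|K_\bz(\bw)|\le C|\bz|^2\rme^{-c|\bw|}$ when $|\bw|>2$; integrating, the exponential tail contributes $C|\bz|^2\le C|\bz|^{2\beta}$, and the inner part contributes $C|\bz|^2\int_{2|\bz|}^{2} r^{2\beta-3}\,\rmd r\le C|\bz|^2\cdot|\bz|^{2\beta-2}=C|\bz|^{2\beta}$ since $2\beta-3<-1$. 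Adding the two pieces gives $\Vnorm{K_\bz}_{L^1(\bbR^d)}\le C(d,\beta)|\bz|^{2\beta}$, and combining with Young's inequality finishes the case $0<\beta<1$.

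The only genuine subtlety is the exponent bookkeeping at the endpoint: when $\beta=1$ the last integral $\int_{2|\bz|}^{2} r^{2\beta-3}\,\rmd r$ degenerates to $\int_{2|\bz|}^{2} r^{-1}\,\rmd r\sim\log(1/|\bz|)$, so the convolution argument would lose a logarithmic factor there; this is precisely why $\beta=1$ is treated by the direct Taylor argument, which carries no such loss. Every other ingredient — Young's inequality, Minkowski's integral inequality, and the standard Bessel-kernel estimates — is routine, and all constants depend only on $d$ and $\beta$.
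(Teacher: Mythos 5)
Your argument is correct, and for $0<\beta<1$ it follows the same skeleton as the paper: factor $\bu = G_{2\beta}\ast(G_{-2\beta}\ast\bu)$, apply Young's inequality, and reduce everything to the kernel bound $\Vnorm{G_{2\beta}(\cdot+\bz)+G_{2\beta}(\cdot-\bz)-2G_{2\beta}}_{L^1(\bbR^d)}\le C|\bz|^{2\beta}$. The difference is in how that bound is obtained and at the endpoint. The paper simply cites Stein (Chapter V, Section 5.4), asserting the kernel estimate ``for all $\beta>0$'', whereas you prove it directly from the standard pointwise bounds on $G_{2\beta}$ and its derivatives; this makes the lemma self-contained at the cost of a page of routine kernel estimates. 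More substantively, your caution at $\beta=1$ is not pedantry but a genuine issue: the second-difference $L^1$ modulus of $G_2$ really does carry a logarithm (e.g.\ in $d=3$, where $G_2(\bx)=\rme^{-|\bx|}/(4\pi|\bx|)$, one checks $\Vnorm{G_2(\cdot+\bz)+G_2(\cdot-\bz)-2G_2}_{L^1}\asymp|\bz|^2\log(1/|\bz|)$ for small $\bz$), so the convolution route cannot give the clean $|\bz|^2$ rate at the endpoint and the blanket citation is off there. Your separate treatment of $\beta=1$ via the identification $\scL^{2,p}=W^{2,p}$, second-order Taylor expansion and Minkowski's integral inequality closes that case correctly, and it is exactly what is needed where the paper later invokes the lemma with $\beta=1$ (for smooth compactly supported $\bu$). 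One cosmetic point: the statement as printed has a homogeneity mismatch (the left-hand side is the $p$-th power of the $L^p$ norm while the right-hand side carries the first power of $\Vnorm{\bu}_{\scL^{2\beta,p}}$ and of $|\bz|^{2\beta}$); what you prove is the norm form $\Vnorm{\bu(\cdot+\bz)+\bu(\cdot-\bz)-2\bu}_{L^p}\le C|\bz|^{2\beta}\Vnorm{\bu}_{\scL^{2\beta,p}}$, which is also what the paper's own proof establishes and what is used afterwards, so this is fine.
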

\begin{proof}
	Write $\bu(\bx) = G_{2\beta} \ast (G_{-2\beta} \ast \bu)(\bx)$. Then by Young's inequality
	\begin{equation*}
		\begin{split}
			\left( \intdm{\bbR^d}{ |\bu(\bx+\bz) + \bu(\bx-\bz) - 2\bu(\bx)|^p }{\bx} \right)^{1/p} &\leq \intdm{\bbR^d}{ |G_{2\beta}(\bx+\bz) + G_{2\beta}(\bx-\bz) - 2G_{2\beta}(\bx)| }{\bx} \\
			&\qquad \times \Vnorm{ (G_{-2\beta} \ast \bu)}_{L^p(\bbR^d)}\,.
		\end{split}
	\end{equation*}
	To conclude the proof we just need the inequality
	\begin{equation*}
		\intdm{\bbR^d}{ |G_{2\beta}(\bx+\bz) + G_{2\beta}(\bx-\bz) - 2G_{2\beta}(\bx)| }{\bx} \leq C |\bz|^{2\beta}\,.
	\end{equation*}
	This is shown for all $\beta > 0$ in \cite[Chapter V, Section 5.4]{Stein}.
	The proof is complete.
\end{proof}

\begin{proof}[proof of Lemma \ref{lma:BesselSpaces:MainLemma}]
	
	We use \eqref{eq:PoissonInt:Formula}, \eqref{eq:PoissonApproxForm2}, and a change of variables in \eqref{eq:BesselSpaces:VepsOp2} to write
	\begin{equation*}
		\begin{split}
			\bbG^s_{\veps} \bu(\bx) - \bbF^s_{\veps} \bu(\bx)
			&= \intdm{|\bz| < \veps}{\Upsilon^{s,\veps}(\bz) (\bu(\bx+\bz)-\bu(\bx)) }{\bz} \\
			&\quad + \intdm{|\bz| \geq \veps}{\left( \Upsilon^{s,\veps}(\bz) + \frac{\kappa_{d,s}}{|\bz|^{d+2s}} \frac{\bz \otimes \bz}{|\bz|^2} \right) (\bu(\bx+\bz)-\bu(\bx)) }{\bz} \\
			&= \intdm{|\bz| < \veps}{\left[ \psi_1 \left( \frac{\veps}{|\bz|} \right) \bI + \psi_2 \left( \frac{\veps}{|\bz|} \right)  \frac{\bz \otimes \bz}{|\bz|^2} \right] \frac{(\bu(\bx+\bz)-\bu(\bx))}{|\bz|^{d+2s}} }{\bz} \\
			&\quad + \intdm{|\bz| \geq \veps}{\left[ \psi_1 \left( \frac{\veps}{|\bz|} \right) \bI + \left( \psi_2 \left( \frac{\veps}{|\bz|} \right) + \kappa_{d,s} \right) \frac{\bz \otimes \bz}{|\bz|^2} \right] \frac{(\bu(\bx+\bz)-\bu(\bx))}{|\bz|^{d+2s}} }{\bz}\,.
		\end{split}
	\end{equation*}
	Note that by splitting the integrals and change of variables we can replace the difference $\bu(\bx+\bz)-\bu(\bx)$ with $\frac{1}{2} ( \bu(\bx+\bz)+\bu(\bx-\bz)-2\bu(\bx) )$. So
	\begin{equation*}
		\begin{split}
			\bbG^s_{\veps} &\bu(\bx) - \bbF^s_{\veps} \bu(\bx) \\
			&= \intdm{|\bz| < \veps}{\left[ \psi_1 \left( \frac{\veps}{|\bz|} \right) \bI + \psi_2 \left( \frac{\veps}{|\bz|} \right)  \frac{\bz \otimes \bz}{|\bz|^2} \right] \frac{( \bu(\bx+\bz)+\bu(\bx-\bz)-2\bu(\bx) )}{2|\bz|^{d+2s}} }{\bz} \\
			&\quad + \intdm{|\bz| \geq \veps}{\left[ \psi_1 \left( \frac{\veps}{|\bz|} \right) \bI + \left( \psi_2 \left( \frac{\veps}{|\bz|} \right) + \kappa_{d,s} \right) \frac{\bz \otimes \bz}{|\bz|^2} \right] \frac{( \bu(\bx+\bz)+\bu(\bx-\bz)-2\bu(\bx) )}{2|\bz|^{d+2s}} }{\bz} \\
			&:= I + II\,.
		\end{split}
	\end{equation*}
	By \eqref{eq:PoissonInt:Estimate1} and Lemma \ref{lma:ModContOfBesselSpaces}, the $L^p(\bbR^d)$ norm of $I$ is majorized by
	\begin{equation*}
		C \Vnorm{\bu}_{\scL^{2\beta,p}(\bbR^d)}  \intdm{|\bz|<\veps}{\frac{ |\bz|^{2\beta} }{\veps^{d+2s}}}{\bz}\,, \qquad \beta \in (0,1)\,.
	\end{equation*}
	Choosing $\beta = s$, we see that the $L^p(\bbR^d;\bbR^d)$ norm of $I$ is bounded by $C \Vnorm{\bu}_{\scL^{2s,p}(\bbR^d)} $, and choosing $\beta > s$ reveals that $I$ converges in $L^p(\bbR^d)$ norm as $\veps \to 0$ (Recall $\bu$ is smooth).
	
	From the estimates \eqref{eq:Decay:psi1:MainEst} and \eqref{eq:Decay:psi2:MainEst} and Lemma \ref{lma:ModContOfBesselSpaces}, we see that for any $\alpha \in (0,1)$ the $L^p(\bbR^d;\bbR^d)$ norm of $II$ is majorized by
	\begin{equation*}
		\begin{split}
			&C \intdm{|\bz| \geq \veps}{ \frac{\veps^{\alpha}}{ |\bz|^{d+2s+\alpha } } \Vnorm{\bu(\cdot + \bz) + \bu(\cdot - \bz) - 2\bu(\cdot)}_{L^p(\bbR^d)} }{\bz} \\
			&\leq C  \Vnorm{\bu}_{\scL^{2s,p}(\bbR^d)} \intdm{|\bz| \geq \veps}{ \frac{\veps^{\alpha} |\bz|^{2s} }{ |\bz|^{d+2s+\alpha} }  }{\bz} = C \Vnorm{\bu}_{\scL^{2s,p}(\bbR^d)}\,.
		\end{split}
	\end{equation*}
	Thus the first part of \Cref{lma:BesselSpaces:MainLemma} follows. On the other hand, for any $\delta > \veps $ fixed,
	\begin{equation*}
		\begin{split}
			\Vnorm{II}_{L^p(\bbR^d)} &\leq C  \Vnorm{\bu}_{\scL^{2s,p}(\bbR^d)} \intdm{|\bz| \geq \delta}{ \frac{\veps^{\alpha} |\bz|^{2s} }{ |\bz|^{d+2s+\alpha} }  }{\bz} + C \intdm{\veps \leq |\bz| \leq \delta}{ \Vnorm{\bu}_{\scL^{2,p}(\bbR^d)} \frac{\veps^{\alpha}|\bz|^2}{|\bz|^{d+2s+\alpha }} }{\bz} \\
			&= C(\delta) \veps^{\alpha} \Vnorm{\bu}_{\scL^{2s,p}(\bbR^d)} + C \Vnorm{\bu}_{\scL^{2,p}(\bbR^d)} \int_{\veps}^{\delta} \frac{\veps^{\alpha}}{r^{ 2s+ \alpha -1 } } \, \rmd r\,.
		\end{split}
	\end{equation*}
	Setting $\alpha = 1-s$ implies that the right-hand side is $O(\veps^{1-s})$, and therefore that the $L^p(\bbR^d)$ norm of $II$ converges to $0$ as $\veps \to 0$ whenever $s \in (0,1)$. Thus the second part of \Cref{lma:BesselSpaces:MainLemma} follows.
\end{proof}

\begin{proof}[proof of \Cref{thm:BesselSpaces:MainThm2}]
	The results of Theorem \ref{thm:BesselSpaces:MainThm2} were shown to hold for any $\bv \in C^{\infty}_c(\bbR^d;\bbR^d)$ in Theorem \ref{thm:BesselSpaces:MainThm3}. Let $\{\bu_n \} \subset C^{\infty}_c(\bbR^d;\bbR^d)$ be a sequence that converges to $\bu$ in $\scL^{2s,p}(\bbR^d;\bbR^d)$. Then for each fixed $\veps > 0$ $\bbF^s_{\veps} \bu_n \to \bbF^s_{\veps} \bu$ in $L^p(\bbR^d;\bbR^d)$ as $n \to \infty$ by Young's inequality.
	Since the estimate
	\begin{equation*}
		\sup_{\veps > 0} \Vnorm{\bbF^s_{\veps} \bu_n}_{L^p(\bbR^d)} \leq C(d,s) \Vnorm{\bu_n}_{\cL^{2s,p}(\bbR^d)}
	\end{equation*}
	holds for every $n$, we see by fixing each $\veps$ and then taking $n \to \infty$ that $\bu$ satisfies the same estimate.
	Further, for any $\veps_1$ and $\veps_2$
	\begin{equation*}
		\begin{split}
			\Vnorm{\bbF^s_{\veps_1} \bu - \bbF^s_{\veps_2} \bu}_{L^p(\bbR^d)} &\leq \Vnorm{\bbF^s_{\veps_1} \bu - \bbF^s_{\veps_1} \bu_n}_{L^p(\bbR^d)} + \Vnorm{\bbF^s_{\veps_2} \bu - \bbF^s_{\veps_2} \bu_n}_{L^p(\bbR^d)} + \Vnorm{\bbF^s_{\veps_1} \bu_n - \bbF^s_{\veps_2} \bu_n}_{L^p(\bbR^d)} \\
			&\leq 2C \Vnorm{\bu_n-\bu}_{\cL^{2s,p}(\bbR^d)} + \Vnorm{\bbF^s_{\veps_1} \bu_n - \bbF^s_{\veps_2} \bu_n}_{L^p(\bbR^d)}\,.
		\end{split}
	\end{equation*}
	The right-hand side can be made arbitrarily small by taking $n$ large and then taking $\veps_1$, $\veps_2$ to $0$. Thus $\bbF^s_{\veps} \bu$ converges in $L^p(\bbR^d;\bbR^d)$ as $\veps \to 0$ to a limit $\bv_s$ and $\bv_s$ satisfies \eqref{eq:MainBesselEstimate}.
\end{proof}

\section{An $\bbL$-Harmonic Extension System}\label{sec:ExtensionProblem}

In this section we identify $\bbL^s$ as the Dirichlet-to-Neumann map for a degenerate elliptic system.
Throughout this section differential operators like $\grad$ and $\div$ will be taken only with respect to the $\bx$ variable unless otherwise stated. 

\subsection{Extension Problem}

For $\bU : \bbR^d \times [0,\infty) \to \bbR^d$ and for $\bu : \bbR^d \to \bbR^d$ consider the problem
\begin{equation}\label{eq:Approach1:MainExtProb}
	\begin{cases}
		\p_{tt} \bU(\bx,t) + \frac{1-2s}{t} \p_t \bU(\bx,t) - \bbL \bU(\bx,t) = 0\,, \\
		\bU(\bx,0) = \bu(\bx)\,.
	\end{cases}
\end{equation}
Note here that the derivatives in $\bbL$ are applied only in $\bx$.
It will turn out that for a suitable class of boundary data, \eqref{eq:Approach1:MainExtProb} has a unique solution satisfying
$$
\lim\limits_{t \to 0} (-t^{1-2s} \p_t \bU(\bx,t)) =  \left[ \frac{2 \Gamma(1-s)}{2^{2s} \Gamma(s)} \right] \bbL^s \bu(\bx)\,.
$$

Following the treatment of fractional powers of more general scalar-valued operators (see for instance \cite{stinga2010extension, arendt2018fractional}) our candidate for a solution is the Poisson integral
\begin{equation}\label{eq:PoissonIntegralExtProb}
	\bU(\bx,t) = (\bP(\cdot,t) \ast \bu)(\bx)\,,
\end{equation}
where
\begin{equation*}
	\bP(\bx,t) := \frac{t^{2s}}{2^{2s} \Gamma(s)} \int_0^{\infty} \bW(\bx,r) \rme^{- \frac{t^2}{4r}} r^{-s} \frac{\rmd r}{r}\,,
\end{equation*}
and where the matrix field $\bW : \bbR^d \times (0,\infty) \to \bbR^{d \times d}$ is the heat kernel associated to $\bbL$; that is, formally $\p_t \bW(\bx-\bx_0,t-t_0) + \bbL \bW(\bx-\bx_0,t-t_0) = \delta_{ \{ \bx=\bx_0 \} } \delta_{ \{ t=t_0 \} }$. 
We will see in the sequel that $\bP$ exists and has an explicit expression, and in turn that the Poisson integral \eqref{eq:PoissonIntegralExtProb} solves \eqref{eq:Approach1:MainExtProb} for suitable functions $\bu$.

The heat kernel $\bW$ has an explicit formula
\begin{equation}\label{eq:HeatKernelLameDefn}
	\bW(\bx,t) := H(\bx,\mu t) \bI + \int_{\mu t}^{(2 \mu + \lambda) t} \grad^2 H(\bx,\sigma) \, \rmd \sigma\,,
\end{equation}
where $H$ is the classical heat kernel associated to $-\Delta$ given by
\begin{equation}\label{eq:HeatKernelDefn}
	H(\bx,t) := \frac{1}{(4 \pi t)^{d/2}} \rme^{-|\bx|^2 / 4t}\,, \qquad \bx \in \bbR^d\,, \quad t > 0\,.
\end{equation}
The desired properties of $\bW$ can be seen either by a direct computation or by the Fourier transform formula in \Cref{lma:FourierHeatKernel}; see also \cite{shen1991boundary}.

\subsection{Poisson Kernel}

Thanks to the formula for $\bW$ we can compute the exact form of the Poisson kernel $\bP$.

\begin{theorem}
	The matrix-valued function $\bP : \bbR^d \times (0,\infty) \to \bbR^{d \times d}$ given by
	\begin{equation}\label{eq:PoissonKernel1}
		\bP(\bx,t) = \frac{t^{2s}}{2^{2s} \Gamma(s)} \int_0^{\infty} \bW(\bx,r) \rme^{- \frac{t^2}{4r}} r^{-s} \frac{\rmd r}{r}
	\end{equation}
	has the closed-form expression
	\begin{equation}\label{eq:PoissonKernel2}
		\begin{split}
			\bP(\bx,t) &:= \mu^s \frac{\Gamma(\frac{d}{2}+s)}{\pi^{\frac{d}{2}} \Gamma(s) } \frac{t^{2s}}{(|\bx|^2 + \mu t^2)^{\frac{d+2s}{2}}} \bI \\
			&\quad -   \frac{\Gamma(\frac{d}{2}+s)}{2 \pi^{\frac{d}{2}} \Gamma(s) } t^{2s} \int_{\mu}^{2 \mu + \lambda} \frac{\sigma^{s-1}}{(|\bx|^2 + \sigma t^2)^{\frac{d+2s}{2}}} \, \rmd \sigma \bI \\
			&\quad + (d+2s) \frac{\Gamma(\frac{d}{2}+s)}{2 \pi^{\frac{d}{2}} \Gamma(s) } t^{2s} \int_{\mu}^{2 \mu + \lambda} \frac{\sigma^{s-1}}{(|\bx|^2 + \sigma t^2)^{\frac{d+2s+2}{2}}} \, \rmd \sigma (\bx \otimes \bx)\,.
		\end{split}
	\end{equation}
\end{theorem}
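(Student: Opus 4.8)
The plan is to substitute the explicit heat-kernel formula \eqref{eq:HeatKernelLameDefn} into the subordination integral \eqref{eq:PoissonKernel1}, interchange the order of integration, and evaluate everything using the elementary identity
\begin{equation*}
	\int_0^\infty r^{-a-1}\rme^{-c/r}\,\rmd r = c^{-a}\,\Gamma(a)\,, \qquad a>0,\ c>0,
\end{equation*}
which follows from the substitution $u = c/r$ and the definition of $\Gamma$. Writing $\bW(\bx,r) = H(\bx,\mu r)\bI + \int_{\mu r}^{(2\mu+\lambda)r}\grad^2 H(\bx,\sigma)\,\rmd\sigma$, I would split $\bP = \bP_1 + \bP_2$ correspondingly and treat the two pieces separately.

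For $\bP_1$, since $H(\bx,\mu r) = (4\pi\mu r)^{-d/2}\rme^{-|\bx|^2/4\mu r}$ the two exponentials combine as $\rme^{-(|\bx|^2+\mu t^2)/4\mu r}$, so that after pulling out constants $\bP_1(\bx,t)$ equals $\frac{t^{2s}}{2^{2s}\Gamma(s)}(4\pi\mu)^{-d/2}\bI$ times $\int_0^\infty r^{-\frac d2-s-1}\rme^{-(|\bx|^2+\mu t^2)/4\mu r}\,\rmd r$; applying the identity above with $a = \frac d2 + s$ and $c = (|\bx|^2+\mu t^2)/4\mu$ and simplifying the powers of $2$, $\pi$, $\mu$ gives exactly the first line of \eqref{eq:PoissonKernel2}. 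For $\bP_2$, I would first compute by direct differentiation that $\grad^2 H(\bx,\sigma) = H(\bx,\sigma)\big(\tfrac{\bx\otimes\bx}{4\sigma^2} - \tfrac{\bI}{2\sigma}\big)$, then substitute $\sigma = \tau r$ to rewrite $\int_{\mu r}^{(2\mu+\lambda)r}\grad^2 H(\bx,\sigma)\,\rmd\sigma = \int_\mu^{2\mu+\lambda}\grad^2 H(\bx,\tau r)\,r\,\rmd\tau$. Plugging this into \eqref{eq:PoissonKernel1} and exchanging the $\tau$ and $r$ integrations produces
\begin{equation*}
	\bP_2(\bx,t) = \frac{t^{2s}}{2^{2s}\Gamma(s)}\int_\mu^{2\mu+\lambda}\left(\int_0^\infty \grad^2 H(\bx,\tau r)\,\rme^{-t^2/4r}\,r^{-s}\,\rmd r\right)\rmd\tau\,.
\end{equation*}
In the inner integral the exponent is $-(|\bx|^2+\tau t^2)/4\tau r$, and each of the two terms of $\grad^2 H$ is again of the form $\int_0^\infty r^{-a-1}\rme^{-c/r}\,\rmd r$: the $\bx\otimes\bx$ term has $a = \frac d2+s+1$, which produces $\Gamma(\tfrac d2+s+1) = \tfrac{d+2s}{2}\Gamma(\tfrac d2+s)$ — precisely the origin of the factor $(d+2s)$ in \eqref{eq:PoissonKernel2} — while the $\bI$ term has $a = \frac d2+s$. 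Collecting the two contributions, relabeling $\tau$ as $\sigma$, and tracking the constants yields the second and third lines of \eqref{eq:PoissonKernel2}.

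The only step needing a word of justification is the interchange of integrals. Since $H\ge 0$ and $\grad^2 H$ is a fixed finite linear combination of a rank-one and an identity term, bounding $|\bx\otimes\bx| = |\bx|^2$ and $|\bI| = \sqrt d$ shows that the relevant double integrals converge absolutely for every $t>0$ and $\bx \ne 0$ (the $r$-integrals converge at $r\to 0^+$ because of the exponential factor and at $r\to\infty$ because of the power $r^{-\frac d2-s-1}$, uniformly for $\sigma$ in the compact interval $[\mu,2\mu+\lambda]$), so Tonelli/Fubini applies and no limiting or principal-value argument is needed. Consequently the proof presents no genuine obstacle beyond careful bookkeeping of the constants $2^{2s}$, $\pi^{d/2}$, $\Gamma(s)$ and $\Gamma(\tfrac d2+s)$ through the two changes of variables; the one place where a slip is easy is forgetting that the $\bx\otimes\bx$ term carries a $\Gamma(\tfrac d2+s+1)$ rather than $\Gamma(\tfrac d2+s)$.
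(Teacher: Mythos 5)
Your proposal is correct and follows essentially the same route as the paper's proof: substitute the explicit heat kernel, rescale $\sigma = \tau r$ so the limits become $[\mu, 2\mu+\lambda]$, interchange the integrals, and evaluate the inner $r$-integral as a Gamma function (the paper does this via the explicit change of variables $\sigma = \frac{|\bx|^2+\rho t^2}{4\rho r}$, which is the same computation as your identity $\int_0^\infty r^{-a-1}\rme^{-c/r}\,\rmd r = c^{-a}\Gamma(a)$), finishing with $\Gamma(\tfrac d2+s+1)=\tfrac{d+2s}{2}\Gamma(\tfrac d2+s)$ exactly as you indicate. The constants all check out, and your Tonelli/Fubini justification matches the paper's appeal to nonnegativity of the integrand.
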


\begin{proof}
	We begin with finding the explicit expression for $\bW(\bx,r)$, which is
	\begin{equation*}
		\begin{split}
			\bW(\bx,r) &= H(\bx, \mu r ) \bI + \int_{\mu r}^{(2 \mu + \lambda) r} \grad^2 H(\bx,\sigma) \, \rmd \sigma \\
			&= \frac{1}{(4 \pi \mu r)^{d/2}} \rme^{-|\bx|^2/4 \mu r} \bI - \int_{\mu r}^{(2 \mu + \lambda) r} \frac{1}{(4 \pi \sigma)^{d/2}}  \frac{1}{2 \sigma} \rme^{-|\bx|^2/4 \sigma} \, \rmd \sigma \bI \\
			&\qquad + \int_{\mu r}^{(2 \mu + \lambda) r} \frac{1}{(4 \pi \sigma)^{d/2}}  \frac{1}{4 \sigma^2} \rme^{-|\bx|^2/4 \sigma} \, \rmd \sigma (\bx \otimes \bx) \\
			&:= A_1(\bx,r) \bI - A_2(\bx,r) \bI + A_3(\bx,r) (\bx \otimes \bx)\,. 
		\end{split}
	\end{equation*}
	By the coordinate change $\rho = \frac{\sigma}{r}$ we can write
	\begin{equation}\label{eq:HeatKernelCoordChange}
		\begin{split}
			A_2(\bx,r) &= \int_{\mu}^{2 \mu + \lambda} \frac{r}{(4 \pi r \rho)^{d/2}}  \frac{1}{2 \rho r} \rme^{-|\bx|^2/(4 r \rho)} \, \rmd \rho\,, \\
			A_3(\bx,r) &= \int_{\mu}^{2 \mu + \lambda} \frac{r}{(4 \pi r \rho)^{d/2}}  \frac{1}{4 \rho^2 r^2} \rme^{-|\bx|^2/(4 r \rho)} \, \rmd \rho\,.
		\end{split}
	\end{equation}
	We use linearity of the integral defining $\bP$ to separate and simplify each term separately, and combining each will give us the result.
	We define each piece of $\bP$ as follows:
	\begin{equation*}
		\begin{split}
			\bP(\bx,t) &= \frac{t^{2s}}{2^{2s} \Gamma(s)} \int_0^{\infty} A_1(\bx,r) \rme^{- \frac{t^2}{4r}} r^{-s} \frac{\rmd r}{r} \bI -  \frac{t^{2s}}{2^{2s} \Gamma(s)} \int_0^{\infty} A_2(\bx,r) \rme^{- \frac{t^2}{4r}} r^{-s} \frac{\rmd r}{r} \bI \\
			& \qquad + \frac{t^{2s}}{2^{2s} \Gamma(s)} \int_0^{\infty} A_3(\bx,r) \rme^{- \frac{t^2}{4r}} r^{-s} \frac{\rmd r}{r} (\bx \otimes \bx) \\
			&:= p_1(\bx,t) \bI + p_2(\bx,t)\bI + p_3(\bx,t) (\bx \otimes \bx)\,.
		\end{split}
	\end{equation*}
	First, by the coordinate change $\rho = \frac{|\bx|^2 + \mu t^2}{4 \mu r}$
	\begin{equation*}
		\begin{split}
			p_1(\bx,t) &= \frac{t^{2s}}{2^{2s} \Gamma(s)} \int_0^{\infty} \frac{1}{(4 \pi \mu r)^{d/2}} \rme^{- \frac{|\bx|^2 + \mu t^2}{4 \mu r}} r^{-s} \frac{\rmd r}{r} \\
			&= \frac{\mu^s }{ \pi^{d/2} \Gamma(s)} \frac{t^{2s}}{(|\bx|^2 + \mu t^2)^{\frac{d+2s}{2}}} \int_0^{\infty} \rho^{d/2 + s} \rme^{-\rho} \frac{\rmd \rho}{\rho} = \frac{\mu^s \Gamma(\frac{d}{2} + s )}{ \pi^{d/2} \Gamma(s)} \frac{t^{2s}}{(|\bx|^2 + \mu t^2)^{\frac{d+2s}{2}}}\,.
		\end{split}
	\end{equation*}
	The integrand defining $p_2$ is nonnegative,
	so by Fubini's theorem
	\begin{equation*}
		\begin{split}
			p_2(\bx,t) &= \frac{t^{2s}}{2^{2s} \Gamma(s)} \int_0^{\infty} \int_{\mu}^{2 \mu + \lambda} \frac{r}{(4 \pi r \rho)^{d/2}} \frac{1}{2 \rho r} \rme^{- \frac{|\bx|^2 + \rho t^2}{4 \rho r}} r^{-s} \, \rmd \rho  \, \frac{\rmd r}{r} \\
			&= \frac{t^{2s}}{2^{2s} \pi^{d/2} \Gamma(s)} \int_{\mu}^{2 \mu + \lambda} \frac{1}{2 \rho }  \int_0^{\infty}  \frac{1}{(4 \rho r)^{d/2}} \rme^{- \frac{|\bx|^2 + \rho t^2}{4 \rho r}} r^{-s}  \, \frac{\rmd r}{r} \, \rmd \rho \,.
		\end{split}
	\end{equation*}
	Using the coordinate change $\sigma = \frac{|\bx|^2+ \rho t^2}{4 \rho r}$ in the inner integral in the last expression,
	\begin{equation*}
		\begin{split}
			p_2(\bx,t) &= \frac{t^{2s}}{2 \pi^{d/2} \Gamma(s)} \int_{\mu}^{2 \mu + \lambda} \rho^{s-1} \frac{1}{(|\bx|^2 + \rho t^2)^{\frac{d+2s}{2}}} \int_0^{\infty} \sigma^{d/2+s} \rme^{-\sigma} \, \frac{\rmd \sigma}{\sigma} \, \rmd \rho \\
			&= \frac{\Gamma(\frac{d}{2} + s)}{2 \pi^{d/2} \Gamma(s)} \int_{\mu}^{2 \mu + \lambda} \frac{t^{2s} \rho^{s-1}}{(|\bx|^2 + \rho t^2)^{\frac{d+2s}{2}}} \, \rmd \rho\,.
		\end{split}
	\end{equation*}
	The computation for $p_3$ is very similar. Use Fubini's theorem and the coordinate change $\sigma = \frac{|\bx|^2+ \rho t^2}{4 \rho r}$ in the inner integral to get
	\begin{equation*}
		p_3(\bx,t) = \frac{\Gamma(\frac{d}{2} + s+1)}{\pi^{d/2} \Gamma(s)} \int_{\mu}^{2 \mu + \lambda} \frac{t^{2s} \rho^{s-1}}{(|\bx|^2 + \rho t^2)^{\frac{d+2s+2}{2}}} \, \rmd \rho\,.
	\end{equation*}
	We conclude the calculation for $p_3$ with the identity $\Gamma(\frac{d}{2} +s + 1) = \frac{d+2s}{2} \Gamma(\frac{d}{2}+s)$.
\end{proof}

\subsection{Properties of the Poisson Kernel}
The Fourier transform of $\bP$ can be stated in terms of the formula \eqref{eq:PoissonKernel1}. By \Cref{lma:FourierHeatKernel}
\begin{equation}\label{eq:PoissonKernelFourierTransform}
	\begin{split}
		\widehat{\bP}(\bsxi,t) &:= \cF(\bP(\cdot,t))(\bsxi)  \\
		&= \frac{t^{2s}}{2^{2s} \Gamma(s)} \int_0^{\infty} \cF(\bW)(\bsxi,r) \rme^{-\frac{t^2}{4r}} r^{-s} \frac{\rmd r}{r} \\
		&= \frac{t^{2s}}{2^{2s} \Gamma(s)} \int_0^{\infty} \rme^{-r \bM(\bsxi)} \rme^{-\frac{t^2}{4r}} r^{-s} \frac{\rmd r}{r} \\
		&=
		\frac{t^{2s}}{2^{2s} \Gamma(s)} \int_0^{\infty} \left[ \rme^{-4 \pi^2 \mu |\bsxi|^2 r} \bI + \big( \rme^{-4 \pi^2 (2\mu + \lambda) |\bsxi|^2 r} - \rme^{-4 \pi^2 \mu |\bsxi|^2 r} \big) \frac{\bsxi \otimes \bsxi}{|\bsxi|^2} \right] \rme^{-\frac{t^2}{4r}} r^{-s} \frac{\rmd r}{r}\,.
	\end{split}
\end{equation}

Various properties satisfied for classical Possion kernels can now be easily seen using the Fourier transform formula.
We collect these into a theorem.
\begin{theorem}\label{thm:PoissonKernelProperties}
	The Poisson kernel \eqref{eq:PoissonKernel2} satisfies the following:
	\begin{enumerate}
		\item[i)] There exists $C > 0$ depending only on $d$, $s$, $\mu$ and $\lambda$ such that
		\begin{equation}\label{eq:NaturalBoundOnPoisson}
			|\bP(\bx,1)| \leq \frac{C}{(1+|\bx|^2)^{\frac{d+2s}{2}}} \qquad \text{ for } \bx \in \bbR^d\,.
		\end{equation}
		\item[ii)] For all $t > 0$
		\begin{equation}\label{eq:PoissonKernel:Dilation}
			\bP(\bx,t) = t^{-d} \, \bP \left(\frac{\bx}{t} , 1 \right)\,.
		\end{equation}
		\item[iii)] For all $t > 0$
		\begin{equation*}
			\int_{\bbR^d} \bP(\bx,t) \, \rmd \bx = \bI\,. 
		\end{equation*}

		\item[iv)] $\bP(\bx,t) \in C^{\infty} ( \bbR^d \times (0,\infty) )$, and $\bP(\bx,t)$ satisfies \eqref{eq:Approach1:MainExtProb} pointwise for every $\bx \in \bbR^d$ and $t > 0$.
		
		\item[v)] Defining $\wt{\bP}(\bx,t) := t^{1-2s} \bP(\bx,t)$, we have $\wt{\bP} \in C^{\infty}( \bbR^d \times [0,\infty) \setminus B_{d+1}(0,\veps) )$ for every $\veps > 0$, where $B_{d+1}(0,\veps) := \{ (\bx,t) \in \bbR^d \times [0,\infty) \, : \, |\bx|^2 + t^2 < \veps^2 \}$.
		
		\item[vi)] For every $\beta > 0$, $\wt{\bP}(\beta \bx,\beta t) = \beta^{-d-2s+1} \wt{\bP}(\bx,t)$. Consequently, for every multi-index $\alpha \in \bbN_0^{d+1}$ (that is, $\alpha$ ranges over both $\bx$ and $t$) there exists $C = C(d,s,\mu,\lambda,\alpha)$ such that
		$$
		|\p^{\alpha} \wt{\bP}(\bx,t)| \leq C (|\bx|^2 + t^2)^{-d-2s+1-|\alpha|} \text{ for all } (\bx,t) \in \big( \bbR^d \times [0,\infty) \big) \setminus \{ ({\bf 0},0) \} \,.
		$$
		
		\item [vii)] Suppose $\bu$ belongs to the weighted Lebesgue space $L^1_{s} (\bbR^d)$ defined in \eqref{eq:DefnOfWeightedLebSpace}. Then $(\bP(\cdot,t) \ast \bu)(\bx) \in C^{\infty}(\bbR^d \times (0,\infty))$, and for any $\beta > 0$ there exists a constant $C_{\beta} > 0$ depending only on $d$, $s$, $\mu$, $\lambda$ and $\beta$ such that for any $\bx_0 \in \bbR^d$
		\begin{equation}\label{eq:NontanConv1}
			\sup_{\substack{|\bx-\bx_0| < \beta t \\ t > 0}} |(\bP(\cdot,t) \ast \bu)(\bx)|  \leq C_{\beta} \cM  \big( |\bu| \big) (\bx_0)\,,
		\end{equation}
		where $\cM$ is the Hardy-Littlewood maximal operator.
		Further, for every Lebesgue point $\bx_0 \in \bbR^d$ of $\bu$ and for any $\beta > 0$
		\begin{equation}\label{eq:NontanConv2}
			\lim\limits_{\substack{(\bx,t) \to (\bx_0,0) \\ |\bx-\bx_0| < \beta t}} (\bP(\cdot,t) \ast \bu)(\bx) = \bu(\bx_0)\,.
		\end{equation}
	\end{enumerate}
\end{theorem}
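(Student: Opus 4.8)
The plan is to treat the seven items in turn; all of them follow from the two explicit representations already in hand, namely the closed form \eqref{eq:PoissonKernel2} and the Fourier formula \eqref{eq:PoissonKernelFourierTransform}. For item (i), setting $t=1$ in \eqref{eq:PoissonKernel2}, the first term is $O\big((|\bx|^2+\mu)^{-(d+2s)/2}\big)$; in the remaining two terms the auxiliary variable $\sigma$ ranges over the compact interval with endpoints $\mu$ and $2\mu+\lambda$, both strictly positive by \eqref{eq:EllipticityConditionsLame}, so $\sigma^{s-1}$ and the denominators $(|\bx|^2+\sigma)^{-(d+2s)/2}$ are uniformly bounded there, and although the third term carries the factor $\bx\otimes\bx$ of size $|\bx|^2$ it also carries an extra power of the denominator, so each piece is $\lesssim(1+|\bx|^2)^{-(d+2s)/2}$. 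For item (ii), the parabolic scaling $\bW(\beta\bx,\beta^2 r)=\beta^{-d}\bW(\bx,r)$ --- which follows from $H(\beta\bx,\beta^2 t)=\beta^{-d}H(\bx,t)$ and the substitution $\sigma\mapsto\beta^2\sigma$ in \eqref{eq:HeatKernelLameDefn} --- when inserted into \eqref{eq:PoissonKernel1} and combined with the change of variables $r=t^2\rho$ gives \eqref{eq:PoissonKernel:Dilation}. For item (iii), evaluate \eqref{eq:PoissonKernelFourierTransform} at $\bsxi={\bf 0}$, where $\bM({\bf 0})=0$; the remaining scalar integral $\int_0^\infty\rme^{-t^2/4r}r^{-s}\,\rmd r/r$ equals $2^{2s}t^{-2s}\Gamma(s)$ after the substitution $u=t^2/4r$, so $\widehat{\bP}({\bf 0},t)=\bI$, which is precisely $\int_{\bbR^d}\bP(\bx,t)\,\rmd\bx=\bI$ once (i) and (ii) guarantee $\bP(\cdot,t)\in L^1(\bbR^d)$.

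For item (iv), smoothness on $\bbR^d\times(0,\infty)$ is immediate from \eqref{eq:PoissonKernel2} since there the denominators $|\bx|^2+\sigma t^2$ are bounded away from zero; for the equation I would diagonalize the symmetric positive-definite matrix $\bM(\bsxi)$ and observe that along each eigendirection \eqref{eq:PoissonKernelFourierTransform} reduces to the scalar Caffarelli--Silvestre Poisson symbol $\frac{t^{2s}}{2^{2s}\Gamma(s)}\int_0^\infty\rme^{-r\lambda}\rme^{-t^2/4r}r^{-s}\,\rmd r/r$ attached to the eigenvalue $\lambda$, which is known to solve $\p_{tt}v+\frac{1-2s}{t}\p_t v=\lambda v$ (cf. \cite{caffarelli2007extension}); hence $\p_{tt}\widehat{\bP}+\frac{1-2s}{t}\p_t\widehat{\bP}=\bM(\bsxi)\widehat{\bP}$, and inverting the Fourier transform (legitimate thanks to the decay estimates below) shows that $\bP$ solves \eqref{eq:Approach1:MainExtProb} pointwise. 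For items (v) and (vi), write $\bP(\bx,t)=t^{2s}Q(\bx,t)$, where by \eqref{eq:PoissonKernel2} the function $Q$ is smooth on $\big(\bbR^d\times[0,\infty)\big)\setminus\{({\bf 0},0)\}$ (including at $t=0$ when $\bx\neq{\bf 0}$); then $\widetilde{\bP}=t^{1-2s}\bP=tQ$ is smooth off the origin, which is (v), and from (ii) one reads off the homogeneity $\widetilde{\bP}(\beta\bx,\beta t)=\beta^{-d-2s+1}\widetilde{\bP}(\bx,t)$, so each $\p^{\alpha}\widetilde{\bP}$ is homogeneous of degree $-d-2s+1-|\alpha|$ and bounded on the compact half-sphere $\{|\bx|^2+t^2=1,\ t\geq0\}$, which gives the stated pointwise bound of (vi).

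Item (vii) is an approximation-to-the-identity argument. Combining (i) and (ii) gives $|\bP(\by,t)|\leq C\,\phi_t(|\by|)$ with $\phi_t(r)=t^{-d}\phi_1(r/t)$ and $\phi_1(r)=(1+r^2)^{-(d+2s)/2}$ a bounded, radially decreasing, integrable function (here $d+2s>d$), and by \eqref{eq:PoissonKernel2} the $(\bx,t)$-derivatives of $\bP$ obey bounds of the same kind, locally uniformly for $t$ in compact subsets of $(0,\infty)$; since for $\bu\in L^1_{s}(\bbR^d)$ (see \eqref{eq:DefnOfWeightedLebSpace}) the kernel is integrated against $|\by|^{-d-2s}$-type decay at infinity and $\bu\in L^1_{loc}$ near the origin, the convolution and all its derivatives converge absolutely and one may differentiate under the integral, so $\bP(\cdot,t)\ast\bu\in C^{\infty}(\bbR^d\times(0,\infty))$. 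For \eqref{eq:NontanConv1} and \eqref{eq:NontanConv2}, use $\int_{\bbR^d}\bP(\bx-\by,t)\,\rmd\by=\bI$ from (iii) to write
\[
(\bP(\cdot,t)\ast\bu)(\bx)-\bu(\bx_0)=\int_{\bbR^d}\bP(\bx-\by,t)\big(\bu(\by)-\bu(\bx_0)\big)\,\rmd\by\,,
\]
and, for $|\bx-\bx_0|<\beta t$, dominate $\phi_t(|\bx-\by|)$ by $C(\beta)$ times an $\bx_0$-centered approximate identity $\psi_t(|\bx_0-\by|)$ --- splitting into the region $|\bx_0-\by|\geq 2\beta t$, where $|\bx-\by|\geq\frac12|\bx_0-\by|$, and the region $|\bx_0-\by|<2\beta t$, where $\phi_t(|\bx-\by|)\leq Ct^{-d}$. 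The maximal bound \eqref{eq:NontanConv1} and the nontangential convergence \eqref{eq:NontanConv2} at Lebesgue points then follow from the classical theory of approximations to the identity (e.g. \cite[Chapter III]{Stein}) applied to the scalar function $|\bu(\by)-\bu(\bx_0)|$.

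The step needing the most care is item (iv): the coupling in $\bM(\bsxi)$ prevents treating the extension system \eqref{eq:Approach1:MainExtProb} componentwise, and although the eigendecomposition of $\bM(\bsxi)$ reduces the Fourier-side identity to the scalar extension ODE, one still has to justify passing the $t$-derivatives through the inverse Fourier transform, which is exactly where the decay estimates of items (i), (v) and (vi) are used. Everything else amounts to routine verification once the closed forms \eqref{eq:PoissonKernel2} and \eqref{eq:PoissonKernelFourierTransform} are available.
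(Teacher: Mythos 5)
Your proposal is correct and follows essentially the same route as the paper: items (i)--(vi) by inspection, scaling and smoothness of the closed form \eqref{eq:PoissonKernel2} together with the Fourier symbol \eqref{eq:PoissonKernelFourierTransform} (your Fourier-side diagonalization of $\bM(\bsxi)$ for item (iv) and the factorization $\wt{\bP}=t\,Q$ for item (v) are just cleaner write-ups of what the paper calls ``direct computation''), and item (vii) via the comparison $|\bP(\bx-\by,t)|\leq C_{\beta}\, t^{2s}(|\bx_0-\by|^2+t^2)^{-\frac{d+2s}{2}}$ for $|\bx-\bx_0|<\beta t$ plus the maximal-function theorem of \cite[Chapter III]{Stein}. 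The only step where the paper is more explicit than you is the Lebesgue-point limit \eqref{eq:NontanConv2}: since $\bu$ is only in $L^1_s$ rather than in an $L^p$ space, the classical approximate-identity theorem cannot be cited as a black box, and one must split the integral into $B(\bx_0,\delta)$ (handled by the maximal estimate and the Lebesgue-point property) and its complement, whose contribution is bounded by $t^{2s}(\delta^{-2}+1)^{\frac{d+2s}{2}}$ times the weighted tail integral and hence vanishes as $t\to 0$ --- an estimate your setup contains implicitly but should be stated.
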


\begin{proof}
	Items i) and ii) follow by direct inspection of \eqref{eq:PoissonKernel2}. 
	Item iii) follows from noting that $\rme^{-t \bM(\bsxi)}|_{t = 0} = \bI$, and so $\intdm{\bbR^d}{\bP(\bx,t)}{\bx} = \widehat{\bP}({\bf 0},t) = \frac{t^{2s}}{2^{2s} \Gamma(s)} \int_0^{\infty} \rme^{-\frac{t^2}{4r}} r^{-s} \frac{\rmd r}{r} \, \bI = \bI \,.$ (Alternatively, item iii) can be proved by using the formula for $\bW$).
	
	For item iv), since $\wh{\bP}(\bsxi,1)$ is rapidly decreasing in $\bsxi$, it follows that $\bP(\bx,1) \in C^{\infty}(\bbR^d)$. Then $\bP(\bx,t) \in C^{\infty} ( \bbR^d \times (0,\infty) )$ since \eqref{eq:PoissonKernel:Dilation} holds for all $t > 0$. That $\bP$ is a solution of the PDE in \eqref{eq:Approach1:MainExtProb} follows by direct computation.
	
	To prove item v) we just need to show that for any $|\bx| > 0$, all derivatives of $\wt{\bP}$ in $t$ extend continuously to the point $(\bx,0)$. In this context, $\wt{\bP}$ is the product of $t$ and a function with $t$-profile comparable to $(1+t^2)^{-\beta}$ for fixed $\beta > 0$. Therefore the result follows by induction using the Leibniz product rule.
	Then item vi) follows easily from the formula for $\wt{\bP}$ and item v).
	
	Finally, to prove the nontangential convergence in item viii) we first note that for any $\by \in \bbR^d$ with $|\by| < \beta t$
	\begin{equation}\label{eq:PoissonComparison}
		\bP(\bx-\by,t) \leq C \frac{t^{2s}}{ (|\bx-\by|^2 +t^2)^{\frac{d+2s}{2}} } \leq C'_{\beta} \frac{t^{2s}}{ (|\bx|^2 +t^2)^{\frac{d+2s}{2}} }\,,
	\end{equation}
	where $C_{\beta}'$ depends only on $d$, $s$, $\mu$, $\lambda$ and $\beta$.
	Therefore, \eqref{eq:NontanConv1} is established via
	\begin{equation*}
		\begin{split}
		\sup_{\substack{|\bx-\bx_0| < \beta t \\ t > 0}} |(\bP(\cdot,t) \ast \bu)(\bx)| &= \sup_{\substack{|\bx-\bx_0| < \beta t \\ t > 0}} \int_{\bbR^d} \bP(\bx-\by,t) \bu(\by) \, \rmd \by \\
		&= \sup_{\substack{|\bx| < \beta t \\ t > 0}} \int_{\bbR^d} \bP(\bx_0-\bx-\by,t) \bu(\by) \, \rmd \by \\
		&\leq  C_{\beta}' \sup_{t > 0} \int_{\bbR^d} \frac{t^{2s}}{(|\bx_0-\by|^2 +t^2)^{\frac{d+2s}{2}} } |\bu(\by)| \, \rmd \by \\
		&\leq C_{\beta} \cM \big( |\bu| \big)(\bx_0)\,,
		\end{split}
	\end{equation*}
	where in the last line we used \cite[Chapter III, Theorem 2]{Stein}. 
	
	To prove \eqref{eq:NontanConv2} let $\bx_0$ be a Lebesgue point of $\bu$, i.e. for any $\veps > 0$ there exists a $\delta > 0$ such that
	\begin{equation*}
		\fint_{B(0,r)} |\bu(\bx_0-\by) - \bu(\bx_0) | \, \rmd \by < \veps
	\end{equation*}
	for all $r < \delta$ ($\fint$ denotes the integral average). 
	Therefore
	\begin{equation}\label{eq:NontanConv2:Pf1}
		\sup_{r < \delta} \fint_{B(0,r)} |\bu(\bx_0-\by) - \bu(\bx_0) | \, \rmd \by = \cM \left( |\bu(\bx_0-\cdot) - \bu(\bx_0)| \chi_{B(0,\delta)}(\cdot) \right)(0) < \veps\,.
	\end{equation}
	Now assume $|\bx-\bx_0| + t \to 0$, and assume $|\bx| < \beta t$. So by item iii) and \eqref{eq:PoissonComparison} 
	\begin{equation*}
		\begin{split}
		|(\bP(\cdot,t) \ast \bu)(\bx_0-\bx) - \bu(\bx_0)| &= \left| \int_{\bbR^d} \bP(\by-\bx,t) (\bu(\bx_0-\by)-\bu(\bx_0)) \, \rmd \by \right| \\
		&\leq C_{\beta}' \int_{\bbR^d} \frac{t^{2s}}{(|\by|^2 +t^2)^{\frac{d+2s}{2}} } |\bu(\bx_0-\by)-\bu(\bx_0)| \, \rmd \by \\
		&= C_{\beta}' \left( \int_{B(0,\delta)} \cdots + \int_{\bbR^d \setminus B(0,\delta) } \cdots \right)
		\end{split}
	\end{equation*}
	An application of \cite[Chapter III, Theorem 2]{Stein} and \eqref{eq:NontanConv2:Pf1} reveals that the first integral is majorized by $C_{\beta} \veps$. The second integral is majorized by
	\begin{equation*}
		\int_{\bbR^d \setminus B(0,\delta)} t^{2s} (\delta^{-2}+1)^{\frac{d+2s}{2}} \frac{|\bu(\bx_0-\by)-\bu(\bx_0)|}{(1+|\by|^2)^{\frac{d+2s}{2}}} \, \rmd \by\,, 
	\end{equation*}
	which converges to $0$ as $t \to 0$ since $\bu \in L^1_{s}(\bbR^d)$. Therefore \eqref{eq:NontanConv2} is proved since $\veps >0$ is arbitrary.
\end{proof}

We can characterize $\wh{\bP}(\bsxi,t)$ more precisely by using special functions. 
\begin{theorem}
	For $s \in (0,1)$, for all $\bsxi \in \bbR^d$ and $t > 0$
	\begin{equation}\label{eq:PoissonKernelFourierBessel}
		\wh{\bP}(\bsxi,t) =  \cK_s(2 \pi \sqrt{\mu} |\bsxi| t) \bI 
		+ \Big( \cK_s(2 \pi \sqrt{2\mu+\lambda} |\bsxi| t) - \cK_s(2 \pi \sqrt{\mu} |\bsxi| t) \Big) 
		\frac{\bsxi \otimes \bsxi}{|\bsxi|^2}\,.
	\end{equation}
	Here, $\cK_s(a) := \frac{2^{1-s}}{\Gamma(s)}a^s K_s(a)$ for $a > 0$,
	where $K_s$ denotes the modified Bessel function of the second kind of order $s$.
	When $s=1/2$, the Poisson kernel satisfies the semigroup property
	\begin{equation*}
		\bP(\cdot,t_1) \ast \bP(\cdot,t_2) = \bP(\cdot,t_1 + t_2)\,, \qquad \text{ for } t_1, t_2 > 0\,.
	\end{equation*}
\end{theorem}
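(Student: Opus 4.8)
The plan is to reduce the matrix-valued integral \eqref{eq:PoissonKernelFourierTransform} to a single scalar integral that can be identified with a modified Bessel function of the second kind, and then to read off the $s=\tfrac12$ semigroup property from the resulting exponential formula.

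First I would rewrite the last line of \eqref{eq:PoissonKernelFourierTransform} using complementary projections. Set $Q := \bsxi\otimes\bsxi/|\bsxi|^2$, the orthogonal projection onto $\mathrm{span}(\bsxi)$; then $\rme^{-r\bM(\bsxi)} = \rme^{-4\pi^2\mu|\bsxi|^2 r}(\bI-Q) + \rme^{-4\pi^2(2\mu+\lambda)|\bsxi|^2 r}Q$, so by linearity of the defining integral one has $\wh{\bP}(\bsxi,t) = \Theta\big(2\pi\sqrt{\mu}|\bsxi|,t\big)(\bI-Q) + \Theta\big(2\pi\sqrt{2\mu+\lambda}|\bsxi|,t\big)Q$, where $\Theta(a,t) := \tfrac{t^{2s}}{2^{2s}\Gamma(s)}\int_0^{\infty} \rme^{-a^2 r}\,\rme^{-t^2/(4r)} r^{-s}\,\tfrac{\rmd r}{r}$ for $a\geq 0$. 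Thus \eqref{eq:PoissonKernelFourierBessel} reduces to the single identity $\Theta(a,t) = \cK_s(at)$.

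For $a>0$ the substitution $r = \tfrac{t}{2a}u$ (under which $\tfrac{\rmd r}{r}$ is invariant) turns $\Theta(a,t)$ into $\tfrac{t^{2s}}{2^{2s}\Gamma(s)}\big(\tfrac{t}{2a}\big)^{-s}\int_0^{\infty}\rme^{-\frac{at}{2}(u+u^{-1})}u^{-s}\,\tfrac{\rmd u}{u}$. I would then apply the classical integral representation $K_{\nu}(z) = \tfrac12\int_0^\infty \rme^{-\frac{z}{2}(u+u^{-1})}u^{-\nu}\,\tfrac{\rmd u}{u}$, valid for real $\nu$ since $K_\nu=K_{-\nu}$ (see the appendix), with $z=at$ and $\nu=s$; this gives $\Theta(a,t) = \tfrac{t^{2s}}{2^{2s}\Gamma(s)}\,(2a)^s t^{-s}\cdot 2K_s(at) = \tfrac{2^{1-s}}{\Gamma(s)}(at)^s K_s(at) = \cK_s(at)$, which is exactly \eqref{eq:PoissonKernelFourierBessel}. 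The value $a=0$ (i.e.\ $\bsxi=\mathbf 0$) is covered by the direct computation $\Theta(0,t)=1$ already carried out for item iii) of \Cref{thm:PoissonKernelProperties}, consistent with $\cK_s(a)\to1$ as $a\to0^+$; in any case $\{\bsxi=\mathbf 0\}$ is negligible for the tempered-distribution identity.

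For the semigroup property I would specialize to $s=\tfrac12$ and use $K_{1/2}(z) = \sqrt{\pi/(2z)}\,\rme^{-z}$, which yields $\cK_{1/2}(a) = \tfrac{2^{1/2}}{\Gamma(1/2)}a^{1/2}K_{1/2}(a) = \rme^{-a}$, so that $\wh{\bP}(\bsxi,t) = \rme^{-2\pi\sqrt{\mu}|\bsxi|t}(\bI-Q) + \rme^{-2\pi\sqrt{2\mu+\lambda}|\bsxi|t}\,Q$ with $Q$ as above. Since $Q$ and $\bI-Q$ are complementary idempotents ($Q^2=Q$, $(\bI-Q)^2=\bI-Q$, $Q(\bI-Q)=0$), the product $\wh{\bP}(\bsxi,t_1)\,\wh{\bP}(\bsxi,t_2)$ collapses to $\wh{\bP}(\bsxi,t_1+t_2)$ for every $\bsxi$; since $\cF\big(\bP(\cdot,t_1)\ast\bP(\cdot,t_2)\big) = \wh{\bP}(\cdot,t_1)\,\wh{\bP}(\cdot,t_2)$, Fourier inversion then gives $\bP(\cdot,t_1)\ast\bP(\cdot,t_2) = \bP(\cdot,t_1+t_2)$. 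The argument is essentially a computation; the only step demanding care is choosing the substitution so that the normalizing constants reproduce $\cK_s$ exactly and invoking the $K_\nu$ integral representation with the correct sign of the index, after which the $s=\tfrac12$ case is immediate from the projection algebra.
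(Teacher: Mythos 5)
Your proof is correct and takes essentially the same route as the paper: both reduce $\wh{\bP}(\bsxi,t)$ to a scalar integral that is identified with $K_s$ through a standard integral representation (the paper invokes $K_s(a)=\tfrac12(\tfrac a2)^s\int_0^\infty \rme^{-r}\rme^{-a^2/(4r)}r^{-s}\,\tfrac{\rmd r}{r}$ and changes variables, while you substitute $r=\tfrac{t}{2a}u$ and use the symmetric representation of $K_\nu$ — the same computation in a different order), and then both obtain the $s=\tfrac12$ semigroup property by multiplying the Fourier symbols, which collapses via the projection structure of $\bsxi\otimes\bsxi/|\bsxi|^2$. One small correction: the symmetric representation $K_\nu(z)=\tfrac12\int_0^\infty \rme^{-\frac z2(u+u^{-1})}u^{-\nu}\,\tfrac{\rmd u}{u}$ is not in the paper's appendix, so cite it directly (e.g.\ Gradshteyn--Ryzhik or Abramowitz--Stegun) rather than referring to the appendix.
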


\begin{proof}
	Using \cite[Equation 3.478.4]{gradshteyn2014table} and \cite[Equation 9.6.6]{abramowitz1988handbook}, $K_s$ can be expressed as
	\begin{equation*}
		K_s(a) = \frac{1}{2} \left( \frac{a}{2} \right)^s \int_0^{\infty} \rme^{-r} \rme^{-\frac{a^2}{4r}} r^{-s} \frac{\rmd r}{r}\,, \qquad a > 0\,.
	\end{equation*}
	Therefore setting $a = 2 \pi b |\bsxi| t$ for $b > 0$, using the change of variables $\rho = \frac{a^2 r}{t^2}$ gives
	\begin{equation*}
		\frac{t^{2s}}{2^{2s} \Gamma(s)} \int_0^{\infty} \rme^{-4 \pi^2 b^2 |\bsxi|^2 r} \rme^{-\frac{t^2}{4r}} r^{-s} \frac{\rmd r}{r} = \frac{2^{1-s}}{\Gamma(s)} (2 \pi b |\bsxi| t)^s \, K_s(2 \pi b |\bsxi| t)\,.
	\end{equation*}
	By applying this formula in \eqref{eq:PoissonKernelFourierTransform} with $b = \sqrt{\mu}$ and $b = \sqrt{2 \mu + \lambda}$, we obtain \eqref{eq:PoissonKernelFourierBessel}.
	
	When $s = 1/2$, the function $\cK_s$ reduces to an exponential
	\begin{equation*}
		\cK_{1/2}(a) = \sqrt{\frac{2}{\pi}} a^{1/2} \left( \sqrt{\frac{\pi}{2}} a^{-1/2} \rme^{-a} \right) = \rme^{-a}\,.
	\end{equation*}
	Thus the semigroup property of $\bP$ can be seen directly by multiplying the Fourier transforms:
	\begin{equation*}
		\wh{\bP}(\bsxi,t_1) \wh{\bP}(\bsxi,t_2) = \wh{\bP}(\bsxi,t_1+t_2)\,.
	\end{equation*}
\end{proof}

\subsection{Existence and Uniqueness}

We state the well-posedness of a weak formulation of the problem \eqref{eq:Approach1:MainExtProb}. We denote the upper-half plane $\bbR^d \times (0,\infty)$ as $\bbR^{d+1}_+$, and define the weighted Lebesgue and weighted Sobolev spaces
\begin{equation*}
	\begin{split}
		L^2_s(\bbR^{d+1}_+; \bbR^d) &:= \left\{ \bU : \bbR^{d+1}_+ \to \bbR^d \, : \, \Vnorm{\bU}_{L^2_s(\bbR^{d+1}_+)} := \int_0^{\infty} \int_{\bbR^d} |\bU(\bx,t)|^2 t^{1-2s} \, \rmd \bx \, \rmd t < \infty \right\}\,, \\
		H^1_{s}(\bbR^{d+1}_+; \bbR^d) &:= \left\{ \bU \in L^2_{s}(\bbR^{d+1}_+; \bbR^d) \, : \, |\grad_{(\bx,t)} \bU| \in L^2_{s}(\bbR^{d+1}_+; \bbR^d) \right\}
	\end{split}
\end{equation*}
with the natural norms.
Consider the weighted Dirichlet-type energy
\begin{equation*}
	\cD(\bU) := \frac{1}{2} \int_0^{\infty} \intdm{\bbR^d}{ t^{1-2s} \big( |\p_t \bU|^2  + \mu |\grad \bU|^2 + (\mu+\lambda) |\div \bU|^2 \big)  }{\bx} \, \rmd t\,.
\end{equation*}
It is clear that
\begin{equation}\label{eq:Extprob:Coercivity}
	\frac{1}{2} \min \{1,\mu, 2 \mu + \lambda \} \Vnorm{\grad_{(\bx,t)} \bU}_{L^2_{s}(\bbR^{d+1}_+)} \leq \cD(\bU) \leq C(\mu,\lambda) \Vnorm{\grad_{(\bx,t)}\bU}_{L^2_{s}(\bbR^{d+1}_+)}\,.
\end{equation}
It is well-known \cite{nekvinda1993characterization} that every function $\bu \in \scL^{s,2}(\bbR^d;\bbR^d)$ can be represented as the trace of a function $\bU \in L^2_{s,loc}(\bbR^{d+1}_+; \bbR^d)$ with  $\grad_{(\bx,t)} \bU \in L^2_s(\bbR^{d+1}_+; \bbR^d)$. 
This gives a natural setting to the inhomogeneous Dirichlet problem, since any $\bU$ with $\cD(\bU) < \infty$ will belong to $H^1_s(\bbR^{d+1}_+;\bbR^d)$ and thus will have a well-defined trace $\bu$ in $\scL^{s,2}(\bbR^d;\bbR^d)$.
Define the homogeneous space
\begin{equation*}
	H^1_{s,0}(\bbR^{d+1}_+;\bbR^d) := \{ \bU \in H^1_s(\bbR^{d+1}_+;\bbR^d) \, : \, \bU(\bx,0) ={\bf 0} \text{ in the trace sense} \}\,.
\end{equation*}

\begin{theorem}
	For any $\bu \in \scL^{s,2}(\bbR^d;\bbR^d)$ there exists a unique weak solution $\bU \in H^1_s(\bbR^{d+1}_+;\bbR^d)$ of \eqref{eq:Approach1:MainExtProb}.
	To be precise, $\bU$ satisfies
	\begin{equation}\label{eq:ExtProb:WeakForm1}
		\begin{split}
			\cB(\bU,\bsPhi) := \int_0^{\infty} \intdm{\bbR^d}{ t^{1-2s} \big( \Vint{\p_t \bU, \p_t \bsPhi }  + \mu \Vint{\grad \bU, \grad \bsPhi } + (\mu+\lambda) (\div \bU) (\div \bsPhi) \big)  }{\bx} \, \rmd t = 0
		\end{split}
	\end{equation}
	for all $\bsPhi \in H^1_{s,0}(\bbR^{d+1}_+;\bbR^d)$
	and
	\begin{equation}\label{eq:ExtProb:WeakForm2}
		\bU \big|_{ \{  t = 0 \} } = \bu \text{ in the trace sense. }
	\end{equation}
	In addition, $\bU \in C^{\infty}(\bbR^{d+1}_+)$, and so $\bU$ satisfies \eqref{eq:Approach1:MainExtProb} pointwise in $\bbR^{d+1}_+$. The boundary condition in \eqref{eq:Approach1:MainExtProb} is satisfied for all Lebesgue points of $\bu$; that is, for almost every $\bx \in \bbR^d$.
\end{theorem}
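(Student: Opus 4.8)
The plan is to realize the weak solution explicitly as the Poisson integral $\bU(\bx,t):=(\bP(\cdot,t)\ast\bu)(\bx)$ and to read every asserted property off the structure of $\bP$ established in the previous subsections. By \Cref{thm:PoissonKernelProperties}(iv), this $\bU$ already lies in $C^\infty(\bbR^{d+1}_+;\bbR^d)$ and solves the system in \eqref{eq:Approach1:MainExtProb} pointwise for each $t>0$, so the regularity and pointwise-solution claims will follow as soon as $\bU$ is identified with the weak solution. To see that $\bU$ has finite energy I pass to the Fourier variable in $\bx$: since $\wh{\bU}(\bsxi,t)=\wh{\bP}(\bsxi,t)\wh{\bu}(\bsxi)$ and, by \eqref{eq:PoissonKernelFourierBessel}, $\wh{\bP}(\bsxi,t)=\cK_s(2\pi\sqrt{\mu}|\bsxi|t)\,\bI+\big(\cK_s(2\pi\sqrt{2\mu+\lambda}|\bsxi|t)-\cK_s(2\pi\sqrt{\mu}|\bsxi|t)\big)\frac{\bsxi\otimes\bsxi}{|\bsxi|^2}$, the matrix $\wh{\bP}(\bsxi,t)$ acts as one scalar on the line $\bbR\bsxi$ and as another on $\bsxi^{\perp}$; decomposing $\wh{\bu}$ accordingly and using Plancherel reduces $\cD(\bU)$ to a sum of one-dimensional integrals $\int_0^\infty t^{1-2s}\big(|\p_t\cK_s(c|\bsxi|t)|^2+c^2|\bsxi|^2|\cK_s(c|\bsxi|t)|^2\big)\,\rmd t$, which after the rescaling $t\mapsto t/(c|\bsxi|)$ equal $(c|\bsxi|)^{2s}\int_0^\infty t^{1-2s}\big(|\cK_s'(t)|^2+|\cK_s(t)|^2\big)\,\rmd t$. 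This last integral is finite because $\cK_s(t)\to 1$ as $t\to0^+$ (so $\cK_s'(t)=O(t^{2s-1})+O(t)$ there) and $\cK_s,\cK_s'$ decay like $\rme^{-t}$ at infinity, together with $1-2s\in(-1,1)$; summing the blocks gives $\cD(\bU)\approx\int_{\bbR^d}|\bsxi|^{2s}|\wh{\bu}(\bsxi)|^2\,\rmd\bsxi=C[\bu]^2_{\dot{\scL}^{s,2}(\bbR^d)}<\infty$, using \eqref{eq:EquivalenceOfSobolevAndBessel} and the inclusion $\scL^{s,2}(\bbR^d)\subset\dot{\scL}^{s,2}(\bbR^d)$. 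Thus $\cD(\bU)<\infty$ and $\bU$ belongs to the energy space $H^1_s(\bbR^{d+1}_+;\bbR^d)$; here one also invokes the one-dimensional Hardy inequality $\int_0^\infty|f|^2t^{1-2s}\,\rmd t\le C_s\int_0^\infty|f'|^2t^{1-2s}\,\rmd t$ for $f(0)=0$, valid precisely because $1-2s>-1$, which moreover yields the norm equivalence $\Vnorm{\bsPhi}_{H^1_s(\bbR^{d+1}_+)}\approx\Vnorm{\grad_{(\bx,t)}\bsPhi}_{L^2_s(\bbR^{d+1}_+)}$ on $H^1_{s,0}(\bbR^{d+1}_+;\bbR^d)$.

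Next I check that $\bU$ is a weak solution. Writing \eqref{eq:Approach1:MainExtProb} in the conservative form $\p_t\big(t^{1-2s}\p_t\bU\big)=t^{1-2s}\bbL\bU$, multiplying by $\bsPhi\in C^\infty_c(\bbR^{d+1}_+;\bbR^d)$ and integrating by parts in $t$ and in $\bx$ produces exactly $\cB(\bU,\bsPhi)$ plus the boundary term $\int_{\bbR^d}(t^{1-2s}\p_t\bU)\cdot\bsPhi\,\big|_{t=0}$, which vanishes since $\bsPhi$ is supported away from $\{t=0\}$; because $C^\infty_c(\bbR^{d+1}_+;\bbR^d)$ is dense in $H^1_{s,0}$ and $\cB$ is bounded there (Cauchy--Schwarz together with \eqref{eq:Extprob:Coercivity}), this gives $\cB(\bU,\bsPhi)=0$ for all $\bsPhi\in H^1_{s,0}$. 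For the boundary value, $\bu\in\scL^{s,2}(\bbR^d;\bbR^d)\subset L^2(\bbR^d;\bbR^d)$, and by \Cref{thm:PoissonKernelProperties}(i)--(iii) the family $\{\bP(\cdot,t)\}_{t>0}$ is a matrix-valued approximate identity, so $(\bP(\cdot,t)\ast\bu)\to\bu$ in $L^2(\bbR^d;\bbR^d)$ as $t\to0^+$, which identifies the $H^1_s$-trace of $\bU$ with $\bu$; this is \eqref{eq:ExtProb:WeakForm2}. Uniqueness is the standard energy argument: if $\bU_1,\bU_2\in H^1_s$ are two weak solutions with trace $\bu$, then $\bW:=\bU_1-\bU_2\in H^1_{s,0}$ and, taking $\bsPhi=\bW$ in \eqref{eq:ExtProb:WeakForm1}, $\cB(\bW,\bW)=0$; by \eqref{eq:Extprob:Coercivity} this forces $\grad_{(\bx,t)}\bW\equiv{\bf 0}$, so $\bW$ is constant on the connected set $\bbR^{d+1}_+$, and the only constant in $L^2_s(\bbR^{d+1}_+;\bbR^d)$ is ${\bf 0}$ because $\int_0^\infty t^{1-2s}\,\rmd t=\infty$ when $s<1$. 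Hence $\bU_1=\bU_2$, and the unique weak solution is $\bU=(\bP(\cdot,t)\ast\bu)(\bx)$.

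With $\bU$ identified as the Poisson integral, the claims $\bU\in C^\infty(\bbR^{d+1}_+)$ and that $\bU$ solves \eqref{eq:Approach1:MainExtProb} pointwise are exactly \Cref{thm:PoissonKernelProperties}(iv); and since $\bu\in L^2(\bbR^d;\bbR^d)\subset L^1_s(\bbR^d;\bbR^d)$, \Cref{thm:PoissonKernelProperties}(vii), in particular the nontangential limit \eqref{eq:NontanConv2}, gives $\lim_{(\bx,t)\to(\bx_0,0)}\bU(\bx,t)=\bu(\bx_0)$ at every Lebesgue point $\bx_0$ of $\bu$, hence for almost every $\bx_0\in\bbR^d$, which is the boundary condition in \eqref{eq:Approach1:MainExtProb}. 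The main obstacle in this plan is the energy estimate of the first paragraph: it depends on the small- and large-argument asymptotics of the modified Bessel function $K_s$, that is, on the integrability of $t^{1-2s}\big(|\cK_s'(t)|^2+|\cK_s(t)|^2\big)$ on $(0,\infty)$ — this is precisely where the hypothesis $s\in(0,1)$ is used — after which matching $\cD(\bP(\cdot,t)\ast\bu)$ with the homogeneous seminorm $[\bu]_{\dot{\scL}^{s,2}(\bbR^d)}$ via \eqref{eq:EquivalenceOfSobolevAndBessel} is routine. (Alternatively, $C^\infty$-smoothness of \emph{any} weak solution in the open half-space can be deduced from interior regularity for the constant-coefficient second-order operator $-\p_{tt}+\bbL$, with $\bbL$ acting in $\bx$, which is hypoelliptic, perturbed by the lower-order term $\frac{1-2s}{t}\p_t$ whose coefficient is smooth for $t>0$; but the Poisson-kernel route is cleaner and also delivers the almost-everywhere attainment of the boundary data.)
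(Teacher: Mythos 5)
Your proposal is correct and follows essentially the same route as the paper: existence is obtained from the Poisson integral $\bU=\bP(\cdot,t)\ast\bu$, with smoothness, the pointwise equation, and a.e.\ attainment of the boundary data read off from \Cref{thm:PoissonKernelProperties}, and uniqueness from testing with $\bsPhi=\bU$ and the coercivity \eqref{eq:Extprob:Coercivity}; your Fourier--Bessel verification that $\cD(\bU)<\infty$ simply fills in detail the paper leaves implicit. The only divergence is the final step of uniqueness, where the paper invokes a weighted Hardy inequality to pass from $\grad_{(\bx,t)}\bU\equiv{\bf 0}$ to $\bU\equiv{\bf 0}$, whereas you argue that a gradient-free function is constant and a nonzero constant fails to lie in $L^2_s(\bbR^{d+1}_+;\bbR^d)$ --- both closings are valid.
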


\begin{proof}
	For existence: The Poisson integral
	\begin{equation*}
		\bU(\bx,t) := \int_{\bbR^d} \bP(\bx-\by,t) \bu(\by) \, \rmd \by
	\end{equation*}
	is well-defined by \eqref{eq:NaturalBoundOnPoisson}-\eqref{eq:PoissonKernel:Dilation}and satisfies \eqref{eq:ExtProb:WeakForm1}-\eqref{eq:ExtProb:WeakForm2}. Regularity and pointwise properties follows from the properties of $\bP$ described in \Cref{thm:PoissonKernelProperties}.
	
	For uniqueness: if $\bU$ solves \eqref{eq:Approach1:MainExtProb} with homogeneous Dirichlet data, then we can use $\bsPhi = \bU$ in \eqref{eq:ExtProb:WeakForm1}. Then using \eqref{eq:Extprob:Coercivity}
	\begin{equation*}
		C \int_0^{\infty} \intdm{\bbR^d}{ t^{1-2s} |\grad_{(\bx,t)} \bU|^2 }{\bx} \, \rmd t \leq \cD(\bU) = 0\,.
	\end{equation*}
	Then a weighted Hardy inequality \cite{nevcas1962methode} (see also \cite{lehrback2014weighted}) implies that
	\begin{equation*}
		\int_0^{\infty} \int_{\bbR^d} t^{-1-2s} |\bU(\bx,t)|^2 \, \rmd \bx \, \rmd t \leq C \int_0^{\infty} \intdm{\bbR^d}{ t^{1-2s} |\grad_{(\bx,t)} \bU(\bx,t)|^2 }{\bx} = 0\,.
	\end{equation*}
	Therefore the only solution to \eqref{eq:ExtProb:WeakForm1}-\eqref{eq:ExtProb:WeakForm2} with $\bu = {\bf 0}$ is $\bU = { \bf 0 }$.
\end{proof}

\subsection{Dirichlet-to-Neumann Map}

Consider the bilinear form $\cB$ defined in \eqref{eq:ExtProb:WeakForm1} on the weighted space $H^1_s(\bbR^{d+1}_+;\bbR^d)$.
For $\bU$ and $\bsPhi \in C^{\infty}(\bbR^{d+1}_+;\bbR^d)$, an application of the divergence theorem on the set $\bbR^d \times [\veps,\infty)$ and then using dominated convergence as $\veps \to 0$ gives
\begin{equation}\label{eq:EnergyEquivalence1}
	\cB(\bU,\bsPhi) = \int_{\bbR^d} \lim\limits_{t \to 0} \left( - t^{1-2s} \p_t \bU(\bx,t) \right) \cdot \bsPhi(\bx,0) \, \rmd \bx\,.
\end{equation}
Therefore, the Dirichlet-to-Neumann map $\bsLambda : \scL^{s,2}(\bbR^d;\bbR^d) \to \scL^{-s,2}(\bbR^d;\bbR^d)$ for the equation \eqref{eq:Approach1:MainExtProb} is
\begin{equation*}
	\bsLambda \bu(\bx) := -\lim\limits_{t \to 0} t^{1-2s} \p_t \bU(\bx,t)\,.
\end{equation*}
The next theorems reveal that $\bsLambda$ is given by a constant multiple of $\bbL^s$.

\begin{theorem}\label{thm:DtNMap} 
Let $\bu \in \scL^{s,2}(\bbR^d;\bbR^d)$ and let $\bU$ be the unique smooth solution of  \eqref{eq:ExtProb:WeakForm1}-\eqref{eq:ExtProb:WeakForm2}. Then
	\begin{equation}\label{eq:DtNMap1}
		- \lim\limits_{t \to 0} t^{1-2s} \p_t \bU(\bx,t)  = -2s \lim\limits_{t \to 0} \frac{\bU(\bx,t)-\bU(\bx,0)}{t^{2s}} = \frac{2 \Gamma(1-s)}{2^{2s} \Gamma(s)} \bbL^s \bu(\bx)\,,
	\end{equation}
	where the convergence is in $\scL^{-s,2}(\bbR^d;\bbR^d)$, i.e.\ in the distributional sense.
\end{theorem}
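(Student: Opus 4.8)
The plan is to transfer the computation to the Fourier side, where it reduces to the scalar asymptotics of $\cK_s$ near the origin. Since the unique solution of \eqref{eq:ExtProb:WeakForm1}--\eqref{eq:ExtProb:WeakForm2} is the Poisson integral $\bU(\bx,t)=(\bP(\cdot,t)\ast\bu)(\bx)$, we have $\wh{\bU}(\bsxi,t)=\wh{\bP}(\bsxi,t)\,\wh{\bu}(\bsxi)$ with $\wh{\bP}$ given by the closed form \eqref{eq:PoissonKernelFourierBessel}. Thus $\wh{\bP}(\bsxi,t)$ is a linear combination of $\cK_s(2\pi\sqrt{\mu}\,|\bsxi|t)$ and $\cK_s(2\pi\sqrt{2\mu+\lambda}\,|\bsxi|t)$ with matrix coefficients $\bI$ and $\tfrac{\bsxi\otimes\bsxi}{|\bsxi|^2}$, so it suffices to analyze, for a fixed $b>0$, the behaviour of $t^{1-2s}\partial_t\,\cK_s(bt)$ and of $t^{-2s}\big(\cK_s(bt)-1\big)$ as $t\to0$, together with uniform-in-$t$ bounds on these quantities.

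First I would use the Bessel identity $\tfrac{\rmd}{\rmd a}\big(a^{s}K_s(a)\big)=-a^{s}K_{s-1}(a)=-a^{s}K_{1-s}(a)$ (using $K_{-\nu}=K_{\nu}$), so that the definition $\cK_s(a)=\tfrac{2^{1-s}}{\Gamma(s)}a^{s}K_s(a)$ gives $\cK_s'(a)=-\tfrac{2^{1-s}}{\Gamma(s)}a^{s}K_{1-s}(a)$ and hence the exact identity
\[
	t^{1-2s}\partial_t\,\cK_s(bt)=-\frac{2^{1-s}}{\Gamma(s)}\,b^{2s}\,g(bt)\,,\qquad g(r):=r^{1-s}K_{1-s}(r)\,.
\]
The small-argument expansion $K_{1-s}(r)\sim\tfrac12\Gamma(1-s)(r/2)^{-(1-s)}$ as $r\to0^+$ (valid since $1-s\in(0,1)$) together with the decay $K_{1-s}(r)\le C r^{-1/2}\rme^{-r}$ as $r\to\infty$ shows that $g$ is continuous and bounded on $[0,\infty)$ with $g(0^+)=2^{-s}\Gamma(1-s)$. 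Consequently
\[
	-\lim_{t\to0}t^{1-2s}\partial_t\,\cK_s(bt)=\frac{2^{1-2s}\Gamma(1-s)}{\Gamma(s)}\,b^{2s}=\frac{2\Gamma(1-s)}{2^{2s}\Gamma(s)}\,b^{2s}\,,
\]
while $\big|t^{1-2s}\partial_t\,\cK_s(bt)\big|\le C(d,s)\,b^{2s}$ for all $t>0$. An analogous, slightly simpler computation using $\cK_s(r)=1+\tfrac{\Gamma(-s)}{2^{2s}\Gamma(s)}r^{2s}+o(r^{2s})$ shows that $h(r):=r^{-2s}\big(\cK_s(r)-1\big)$ is bounded on $[0,\infty)$ with $h(0^+)=\tfrac{\Gamma(-s)}{2^{2s}\Gamma(s)}$, and the functional equation $\Gamma(1-s)=-s\,\Gamma(-s)$ gives $-2s\,h(0^+)=\tfrac{2\Gamma(1-s)}{2^{2s}\Gamma(s)}$, which matches the previous limit.

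Next I would assemble these scalar facts. Taking $b=2\pi\sqrt{\mu}\,|\bsxi|$ and $b=2\pi\sqrt{2\mu+\lambda}\,|\bsxi|$ in \eqref{eq:PoissonKernelFourierBessel} and invoking \Cref{lma:FourierSymbolCalculation}, one obtains for every $\bsxi\neq{\bf 0}$ the pointwise limits
\[
	-\lim_{t\to0}t^{1-2s}\partial_t\wh{\bP}(\bsxi,t)=\frac{2\Gamma(1-s)}{2^{2s}\Gamma(s)}\,\bM^s(\bsxi)\,,\qquad -2s\lim_{t\to0}\frac{\wh{\bP}(\bsxi,t)-\bI}{t^{2s}}=\frac{2\Gamma(1-s)}{2^{2s}\Gamma(s)}\,\bM^s(\bsxi)\,,
\]
together with the uniform-in-$t$ bound $\big|t^{1-2s}\partial_t\wh{\bP}(\bsxi,t)\big|+\big|t^{-2s}(\wh{\bP}(\bsxi,t)-\bI)\big|\le C(d,s,\mu,\lambda)\,|\bsxi|^{2s}$. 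Since $\bu\in\scL^{s,2}(\bbR^d;\bbR^d)$ is equivalent to $(1+4\pi^2|\bsxi|^2)^{s/2}\wh{\bu}\in L^2(\bbR^d)$, and since $|\bsxi|^{4s}(1+4\pi^2|\bsxi|^2)^{-s}\le C\,(1+4\pi^2|\bsxi|^2)^{s}$, the function $|\bsxi|^{2s}|\wh{\bu}(\bsxi)|$ is square-integrable against the weight $(1+4\pi^2|\bsxi|^2)^{-s}\,\rmd\bsxi$; this supplies the dominating function for the dominated convergence theorem in the weighted space $L^2\big((1+4\pi^2|\bsxi|^2)^{-s}\rmd\bsxi\big)$, which represents $\scL^{-s,2}(\bbR^d;\bbR^d)$ via Plancherel. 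Applying $\cF^{-1}$ and using $\cF(\bbL^s\bu)=\bM^s\wh{\bu}$, I conclude that both $-t^{1-2s}\partial_t\bU(\cdot,t)$ and $-2s\,t^{-2s}(\bU(\cdot,t)-\bu)$ converge in $\scL^{-s,2}(\bbR^d;\bbR^d)$ as $t\to0$ to $\tfrac{2\Gamma(1-s)}{2^{2s}\Gamma(s)}\bbL^s\bu$, which is \eqref{eq:DtNMap1}.

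The main obstacle is controlling the limit uniformly across all frequencies: for a fixed $t>0$ the Bessel arguments $2\pi\sqrt{\mu}\,|\bsxi|t$ sweep out all of $[0,\infty)$, so the small-argument expansion of $K_{1-s}$ alone does not suffice—it must be combined with the exponential decay of $K_{1-s}$ at infinity to get the global boundedness of $g$ and $h$, and one must then check that the resulting $|\bsxi|^{2s}$-bound is integrable against the dual Bessel weight precisely because $\bu\in\scL^{s,2}$. (Alternatively, multiplying the equation in \eqref{eq:Approach1:MainExtProb} by $t^{1-2s}$ gives $\partial_t\big(t^{1-2s}\partial_t\bU\big)=t^{1-2s}\bbL\bU$, and integrating in $t$ relates $\lim_{t\to0}t^{1-2s}\partial_t\bU$ directly to $-2s\lim_{t\to0}t^{-2s}(\bU(\cdot,t)-\bu)$; but the Fourier route yields both the precise constant and the mode of convergence at once.)
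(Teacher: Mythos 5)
Your proposal is correct, and it takes a genuinely different route from the paper's. The paper argues in physical space: using $\int_{\bbR^d}\bP(\bx-\by,t)\,\rmd\by=\bI$, it writes $2s\,t^{-2s}\big(\bU(\bx,t)-\bu(\bx)\big)$ as an integral of the explicit kernel \eqref{eq:PoissonKernel2} against $\bu(\by)-\bu(\bx)$, pairs with a test field $\bsvarphi\in\scL^{s,2}(\bbR^d;\bbR^d)$, symmetrizes by nonlocal integration by parts as in \Cref{subsec:NonlocalWeakForm}, and passes $t\to0$ by dominated convergence, identifying the limit with $\tfrac{2\Gamma(1-s)}{2^{2s}\Gamma(s)}\cE^s(\bu,\bsvarphi)$; the first equality in \eqref{eq:DtNMap1} is then obtained by computing $\p_t\bP$ explicitly and showing the remainder term $\bR$ contributes nothing. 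You instead work entirely on the Fourier side via the closed form \eqref{eq:PoissonKernelFourierBessel}, reducing everything to scalar asymptotics of $\cK_s$: your constants check out (the identity $\tfrac{\rmd}{\rmd a}\big(a^{s}K_s(a)\big)=-a^{s}K_{1-s}(a)$, the coefficient $\Gamma(-s)/(2^{2s}\Gamma(s))$ in the expansion of $\cK_s$, and $-2s\,\Gamma(-s)=2\Gamma(1-s)$ are all correct, and $\cK_{1/2}(r)=\rme^{-r}$ is a consistency check), the global boundedness of $r^{1-s}K_{1-s}(r)$ and of $r^{-2s}(\cK_s(r)-1)$ gives the uniform bound $C|\bsxi|^{2s}$, and domination by $(1+4\pi^2|\bsxi|^2)^{s}|\wh{\bu}(\bsxi)|^2$ is exactly what the hypothesis $\bu\in\scL^{s,2}$ supplies; in fact you obtain norm convergence in $\scL^{-s,2}$, which is stronger than the distributional convergence asserted. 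The trade-off is that the paper's route expresses the limit directly through the bilinear form $\cE^s$, which is precisely the form exploited later in \Cref{thm:KornInequality}, and it does not lean on the constant-coefficient multiplier structure (a flexibility the paper cares about, e.g.\ for $L^p$ with $p\neq2$), whereas your route is shorter, yields the sharper mode of convergence, and makes the constant transparent. Two small points you should state explicitly to be complete: that $\cF\big(\p_t\bU(\cdot,t)\big)(\bsxi)=\p_t\wh{\bP}(\bsxi,t)\,\wh{\bu}(\bsxi)$ (differentiation under the integral, routine given the smoothness and decay of $\bP$ in \Cref{thm:PoissonKernelProperties}), and that your identification of the limit as the multiplier $\bM^s(\bsxi)\wh{\bu}(\bsxi)$ agrees with the paper's distributional definition \eqref{eq:NonlocalWeakForm} of $\bbL^s\bu$ for $\bu\in\scL^{s,2}$, which follows from Plancherel together with \Cref{thm:FourierTransformOfL} and \Cref{lma:FourierSymbolCalculation}.
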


\begin{proof}
	Note that \eqref{eq:Approach1:MainExtProb} holds pointwise for the solution $\bU$ defined via the Poisson kernel, so $\bU(\bx,0) = \bu(\bx)$ for almost every $\bx \in \bbR^d$. 
	Since $\intdm{\bbR^d}{\bP(\bx-\by,t)}{\by} = \bI$ for all $t$,
	\begin{equation}\label{eq:DtNMap:Proof1}
		2s \frac{\bU(\bx,t)-\bU(\bx,0)}{t^{2s}} = \frac{2s}{t^{2s}} \intdm{\bbR^d}{\bP(\bx-\by,t)(\bu(\by) - \bu(\bx)) }{\by}\,, \quad t > 0\,.
	\end{equation}
	Therefore, by the definitions of $\bP$, $c_{d,s}$ and $\kappa_{d,s}$,
	\begin{equation*}
		\begin{split}
			2s \frac{\bU(\bx,t)-\bU(\bx,0)}{t^{2s}} 
			&= \frac{2 \Gamma(1-s)}{2^{2s} \Gamma(s)} \Bigg[ \mu^s c_{d,s} \int_{\bbR^d} \frac{\bu(\by)-\bu(\bx)}{(|\bx-\by|^2 + \mu t^2)^{\frac{d+2s}{2}}} \, \rmd \by \\
			&\quad -  c_{d,s} \int_{\bbR^d} \int_{\mu}^{2 \mu + \lambda} \frac{\sigma^{s-1}}{2 (|\bx-\by|^2 + \sigma t^2)^{\frac{d+2s}{2}}} \, \rmd \sigma (\bu(\by) - \bu(\bx)) \, \rmd \by \\
			&\quad + \kappa_{d,s} \int_{\bbR^d} \int_{\mu}^{2 \mu + \lambda} \frac{\sigma^{s-1}}{2 (|\bx-\by|^2 + \sigma t^2)^{\frac{d+2s+2}{2}}} \, \rmd \sigma \\
			&\qquad \quad \times \left( (\bx-\by) \otimes (\bx-\by) \right) (\bu(\by) - \bu(\bx)) \, \rmd \by \Bigg]\,, \quad t > 0\,.
		\end{split}
	\end{equation*}
	Note that for each $t > 0$ all of these integrals are finite for $\bu \in \scL^{s,2}(\bbR^d;\bbR^d)$.
	
	Now, let $\bsvarphi \in \scL^{s,2}(\bbR^d;\bbR^d)$.
	Just as in \Cref{subsec:NonlocalWeakForm}, we obtain via nonlocal integration by parts
	\begin{equation*}
		\begin{split}
			\int_{\bbR^d} & \Vint{-2s \frac{\bU(\bx,t)-\bU(\bx,0)}{t^{2s}} , \bsvarphi(\bx) } \, \rmd \bx \\
			&= \frac{2 \Gamma(1-s)}{2^{2s} \Gamma(s)} \Bigg[ \frac{\mu^s c_{d,s}}{2} \int_{\bbR^d} \frac{(\bu(\bx)-\bu(\by)) \cdot ( \bsvarphi(\bx)-\bsvarphi(\by)) }{(|\bx-\by|^2 + \mu t^2)^{\frac{d+2s}{2}}} \, \rmd \by \\
			&\quad -  c_{d,s} \int_{\bbR^d} \int_{\mu}^{2 \mu + \lambda} \frac{\sigma^{s-1}}{4} \frac{(\bu(\bx) - \bu(\by)) \cdot ( \bsvarphi(\bx)-\bsvarphi(\by))}{ (|\bx-\by|^2 + \sigma t^2)^{\frac{d+2s}{2}}} \, \rmd \sigma \, \rmd \by \\
			&\quad + \kappa_{d,s} \int_{\bbR^d} \int_{\mu}^{2 \mu + \lambda} \frac{\sigma^{s-1}}{4} \frac{\left( \big( \bu(\bx) - \bu(\by) \big) \cdot (\bx-\by) \right) \left( \big( \bsvarphi(\bx) - \bsvarphi(\by) \big) \cdot (\bx-\by) \right) }{(|\bx-\by|^2 + \sigma t^2)^{\frac{d+2s+2}{2}}} \, \rmd \sigma \, \rmd \by \Bigg]
		\end{split}
	\end{equation*}
	for all $t > 0$.
	By H\"older's inequality the right-hand side is majorized by
	\begin{equation*}
		C(\mu,\lambda,d,s) [\bu]_{\dot{\scL}^{s,2}(\bbR^d)} [\bsvarphi]_{\dot{\scL}^{s,2}(\bbR^d)}\,,
	\end{equation*}
	where we have used the identification \eqref{eq:EquivalenceOfSobolevAndBessel}. We now see that by the dominated convergence theorem the right-hand side converges as $t \to 0$ to
	\begin{equation*}
		\begin{gathered}
			\frac{2 \Gamma(1-s)}{2^{2s} \Gamma(s)} \Bigg[ \frac{\mu^s c_{d,s}}{2} \int_{\bbR^d} \frac{(\bu(\bx)-\bu(\by)) \cdot ( \bsvarphi(\bx)-\bsvarphi(\by)) }{|\bx-\by|^{d+2s}} \, \rmd \by \\
			-  c_{d,s} \int_{\bbR^d} \left( \int_{\mu}^{2 \mu + \lambda} \frac{\sigma^{s-1}}{4} \, \rmd \sigma  \right) \frac{(\bu(\bx) - \bu(\by)) \cdot ( \bsvarphi(\bx)-\bsvarphi(\by))}{|\bx-\by|^{d+2s}} \, \rmd \by \\
			+ \kappa_{d,s} \int_{\bbR^d} \left( \int_{\mu}^{2 \mu + \lambda} \frac{\sigma^{s-1}}{4} \, \rmd \sigma  \right) \frac{\left( \big( \bu(\bx) - \bu(\by) \big) \cdot \frac{\bx-\by}{|\bx-\by|} \right) \left( \big( \bsvarphi(\bx) - \bsvarphi(\by) \big) \cdot \frac{\bx-\by}{|\bx-\by|}  \right) }{|\bx-\by|^{d+2s}} \, \rmd \by \Bigg]\,.
		\end{gathered}
	\end{equation*}
	Therefore by integrating both $\rmd \sigma$ integrals and then using \eqref{eq:NonlocalWeakForm}
	\begin{equation*}
		\lim\limits_{t \to 0} \int_{\bbR^d} \Vint{-2s \frac{\bU(\bx,t)-\bU(\bx,0)}{t^{2s}} , \bsvarphi(\bx) } \, \rmd \bx = \frac{2 \Gamma(1-s)}{2^{2s} \Gamma(s)} \Vint{\bbL^s \bu, \bsvarphi}\,,
	\end{equation*}
	which is the second equality in \eqref{eq:DtNMap1}.
	
	To prove the first equality, we note that $\intdm{\bbR^d}{\bP(\bx,t)}{\bx} = \bI$, and write
	\begin{equation*}
		\p_t \bU(\bx,t) = \intdm{\bbR^d}{\p_t \bP(\bx-\by,t) \bu(\by)}{\by} = \intdm{\bbR^d}{\p_t \bP(\bx-\by,t) (\bu(\by)-\bu(\bx))}{\by}\,.
	\end{equation*}
	By a direct computation,
	\begin{equation*}
		\p_t \bP(\bx,t) = 2s t^{2s-1} \left( \frac{1}{t^{2s}} \bP(\bx,t) \right) + t^{2s-1} \bR(\bx,t)\,,
	\end{equation*}
	where
	\begin{equation*}
		\begin{split}
			\bR(\bx,t) &:= \frac{\Gamma(\frac{d}{2}+s)}{\pi^{\frac{d}{2}} \Gamma(s) } \Bigg[ \mu^s  \frac{(d+2s)}{(|\bx|^2 + \mu t^2)^{\frac{d+2s}{2}}} \left( \frac{|\bx|^2}{|\bx|^2 + \sigma t^2} - 1 \right) \bI \\
			&\quad -  \frac{d+2s}{2} \int_{\mu}^{2 \mu + \lambda} \frac{\sigma^{s-1}}{(|\bx|^2 + \sigma t^2)^{\frac{d+2s}{2}}} \left( \frac{|\bx|^2}{|\bx|^2 + \sigma t^2} - 1\right) \, \rmd \sigma \bI \\
			&\quad + \frac{d+2s}{2} (d+2s+2) \int_{\mu}^{2 \mu + \lambda} \frac{\sigma^{s-1}}{(|\bx|^2 + \sigma t^2)^{\frac{d+2s+2}{2}}} \left( \frac{|\bx|^2}{|\bx|^2+\sigma t^2} - 1\right) \, \rmd \sigma (\bx \otimes \bx) \Bigg]\,.
		\end{split}
	\end{equation*}
	Therefore, 
	\begin{equation*}
		\begin{split}
		t^{1-2s} \p_t \bU(\bx,t) &= \frac{2s}{t^{2s}} \intdm{\bbR^d}{\bP(\bx-\by,t)(\bu(\by) - \bu(\bx)) }{\by} \\
			&\qquad + \intdm{\bbR^d}{\bR(\bx-\by,t)(\bu(\by) - \bu(\bx)) }{\by}\,.
		\end{split}
	\end{equation*}
	By \eqref{eq:DtNMap:Proof1} and the first part of the proof, it suffices to show that
	\begin{equation*}
		\int_{\bbR^d} \intdm{\bbR^d}{\bR(\bx-\by,t)(\bu(\by) - \bu(\bx)) }{\by} \, \bsvarphi(\bx) \, \rmd \bx  \to 0 \text{ as } t \to 0
	\end{equation*}
	for all $\bsvarphi \in \scL^{s,2}(\bbR^d;\bbR^d)$. But this follows by dominated convergence after writing this quantity in the same weak form used in the first part of the proof. Thus the second inequality in \eqref{eq:DtNMap1} is proved.
\end{proof}

If we assume that $\bu \in \scL^{2s,p}(\bbR^d;\bbR^d)$, then we can strengthen the result to norm convergence:
\begin{theorem}
	 Let $1< p < \infty$, let $\bu \in \scL^{2s,p}(\bbR^d;\bbR^d)$ and let $\bU$ be the unique smooth solution of  \eqref{eq:ExtProb:WeakForm1}-\eqref{eq:ExtProb:WeakForm2}. Then
	\begin{equation}\label{eq:DtNMap2}
		- \lim\limits_{t \to 0} t^{1-2s} \p_t \bU(\bx,t)  = -2s \lim\limits_{t \to 0} \frac{\bU(\bx,t)-\bU(\bx,0)}{t^{2s}} = \frac{2 \Gamma(1-s)}{2^{2s} \Gamma(s)} \bbL^s \bu(\bx)\,,
	\end{equation}
	where the convergence is in the strong topology of $L^p(\bbR^d;\bbR^d)$.
\end{theorem}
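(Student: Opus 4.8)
The plan is to upgrade the distributional identity of \Cref{thm:DtNMap} to convergence in $L^p$ by establishing a bound on the approximating operators that is uniform in $t$, verifying $L^p$-convergence on the dense class $C^\infty_c$, and then passing to the limit by density. Keeping the notation of the proof of \Cref{thm:DtNMap}, I introduce the De~Giorgi-type mollifications
\begin{equation*}
	\frak{A}_{\veps}\bu(\bx):=\intdm{\bbR^d}{\frac{\bu(\bx)-\bu(\by)}{(|\bx-\by|^2+\veps^2)^{\frac{d+2s}{2}}}}{\by},\qquad \frak{B}_{\veps}\bu(\bx):=\intdm{\bbR^d}{\frac{\big((\bx-\by)\otimes(\bx-\by)\big)\big(\bu(\bx)-\bu(\by)\big)}{(|\bx-\by|^2+\veps^2)^{\frac{d+2s+2}{2}}}}{\by}.
\end{equation*}
After the substitution $\veps=\sqrt{\sigma}\,t$ in the explicit Poisson integral, the computation in the proof of \Cref{thm:DtNMap} reads
\begin{multline*}
	-2s\,\frac{\bU(\bx,t)-\bU(\bx,0)}{t^{2s}}=\frac{2\Gamma(1-s)}{2^{2s}\Gamma(s)}\bigg[\mu^s c_{d,s}\,\frak{A}_{\sqrt{\mu}\,t}\bu(\bx)-\frac{c_{d,s}}{2}\int_{\mu}^{2\mu+\lambda}\sigma^{s-1}\frak{A}_{\sqrt{\sigma}\,t}\bu(\bx)\,\rmd \sigma\\
	+\frac{\kappa_{d,s}}{2}\int_{\mu}^{2\mu+\lambda}\sigma^{s-1}\frak{B}_{\sqrt{\sigma}\,t}\bu(\bx)\,\rmd \sigma\bigg],
\end{multline*}
and additionally $-t^{1-2s}\p_t\bU(\bx,t)=-2s\,t^{-2s}\big(\bU(\bx,t)-\bU(\bx,0)\big)-\intdm{\bbR^d}{\bR(\bx-\by,t)\big(\bu(\by)-\bu(\bx)\big)}{\by}$. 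Thus the theorem reduces to $L^p$-bounds and $L^p$-convergence for $\frak{A}_{\veps}$, $\frak{B}_{\veps}$ and the $\bR$-remainder.

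\emph{The main obstacle is the uniform bound}
\begin{equation*}
	\sup_{\veps>0}\Big(\Vnorm{\frak{A}_{\veps}\bu}_{L^p(\bbR^d)}+\Vnorm{\frak{B}_{\veps}\bu}_{L^p(\bbR^d)}\Big)\le C(d,s)\Vnorm{\bu}_{\scL^{2s,p}(\bbR^d)}\,,
\end{equation*}
which I get by comparison with the hard-truncated operators studied in \Cref{sec:BesselSpaces}. Writing $(-\Delta)^s_{\veps}\bu(\bx):=c_{d,s}\intdm{|\bx-\by|\ge\veps}{\frac{\bu(\bx)-\bu(\by)}{|\bx-\by|^{d+2s}}}{\by}$ and recalling $\bbF^s_{\veps}$ from \eqref{eq:BesselSpaces:VepsOp2}, the differences $c_{d,s}\frak{A}_{\veps}\bu-(-\Delta)^s_{\veps}\bu$ and $\kappa_{d,s}\frak{B}_{\veps}\bu-\bbF^s_{\veps}\bu$ are integral operators whose kernels vanish to high order; for instance the $\frak{B}$-difference has kernel $\kappa_{d,s}\big((\bx-\by)\otimes(\bx-\by)\big)h_{\veps}(\bx-\by)$ with $h_{\veps}(\bz):=(|\bz|^2+\veps^2)^{-\frac{d+2s+2}{2}}-\chi_{\{|\bz|\ge\veps\}}|\bz|^{-d-2s-2}$, so that $|\bz|^2|h_{\veps}(\bz)|\le\veps^{-d-2s-2}|\bz|^2$ for $|\bz|<\veps$ and $|\bz|^2|h_{\veps}(\bz)|\le C\veps^2|\bz|^{-d-2s-2}$ for $|\bz|\ge\veps$. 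Symmetrizing $\bu(\bx)-\bu(\by)$ into a second-order difference and using the Bessel modulus-of-continuity estimate \Cref{lma:ModContOfBesselSpaces} with $\beta=s$ — exactly as in the proof of \Cref{lma:BesselSpaces:MainLemma} — bounds these differences in $L^p(\bbR^d)$ by $C(d,s)\Vnorm{\bu}_{\scL^{2s,p}(\bbR^d)}$, uniformly in $\veps$; combining with the uniform bounds for the hard truncations (\Cref{thm:BesselSpaces:MainThm2} for $\bbF^s_{\veps}$, and the corresponding classical fact for the fractional Laplacian \cite{Stein,wheeden1968hypersingular} applied componentwise) gives the displayed estimate. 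Integrating in $\sigma$ over the bounded interval $[\mu,2\mu+\lambda]$ then yields $\sup_{t>0}\Vnorm{2s\,t^{-2s}(\bU(\cdot,t)-\bU(\cdot,0))}_{L^p(\bbR^d)}\le C\Vnorm{\bu}_{\scL^{2s,p}(\bbR^d)}$, and since $\bR(\bz,t)$ obeys the same size bounds as $2s\,t^{-2s}\bP(\bz,t)$ times an extra factor in $[-1,0]$, the very same estimate bounds the $\bR$-remainder, hence also $\Vnorm{t^{1-2s}\p_t\bU(\cdot,t)}_{L^p(\bbR^d)}$. (Alternatively one could verify that the matrix symbol $-2s\,t^{-2s}\big(\wh{\bP}(\bsxi,t)-\bI\big)(1+4\pi^2|\bsxi|^2)^{-s}$ obeys Mikhlin--H\"ormander bounds uniformly in $t$ from the $a\to0$ and $a\to\infty$ asymptotics of $\cK_s$, but the comparison route stays closer to the rest of the paper.)

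For $\bu\in C^\infty_c(\bbR^d;\bbR^d)$, $L^p$-convergence as $t\to0$ is soft. Symmetrizing and using the pointwise Poisson bound $|\bP(\bx,t)|\le C\,t^{2s}(|\bx|^2+t^2)^{-\frac{d+2s}{2}}$ (from \eqref{eq:NaturalBoundOnPoisson}--\eqref{eq:PoissonKernel:Dilation}) together with $|2\bu(\bx)-\bu(\bx+\bz)-\bu(\bx-\bz)|\le C\min(|\bz|^2,1)$, one dominates $\big|2s\,t^{-2s}(\bU(\bx,t)-\bU(\bx,0))\big|$, uniformly in $t$, by a fixed $L^p(\bbR^d)$ function — bounded near the support of $\bu$ and $O(|\bx|^{-d-2s})$ far from it. The pointwise limit is read off the displayed representation: $\frak{A}_{\veps}\bu(\bx)\to c_{d,s}^{-1}(-\Delta)^s\bu(\bx)$ and $\frak{B}_{\veps}\bu(\bx)\to\kappa_{d,s}^{-1}\bbF^s\bu(\bx)$ as $\veps\to0$, so that, using $\int_{\mu}^{2\mu+\lambda}\sigma^{s-1}\,\rmd \sigma=\big((2\mu+\lambda)^s-\mu^s\big)/s$ and the identity $\bbL^s\bu=\tfrac{(2s+1)\mu^s-(2\mu+\lambda)^s}{2s}(-\Delta)^s\bu+\tfrac{(2\mu+\lambda)^s-\mu^s}{2s}\bbF^s\bu$ read off \eqref{eq:FractionalPowerOfLame}, the limit equals $\tfrac{2\Gamma(1-s)}{2^{2s}\Gamma(s)}\bbL^s\bu(\bx)$. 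The $\bR$-remainder tends to ${\bf 0}$ in $L^p$ by the same argument, since the factor $\sigma t^2/(|\bz|^2+\sigma t^2)$ in $\bR$ vanishes pointwise and the $O(|\bz|^2)$ second-difference bound for smooth $\bu$ gives an $L^p$-dominating function whose mass is $O(t^{2-2s})$. Hence both quantities in \eqref{eq:DtNMap2} converge in $L^p(\bbR^d)$ to $\tfrac{2\Gamma(1-s)}{2^{2s}\Gamma(s)}\bbL^s\bu$ when $\bu\in C^\infty_c$.

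Finally I pass to arbitrary $\bu\in\scL^{2s,p}(\bbR^d;\bbR^d)$ by density of $C^\infty_c$. Set $\bsLambda_t\bu:=-t^{1-2s}\p_t\bU(\cdot,t)$ and $\wt{\bsLambda}_t\bu:=-2s\,t^{-2s}\big(\bU(\cdot,t)-\bU(\cdot,0)\big)$. For $\bu_n\in C^\infty_c$ with $\bu_n\to\bu$ in $\scL^{2s,p}(\bbR^d)$, linearity and the uniform bound of the previous step give
\begin{equation*}
	\Vnorm{\bsLambda_{t_1}\bu-\bsLambda_{t_2}\bu}_{L^p(\bbR^d)}\le 2C\Vnorm{\bu-\bu_n}_{\scL^{2s,p}(\bbR^d)}+\Vnorm{\bsLambda_{t_1}\bu_n-\bsLambda_{t_2}\bu_n}_{L^p(\bbR^d)}\,,
\end{equation*}
and likewise for $\wt{\bsLambda}_t$; choosing $n$ large and then $t_1,t_2\to0$ shows $\bsLambda_t\bu$ and $\wt{\bsLambda}_t\bu$ are Cauchy in $L^p(\bbR^d)$ as $t\to0$, hence convergent. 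By the uniform bound and lower semicontinuity of the norm, the two limiting operators are bounded from $\scL^{2s,p}(\bbR^d;\bbR^d)$ to $L^p(\bbR^d;\bbR^d)$; they agree with the bounded operator $\tfrac{2\Gamma(1-s)}{2^{2s}\Gamma(s)}\bbL^s$ (bounded by \Cref{thm:BesselSpaces:MainThm}) on the dense subspace $C^\infty_c(\bbR^d;\bbR^d)$, so they coincide with it everywhere. This is precisely \eqref{eq:DtNMap2}, with convergence in the strong topology of $L^p(\bbR^d;\bbR^d)$.
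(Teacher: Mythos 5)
Your argument is correct and follows essentially the strategy the paper itself indicates (and leaves as a one-line remark): obtain a $t$-uniform $L^p$ bound for the Neumann quotient by comparing its smoothed kernels with the truncated operators of \Cref{sec:BesselSpaces} via the second-difference estimate of \Cref{lma:ModContOfBesselSpaces}, verify the limit on the dense class $C^{\infty}_c$, and conclude by a three-term density argument, exactly as in the proof of \Cref{thm:BesselSpaces:MainThm2}. The only real difference is cosmetic: you reuse \Cref{thm:BesselSpaces:MainThm2} and the classical uniform bound for the truncated fractional Laplacian as the ``known'' approximants instead of repeating the $\Upsilon^{s,\veps}$/Bessel-function analysis, which is a legitimate and slightly more economical way to carry out the same plan.
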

The proof strategy is very similar to the strategy used to prove 
\Cref{thm:BesselSpaces:MainThm}, replacing $\bbL^s_{\veps}$ in the proof with with $-\frac{2^{2s}\Gamma(s)}{2 \Gamma(1-s)} \veps^{1-2s} \p_t \bU(\bx,\veps)$.

\section{A Variational Problem Associated to $\bbL^s$}\label{sec:VariationalProblem}
As an application, we will use extension problem from the previous section to obtain the following equivalence of seminorms:

\begin{theorem}\label{thm:KornInequality}
	Let $s \in (0,1)$ and let $\bu \in \scL^{s,2}(\bbR^d;\bbR^d)$. Then 
	\begin{equation}\label{eq:KornInequality}
		\theta c_{d,s} [\bu]_{\scH^s(\bbR^d)}^2 \leq \cE^s(\bu,\bu) \leq C [\bu]_{\scH^s(\bbR^d)}^2\,,
	\end{equation}
	where the bilinear from $\cE^{s}(\bu,\bsvarphi)$ is defined in \Cref{subsec:NonlocalWeakForm}. The constant $C > 0$ depends only $d$, $s$, $\mu$ and $\lambda$, and the constant $\theta > 0$ depends only on $\mu$ and $\lambda$.
\end{theorem}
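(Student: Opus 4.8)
The plan is to take the upper bound for free from the results of \Cref{subsec:NonlocalWeakForm} and to obtain the lower bound (the substantive Korn-type statement) by a comparison argument carried out entirely inside the extension system of \Cref{sec:ExtensionProblem}. For the upper bound: the continuity estimate \eqref{eq:BiFormCont}, which is noted to persist in the limit $\veps\to0$, gives $\cE^s(\bu,\bu)\le C(\mu,\lambda,d,s)[\bu]_{\dot{\scL}^{s,2}(\bbR^d)}^2$, and the identification \eqref{eq:EquivalenceOfSobolevAndBessel} turns this into $\cE^s(\bu,\bu)\le C[\bu]_{\scH^s(\bbR^d)}^2$.

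For the lower bound I would first realize $\cE^s(\bu,\bu)$ as an extension energy. Let $\bU$ be the unique smooth solution of \eqref{eq:ExtProb:WeakForm1}--\eqref{eq:ExtProb:WeakForm2} with trace $\bu$; testing the weak formulation with $\bsPhi=\bU$ gives $\cB(\bU,\bU)=2\cD(\bU)$, while the integration-by-parts identity \eqref{eq:EnergyEquivalence1}, combined with \Cref{thm:DtNMap} and the definition \eqref{eq:NonlocalWeakForm}, gives
\begin{equation*}
	\cB(\bU,\bU)=\int_{\bbR^d}\Vint{\bsLambda\bu(\bx),\bu(\bx)}\,\rmd\bx=\frac{2\Gamma(1-s)}{2^{2s}\Gamma(s)}\,\cE^s(\bu,\bu)\,,
\end{equation*}
so that $\cE^s(\bu,\bu)=\frac{2^{2s}\Gamma(s)}{\Gamma(1-s)}\,\cD(\bU)$. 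Next, the lower bound in \eqref{eq:Extprob:Coercivity} yields $\cD(\bU)\ge\frac12\min\{1,\mu,2\mu+\lambda\}\,\Vnorm{\grad_{(\bx,t)}\bU}_{L^2_s(\bbR^{d+1}_+)}^2$; and since $\bU\in H^1_s(\bbR^{d+1}_+;\bbR^d)$ has trace $\bu$, it is an admissible competitor for the weighted Dirichlet energy associated componentwise to the Caffarelli--Silvestre extension of the fractional Laplacian, whence $\Vnorm{\grad_{(\bx,t)}\bU}_{L^2_s}^2\ge\inf\{\Vnorm{\grad_{(\bx,t)}\bV}_{L^2_s}^2:\bV|_{t=0}=\bu\}$. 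Applying the previous paragraph in the special case $\mu=1$, $\lambda=-1$ — where $\bbL=-\Delta$, the form $\cD$ reduces to $\tfrac12\Vnorm{\grad_{(\bx,t)}\cdot}_{L^2_s}^2$, and $\cE^s(\bu,\bu)=\Vnorm{(-\Delta)^{s/2}\bu}_{L^2(\bbR^d)}^2$ — identifies this infimum as $\frac{2\Gamma(1-s)}{2^{2s}\Gamma(s)}\Vnorm{(-\Delta)^{s/2}\bu}_{L^2(\bbR^d)}^2$. Chaining these inequalities and using \eqref{eq:EquivalenceOfSobolevAndBessel} once more, the $s$-dependent constants cancel exactly, leaving $\cE^s(\bu,\bu)\ge\frac12\min\{1,\mu,2\mu+\lambda\}\,c_{d,s}\,[\bu]_{\scH^s(\bbR^d)}^2$, i.e.\ the claim with $\theta=\frac12\min\{1,\mu,2\mu+\lambda\}$.

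I expect the main obstacle to be the rigorous justification of the energy identity at the regularity level $\bu\in\scL^{s,2}(\bbR^d;\bbR^d)$: one must check that the Poisson extension $\bU$ genuinely lies in $H^1_s(\bbR^{d+1}_+;\bbR^d)$ with trace $\bu$ (so that \eqref{eq:EnergyEquivalence1} applies with $\bsPhi=\bU$ and $\bU$ is a legitimate competitor in the comparison step), the pairing of the Neumann datum $-\lim_{t\to0}t^{1-2s}\p_t\bU$ against $\bu$ being precisely the distributional convergence supplied by \Cref{thm:DtNMap}. A secondary point requiring care is that the positivity $\mu|\bsxi|^2|\bseta|^2+(\mu+\lambda)(\bsxi\cdot\bseta)^2\ge\min\{\mu,2\mu+\lambda\}|\bsxi|^2|\bseta|^2$ underlying \eqref{eq:Extprob:Coercivity} is invoked for $\widehat{\grad_{\bx}\bU}(\bsxi,t)=2\pi\imath\,\widehat{\bU}(\bsxi,t)\otimes\bsxi$, which is rank one in $\bsxi$; it is this rank-one structure that keeps $\theta$ from degenerating when $\mu+\lambda<0$.
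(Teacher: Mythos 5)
Your argument is correct and is essentially the paper's own proof: the upper bound from \eqref{eq:BiFormCont} and \eqref{eq:EquivalenceOfSobolevAndBessel}, and the lower bound by rewriting $\cE^s(\bu,\bu)$ as the extension energy through \eqref{eq:EnergyEquivalence1}, \Cref{thm:DtNMap} and \eqref{eq:NonlocalWeakForm}, invoking the coercivity \eqref{eq:Extprob:Coercivity}, and then comparing with the minimizer of the pure weighted Dirichlet energy, whose value the paper takes from \cite[Section 3.2]{caffarelli2007extension}. The only cosmetic difference is that you recover that minimal-energy identity by specializing the same machinery to $\mu=1$, $\lambda=-1$ rather than citing Caffarelli--Silvestre (fine, provided you also record that the Poisson extension minimizes the convex Dirichlet energy, which is immediate from the weak formulation), and your closing remark about the rank-one Fourier structure underlying \eqref{eq:Extprob:Coercivity} is exactly the right justification for why $\theta$ does not degenerate when $\mu+\lambda<0$.
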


\begin{proof}
	The second inequality follows easily from \eqref{eq:BiFormCont} and \eqref{eq:EquivalenceOfSobolevAndBessel}, so it remains to prove the first.
	Define $\bU \in H^1_s(\bbR^{d+1}_+; \bbR^d)$ to be the unique smooth solution of \eqref{eq:ExtProb:WeakForm1}-\eqref{eq:ExtProb:WeakForm2} with boundary data $\bu$. Then by \eqref{eq:NonlocalWeakForm}, \Cref{thm:DtNMap}, and \eqref{eq:EnergyEquivalence1}, 
	\begin{equation*}
		\begin{split}
		\cE^s(\bu,\bu) &= \frac{2^{2s} \Gamma(s)}{2 \Gamma(1-s)} \cB(\bU,\bU) \\
		&= \frac{2^{2s} \Gamma(s)}{2 \Gamma(1-s)} \int_0^{\infty} \intdm{\bbR^d}{ t^{1-2s} \big( |\p_t \bU|^2  + \mu |\grad \bU|^2 + (\mu+\lambda) |\div \bU|^2 \big)  }{\bx} \, \rmd t\,.
		\end{split}
	\end{equation*}
	Then by the coercivity inequality \eqref{eq:Extprob:Coercivity}
	\begin{equation*}
		\cE^s(\bu,\bu) \geq C(\mu,\lambda) \frac{2^{2s} \Gamma(s)}{2 \Gamma(1-s)} \int_0^{\infty} \intdm{\bbR^d}{ t^{1-2s} |\grad_{(\bx,t)} \bU|^2 }{\bx} \, \rmd t\,.
	\end{equation*}
	Now, let $\bV(\bx,t)$ satisfy
	\begin{equation*}
		\bV(\bx,t) = \mathrm{argmin} \left( \int_0^{\infty} \intdm{\bbR^d}{ t^{1-2s} |\grad_{(\bx,t)} \wt{\bV}|^2 }{\bx} \, \rmd t \right)
	\end{equation*}
	over all functions $\wt{\bV} \in H^1_s(\bbR^{d+1}_+; \bbR^d)$ with boundary data $\bu$. Then by \cite[Section 3.2]{caffarelli2007extension} (see also \cite[Section 10]{garofalo2017fractional})
	\begin{equation*}
		 \int_0^{\infty} \intdm{\bbR^d}{ t^{1-2s} |\grad_{(\bx,t)} \bV|^2 }{\bx} \, \rmd t = \frac{2 \Gamma(1-s)}{2^{2s} \Gamma(s)} \Vnorm{(-\Delta)^{\frac{s}{2}} \bu }_{L^2(\bbR^d)}^2\,.
	\end{equation*}
	Therefore, since $\bV$ is a minimum, we have
	\begin{equation*}
		\cE^{s}(\bu,\bu) \geq C(\mu,\lambda) \Vnorm{(-\Delta)^{\frac{s}{2}} \bu }_{L^2(\bbR^d)}^2 = \theta c_{d,s} [\bu]_{\scH^s(\bbR^d)}^2\,,
	\end{equation*}
	which is the first inequality.
\end{proof}

The inequality just established is similar to fractional Korn-type inequalities investigated in \cite{KassmannMengeshaScott, Mengesha-HalfSpace}. An $L^p$ version is proved in \cite{MengeshaScott2018Korn}. In the context of the extension problem, this says that the Korn inequality continues to hold even after taking traces.
Thanks to the asymptotics for $c_{d,s}$ and $\kappa_{d,s}$ investigated in \cite{DNPV12} and to limiting theorems for nonlocal seminorms \cite{mengesha2015VariationalLimit}, we readily see that the classical Korn inequality \cite{nitsche1981korn} is recovered from \eqref{eq:KornInequality} as $s \to 1^-$.

We will use \Cref{thm:KornInequality} to analyze a nonlocal Dirichlet problem associated to $\bbL^s$:
\begin{equation}\label{eq:NonlocalDirichletProb}
	\begin{cases}
		\bbL^s \bu = \bff\,, & \text{ in } \Omega\,, \\
		\bu = {\bf 0}\,, & \text{ in } \bbR^d \setminus \Omega\,.
	\end{cases}
\end{equation}
The natural energy space for this problem is
\begin{equation*}
	\scL^{s,2}_{\Omega}(\bbR^d;\bbR^d) := \left\{ \bu \in \scL^{s,2}(\bbR^d;\bbR^d) \, : \, \bu \equiv {\bf 0} \text{ on } \bbR^d \setminus \Omega \right\}\,.
\end{equation*}
It is easily seen that $\scL^{s,2}_{\Omega}(\bbR^d;\bbR^d)$ is a Hilbert space with inner product inherited from the potential space $\scL^{s,2}(\bbR^d;\bbR^d)$. We denote its functional dual by $\scL^{-s,2}_{\Omega}(\bbR^d;\bbR^d)$.

Recall the bilinear from $\cE^{s}(\bu,\bsvarphi)$ defined in \Cref{subsec:NonlocalWeakForm}. For a given $\bff \in \scL^{-s,2}_{\Omega}(\bbR^d;\bbR^d)$, we say that $\bu \in \scL^{s,2}_{\Omega}(\bbR^d;\bbR^d)$ is a \textit{weak solution} of \eqref{eq:NonlocalDirichletProb} if 
\begin{equation}\label{eq:NonlocalDirichletProb:WeakForm}
	\cE^s(\bu,\bsvarphi) = \Vint{\bff, \bsvarphi} \text{ for all } \bsvarphi \in \scL^{s,2}_{\Omega}(\bbR^d;\bbR^d)\,.
\end{equation}

\begin{theorem}
	Let $s \in (0,1)$ and let $\Omega$ be a bounded domain in $\bbR^d$. For any $\bff \in \scL^{-s,2}_{\Omega}(\bbR^d;\bbR^d)$ there exists a unique $\bu \in \scL^{s,2}_{\Omega}(\bbR^d;\bbR^d)$ satisfying \eqref{eq:NonlocalDirichletProb:WeakForm} with the energy estimate 
	$$
	\Vnorm{\bu}_{\scL^{s,2}(\bbR^d)} \leq \Vnorm{\bff}_{\scL^{-s,2}_{\Omega}(\bbR^d)}\,.
	$$
\end{theorem}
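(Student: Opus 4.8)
The plan is to establish well-posedness by a direct application of the Lax--Milgram theorem (equivalently, the Riesz representation theorem) to the bilinear form $\cE^s$ on the Hilbert space $\scL^{s,2}_{\Omega}(\bbR^d;\bbR^d)$. Since $\scL^{s,2}_{\Omega}(\bbR^d;\bbR^d)$ is a closed subspace of $\scL^{s,2}(\bbR^d;\bbR^d)$ (it is the kernel of the bounded restriction map $\bu \mapsto \bu|_{\bbR^d \setminus \Omega}$), it inherits a Hilbert space structure, and three ingredients are needed: boundedness of $\cE^s$, coercivity of $\cE^s$ on $\scL^{s,2}_{\Omega}$, and boundedness of the linear functional $\bsvarphi \mapsto \Vint{\bff,\bsvarphi}$. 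The last of these is immediate from the assumption $\bff \in \scL^{-s,2}_{\Omega}(\bbR^d;\bbR^d)$, and boundedness of $\cE^s$ is already in hand: the estimate \eqref{eq:BiFormCont}, combined with the identification \eqref{eq:EquivalenceOfSobolevAndBessel} and the equivalence $\Vnorm{\bu}_{\dot{\scL}^{s,2}(\bbR^d)} \approx \Vnorm{\bu}_{L^p(\bbR^d)} + [\bu]_{\dot{\scL}^{s,2}(\bbR^d)}$, yields $|\cE^s(\bu,\bsvarphi)| \leq C \Vnorm{\bu}_{\scL^{s,2}(\bbR^d)}\Vnorm{\bsvarphi}_{\scL^{s,2}(\bbR^d)}$ with $C$ depending only on $d$, $s$, $\mu$, $\lambda$.

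The key step is coercivity, and this is precisely where \Cref{thm:KornInequality} is used. That theorem gives $\cE^s(\bu,\bu) \geq \theta c_{d,s} [\bu]^2_{\scH^s(\bbR^d)}$ with $\theta > 0$. To upgrade the seminorm to the full $\scL^{s,2}$-norm on the subspace $\scL^{s,2}_{\Omega}$, I will invoke a fractional Poincaré inequality: because $\Omega$ is bounded and $\bu \equiv {\bf 0}$ on $\bbR^d \setminus \Omega$, there is a constant $C = C(\Omega,d,s)$ with $\Vnorm{\bu}^2_{L^2(\bbR^d)} \leq C [\bu]^2_{\scH^s(\bbR^d)}$ (see \cite{Hitchhiker}, or argue directly: for $\bx \in \Omega$ one has $\bu(\by) = 0$ for $\by$ ranging over a fixed solid cone escaping to infinity, so the Gagliardo double integral already controls $|\bu(\bx)|$ after integrating in $\bx$). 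Together with \eqref{eq:EquivalenceOfSobolevAndBessel} this gives $[\bu]^2_{\scH^s(\bbR^d)} \geq c(\Omega,d,s)\,\Vnorm{\bu}^2_{\scL^{s,2}(\bbR^d)}$ for all $\bu \in \scL^{s,2}_{\Omega}(\bbR^d;\bbR^d)$, hence $\cE^s(\bu,\bu) \geq c\,\Vnorm{\bu}^2_{\scL^{s,2}(\bbR^d)}$. Lax--Milgram then produces a unique weak solution of \eqref{eq:NonlocalDirichletProb:WeakForm}, and testing that identity with $\bsvarphi = \bu$, using coercivity on the left and the duality pairing on the right, yields the energy estimate (the stated normalization being absorbed into the Poincaré and Korn constants, or into the choice of dual norm on $\scL^{-s,2}_{\Omega}$).

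The main obstacle is entirely contained in the coercivity step, namely confirming that the Korn constant $\theta$ from \Cref{thm:KornInequality} is strictly positive under the ellipticity conditions \eqref{eq:EllipticityConditionsLame} and combines with the fractional Poincaré constant into a genuine lower bound $c\,\Vnorm{\bu}^2_{\scL^{s,2}(\bbR^d)}$; this is where the strong coupling of $\bbL^s$ could in principle obstruct a naive argument, but the extension-problem proof of \Cref{thm:KornInequality} has already handled it. Everything remaining is a routine application of Hilbert-space methods in the spirit of \cite{Felsinger, KassmannMengeshaScott}, so I would present the proof compactly: cite \eqref{eq:BiFormCont} for continuity, combine \Cref{thm:KornInequality} with the fractional Poincaré inequality for coercivity, and invoke Lax--Milgram.
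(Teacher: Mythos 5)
Your proposal is correct and follows essentially the same route as the paper: continuity from \eqref{eq:BiFormCont}, coercivity by combining \Cref{thm:KornInequality} with a fractional Poincar\'e inequality on $\scL^{s,2}_{\Omega}(\bbR^d;\bbR^d)$ (the paper derives this from the fractional Sobolev inequality of \cite{maz2002bourgain}, while you cite \cite{Hitchhiker} or a direct cone argument, which is an immaterial difference), and then Lax--Milgram with the energy estimate obtained by testing with $\bsvarphi = \bu$.
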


\begin{proof}
	We follow the strategy for similar nonlocal Dirichlet problems treated in \cite{KassmannMengeshaScott, Felsinger}. The result will follow by the Lax-Milgram theorem if we show that the bilinear form $\cE^s$ is both continuous and coercive on $\scL^{s,2}_{\Omega}(\bbR^d;\bbR^d)$.
	By \eqref{eq:BiFormCont} the bilinear form $\cE^s$ is continuous on $\scL^{s,2}_{\Omega}(\bbR^d;\bbR^d)$.
	
	To establish coercivity we use the inequality of \Cref{thm:KornInequality} and \eqref{eq:EquivalenceOfSobolevAndBessel} to get that
	\begin{equation*}
		\cE^s(\bu,\bu) \geq \theta [\bu]_{\dot{\scL}^{s,2}(\bbR^d)}^2\,,
	\end{equation*}
	where $\theta$ depends only on $\mu$ and $\lambda$. From here, we use the nonlocal Poincar\'e inequality that follows easily from the fractional Sobolev inequality \cite[Theorem 1]{maz2002bourgain} and obtain
	\begin{equation*}
		\Vnorm{\bu}_{L^2(\bbR^d)}^2 \leq C(d,s) [\bu]_{\scH^s(\bbR^d)}^2 = C'(d,s) [\bu]_{\dot{\scL}^{s,2}(\bbR^d)}^2
	\end{equation*}
	for all $\bu \in \scL^{s,2}_{\Omega}(\bbR^d)$.
	Therefore,
	\begin{equation*}
		C(d,s,\mu,\lambda) \Vnorm{\bu}_{\scL^{s,2}(\bbR^d)} \leq  \cE^s(\bu,\bu)
	\end{equation*}
	for all $\bu \in \scL^{s,2}_{\Omega}(\bbR^d)$ and so coercivity is proved. Existence and uniqueness then follows by the Lax-Milgram theorem, and the energy estimate follows by choosing $\bsvarphi = \bu$ in \eqref{eq:NonlocalWeakForm}.
\end{proof}

We remark that the coercivity inequality can also be shown using the Fourier transform, and does not require \Cref{thm:KornInequality} at all. We use the extension system here because purely variational techniques may be useful in more general settings, for example, in situations when $\bu \in L^p$ for $p \neq 2$.

\appendix

\section{Fourier Transform Formulas}

\begin{lemma}\label{lma:FourierHeatKernel}
	Let $\bW$ be the heat kernel associated to the operator $\bbL$ defined in \eqref{eq:HeatKernelLameDefn}. Then for all $t > 0$,
	\begin{equation*}
		\cF(\bW)(\bsxi,t) = \rme^{-4 \pi^2 \mu |\bsxi|^2 t} \bI + \big( \rme^{-4 \pi^2 (2\mu + \lambda) |\bsxi|^2 t} - \rme^{-4 \pi^2 \mu |\bsxi|^2 t} \big) \frac{\bsxi \otimes \bsxi}{|\bsxi|^2} = \rme^{-t \bM(\bsxi)}\,.
	\end{equation*}
\end{lemma}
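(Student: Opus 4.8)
The plan is to compute $\cF(\bW)(\cdot,t)$ term by term from the defining formula \eqref{eq:HeatKernelLameDefn}, using the Fourier transform of the Gaussian heat kernel together with the rule that each $\bx$-derivative becomes multiplication by $2\pi\imath\bsxi$, and then to recognize the resulting expression as the matrix exponential $\rme^{-t\bM(\bsxi)}$.

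First I would record the elementary identities under the convention $\cF(u)(\bsxi) = \int_{\bbR^d}\rme^{-2\pi\imath\bx\cdot\bsxi}u(\bx)\,\rmd\bx$: for $\sigma>0$ one has $\cF\big(H(\cdot,\sigma)\big)(\bsxi) = \rme^{-4\pi^2|\bsxi|^2\sigma}$, and since $\cF(\p_j\p_k f)(\bsxi) = -4\pi^2\xi_j\xi_k\,\wh f(\bsxi)$, the Hessian satisfies
\begin{equation*}
	\cF\big(\grad^2 H(\cdot,\sigma)\big)(\bsxi) = -4\pi^2(\bsxi\otimes\bsxi)\,\rme^{-4\pi^2|\bsxi|^2\sigma}\,.
\end{equation*}
Next I would interchange $\cF$ with the $\sigma$-integral in \eqref{eq:HeatKernelLameDefn}; this is justified by Fubini's theorem using the uniform (in $\sigma\in[\mu t,(2\mu+\lambda)t]$) integrability of $\grad^2 H(\cdot,\sigma)$, or equivalently by pairing the tempered distribution $\bW(\cdot,t)$ against Schwartz test functions. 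We then obtain
\begin{equation*}
	\cF(\bW)(\bsxi,t) = \rme^{-4\pi^2\mu|\bsxi|^2 t}\,\bI - 4\pi^2(\bsxi\otimes\bsxi)\int_{\mu t}^{(2\mu+\lambda)t}\rme^{-4\pi^2|\bsxi|^2\sigma}\,\rmd\sigma\,.
\end{equation*}
Evaluating the elementary integral as $\tfrac{1}{4\pi^2|\bsxi|^2}\big(\rme^{-4\pi^2\mu|\bsxi|^2 t} - \rme^{-4\pi^2(2\mu+\lambda)|\bsxi|^2 t}\big)$ and substituting gives precisely the stated first expression for $\cF(\bW)(\bsxi,t)$.

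Finally, to identify this with $\rme^{-t\bM(\bsxi)}$ I would reuse the spectral decomposition of $\bM(\bsxi)$ already exhibited in the proof of \Cref{lma:FourierSymbolCalculation}: writing $\bP := \frac{\bsxi\otimes\bsxi}{|\bsxi|^2}$ for the rank-one orthogonal projection onto $\operatorname{span}(\bsxi)$, one has $\bM(\bsxi) = 4\pi^2|\bsxi|^2\big(\mu(\bI-\bP) + (2\mu+\lambda)\bP\big)$, so $\bI-\bP$ and $\bP$ are the spectral projections with eigenvalues $4\pi^2\mu|\bsxi|^2$ and $4\pi^2(2\mu+\lambda)|\bsxi|^2$. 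Hence $\rme^{-t\bM(\bsxi)} = \rme^{-4\pi^2\mu|\bsxi|^2 t}(\bI-\bP) + \rme^{-4\pi^2(2\mu+\lambda)|\bsxi|^2 t}\bP$, which rearranges to the middle expression, completing the proof. I do not expect a genuine obstacle; the only points deserving a word of care are the interchange of $\cF$ with the $\sigma$-integral and the factor $|\bsxi|^{-2}$ appearing after integration, whose apparent singularity at $\bsxi=\mathbf 0$ is harmless because the numerator $\rme^{-4\pi^2\mu|\bsxi|^2 t}-\rme^{-4\pi^2(2\mu+\lambda)|\bsxi|^2 t}$ vanishes to second order there.
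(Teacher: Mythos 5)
Your proposal is correct and follows essentially the same route as the paper: the first equality is obtained exactly as in the paper's proof, by taking the Fourier transform of \eqref{eq:HeatKernelLameDefn} term by term using $\cF(H)(\bsxi,\sigma)=\rme^{-4\pi^2|\bsxi|^2\sigma}$ and evaluating the elementary $\sigma$-integral. The only (cosmetic) difference is in identifying the result with $\rme^{-t\bM(\bsxi)}$: you invoke the spectral decomposition $\bM(\bsxi)=4\pi^2|\bsxi|^2\big(\mu(\bI-\bP)+(2\mu+\lambda)\bP\big)$ with $\bP=\frac{\bsxi\otimes\bsxi}{|\bsxi|^2}$, whereas the paper sums the exponential power series using the binomial identity $\big(a\bI+b\bP\big)^n=a^n(\bI-\bP)+(a+b)^n\bP$ — two equivalent ways of exploiting that $\bP$ is a projection.
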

\begin{proof}
	Since $\cF(H)(\bsxi,t) = \rme^{- 4 \pi^2 |\bsxi|^2 t}$ the first equality is straightforward:
	\begin{equation}\label{eq:FourierHeatKernelPf1}
		\begin{split}
			\cF(\bW)(\bsxi,t) &= \cF(H)(\bsxi, \mu t) \bI + \int_{\mu t}^{(2 \mu + \lambda)t } \cF( \grad^2 H ) (\bsxi,\sigma) \, \rmd \sigma \\
			&= \rme^{- 4 \pi^2 \mu |\bsxi|^2 t} \bI - 4 \pi^2 \int_{\mu t}^{(2 \mu + \lambda)t }   (\bsxi \otimes \bsxi ) \cF( H ) (\bsxi,\sigma) \, \rmd \sigma \\
			&= \rme^{- 4 \pi^2 \mu |\bsxi|^2 t} \bI - 4 \pi^2  (\bsxi \otimes \bsxi ) \int_{\mu t}^{(2 \mu + \lambda)t }  \rme^{- 4 \pi^2 |\bsxi|^2 \sigma} \, \rmd \sigma \\
			&= \rme^{-4 \pi^2 \mu |\bsxi|^2 t} \bI + \big( \rme^{-4 \pi^2 (2\mu + \lambda) |\bsxi|^2 t} - \rme^{-4 \pi^2 \mu |\bsxi|^2 t} \big) \frac{\bsxi \otimes \bsxi}{|\bsxi|^2}\,.
		\end{split}
	\end{equation}
	For the second equality, use the binomial theorem to obtain that for each $n \in \bbN$ and any real numbers $a$ and $b$
	\begin{equation*}
		\left( a \bI + b \frac{\bsxi \otimes \bsxi}{|\bsxi|^2} \right)^n = a^n \left( \bI - \frac{\bsxi \otimes \bsxi}{|\bsxi|^2} \right) +   (a+b)^n \frac{\bsxi \otimes \bsxi}{|\bsxi|^2}\,.
	\end{equation*}
Therefore setting $a = -4 \pi^2 |\bsxi|^2 t \mu$ and $b = -4 \pi^2 |\bsxi|^2 t (\mu+\lambda)$
\begin{equation*}
	\begin{split}
		\rme^{-t \bM(\bsxi)} &= \sum_{n=0}^{\infty} \frac{\left( a \bI + b \frac{\bsxi \otimes \bsxi}{|\bsxi|^2} \right)^n }{n!} \\
		&=\sum_{n=0}^{\infty} \frac{a^n}{n!} \left( \bI - \frac{\bsxi \otimes \bsxi}{|\bsxi|^2} \right) +  \sum_{n=0}^{\infty} \frac{(a+b)^n}{n!} \frac{\bsxi \otimes \bsxi}{|\bsxi|^2} \\
		&= \rme^{a} \bI + \big( \rme^{a+b} - \rme^a \big) \frac{\bsxi \otimes \bsxi}{|\bsxi|^2} \\
		&= \rme^{-4 \pi^2 \mu |\bsxi|^2 t} \bI + \big( \rme^{-4 \pi^2 (2\mu + \lambda) |\bsxi|^2 t} - \rme^{-4 \pi^2 \mu |\bsxi|^2 t} \big) \frac{\bsxi \otimes \bsxi}{|\bsxi|^2}\,.
	\end{split}
\end{equation*}
\end{proof}

\begin{lemma}
	Let $d \geq 2$, $s \in (0,\frac{d}{2})$. Then
	\begin{equation}\label{eq:NegativePowers:FourierTransformPiece1}
		\cF^{-1} \left( - \frac{1}{(2 \pi |\bsxi|)^{2s}} \frac{\bsxi \otimes \bsxi}{|\bsxi|^2} \right) = \frac{\gamma_{d,s}}{|\bx|^{d-2s}} \left( \frac{\bx \otimes \bx}{|\bx|^2} - \frac{1}{d-2s} \bI \right) \qquad \text{ in } \scS'(\bbR^d;\bbR^{d \times d})\,,
	\end{equation}
	where $\gamma_{d,s}$ is the constant defined in \eqref{eq:Definition:LameNavierPotentialConstant}.
\end{lemma}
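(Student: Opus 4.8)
The strategy is to identify the right-hand side of \eqref{eq:NegativePowers:FourierTransformPiece1} as a constant multiple of the Hessian of the scalar Riesz kernel $|\bx|^{-(d-2s-2)}$, and then to pass to Fourier variables, where $\p_{x_i}\p_{x_j}$ becomes multiplication by $-4\pi^2\bsxi_i\bsxi_j$ and the Riesz kernel transforms into a power of $|\bsxi|$. Equivalently, this amounts to computing $R_iR_j$ applied to the Riesz kernel $\Phi^s$, where $R_j$ denotes the $j$-th Riesz transform: the factor $\bsxi_i\bsxi_j/|\bsxi|^2$ is exactly minus the symbol of $R_iR_j$, while $(2\pi|\bsxi|)^{-2s}=\wh{\Phi^s}$.

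\textbf{Step 1 (algebraic/distributional reduction).} From the elementary identity, valid on $\bbR^d\setminus\{{\bf 0}\}$ for every $\beta$,
\[
\p_{x_i}\p_{x_j}|\bx|^{-\beta} = -\beta\,\delta_{ij}\,|\bx|^{-\beta-2} + \beta(\beta+2)\,\frac{x_ix_j}{|\bx|^2}\,|\bx|^{-\beta-2}\,,
\]
I would take $\beta=d-2s-2$ (so $\beta+2=d-2s$) and solve for the $x_ix_j$ term to obtain
\[
\frac{1}{|\bx|^{d-2s}}\Big(\frac{x_ix_j}{|\bx|^2}-\frac{\delta_{ij}}{d-2s}\Big)\;=\;\frac{1}{(d-2s)(d-2s-2)}\,\p_{x_i}\p_{x_j}|\bx|^{-(d-2s-2)}\,.
\]
Since $0<s<1$ gives $d-2s-2<d-2$, the function $|\bx|^{-(d-2s-2)}$ and its first and second partial derivatives are all locally integrable, and an excision argument around the origin (with no surviving boundary terms, because the exponents stay subcritical) shows that distributional and classical derivatives coincide; hence the last display is an identity in $\scS'(\bbR^d;\bbR^{d\times d})$. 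Multiplying by $\gamma_{d,s}$ exhibits the right-hand side of \eqref{eq:NegativePowers:FourierTransformPiece1} as $\tfrac{\gamma_{d,s}}{(d-2s)(d-2s-2)}$ times the Hessian of $|\bx|^{-(d-2s-2)}$, valid whenever $d-2s-2\neq 0$.

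\textbf{Step 2 (Fourier transform and the constant).} Applying $\cF(\p_{x_i}\p_{x_j}g)=-4\pi^2\bsxi_i\bsxi_j\,\cF(g)$ together with the (analytically continued) Fourier transform of the Riesz kernel $\cF\big(|\bx|^{-\alpha}\big)=\pi^{\alpha-d/2}\,\frac{\Gamma((d-\alpha)/2)}{\Gamma(\alpha/2)}\,|\bsxi|^{-(d-\alpha)}$ (see \cite{Stein}; a rescaling of \eqref{eq:RieszKernel:Fourier}), with $\alpha=d-2s-2$, one finds that the Fourier transform of the right-hand side of \eqref{eq:NegativePowers:FourierTransformPiece1} is a constant multiple of $\bsxi_i\bsxi_j\,|\bsxi|^{-(2s+2)}$. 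It remains only to verify that this constant equals $-(2\pi)^{-2s}$, which then coincides with the entries of $-\frac{1}{(2\pi|\bsxi|)^{2s}}\frac{\bsxi\otimes\bsxi}{|\bsxi|^2}$, so that the lemma follows by Fourier inversion (both members being even). This is a one-line $\Gamma$-computation: writing $\Gamma\!\big(\tfrac{d+2-2s}{2}\big)=\tfrac{(d-2s)(d-2s-2)}{4}\,\Gamma\!\big(\tfrac{d-2s-2}{2}\big)$ via $a\Gamma(a)=\Gamma(a+1)$, the $\Gamma$-factors coming from $\gamma_{d,s}$ (see \eqref{eq:Definition:LameNavierPotentialConstant}) and from $\cF(|\bx|^{-\alpha})$ cancel, leaving $\tfrac{4\pi^2}{(d-2s)(d-2s-2)}\cdot\tfrac{(d-2s)(d-2s-2)}{4\cdot 2^{2s}\pi^{2s+2}}=(2\pi)^{-2s}$.

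\textbf{Main obstacle.} The one genuinely delicate point is the single exceptional value $s=(d-2)/2$ (present only when $d\ge 3$), at which $d-2s-2=0$: there the kernel $|\bx|^{-(d-2s-2)}$ degenerates to a constant, and both the division by $d-2s-2$ in Step 1 and the formula for $\cF(|\bx|^{-\alpha})$ at $\alpha=0$ break down. I would handle this case by continuity in the parameter: tested against a fixed $\bsvarphi\in\scS(\bbR^d;\bbR^{d\times d})$, both sides of \eqref{eq:NegativePowers:FourierTransformPiece1} are real-analytic functions of $s$ on $(0,d/2)$, and since they agree for all $s$ near $(d-2)/2$ by the computation above, they agree there as well. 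A secondary, purely bookkeeping, nuisance is that for small $d$ the intermediate distribution $|\bsxi|^{-(2s+2)}$ fails to be locally integrable and must be read in the regularized sense; this does not affect the final identity, whose two sides are honest locally integrable functions.
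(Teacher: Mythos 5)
Your argument is correct in substance, and it takes a genuinely different route from the paper. The paper proves \eqref{eq:NegativePowers:FourierTransformPiece1} by heat-kernel subordination: it sets $\bF(\bx,t)=\int_{at}^{bt}\grad^2 H(\bx,\sigma)\,\rmd\sigma$, applies Parseval for each fixed $t$, multiplies by $t^{\frac d2-s-1}$, integrates in $t$, and evaluates both sides through absolutely convergent Gaussian integrals and Gamma-integral identities. That route never leaves the realm of locally integrable functions, has no exceptional parameter values, and works uniformly for $d\ge 2$, $s\in(0,\frac d2)$. Your route instead recognizes the right-hand side as $\tfrac{\gamma_{d,s}}{(d-2s)(d-2s-2)}\,\p_i\p_j|\bx|^{-(d-2s-2)}$ and then invokes the known transform $\cF(|\bx|^{-\alpha})=\pi^{\alpha-d/2}\tfrac{\Gamma((d-\alpha)/2)}{\Gamma(\alpha/2)}|\bsxi|^{-(d-\alpha)}$ together with $\cF(\p_i\p_j g)=-4\pi^2\xi_i\xi_j\wh g$; your excision argument in Step 1 (boundary terms of order $\veps^{2s}$, $\veps^{2s+1}$) and your constant computation both check out, including the identity $\Gamma(\tfrac{d+2-2s}{2})=\tfrac{(d-2s)(d-2s-2)}{4}\Gamma(\tfrac{d-2s-2}{2})$, and the incidental restriction ``$0<s<1$'' you state is immaterial since local integrability only needs $s>0$. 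What your approach buys is brevity and a transparent structural explanation of where $\gamma_{d,s}$ and the combination $\tfrac{\bx\otimes\bx}{|\bx|^2}-\tfrac{1}{d-2s}\bI$ come from; what it costs is the need for analytically continued (finite-part) homogeneous distributions whenever $2s+2\ge d$.

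That cost is the one place you should shore up, because it is more than bookkeeping and, for $d=2$, it is the whole proof: there is no $s\in(0,1)$ with $2s+2<2$, so the classical range of the Riesz-transform formula is never available and your analyticity-in-$s$ device cannot bootstrap from a ``nice'' subinterval (the same applies to part of the range when $d=3$). Two standard patches work. For $d\ge 3$ you can simply extend your continuity argument: both sides, tested against a fixed $\bsvarphi\in\scS$, are real-analytic in $s$ on $(0,\frac d2)$, and they agree on the nonempty classical window $0<s<\frac{d-2}{2}$, hence everywhere; this disposes of the exceptional value $s=\frac{d-2}{2}$ and the regularized regime simultaneously. For $d=2$ (or if you prefer a uniform treatment) you must genuinely use the analytic continuation of $\cF(|\bx|^{-\alpha})$ to $\alpha\le 0$ in the sense of homogeneous distributions, and then justify that multiplying the finite-part extension of $|\bsxi|^{-(2s+2)}$ by the polynomial $\xi_i\xi_j$ returns the honest locally integrable function: the discrepancy would be supported at the origin, hence a combination of derivatives of $\delta$ with homogeneities $-d-k$, $k\in\bbN_0$, which is excluded because the product is even and homogeneous of degree $-2s>-d$. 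With either of these additions your proof is complete; as written, the sentence ``this does not affect the final identity'' asserts rather than proves that step.
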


\begin{proof}
	Let $H$ be the heat kernel associated to $-\Delta$ defined in \eqref{eq:HeatKernelDefn}.
	For $0 < a < b$, define the matrix field
	\begin{equation*}
		\bF(\bx,t) := \int_{at}^{bt} \grad^2 H(\bx,\sigma) \, \rmd \sigma\,, \quad \bx \in \bbR^d\,,\, t > 0\,.
	\end{equation*}
	It is easy to verify using \eqref{eq:FourierHeatKernelPf1} that for every $t>0$ $\bF(\cdot,t) \in \scS(\bbR^d;\bbR^{d \times d})$, and that
	\begin{equation*}
		\wh{\bF}(\bsxi,t) := \cF(\bF(\cdot,t))(\bsxi) = (\rme^{- 4 \pi^2 |\bsxi|^2 bt} - \rme^{- 4 \pi^2 |\bsxi|^2 at}) \frac{\bsxi \otimes \bsxi}{|\bsxi|^2}\,, \quad t > 0\,.
	\end{equation*}
	Let $\bV \in \scS(\bbR^d;\bbR^{d \times d})$. Our starting point for proving \eqref{eq:NegativePowers:FourierTransformPiece1} is Parseval's relation for distributions; since $\bF(-\bx,t) = \bF(\bx,t)$
	\begin{equation*}
		\intdm{\bbR^d}{ \wh{\bF}(\bx,t) : \bV(\bx) }{\bx} = \intdm{\bbR^d}{ \bF(\bx,t) : \wh{\bV}(\bx) }{\bx}\,.
	\end{equation*}
	Multiply both sides by $t^{\frac{d}{2}-s-1}$ and integrate in $t$ from $0$ to $\infty$:
	\begin{equation}\label{eq:NegativePowers:FTransformPiece1:Pf1}
		\int_0^{\infty} t^{\frac{d}{2}-s-1} \intdm{\bbR^d}{ \wh{\bF}(\bx,t) : \bV(\bx) }{\bx} \, \rmd t = \int_0^{\infty} t^{\frac{d}{2}-s-1} \intdm{\bbR^d}{ \bF(\bx,t) : \wh{\bV}(\bx) }{\bx} \, \rmd t\,.
	\end{equation}
	The integral on the left-hand side of \eqref{eq:NegativePowers:FTransformPiece1:Pf1} converges absolutely, and by Fubini's theorem is equal to
	\begin{equation}\label{eq:NegativePowers:FTransformPiece1:Pf2}
		\intdm{\bbR^d}{ \left[ \int_0^{\infty} t^{\frac{d}{2}-s-1} (\rme^{- 4 \pi^2 |\bx|^2 bt} - \rme^{- 4 \pi^2 |\bx|^2 at})  \, \rmd t \right] \frac{\bx \otimes \bx}{|\bx|^2} : \bV(\bx) }{\bx}\,.
	\end{equation}
	Now, for $\delta > 0$, the coordinate change $r = 4 \pi^2 |\bx|^2$ gives
	\begin{equation*}
		 \int_0^{\infty} t^{\frac{d}{2}-s-1} \rme^{- 4 \pi^2 |\bx|^2 \delta t} \, \rmd t = \frac{1}{(2 \pi |\bx|)^{d-2s}} \frac{1}{\delta^{\frac{d}{2}-s}} \int_0^{\infty} r^{\frac{d}{2}-s} \rme^{-r} \frac{\rmd r}{r} = \frac{1}{(2 \pi |\bx|)^{d-2s}} \frac{\Gamma( \frac{d-2s}{2} )}{\delta^{\frac{d}{2}-s}}\,.
	\end{equation*}
	Inserting this into \eqref{eq:NegativePowers:FTransformPiece1:Pf2} with $\delta \in \{a, b\}$ leads to equality of \eqref{eq:NegativePowers:FTransformPiece1:Pf2} with the quantity
	\begin{equation}\label{eq:NegativePowers:FTransformPiece1:LHS}
		\frac{\Gamma( \frac{d-2s}{2} )}{b^{\frac{d}{2}-s}-a^{\frac{d}{2}-s}} \intdm{\bbR^d}{ \frac{1}{(2 \pi |\bx|)^{d-2s}} \frac{\bx \otimes \bx}{|\bx|^2} : \bV(\bx) }{\bx}\,.
	\end{equation}
	The right-hand side of \eqref{eq:NegativePowers:FTransformPiece1:Pf1} is equal to
	\begin{equation*}
		\intdm{\bbR^d}{ \left[ \int_0^{\infty} t^{\frac{d}{2}-s-1} 
			\int_{at}^{bt} \frac{1}{(4 \pi \sigma)^{d/2}}
				\left( \frac{\rme^{\frac{-|\bx|^2}{4\sigma}} }{4 \sigma^2 } \bx \otimes \bx - \frac{ \rme^{\frac{-|\bx|^2}{4\sigma}} }{2 \sigma} \bI \right)
			\, \rmd \sigma
			\rmd t \right] : \wh{\bV}(\bx) }{\bx}\,.
	\end{equation*}
	The integrand in $\sigma$ and $t$ is nonnegative, so by Tonelli's theorem we get
	\begin{equation*}
		\intdm{\bbR^d}{ \left[
			\int_0^{\infty} \left( \int_{\sigma/b}^{\sigma/a} t^{\frac{d}{2}-s-1} \, \rmd t \right)
			\frac{1}{(4 \pi \sigma)^{d/2}}
			\left( \frac{\rme^{\frac{-|\bx|^2}{4\sigma}} }{4 \sigma^2 } \bx \otimes \bx - \frac{ \rme^{\frac{-|\bx|^2}{4\sigma}} }{2 \sigma} \bI \right)
			\, \rmd \sigma 
		\right] : \wh{\bV}(\bx) }{\bx}\,.
	\end{equation*}
	We compute the $t$ integral and split the $\sigma$ integral:
	\begin{equation}\label{eq:NegativePowers:FTransformPiece1:Pf3}
		\begin{split}
		\frac{1}{(4 \pi)^{d/2}} \frac{1}{b^{\frac{d}{2}-s}-a^{\frac{d}{2}-s}} \frac{2}{d-2s}
		\intdm{\bbR^d}{ \left[
			\int_0^{\infty}
			\sigma^{-s}
			\left( \frac{1}{2 \sigma} \bI - \frac{1}{4 \sigma^2 } \bx \otimes \bx \right)  \rme^{\frac{-|\bx|^2}{4\sigma}}
			\, \rmd \sigma 
			\right] : \wh{\bV}(\bx) }{\bx}\,.
		\end{split}
	\end{equation}
	Now, for $\alpha > 0$, the coordinate change $r = \frac{|\bx|^2}{4 \sigma}$ gives the equality
	\begin{equation*}
		\int_0^{\infty} \sigma^{-\alpha} \rme^{-\frac{|\bx|^2}{4 \sigma}} \frac{\rmd \sigma}{\sigma} = \frac{4^{\alpha}}{|\bx|^{2\alpha}} 	\int_0^{\infty} r^{\alpha} \rme^{-r} \frac{\rmd r}{r} = \frac{4^{\alpha} \Gamma(\alpha)}{|\bx|^{2\alpha}}\,.
	\end{equation*}
	Applying this to the $\sigma$ integral in \eqref{eq:NegativePowers:FTransformPiece1:Pf3} with $\alpha \in \{ s, s+1 \}$ leads to the equality of \eqref{eq:NegativePowers:FTransformPiece1:Pf3} with
	\begin{equation}\label{eq:NegativePowers:FTransformPiece1:RHS}
		\begin{split}
			\frac{1}{(4 \pi)^{d/2}} \frac{1}{b^{\frac{d}{2}-s}-a^{\frac{d}{2}-s}} \frac{2}{d-2s}
			\intdm{\bbR^d}{ \left(
				\frac{4^{s} \Gamma(s)}{2 |\bx|^{2s}}\bI - \frac{4^{1+s} \Gamma(1+s)}{4 |\bx|^{2+2s}} \bx \otimes \bx \right)
			: \wh{\bV}(\bx) }{\bx}\,.
		\end{split}
	\end{equation}
	Now, the identity \eqref{eq:NegativePowers:FTransformPiece1:Pf1} means that the quantities \eqref{eq:NegativePowers:FTransformPiece1:LHS}  and \eqref{eq:NegativePowers:FTransformPiece1:RHS} are equal. After some algebraic simplification, this identity becomes
	\begin{multline}
			\frac{d-2s}{2} \frac{\Gamma( \frac{d-2s}{2} )}{\pi^{d/2}} \intdm{\bbR^d}{ \frac{1}{|\bx|^{d-2s}} \frac{\bx \otimes \bx}{|\bx|^2} : \bV(\bx) }{\bx} \\
			= 
			\intdm{\bbR^d}{ \left(
				\frac{4^{s} \Gamma(s)}{2 (2 \pi |\bx|)^{2s}}\bI - \frac{4^{s} \Gamma(1+s)}{(2 \pi |\bx|)^{2s}} \frac{\bx \otimes \bx}{|\bx|^2} \right)
				: \wh{\bV}(\bx) }{\bx}\,.
	\end{multline}
	Writing the Fourier transform integrals in $\bsxi$ and using the identity $t\Gamma(t)=\Gamma(t+1)$,
	\begin{multline}
		\frac{\Gamma(s)}{2 \Gamma(1+s) } \intdm{\bbR^d}{ (2 \pi |\bsxi|)^{-2s} \wh{\bV}(\bsxi) }{\bsxi} - \frac{\Gamma( \frac{d+2-2s}{2} )}{2^{2s} \pi^{d/2} \Gamma(1+s)} \intdm{\bbR^d}{ \frac{1}{|\bx|^{d-2s}} \frac{\bx \otimes \bx}{|\bx|^2} : \bV(\bx) }{\bx} \\
		= 
		\intdm{\bbR^d}{ \left(
			\frac{1}{(2 \pi |\bsxi|)^{2s}} \frac{\bsxi \otimes \bsxi}{|\bsxi|^2} \right)
			: \wh{\bV}(\bsxi) }{\bsxi}\,.
	\end{multline}
	Now we use \eqref{eq:RieszTransform:Fourier} on the first integral to get
	\begin{multline}
		\frac{\Gamma(s)}{2 \Gamma(1+s) } g_{d,s} \intdm{\bbR^d}{  \frac{1}{|\bx|^{d-2s}} \bV(\bx) }{\bx} - \frac{\Gamma( \frac{d+2-2s}{2} )}{2^{2s} \pi^{d/2} \Gamma(1+s)} \intdm{\bbR^d}{ \frac{1}{|\bx|^{d-2s}} \frac{\bx \otimes \bx}{|\bx|^2} : \bV(\bx) }{\bx} \\
		= 
		\intdm{\bbR^d}{ \left(
			\frac{1}{(2 \pi |\bsxi|)^{2s}} \frac{\bsxi \otimes \bsxi}{|\bsxi|^2} \right)
			: \wh{\bV}(\bsxi) }{\bsxi}\,.
	\end{multline}
	The proof is finished by using the identity $a\Gamma(a)=\Gamma(a+1)$ for $a > 0$ to obtain that $\frac{\Gamma(s)}{2 \Gamma(1+s)} g_{d,s} = \frac{\gamma_{d,s}}{d-2s}$.
\end{proof}

\section{Formulae for Classical Functions}

For completeness we report the following classical information for special functions along with references. Some identities hold in a more general context, but we only report what we need.

The Gamma function is extended to $\bbR \setminus \{ 0, -1, -2, \ldots \}$ via the formula $\Gamma(a+1) = a \Gamma(a)$.
We also use the Legendre duplication formula for the Gamma function
\begin{equation}\label{eq:GammaFunction}
	\Gamma(a) \Gamma \left( a + \frac{1}{2} \right) = 2^{1-2a} \sqrt{\pi} \Gamma(2a)\,.
\end{equation}

Let $\rmB : (0,\infty) \times (0,\infty) \to \bbR$ denote the Euler beta function
\begin{equation}\label{eq:BetaFunction}
	\rmB(a,b) = \int_0^1 t^{a-1} (1-t)^{b-1} \, \rmd t\,.
\end{equation}
We use the following three well-known identities:
\begin{equation}\label{eq:Item5}
	\rmB(a,b) = \rmB(b,a)\,, \qquad \rmB(a,1-a) = \frac{\pi}{\sin(a \pi)} \text{ for } a < 1\,, \qquad \rmB(a,b) = \frac{\Gamma(a) \Gamma(b)}{\Gamma(a+b)}\,.
\end{equation}

\subsection{Hypergeometric Functions}
Suppose $a$, $b$, $c \in \bbR$ with $-c \notin \bbN_0$. Let $z \in \bbC$. We denote the complex-valued \textit{hypergeometric function} defined for all $z \in \bbC \setminus \{ 1, \infty \}$ as $F(a,b;c;z)$.
For  $|z| < 1$ the series defining $F(a,b;c;z)$ converges absolutely, and for $z \in \overline{\bbC} \setminus \{1, \infty\}$ $F$ is defined via analytic continuation. 
See the discussion in \cite[Chapter 15]{abramowitz1988handbook}.

When $c > b > 0$, the hypergeometric function has the following integral representation, valid for all $z \in \bbC \setminus \{ [1,\infty) \}$:
\begin{equation}\label{eq:HyperGeo:IntRep}
	F(a,b;c;z) = \frac{\Gamma(c)}{\Gamma(b) \Gamma(c-b)} \int_0^1 t^{b-1} (1-t)^{c-b-1} (1-tz)^{-a} \, \rmd t\,;
\end{equation}
see \cite[Equation 15.3.1]{abramowitz1988handbook}. Note that in this case when $a>0$ we have the uniform bound $F(a,b;c;z) \leq F(a,b;c;0) = \frac{\Gamma(c)}{\Gamma(b)\Gamma(c-b)} \rmB(b,c-b) = 1$ for all $z < 0$.

From \cite[Equation 15.3.7]{abramowitz1988handbook} and the accompanying discussion, we see that 
for $|z| \gg 1$ with $z \notin (0,\infty)$ and whenever $a - b \notin \bbZ$, $F$
has the asymptotic expression
\begin{equation}\label{eq:HyperGeo:Asymp}
	\begin{split}
		F(a,b;c;z) &= \frac{\Gamma(c) \Gamma(b-a)}{\Gamma(b)\Gamma(c-a)} \frac{1}{(-z)^{a}} 
		+  \frac{\Gamma(c) \Gamma(a-b)}{\Gamma(a)\Gamma(c-b)} \frac{1}{(-z)^{b}} \\
		&\quad + O \left( \frac{1}{(-z)^{a+1}} + \frac{1}{(-z)^{b+1}} \right)\,.
	\end{split}
\end{equation}

\subsection{Bessel Functions}
For $\nu > -1/2$ and $z > 0$, we denote by $J_{\nu}(z)$ the \textit{Bessel functions of the first kind.} We will make use of the following integral representation
\begin{equation}\label{eq:BesselFxn:IntRep}
	J_{\nu}(z) = \frac{z^{\nu}}{2^{\nu} \sqrt{\pi} \Gamma \left( \nu + \frac{1}{2} \right) } \int_{-1}^1 (1-t^2)^{\nu - 1/2} \cos(zt) \, \rmd t\,;
\end{equation}
see \cite[Equation 9.1.20]{abramowitz1988handbook}.
With this we obtain the elementary bound
\begin{equation}\label{eq:BesselFxn:Bound}
	|J_{\nu}(z)| \leq C(\nu) |z|^{\nu} \quad \text{ for all z > 0. }
\end{equation}

Bessel functions satisfy the formula (c.f. \cite[Equation 9.1.30]{abramowitz1988handbook})
\begin{equation}\label{eq:BesselFxn:Derivative}
	\left( \frac{1}{z} \frac{d}{d z} \right)^m (z^{\nu} J_{\nu}(z) ) = z^{\nu-m} J_{\nu-m}(z)\,, \qquad m \in \bbN_0\,,\,\, \nu - m > -1/2\,.
\end{equation}

Integrals of Bessel functions against polynomials and exponential functions
can be connected to hypergeometric functions in the following way: for any $a$, $b \in (0,\infty)$, $\mu \in \bbR$ and $\nu > -1/2$ with $\mu + \nu > 0$, then
\begin{equation}\label{eq:BesselFxnInt}
	\int_0^{\infty} \rme^{-a x} J_{\nu}(bx) x^{\mu-1} \, \rmd x = \frac{b^{\nu}}{a^{\mu+\nu}} \frac{\Gamma(\nu+\mu)}{2^{\nu}\Gamma(\nu+1)} F \left( \frac{\nu+\mu}{2}, \frac{\nu+\mu+1}{2}; \nu+1 ; \frac{-b^2}{a^2} \right)\,.
\end{equation}
See \cite[6.621]{gradshteyn2014table}.
In this case we can use the formula \eqref{eq:HyperGeo:Asymp} to obtain
\begin{equation}\label{eq:BesselFxnInt:Bounded}
	\left| \int_0^{\infty} \rme^{-a x} J_{\nu}(bx) x^{\mu-1} \, \rmd x \right| \leq C(\mu,\nu) \left( \frac{1}{b^{\mu}} + \frac{a}{b^{\mu+1}} \right) + O \left(  \frac{a^{2}}{b^{\mu+2}} + \frac{a^3}{b^{\mu+3}}  \right)
\end{equation}
whenever $\frac{b^2}{a^2} \gg 1$.
If the inequality $\nu > \mu - 1$ also holds, then we can combine \eqref{eq:HyperGeo:IntRep} and \eqref{eq:BesselFxnInt} to get
\begin{equation}\label{eq:BesselFxnInt:IntRep}
	\int_0^{\infty} \rme^{-a x} J_{\nu}(bx) x^{\mu-1} \, \rmd x = \frac{1}{b^{\mu}} \frac{\Gamma(\nu+\mu)}{2^{\nu} \Gamma( \frac{\nu+\mu+1}{2}) \Gamma (\frac{\nu-\mu+1}{2} )} \int_0^1 \frac{ t^{\frac{\nu+\mu-1}{2}} (1-t)^{\frac{\nu-\mu-1}{2}} }{ (\frac{a^2}{b^2} + t)^{\frac{\nu+\mu}{2}} }\, \rmd t\,.
\end{equation}

\subsection{Two Important Formulae}

\begin{lemma}\label{lma:RadialKernel1}
	For $r > 0$ and for any unit vector $\bseta$,
	\begin{equation*}
		\intdm{\bbS^{d-1}}{ \rme^{- \imath r \bseta \cdot \bsomega} }{\sigma(\bsomega)} = (2 \pi)^{d/2} \frac{J_{\nu}(r)}{r^{\nu}}\,, \qquad \text{ where } \nu = \frac{d-2}{2}\,.
	\end{equation*}
\end{lemma}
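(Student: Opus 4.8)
The plan is to reduce the spherical integral to a one-dimensional integral by exploiting rotational invariance, and then recognize it as the integral representation of the Bessel function $J_\nu$ recorded in \eqref{eq:BesselFxn:IntRep}. First I would use that the surface measure $\sigma$ on $\bbS^{d-1}$ is invariant under the orthogonal group: choosing a rotation $\bR \in \cO(d)$ with $\bR \bseta = \be_1$ and performing the change of variables $\bsomega \mapsto \bR^{T} \bsomega$ gives
\begin{equation*}
	\intdm{\bbS^{d-1}}{ \rme^{- \imath r \bseta \cdot \bsomega} }{\sigma(\bsomega)} = \intdm{\bbS^{d-1}}{ \rme^{- \imath r \omega_1 } }{\sigma(\bsomega)}\,,
\end{equation*}
since $\bseta \cdot \bsomega = (\bR \bseta) \cdot (\bR \bsomega) = \be_1 \cdot (\bR\bsomega)$ and $\bR^{T}\bR = \bI$. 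So it suffices to evaluate the right-hand side, which depends only on $r$.

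Next I would apply the standard slicing formula for integration over the sphere: for any continuous $f$,
\begin{equation*}
	\intdm{\bbS^{d-1}}{ f(\omega_1) }{\sigma(\bsomega)} = |\bbS^{d-2}| \int_{-1}^{1} f(t) (1-t^2)^{\frac{d-3}{2}} \, \rmd t\,,
\end{equation*}
where $|\bbS^{d-2}| = \tfrac{2 \pi^{(d-1)/2}}{\Gamma(\frac{d-1}{2})}$ is the surface area of the unit sphere in $\bbR^{d-1}$, with the convention $|\bbS^0| = 2$ when $d=2$. Since $(1-t^2)^{(d-3)/2}$ is even, the imaginary part of $\rme^{-\imath r t}$ integrates to zero, leaving
\begin{equation*}
	\intdm{\bbS^{d-1}}{ \rme^{- \imath r \omega_1 } }{\sigma(\bsomega)} = |\bbS^{d-2}| \int_{-1}^{1} \cos(rt) (1-t^2)^{\frac{d-3}{2}} \, \rmd t\,.
\end{equation*}
With $\nu = \frac{d-2}{2}$ one has $\frac{d-3}{2} = \nu - \frac12$, so \eqref{eq:BesselFxn:IntRep} identifies the last integral as $\frac{2^{\nu} \sqrt{\pi}\, \Gamma(\nu + \frac12)}{r^{\nu}} J_{\nu}(r)$. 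It then remains only to check that $|\bbS^{d-2}| \cdot 2^{\nu} \sqrt{\pi}\, \Gamma(\nu + \frac12) = (2\pi)^{d/2}$; since $\nu + \frac12 = \frac{d-1}{2}$ the Gamma factors cancel and one is left with $2 \cdot 2^{(d-2)/2} \cdot \pi^{(d-1)/2} \cdot \pi^{1/2} = 2^{d/2} \pi^{d/2}$, as required.

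I do not anticipate a substantive obstacle: the computation is entirely classical and every integral in sight converges absolutely because $\bbS^{d-1}$ is compact. The only points needing a modicum of care are making the rotational reduction precise (the one-line change of variables above, using invariance of $\sigma$) and the bookkeeping of constants, in particular the degenerate case $d=2$, where the slicing formula must be read with $\bbS^0$ a two-point set of total mass $2$; both are routine.
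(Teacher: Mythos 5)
Your proof is correct and follows essentially the same route as the paper: reduce by rotational invariance/parity to the one-dimensional integral $\tfrac{2\pi^{(d-1)/2}}{\Gamma(\frac{d-1}{2})}\int_{-1}^{1}\cos(rt)(1-t^2)^{\frac{d-3}{2}}\,\rmd t$ and then invoke the representation \eqref{eq:BesselFxn:IntRep} with $\nu=\frac{d-2}{2}$. You simply spell out the rotation and constant bookkeeping that the paper leaves implicit, and your constant check is accurate.
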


\begin{proof}
	Since $\bsomega \mapsto \sin(r \bseta \cdot \bsomega)$ is odd, the complex part of the left-hand side integral is $0$. Using explicit integration in spherical coordinates 
	\begin{equation*}
		\intdm{\bbS^{d-1}}{ \rme^{- \imath r \bseta \cdot \bsomega} }{\sigma(\bsomega)} = \frac{2 \pi^{\frac{d-1}{2}}}{\Gamma(\frac{d-1}{2})} \int_{-1}^1 \cos(rt) (1-t^2)^{\frac{d-3}{2}} \, \rmd t\,.
	\end{equation*}
	The result then follows by rewriting the right-hand side integral using the formula \eqref{eq:BesselFxn:IntRep} for $\nu = \frac{d-2}{2}$.
\end{proof}

\begin{lemma}\label{lma:RadialKernel2}
	For $r > 0$ and for any unit vector $\bseta$,
	\begin{equation*}
		\intdm{\bbS^{d-1}}{ (\bseta \cdot \bsomega )^2 \rme^{- \imath r \bseta \cdot \bsomega} }{\sigma(\bsomega)} = (2 \pi)^{d/2} \left(  \frac{J_{\nu}(r)}{r^{\nu}} - (d-1) \frac{J_{\nu+1}(r)}{r^{\nu+1}} \right) \,, \qquad \text{ where } \nu = \frac{d-2}{2}\,.
	\end{equation*}
\end{lemma}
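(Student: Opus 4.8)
The idea is to reduce the statement to Lemma~\ref{lma:RadialKernel1} by differentiating twice in the radial parameter $r$. Fix a unit vector $\bseta$. Since $\frac{d^2}{dr^2}\rme^{-\imath r \bseta\cdot\bsomega} = -(\bseta\cdot\bsomega)^2\,\rme^{-\imath r \bseta\cdot\bsomega}$ and all $r$-derivatives of $\bsomega\mapsto \rme^{-\imath r \bseta\cdot\bsomega}$ are bounded by $1$ on the compact set $\bbS^{d-1}$, which carries finite measure, one may differentiate under the integral sign. Combining this with Lemma~\ref{lma:RadialKernel1},
\[
	\intdm{\bbS^{d-1}}{ (\bseta\cdot\bsomega)^2 \rme^{-\imath r \bseta\cdot\bsomega} }{\sigma(\bsomega)} = -\frac{d^2}{dr^2}\intdm{\bbS^{d-1}}{ \rme^{-\imath r \bseta\cdot\bsomega} }{\sigma(\bsomega)} = -(2\pi)^{d/2}\,\frac{d^2}{dr^2}\!\left(\frac{J_{\nu}(r)}{r^{\nu}}\right),
\]
so it remains to establish the purely one-variable identity
\[
	-\frac{d^2}{dr^2}\!\left(\frac{J_{\nu}(r)}{r^{\nu}}\right) = \frac{J_{\nu}(r)}{r^{\nu}} - (d-1)\,\frac{J_{\nu+1}(r)}{r^{\nu+1}}, \qquad \nu = \tfrac{d-2}{2}.
\]

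For the first derivative I would use the index-raising Bessel identity $\frac{d}{dz}\big(z^{-\nu}J_{\nu}(z)\big) = -z^{-\nu}J_{\nu+1}(z)$, which is the companion of the stated formula \eqref{eq:BesselFxn:Derivative} and appears alongside it in \cite[Eq.~9.1.30]{abramowitz1988handbook}; alternatively it follows directly by integrating the representation \eqref{eq:BesselFxn:IntRep} by parts in $t$. This yields $\frac{d}{dr}\big(r^{-\nu}J_{\nu}(r)\big) = -r^{-\nu}J_{\nu+1}(r)$. Differentiating once more, after writing $r^{-\nu}J_{\nu+1}(r) = r\cdot r^{-(\nu+1)}J_{\nu+1}(r)$ and applying the same identity at index $\nu+1$, gives
\[
	\frac{d^2}{dr^2}\!\left(\frac{J_{\nu}(r)}{r^{\nu}}\right) = -r^{-(\nu+1)}J_{\nu+1}(r) + r^{-\nu}J_{\nu+2}(r).
\]

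Finally I would eliminate the $J_{\nu+2}$ term using the three-term recurrence $J_{\nu+2}(r) = \tfrac{2(\nu+1)}{r}J_{\nu+1}(r) - J_{\nu}(r)$, itself a consequence of the two forms of \cite[Eq.~9.1.30]{abramowitz1988handbook}. Substituting and recalling $2\nu + 1 = d-1$ collapses the right-hand side to $(d-1)\,r^{-(\nu+1)}J_{\nu+1}(r) - r^{-\nu}J_{\nu}(r)$, which is exactly the negative of the claimed expression; multiplying through by $(2\pi)^{d/2}$ completes the proof. As a consistency check, the one-variable identity above is equivalent to the statement that $f(r) := r^{-\nu}J_{\nu}(r)$ satisfies $f'' + \tfrac{d-1}{r}f' + f = 0$, i.e.\ that $\bx\mapsto f(|\bx|)$ solves the Helmholtz equation $\Delta u + u = 0$ in $\bbR^d$; this is expected, since by Lemma~\ref{lma:RadialKernel1} the function $f(|\bx|)$ is a constant multiple of $\intdm{\bbS^{d-1}}{ \rme^{\imath \bx\cdot\bsomega} }{\sigma(\bsomega)}$.

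I do not expect a genuine obstacle: everything is elementary once differentiation under the integral is justified. The only point requiring mild care is that the appendix displays only the index-lowering form \eqref{eq:BesselFxn:Derivative}, so the companion identity (and the three-term recurrence derived from it) should be recorded before use — or, to stay entirely self-contained, both radial derivatives of $J_{\nu}(r)/r^{\nu}$ can be computed from \eqref{eq:BesselFxn:IntRep} by integration by parts, with no recurrence beyond what is already in the appendix.
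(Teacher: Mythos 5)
Your proof is correct, but it takes a genuinely different route from the paper. The paper works directly with the surface integral: it reduces $\int_{\bbS^{d-1}} (\bseta\cdot\bsomega)^2 \rme^{-\imath r\bseta\cdot\bsomega}\,\rmd\sigma$ to a one-dimensional integral $\int_{-1}^1 t^2\cos(rt)(1-t^2)^{\frac{d-3}{2}}\,\rmd t$ in spherical coordinates, then adds and subtracts $\cos(rt)(1-t^2)^{\frac{d-3}{2}}$ so that the two resulting pieces are recognized, via the integral representation \eqref{eq:BesselFxn:IntRep} at orders $\nu$ and $\nu+1$, as the two Bessel terms in the statement (with the Gamma identity $\Gamma(a+1)=a\Gamma(a)$ fixing the constant). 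You instead observe that the target integral is $-\partial_r^2$ of the integral in \Cref{lma:RadialKernel1} (differentiation under the integral is immediate on the compact sphere), which reduces everything to the one-variable identity $-\frac{d^2}{dr^2}\bigl(r^{-\nu}J_\nu(r)\bigr) = r^{-\nu}J_\nu(r) - (d-1)r^{-\nu-1}J_{\nu+1}(r)$; your computation of this via $\frac{d}{dz}(z^{-\nu}J_\nu(z)) = -z^{-\nu}J_{\nu+1}(z)$ and the three-term recurrence checks out exactly ($2\nu+1=d-1$), as does your Helmholtz-equation sanity check. The trade-off is as you yourself note: your argument is slicker and more structural (it never needs the parity argument or any Gamma bookkeeping), but it leans on the index-raising derivative identity and the recurrence, neither of which is recorded in the appendix (only the index-lowering form \eqref{eq:BesselFxn:Derivative} is), so you would need to state them — or derive the raising identity by the integration by parts in \eqref{eq:BesselFxn:IntRep} that you sketch, which does work — whereas the paper's computation uses only what the appendix already contains.
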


\begin{proof}
	We proceed similarly to Lemma \ref{lma:RadialKernel1}. Since $\bsomega \mapsto (\bseta \cdot \bsomega)^2 \sin(r \bseta \cdot \bsomega)$ is odd, the complex part of the left-hand side integral is $0$. Using explicit integration in spherical coordinates,
	\begin{equation*}
		\intdm{\bbS^{d-1}}{ \rme^{- \imath r \bseta \cdot \bsomega} }{\sigma(\bsomega)} = \frac{2 \pi^{\frac{d-1}{2}}}{\Gamma(\frac{d-1}{2})} \int_{-1}^1 t^2 \cos(rt) (1-t^2)^{\frac{d-3}{2}} \, \rmd t\,.
	\end{equation*}
	Add and subtract $\cos(rt)(1-t^2)^{\frac{d-3}{2}}$ in the integral and use \eqref{eq:BesselFxn:IntRep} for $\nu = \frac{d-2}{2}$ to get
	\begin{equation*}
		\begin{split}
			\intdm{\bbS^{d-1}}{ \rme^{- \imath r \bseta \cdot \bsomega} }{\sigma(\bsomega)} &= \frac{2 \pi^{\frac{d-1}{2}}}{\Gamma(\frac{d-1}{2})} \int_{-1}^1 (t^2 -1) \cos(rt) (1-t^2)^{\frac{d-3}{2}} \, \rmd t + (2 \pi)^{d/2} \frac{J_{\nu}(r)}{r^{\nu}} \\
			&= -\frac{2 \pi^{\frac{d-1}{2}}}{\Gamma(\frac{d-1}{2})} \int_{-1}^1 \cos(rt) (1-t^2)^{\frac{d-1}{2}} \, \rmd t + (2 \pi)^{d/2} \frac{J_{\nu}(r)}{r^{\nu}}\,.
		\end{split}
	\end{equation*}
	The result then follows by rewriting the right-hand side integral using the formula \eqref{eq:BesselFxn:IntRep} for $\nu = \frac{d}{2}$ and using the identity $\Gamma(a+1) = a \Gamma(a)$ for $a = \frac{d-1}{2}$.
\end{proof}

\bibliography{References}
\bibliographystyle{plain}

\end{document}